\renewcommand\expandafter\subsection\expandafter{%
		\expandafter\@fb@secFB\subsection
	}%
\numberwithin{equation}{section}
\theoremstyle{plain}
\newtheorem{thm}{Theorem}[section]
\newtheorem{lem}[thm]{Lemma}
\newtheorem{cor}[thm]{Corollary}
\newtheorem{prop}[thm]{Proposition}
\theoremstyle{definition}
\newtheorem{defn}[thm]{Definition}
\newtheorem{rem}[thm]{Remark}
\definecolor{gr}{rgb}   {0., 0.8, 0. } 
\definecolor{bl}{rgb}   {0., 0.5, 1. } 
\definecolor{mg}{rgb}   {0.7, 0., 0.7}
\DeclareMathOperator*{\esssup}{ess\,sup}
\renewcommand{\div}{\operatorname{div}}
\newcommand{\IC}{\mathbb{C}}
\newcommand{\IN}{\mathbb{N}}
\newcommand{\R}{\mathbb{R}}
\newcommand{\IZ}{\mathbb{Z}}
\newcommand{\cD}{\mathcal{D}} %Distributions
\newcommand{\cH}{\mathcal{H}}
\newcommand{\cE}{\mathcal{E}}
\newcommand{\cL}{\mathcal{L}}
\newcommand{\cM}{\mathcal{M}}
\newcommand{\cS}{\mathcal{S}} %Schwartz space
\newcommand{\cV}{\mathcal{V}}
\newcommand{\dom}{\operatorname{dom}}
\renewcommand{\L}{\operatorname{L}} %Lebesgue spaces
\newcommand{\Lloc}{\L_{\operatorname{loc}}} %local Lebesgue spaces
\newcommand{\C}{\operatorname{C}} %spaces of continuous functions
\renewcommand{\H}{\operatorname{H}} %Sobolev spaces, spaces of holomorphic functions
\newcommand{\W}{\operatorname{W}}% Sobolev spaes
\newcommand{\E}{\operatorname{E}}% 
\newcommand{\D}{\operatorname{D}}
\newcommand{\J}{\operatorname{J}}
\newcommand{\I}{\operatorname{I}}
\DeclareRobustCommand{\Hdot}{\dot{\H}\protect{\vphantom{H}}} %homogeneous space
\DeclareRobustCommand{\cVdot}{\dot{\cV}\protect{\vphantom{V}}} 
\DeclareRobustCommand{\Edot}{\dot{\E}\protect{\vphantom{E}}} 
\DeclareRobustCommand{\Deldot}{\dot{\Delta}\protect{\vphantom{\Delta}}} %homogeneous \Delta spaces
\newcommand{\LL}{\L^2_{t}\L^2_{x}}% L^2L^2 spaces.
\newcommand{\BMO}{\mathrm{BMO}}
\newcommand{\e}{\mathrm{e}} %Euler's number
\renewcommand{\d}{\mathrm{d}} %differential
\newcommand{\eps}{\varepsilon} %epsilon
\renewcommand\Re{\operatorname{Re}}
\def\angle#1#2{\langle #1,#2 \rangle} %brackets in x,
\def\anglep#1#2{( #1,#2 )} %parentheses in x,
\def\Angle#1#2{\langle\langle #1,#2 \rangle\rangle} %brackets in (t,x)
\def\Anglep#1#2{(( #1,#2 ))} %double parentheses in (t,x)
\def\Hstarinverse{{\cH^*}^{-1}}
\def\Lt{\cL}
\def\Ltstar{\Lt ^*}
\def\tphi{\tilde \phi}
\def\Ga{G}
\def\tGa{\widetilde G}
\def\P{\Pi}
\def\tP{\widetilde\Pi}
\def\tpsi{\tilde\psi}
\def\tf{\tilde f}
\def\Del{\Delta}
\newcommand\Htheta{\Hdot_{t}^{{-\theta/2}}\Hdot^{{\theta-1}}_{\vphantom{t} x}}
\newcommand\Hmtheta{\Hdot_{t}^{-\theta/2}\Hdot_{\vphantom{t} x}^{m(\theta-1)}}
\newcommand\HthetaV{\Hdot_{t}^{-\theta/2}V^{\theta-1}}
\def\oa{\mathbf{a}}
\def\ob{\mathbf{b}}
\newcommand\Id{\operatorname{Id}}
\def\Xint#1{\mathchoice
{\XXint\displaystyle\textstyle{#1}}%
{\XXint\textstyle\scriptstyle{#1}}%
{\XXint\scriptstyle\scriptscriptstyle{#1}}%
{\XXint\scriptscriptstyle%
\scriptscriptstyle{#1}}%
\!\int}
\def\XXint#1#2#3{{\setbox0=\hbox{$#1{#2#3}{%
\int}$ }
\vcenter{\hbox{$#2#3$ }}\kern-.6\wd0}}
\def\barint{\,\Xint -} % \, corrects the \! used in the definition
\def\bariint{\barint_{} \kern-.4em \barint}
\def\bariiint{\bariint_{} \kern-.4em \barint}
\renewcommand{\iint}{\int_{}\kern-.34em \int} %\, minor space between the integrals
\renewcommand{\iiint}{\iint_{}\kern-.34em \int} %\, minor space between the integrals
\title[A variational framework for parabolic systems]{A universal variational framework for parabolic equations and systems}
\author{Pascal Auscher}
\address{Universit\'e Paris-Saclay, CNRS, Laboratoire de Math\'{e}matiques d'Orsay, 91405 Orsay, France}
\email{pascal.auscher@universite-paris-saclay.fr}
\author{Moritz Egert}
\address{TU Darmstadt, Fachbereich Mathematik, Schlossgartenstr.\ 7, 64289 Darmstadt, Germany}
\email{egert@mathematik.tu-darmstadt.de}
\address{Data sharing not applicable to this article as no datasets were generated or analysed during the current study.}
\thanks{The authors were supported by the ANR project RAGE ANR-18-CE40-0012. A CC-BY 4.0 \url{https://creativecommons.
org/licenses/by/4.0/} public copyright license has been applied by the authors to the present document and will be applied to all subsequent
versions up to the Author Accepted Manuscript arising from this
submission, in accordance with the grant's open access conditions. The authors also acknowledge support and hospitality from the Hausdorff Research Institute for Mathematics funded by the Deutsche Forschungsgemeinschaft (DFG, German Research Foundation) under Germany's Excellence Strategy -- EXC-2047/1 -- 390685813, where part of this material was developed. The authors would like to thank Sylvie Monniaux for a very careful reading of earlier versions of their manuscript. }
\subjclass[2010]{Primary: 35K40, 35K41 Secondary: 35K51, 26A33.} 
\date{September 5, 2023}
\keywords{Parabolic systems, Cauchy problems,  Green operators, variational methods, off-diagonal estimates}
\begin{document}

\maketitle

\begin{abstract} 
We propose a variational approach to solve  Cauchy problems for parabolic equations and  systems  independently of regularity theory for solutions. This produces a  universal and conceptually simple construction of fundamental solution operators (also called propagators)  for which we prove   $\operatorname{L}^2$ off-diagonal estimates, which is new under our assumptions. In the special case of systems for which pointwise local bounds hold for weak solutions, this provides Gaussian upper bounds for the corresponding  fundamental solution. In particular, we obtain a new proof of Aronson's estimates for real equations.  The scheme is general enough to allow systems with higher order elliptic parts on full space or second order elliptic parts on Sobolev spaces with boundary conditions.  Another new feature is that the control on lower order coefficients is within critical mixed time-space Lebesgue spaces or even  mixed Lorentz spaces.
\end{abstract}

\tableofcontents

\section{Introduction}

The classical treatment of parabolic problems begins with solving the Cauchy problem with or without forcing terms and  representing 
solutions by what is called fundamental solutions. Here, we consider operators of the form $\partial_{t}+\Lt $, where $\Lt $ is an elliptic operator in divergence form with possibly complex-valued coefficients.  Coefficients depend on all space and time variables. We assume strongly (G\aa rding) elliptic  {and bounded} higher order coefficients and   unbounded lower order coefficients  controlled in mixed Lebesgue and even Lorentz norms that are compatible with Sobolev embeddings for solutions. In particular, our treatment includes parabolic Schr\"odinger operators with Coulomb like potentials.  

When the coefficients are regular, several methods are possible to construct the fundamental solution and the most efficient one is via a parametrix, using the so-called  freezing point technique, which reduces the situation to space-independent coefficients for which fundamental solutions  are explicit kernels $\Gamma(t,x, s,y)$ with exponential decay in  $(|x-y|^{2m}/|t-s|)^{1/(2m-1)}$, where $2m$ is the order of the elliptic operator~\cite{Fr}. When $m=1$, this is the Gaussian decay.

When coefficients become irregular (measurable, unbounded), one goes through the theory of weak solutions that was developed in the 1950's-60's, culminating in the treatise by Lady{\v{z}}enskaja, Solonnikov and Ural'ceva \cite{LSU}. In parallel, when the coefficients are real-valued and the elliptic operator has order two, Aronson constructed generalized fundamental solutions, using Riesz representation theorems as a consequence of well-posedness of Cauchy problems  that generate bounded solutions. He also proved Gaussian upper and lower bounds \cite{Ar67, Ar68}. This supposedly closed the topic but here we shall reveal some new phenomena. 

Our starting point is the {guiding principle that many results on elliptic problems} have counterparts in parabolic world, taking into account evolution with respect to time. However, elliptic problems are tackled using a coercive variational formulation, while parabolic problems are  attacked via the Cauchy problem as mentioned above. The presence of the first order time derivative seems to forbid any possibility of coercivity as in the elliptic case. 

Nevertheless,  the heat kernel  $$(t,x,s,y) \mapsto 1_{\{t>s\}} \frac{\e^{-\frac{|x-y|^2}{4(t-s)}}}{(4\pi (t-s))^{n/2}} $$
can be seen as the kernel of the operator $(\partial_{t}-\Delta)^{-1}$. Here, the inverse can be computed using Fourier transform, but for more general parabolic operators this is not  possible. Our question is whether  some form of invertibility can still be implemented.

We show that indeed there is a variational formulation in the parabolic setting, too. That is, we find a variational space $\cV$ such that if $\partial_{t}+\Lt \colon \cV \to \cV'$, $\cV'$ being its dual, is invertible, then one can represent the inverse by  Green operators that  eventually become  the fundamental solution operators for the Cauchy problem (and whose kernels, whenever they exist in a pointwise sense, give a generalized fundamental solution). Invertibility and causality are then checked under appropriate coercivity requirements.  In other words, we are reversing the order of the usual arguments. Our main conclusion for Cauchy problems in the case of coefficients in mixed Lebesgue spaces  is in Theorem~\ref{thm:Cauchy}.
 
One may think this is a matter of cosmetic changes in the theory, but it is not.  For instance, in the case of second order elliptic part, the usual energy space of weak solutions $\L^2(\I; \H^1)\cap \L^\infty(\I; \L^2)$ or the smaller Lions' space $\L^2(\I; \H^1)\cap \H^1(\I; \H^{-1})$\footnote{Here, we do not define all spaces precisely as we just want to explain the spirit of our results.} cannot play the  role of a variational space as above, as the dual is either not handy or too big. Thus, we have to renounce to \textit{a priori} boundedness in $\L^2$. Also,  for symmetry reasons it is  easier to let $\I=\R$ as this avoids boundary conditions for the time derivative. If one looks for a variational space candidate, the space $\L^2(\R; \H^1)$ is unavoidable for weak solutions since it is mapped to its dual by the leading terms.  Another Hilbert space,  $\H^{1/2}(\R; \L^2)$,  is mapped to its dual by the time derivative. This space already appeared in the theory \cite{LSU, Lions} but rather in the regularity theory  than with an instrumental role. 
Hence, the space $$\cV = \L^2(\R; \H^1)\cap \H^{1/2}(\R; \L^2),$$ or its homogeneous version $\cVdot$, is a natural candidate and we are going to  assume from the start  the alluded $\cVdot \to \cVdot'$ invertibility of the parabolic operator. Even though $\H^{1/2}(\R) \subset \L^\infty(\R)$ fails,  the homogeneous versions of these two spaces have the same scale invariance and therefore the homogeneous versions of the spaces $\cV$ and  $\L^2(\R; \H^1)\cap \L^\infty(\R; \L^2)$  have  the same embeddings into the mixed spaces $\L^r(\R;\L^q)$ except for the endpoint exponents $r=\infty, q=2$. Regularity theory  based on improvements of Lions's embedding theorem  allows us to introduce  a class of solutions where one can uniquely solve
$$\partial_{t}u+\Lt u=\delta_{s}\otimes \psi$$ for  $\psi\in \L^2$ and $\delta_{s}$ the Dirac mass at $s$,  and show that such solutions are continuously $\L^2$-valued  except at $s$. This turns out to be precisely what is needed to define Green operators with the expected properties that can be used to represent solutions of the Cauchy problem.  All boils down to proving invertibility of the parabolic operator, which uses an idea going back to \cite{Kaplan} that has been rediscovered several times since. There, Kaplan showed for the first time, {in absence of lower order coefficients},
where coercivity of the parabolic operator $\partial_{t}+\Lt$ hides  even though $\partial_{t}$ alone is not coercive in any sense since $\Re \int_{\R}{\partial_{t}u} \, {\overline u}\, \d t=0$ holds for any reasonable function $u$.

In summary, solutions being in $\L^\infty(\L^2)$ is an \textit{a priori} requirement  in most references to develop the theory and that the solutions belong to $\C(\L^2)$ and $\H^{1/2}(\L^2)$ is an \textit{a posteriori}  gain. Here, we use the invertibility on a space involving $\H^{1/2}(\L^2)$ to construct (unique) solutions that  are proved to be $\C(\L^2)\cap\L^\infty(\L^2)$  by a regularity argument, hence that are usual weak solutions in the end.

Let us next describe the new findings that emerge from these conceptual changes.

\medskip

\paragraph{\itshape Weaker assumptions on the coefficients} An advantage of using the variational space $\mathcal V$, as opposed to classical energy spaces, is that it  not only embeds into mixed Lebesgue spaces  $\L^r(\R;\L^q)$ for pairs $(r,q)$ of exponents that we call admissible,  but also into mixed Lorentz spaces $\L^{r,2}(\R; \L^{q,2})$. As a consequence, this allows us to relax assumptions from $\L^{\tilde r}(\R; \L^{\tilde q})$ to $\L^{\tilde r,\infty}(\R; \L^{\tilde q,\infty})$ for the lower order coefficients for pairs $(\tilde r, \tilde q)$ that we call compatible, as far as invertibility is concerned. Also causality can be proved under a weaker assumption, namely $\L^{\tilde r}(\R; \L^{\tilde q,\infty})$. This is explained in Section~\ref{sec:Lorentz}.

\medskip

\paragraph{\itshape Adaptability of the approach} 
%Our Hilbert space approach of parabolic equations is both surprising and interesting. 
The   ``hidden coercivity''  using the space  $\mathcal V$ discovered in \cite{Kaplan} had been explicitly appeared in several instances for other questions~\cite{HL, ABES,  AEN, AE, DZ} concerning local regularity, maximal regularity or boundary value problems. {The heart of the matter are Sections~\ref{sec:preliminaries} -  \ref{sec:puresecondorder}. Once the framework is set up correctly, numerous, otherwise non-trivial extensions, will come effortlessly:  Lower order coefficients in Lorentz spaces (Section~\ref{sec:Lorentz}), unbounded leading coefficients in $\BMO$ (Section~\ref{sec:BMO}), higher order systems on full space with integrability varying over the coefficients (Section~\ref{sec:HO}), second order equations and systems with lateral boundary conditions (Section~\ref{sec:lateral}). We provide full details for the first two extensions and restrict ourselves to sketching the strategies for the latter two as the article is already quite long.}

\medskip

\paragraph{\itshape A self-contained theory with simpler proof techniques  and improvement of Lions' embedding theorem} Many results we prove here could seem ``well-known'' to experts at first glance but we produce all details of the second order case in full space in order to show that the method we develop is self-contained with no recourse to older literature. Some arguments require new techniques of proof, hopefully simpler and without using Steklov averages. In particular, our approach is a consequence of  $\L^2$ continuity in time  (up to constant) of solutions of  the heat operator $\partial_{t}u-\Delta u=f$ or its adjoint when $u$  \textit{a priori} belongs to $\L^2(\R;\H^1)$ or its homogeneous version and $f$ belongs to sums of  mixed Sobolev spaces of $\L^2$ type with negative indices. This seems new.  In the end, this yields an improvement of  Lions' embedding theorem (Lemma~\ref{lem:maxreg}).  

\medskip

\paragraph {\itshape A universal construction without approximation of coefficients}

Lastly, our construction of propagators or fundamental solution operators avoids density arguments from operators with smooth coefficients or Galerkin methods.  Uniqueness implies that our construction agrees with others under common hypotheses. {In this sense it is universal and also constructive}.  In particular, we obtain a new proof of the Gaussian upper bound of Aronson {as a consequence of  $\L^2$ off-diagonal estimates for fundamental solution operators, which hold in full generality (see Theorem~\ref{thm:ODE}). The latter is a new result in its own right.}  

\medskip

Further details and precise assumptions are given in the course of the article.

\section{Second order problems on full space}
\label{sec: full space}

In what follows, we use $\L^2$  spaces in both $\R^n$, $n\ge 1$, and $\R^{n+1 }$, equipped with Lebesgue measures. We denote by $\angle \psi {\tilde\psi}$ the complex inner product in the variable $x \in \R^n$ and by $\Angle \phi \tphi $ the complex inner product in the variables $(t,x) \in \R\times \R^n \eqqcolon \R^{n+1}$. (We prefer this order for the variables for practical reasons.)  For a function $f$ of the two variables, we set  $f(t): x\mapsto f(t,x)$ for any  $t$. 

We use the notation $\cD$ and $\cD'$ for the spaces of $\C^\infty$ functions with compact support and of distributions, respectively, and  $\cS$ and $\cS'$ for the spaces of Schwartz functions and tempered distributions, respectively.  Variables will be indicated at the time of use. Duality brackets extend inner products on $\L^2$ spaces, hence they are sesquilinear.

\subsection{Variational space}
\label{sec:variationalspace}

As said in the introduction, we need a space, which can be thought of as $\L^2_{t}\Hdot^{1}_{x}\cap \Hdot^{1/2}_{t}\L^2_{x}$. However, some care must be taken because we use homogeneous norms.  

Let the Fourier transform on $\R^{n+1}$ be the usual extension to tempered distributions  of the integral defined on $\L^1$ functions by
$$
\hat \varphi(\tau,\xi)= \iint_{\R^{n+1}} \varphi(t,x)\e^{-i(t\tau+x\cdot \xi)}\, \d t\d x, \qquad (\tau,\xi)\in \R\times \R^n.
$$ 
Remark that $(\tau,\xi)\mapsto (|\xi|^2+|\tau|)^{-1/2}$ is locally square integrable in $\R\times \R^{n}$ with $$\iint_{|\xi|^2+|\tau|\le R^2} (|\xi|^2+|\tau|)^{-1}\, \d\tau\d \xi =C(n) R^n.$$ Thus, for any $g\in \LL$ we have that $(|\xi|^2+|\tau|)^{-1/2}g$ is in both $\Lloc^1(\R^{n+1})$ and $\cS'(\R^{n+1})$. We define
$$\cVdot \coloneqq \Big\{u\in  \cS'(\R^{n+1})\, : \, \hat u= (|\xi|^2+|\tau|)^{-1/2}g \text{\ for\ some\ (unique)}\ g\in \LL \Big\}.
$$
Equipped with the norm $$\|u\|_{\cVdot} \coloneqq (2\pi)^{-(n+1)/2} \|g\|_{\LL},$$ this is a Hilbert space.  It is easy to see that it contains $\cS(\R^{n+1})$ as a dense subspace. {Note also that constants do not belong to $\cVdot$}.

\begin{rem} For $\varphi\in \cS(\R^{n+1})$, we have, using Plancherel's formula,
$$
\Big(\|\nabla \varphi\|_{\LL}^2+ \|\D_{t}^{1/2}\!\varphi\|^2_{\LL}\Big)^{1/2}= \|\varphi\|_{\cVdot}.
$$
Here, $\D_{t}^\alpha$ is the Fourier multiplier with symbol $|\tau|^\alpha$.
In fact, the closure of $\cS(\R^{n+1})$ for the norm  defined by the left-hand side is $\cVdot+\IC$, seen as a subspace of $\cS'(\R^{n+1})/\IC$. For a proof see Lemma~3.11 in \cite{AEN}; this closure is denoted by $\Edot(\R^{n+1})$ there. Hence $\cVdot$ is nothing but the realization of this closure within $\cS'(\R^{n+1})$ that eliminates constants.  In  particular, whenever $u\in \cVdot$, then $\nabla u$ and $\D_{t}^{1/2}u$ exist as tempered distributions, belong to $\LL$, and the identity above holds.
\end{rem}
 
We let $\cVdot'$ be the dual of $\cVdot$ with respect to $\Angle{\cdot}{\cdot}$. Thus, it is a subspace of $\cS'(\R^{n+1})$ and a distribution $w \in \cS'(\R^{n+1})$ belongs to $\cVdot'$ if and only if $(|\tau|+ |\xi|^2)^{-1/2}\hat w\in \LL$.  It {follows from Plancherel's formula} that  $w\in \cVdot'$ if and only if  there exists a decomposition $\hat w= |\xi| g_{1}+|\tau|^{1/2}g_{2}$ with $g_{1},g_{2}\in \LL$ and that $\|w\|_{\cVdot'}\sim \inf(\|g_{1}\|_{\LL}+\|g_{2}\|_{\LL})$ taken over all such decompositions. In {this sense we write} $\cVdot'=\L^2_{t}\Hdot^{-1}_{x}+ \Hdot^{-1/2}_{t}\L^2_{x}$ with equivalent norms.

\subsection{Embeddings}
\label{sec:preliminaries}

 Recall that the homogeneous Sobolev space $\Hdot^{1/2}(\R)=\Hdot^{1/2}_{t}$, that is, the closure of $\cS(\R)$ for the norm $\|\D_{t}^{1/2}\!\varphi\|_{2}$, has the same scaling properties as $\L^\infty(\R)$. This results in continuous inclusions into mixed normed Lebesgue spaces for $\cVdot$ that, except for endpoints, are {the same as} for $\L^2_{t}\Hdot^1_{x}\cap \L^\infty_{t}\L^2_{x}$. We describe them next. We mention that {$\Hdot^{1/2}_{t}$ has an equivalent (semi-)norm, using difference quotients that is often used in  the literature on this topic, see for instance \cite{LSU}. We  do not use them here.}

We need the following mixed spaces. For pairs $(r,q)\in [1,\infty]^2$ of exponents, intervals $\I\subset \R$, and open sets $\Omega\subset \R^n$, $n\ge 1$,  we write $\L^r(\I; \L^q(\Omega))$ for the mixed norm space of measurable functions $u:\I\times \Omega\to \IC$ with
$$
\|u\|_{\L^r(\I; \L^q(\Omega))} \coloneqq \bigg(\int_{\I}\Big(\int_{\Omega}|u(t,x)|^q\, \d x\Big)^{r/q}\, \d t\bigg)^{1/r} <\infty$$
and the usual changes if either $r=\infty$ or $q=\infty$. We set  $\L^{r}_{t}\L^{q}_{\vphantom{t} x}\coloneqq \L^r(\R; \L^q(\R^n))$ with dummy variables in indices when $t\in \R$ and $x\in \R^n$. 

We introduce the Banach space of tempered distributions
$$
\Deldot^{r, q}: = \Big\{u\in \L^{r}_{t}\L^{q}_{\vphantom{t} x}\,  : \, \nabla u\in \LL \Big\} \ (\eqqcolon \L^2_{t}\Hdot^{1}_{x} \cap \L^{r}_{t}\L^{q}_{\vphantom{t} x}),
$$
where the gradient is taken in the sense of distributions,\footnote{From now on, we choose not to indicate the target vector space in the notation.} with norm 
$$
\|u\|_{\Deldot^{r, q}}: =\|\nabla u \|_{\LL}+ \|u\|_{\L^{r}_{t}\L^{q}_{\vphantom{t} x}}.
$$
{Duality theory for $\Deldot^{r,q}$ is easily understood by identifying $\Deldot^{r,q}$ with a closed subspace of $\L^{r}_{t}\L^{q}_{\vphantom{t} x} \times \L^{2}_{t}\L^{2}_{x}$ through the map $u\mapsto (u, \nabla u)$. As usual, the  H\"older conjugate  of $q\in [1,\infty]$ is $q'=\frac q {q-1}$. Thus, for the duality $\Angle{\cdot\,}{\cdot}$ we have that if $(r,q)\in [1,\infty)^2$, then $(\Deldot^{r, q})' = \L^2_{t}\Hdot^{-1}_{x} + \L^{r'}_{t}\L^{q'}_{\vphantom{t} x}$, the space of elements $\div F + g$, with vector field $F\in \LL$ and scalar function $g\in \L^{r'}_{t}\L^{q'}_{\vphantom{t} x}$, equipped with usual infimum norm. In the same manner,} when $(r,q)\in (1,\infty]^2$,  then $\Deldot^{r, q}$ is identified with the dual space of $\L^2_{t}\Hdot^{-1}_{x} + \L^{r'}_{t}\L^{q'}_{\vphantom{t} x}$ and when $(r,q)\in (1,\infty)^2$, then it is reflexive. {The duality theory implies that $\cS(\R^{n+1})$ is dense in $\Deldot^{r,q}$ when $(r,q) \in [1,\infty)^2$. }

\begin{defn} 
\label{def:admissible} 
A pair $(r,q)$ is admissible if   $\frac 1 {r}+\frac {n}{2q}= \frac n4$ with $2\le r,q<\infty$.
\end{defn}

\begin{lem} 
\label{ref:embedding}
 If $(r,q)$ is an admissible pair, then $\cVdot \hookrightarrow \Deldot^{r, q}$ {with continuous inclusion.}
In particular, elements of $\cVdot$ are locally integrable functions. By duality, this yields  the continuous inclusion $ (\Deldot^{r, q})' \hookrightarrow \cVdot'$ 
\end{lem}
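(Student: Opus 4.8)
The plan is to deduce the inclusion from a Fourier‑multiplier inequality. Fix an admissible pair $(r,q)$; by Definition~\ref{def:admissible} one has $r,q\in[2,\infty)$, so $\theta\coloneqq 2/r$ lies in $(0,1]$, and the identity $\frac1r+\frac n{2q}=\frac n4$ is equivalent to
\[
\tfrac1r=\tfrac12-\tfrac{1-\theta}{2},\qquad \tfrac1q=\tfrac12-\tfrac\theta n .
\]
A quick inspection of the cases $n=1,2$ and $n\ge3$ (using only $r,q\ge2$) shows that $\theta<n/2$ in every case and that $\theta=1$ can occur only when $n\ge3$. Now let $u\in\cVdot$, so $\hat u=(|\xi|^2+|\tau|)^{-1/2}g$ with $g\in\LL$. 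By concavity of the logarithm, $|\xi|^{2\theta}|\tau|^{1-\theta}\le\theta|\xi|^2+(1-\theta)|\tau|\le|\xi|^2+|\tau|$, so the multiplier $m(\tau,\xi)\coloneqq|\xi|^\theta|\tau|^{(1-\theta)/2}(|\xi|^2+|\tau|)^{-1/2}$ is bounded by $1$; hence the tempered distribution $H$ with $\hat H\coloneqq mg=|\xi|^\theta|\tau|^{(1-\theta)/2}\hat u$ lies in $\L^2(\R^{n+1})$ with $\|H\|_{\L^2}\lesssim\|u\|_{\cVdot}$. By construction $\hat u=|\xi|^{-\theta}|\tau|^{-(1-\theta)/2}\hat H$, i.e.\ $u=(-\Delta)^{-\theta/2}\D_{t}^{-(1-\theta)/2}H$, where $(-\Delta)^{-\theta/2}$ is the Fourier multiplier with symbol $|\xi|^{-\theta}$ and $\D_{t}^{-(1-\theta)/2}$ the one with symbol $|\tau|^{-(1-\theta)/2}$; all three symbols here are locally integrable on $\R^{n+1}$, so this factorization is unambiguous.

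Next I would apply the Hardy--Littlewood--Sobolev inequality twice. First, in the time variable: when $\theta<1$ the operator $\D_{t}^{-(1-\theta)/2}$ is the one‑dimensional Riesz potential of order $(1-\theta)/2\in(0,1/2)$, with nonnegative kernel, so $t\mapsto\|(\D_{t}^{-(1-\theta)/2}H)(t)\|_{\L^2(\R^n)}$ is pointwise dominated by the Riesz potential of $t\mapsto\|H(t)\|_{\L^2(\R^n)}$, and the scalar inequality gives $\D_{t}^{-(1-\theta)/2}H\in\L^r(\R;\L^2(\R^n))$ with norm $\lesssim\|H\|_{\L^2}$, by the first exponent relation above; when $\theta=1$ (hence $r=2$ and $n\ge3$) this step is vacuous. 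Second, in the space variable: since $0<\theta<n/2$, for a.e.\ $t$ the Riesz potential $(-\Delta)^{-\theta/2}$ maps $\L^2(\R^n)\to\L^q(\R^n)$ with the second exponent relation, and applying this to $(\D_{t}^{-(1-\theta)/2}H)(t)$ and then taking the $\L^r(\R)$‑norm in $t$ yields $u\in\L^r(\R;\L^q(\R^n))$ with $\|u\|_{\L^r(\R;\L^q(\R^n))}\lesssim\|u\|_{\cVdot}$. Combined with $\|\nabla u\|_{\LL}\le\|u\|_{\cVdot}$, this is precisely the continuous inclusion $\cVdot\hookrightarrow\Deldot^{r,q}$; since $\L^r(\R;\L^q(\R^n))\subset\Lloc^1(\R^{n+1})$, it follows in particular that elements of $\cVdot$ are locally integrable functions. (Run through the Lorentz‑space refinement of Hardy--Littlewood--Sobolev, the same argument even gives the sharper $\cVdot\hookrightarrow\L^{r,2}(\R;\L^{q,2}(\R^n))$.)

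For the dual statement, recall from the discussion preceding Definition~\ref{def:admissible} that $\cS(\R^{n+1})$ is dense in $\Deldot^{r,q}$ because $(r,q)\in[1,\infty)^2$; since $\cS(\R^{n+1})\subset\cVdot$, the bounded inclusion $\cVdot\hookrightarrow\Deldot^{r,q}$ has dense range, and taking its adjoint with respect to $\Angle{\cdot}{\cdot}$ gives the asserted bounded inclusion $(\Deldot^{r,q})'=\L^2_{t}\Hdot^{-1}_{x}+\L^{r'}_{t}\L^{q'}_{x}\hookrightarrow\cVdot'$. I do not expect a serious obstacle here: the whole content is the exponent bookkeeping --- checking that each admissible pair is hit by a $\theta\in(0,\min\{1,n/2\}]$ so that both applications of Hardy--Littlewood--Sobolev fall in their valid ranges, where the low dimensions $n=1,2$ and the borderline value $\theta=1$ (i.e.\ $r=2$, $n\ge3$) each need to be looked at separately --- together with the routine check that the composed Riesz potentials reconstruct $u$ as a tempered distribution. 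Equivalently, one may read the two steps as the chain $\cVdot\hookrightarrow\Hdot^{(1-\theta)/2}_{t}\Hdot^{\theta}_{x}=[\Hdot^{1/2}_{t}\L^2_{x},\,\L^2_{t}\Hdot^1_{x}]_{\theta}\hookrightarrow\L^r(\R;\L^q(\R^n))$, the last step being the mixed‑norm Sobolev embedding of the anisotropic space --- the same computation.
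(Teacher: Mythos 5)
Your proof is correct and takes essentially the same route as the paper: the bound $m\le 1$ is the convexity inequality \eqref{eq:convexity} (with the roles of $\theta$ and $1-\theta$ interchanged), and your two applications of Hardy--Littlewood--Sobolev are precisely the spatial and vector-valued temporal Sobolev embeddings of \eqref{eq:embed}, run forward from the $\L^2$ datum $H$ rather than backward on Schwartz functions by density, with the same duality argument at the end. Note only that the strict bound $\theta<n/2$ in dimensions $n=1,2$ relies on $q<\infty$ from admissibility, not merely on $r,q\ge 2$ as your parenthetical suggests.
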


\begin{proof} 
By density it suffices to work with  $\varphi \in \cS(\R^{n+1})$ {and show the first inclusion}.  The proof  relies on two ingredients. First, for $\theta\in [0,1]$, using the convexity inequality
 $$
 |\tau|^{\theta}|\xi|^{2(1-\theta)} \le \theta |\tau|+ (1-\theta) |\xi|^2,
 $$
Fourier transform in the $(t,x)$-variable shows that 
\begin{equation}
\label{eq:convexity}
\|\D_{t}^{\theta/2}(-\Delta)^{(1-\theta) /2} \varphi\|_{\LL}^2
\le \theta \|\D_{t}^{1/2} \!\varphi\|_{\LL}^2 + (1-\theta)\|(-\Delta)^{1 /2} \varphi\|_{\LL}^2\le  \|\varphi\|_{\cVdot}^2.
\end{equation}
Next, Sobolev embeddings in $\R^n$ and $\R$ give us 
\begin{equation}
\label{eq:embed}
\|\varphi\|_{\L^{r}_{t}\L^{q}_{\vphantom{t} x}}  \le c(n,q) \|(-\Delta)^{(1-\theta) /2} \varphi\|_{\L^{r}_{t}\L^{2}_{\vphantom{t} x}}
\le c(n,q) c(1,r)\|\D_{t}^{\theta/2}(-\Delta)^{(1-\theta) /2} \varphi\|_{\LL},
\end{equation}
where the first inequality holds exactly when $\frac 1 2 - \frac{1-\theta}{n} =\frac 1 q$ and $2\le q<\infty$, and the second one exactly when $\frac \theta 2- \frac 1 2= -\frac 1 r$ and $2\le r<\infty$. Note that the second embedding is the vector-valued extension of the scalar embedding on $\R$. 

We can solve for $\theta\in [0,1)$ if  $\frac 1 {r}+\frac {n}{2q}= \frac n4$ with $2\le r<\infty$ and $2\le q <\infty$, which is  the definition of an admissible pair. 
Since the other part   $\|\nabla \varphi\|_{\LL}$ of the $\Deldot^{r, q}$ norm  is controlled by $\|\varphi\|_{\cVdot}$, we are done. 
\end{proof}

\begin{rem}
 As $\Hdot^{1/2}(\R)$ does not embed into $\L^\infty(\R)$, the result fails for $(r,q)=(\infty,2)$ and we  excluded this pair  from our admissible range  although it satisfies the relation $\frac 1 {r}+\frac {n}{2q}= \frac n4$. Similarly, the embedding $\Hdot^{1-\theta}_{x}\subset \L^q_{x}$ never holds when $q=\infty$ and requires  $1-\theta < \frac n 2$. 
\end{rem}

\subsection{Variational approach}
 \label{sec:homogeneousparabolicsystems}
 
We study parabolic equations on $\R^{n+1} = \R\times \R^n$, namely
$$\partial_{t}u+\Lt u=f$$ 
and its adjoint equation 
$$-\partial_{t}\tilde u+\Ltstar   \tilde u=\tilde f,$$  
where $\Lt $ and its adjoint are second order elliptic operators in divergence form perturbed with unbounded lower order terms. The equalities are taken in the sense of distributions, provided $\Lt u$ and $\Ltstar   \tilde u$ are well-defined.  To be precise, we consider
\begin{equation}
\label{eq:Lt}
\Lt u= -\div (A \nabla u + \oa\,   u) + \ob  \cdot \nabla u + a u,
\end{equation}
where the coefficients $A,\oa  , \ob  , a$ depend on $(t,x)$. Sometimes, we consider $\Lt$ as an operator acting on functions of the $x$-variable by freezing $t$: context will make things clear. The leading coefficient $A=(a_{ij})$ is an $n\times n$ matrix of  bounded, possibly complex-valued,  measurable functions on $\R^{n+1}$. Thus, the sesquilinear form corresponding to the leading part in \eqref{eq:Lt} satisfies
\begin{equation}
\label{eq:Abdd}
\int_{\R} |\angle{A(t)\nabla u(t)}{\nabla v(t)}|\, \d t \le \Lambda  \|\nabla u\|_{\LL}\|\nabla v\|_{\LL}
\end{equation}
with $\Lambda\coloneqq \|A\|_{\infty}$. The lower order coefficients $\oa  , \ob   $ are $n$-vectors of  complex-valued, measurable functions on $\R^{n+1}$,  and $a$ is a complex-valued, measurable functions on $\R^{n+1}$. The {formal} complex adjoint of $\Lt$ corresponds to 
\begin{equation}
\label{eq:Ltstar}
\Ltstar u= -\div (A^* \nabla u + \overline\ob\,   u) + \overline\oa  \cdot \nabla u + \overline a u.
\end{equation}

We introduce the following quantity. 

\begin{defn}
\label{def:Pr1q1}
For  a  pair  $(\tilde r_{1},\tilde q_{1})\in [1,\infty]^2$ let
\begin{align}
	\label{eq:P}
	P_{\tilde r_{1},\tilde q_{1}} \coloneqq \||\oa  |^2\|_{\L^{\tilde r_{1}}_{t}\L^{\tilde q_{1}}_{\vphantom{t} x}}^{1/2}+ \||\ob  |^2\|_{\L^{\tilde r_{1}}_{t}\L^{\tilde q_{1}}_{\vphantom{t} x}}^{1/2}+ \|a\|_{\L^{\tilde r_{1}}_{t}\L^{\tilde q_{1}}_{\vphantom{t} x}}
\end{align}
and define  $(r_{1}, q_{1})\in [2,\infty]^2$ through the relations
\begin{equation}
	\label{eq:associatedpair}
	r_{1}\coloneqq 2(\tilde r_{1})' \quad \& \quad q_{1}\coloneqq 2 (\tilde q_{1})'.
\end{equation}
\end{defn}

\begin{rem}
In principle, we could let $(\tilde r_{1},\tilde q_{1})$ be different for each entry of $\oa  $ and $\ob  $, and $a$. At this point we do not go into this  in order to simplify the exposition of ideas but we shall come back to the general version later on in Section~\ref{sec:HO}.
\end{rem}

Next, we introduce the sesquilinear pairings corresponding to the lower order terms in \eqref{eq:Lt}. We set 
$$
\beta u \coloneqq  -\div (\oa\,   u) + \ob  \cdot \nabla u + a u,
$$
so that {for appropriate $u,v$} and
for (almost) each $t \in \R$, we write 
\begin{align}
\label{eq:beta}
	\angle {\beta u(t)}{v(t)}= \angle{\oa(t) u(t)}{  \nabla v(t)} + \angle{\ob(t)\cdot \nabla u(t)}{  v(t)} + \angle{a(t) u(t)}{ v(t)}
\end{align}
and 
\begin{align}
\label{eq:Beta}
	\Angle {\beta u}{v}= \int_{\R} \angle{\beta u(t)}{v(t)}\, \d t.
\end{align}
The relation \eqref{eq:associatedpair} guarantees that the formal pairings above are absolutely convergent Lebesgue integrals, as becomes apparent from the next lemma.

\begin{lem}
\label{lem:beta} Let $(\tilde r_{1}, \tilde q_{1}) \in [1,\infty]^2$ and let $(r_{1}, q_{1})$ given by \eqref{eq:associatedpair}. {Suppose that $P_{\tilde r_{1},\tilde q_{1}}$ is finite.} If $u, v\in \Deldot^{r_{1},  q_{1}}$, then  
\begin{align*}
|\angle {\beta u(t)}{v(t)}| \leq 
& \||\oa(t)|^2\|_{\L^{\tilde q_{1}}_{x}}^{1/2}\|u(t)\|_{\L^{q_{1}}_{x}}\|\nabla v(t)\|_{\L^2_{x}} \\
&+  \||\ob(t)|^2\|_{\L^{\tilde q_{1}}_{x}}^{1/2}\|v(t)\|_{\L^{ q_{1}}_{x}}\|\nabla u(t)\|_{\L^2_{x}} \\
&+ \|a(t)\|_{\L^{\tilde q_{1}}_{x}}\|u(t)\|_{\L^{ q_{1}}_{x}}^2.
\end{align*}
In particular, $\angle {\beta u(t)}{v(t)} \in \L^1_{t}$ and
$$
|\Angle {\beta u}{v}| \le P_{\tilde r_{1},\tilde q_{1}} \|u\|_{\Deldot^{ r_{1}, q_{1}}}\|v\|_{\Deldot^{r_{1}, q_{1}}}.
$$
\end{lem}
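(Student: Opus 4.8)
The plan is to bound each of the three terms in \eqref{eq:beta} separately, first pointwise in $t$ and then after integration in $t$. For the pointwise bound I would simply apply the Cauchy--Schwarz inequality in $x$ to each summand. For the first term, $\angle{\oa(t)u(t)}{\nabla v(t)}$, I write $|\oa(t)u(t)| \le |\oa(t)|\,|u(t)|$ and apply Hölder with exponents $q_1$ and $2$ to the product $|\oa(t)|\,|u(t)|$ against $|\nabla v(t)|$; this requires $\frac{1}{q_1} + \frac{1}{2} \le \frac{1}{2} + \frac{1}{2}$... more precisely I first pair $|u(t)|\in\L^{q_1}_x$ with $|\nabla v(t)|\in\L^2_x$ leaving a factor in $\L^s_x$ with $\frac1s = 1 - \frac1{q_1} - \frac12 = \frac12 - \frac1{q_1}$, and then note $\frac1s = \frac1{q_1'} - \frac12 = \frac{1}{2\tilde q_1}\cdot\frac{2}{?}$; cleaner is to observe that $|\oa(t)| \in \L^{2\tilde q_1}_x$ since $\||\oa(t)|^2\|_{\L^{\tilde q_1}_x}^{1/2} = \||\oa(t)|\|_{\L^{2\tilde q_1}_x}$, and that $\frac1{2\tilde q_1} + \frac1{q_1} + \frac12 = \frac1{2\tilde q_1} + \frac{1}{2\tilde q_1'} + \frac{?}{}$. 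Using $q_1 = 2(\tilde q_1)'$ gives $\frac1{q_1} = \frac{1}{2}\big(1 - \frac1{\tilde q_1}\big) = \frac12 - \frac{1}{2\tilde q_1}$, hence $\frac{1}{2\tilde q_1} + \frac1{q_1} = \frac12$, and then $\frac{1}{2\tilde q_1} + \frac{1}{q_1} + \frac12 = 1$, so the three-fold Hölder inequality in $x$ with exponents $2\tilde q_1$, $q_1$, $2$ is exactly balanced and yields the first line of the claimed estimate. The second term is handled symmetrically, swapping the roles of $u$ and $v$ and using $\ob$ in place of $\oa$. For the third term, $\angle{a(t)u(t)}{v(t)}$, I use Hölder with exponents $\tilde q_1$, $q_1$, $q_1$, which is balanced because $\frac{1}{\tilde q_1} + \frac{2}{q_1} = \frac{1}{\tilde q_1} + \big(1 - \frac1{\tilde q_1}\big) = 1$; this gives $\|a(t)\|_{\L^{\tilde q_1}_x}\|u(t)\|_{\L^{q_1}_x}\|v(t)\|_{\L^{q_1}_x}$, and since here $u = v$ in the third summand of \eqref{eq:beta} as written... in fact the third term is $\angle{a(t)u(t)}{v(t)}$ so I get $\|a(t)\|_{\L^{\tilde q_1}_x}\|u(t)\|_{\L^{q_1}_x}\|v(t)\|_{\L^{q_1}_x}$; to match the stated $\|u(t)\|_{\L^{q_1}_x}^2$ I note that in the lemma as formulated the third term on the right is written with $u$ only, which is consistent with applying the estimate and then bounding by the larger of the two (or the statement tacitly takes $u=v$); I would keep the symmetric form $\|u(t)\|\|v(t)\|$ and remark it dominates after integration.

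The second step is to integrate in $t$. Having the three pointwise bounds, I apply the Cauchy--Schwarz (or Hölder) inequality in $t$ to each. For the first term I have a product of $\||\oa(t)|^2\|_{\L^{\tilde q_1}_x}^{1/2}$, $\|u(t)\|_{\L^{q_1}_x}$ and $\|\nabla v(t)\|_{\L^2_x}$; the last factor lies in $\L^2_t$, so I pair it with the product of the first two, which must then lie in $\L^2_t$. By Hölder in $t$ with exponents $2\tilde r_1$ and $r_1$ (noting $r_1 = 2(\tilde r_1)'$, so $\frac{1}{2\tilde r_1} + \frac1{r_1} = \frac12$ by the same arithmetic as above), $\||\oa(\cdot)|^2\|_{\L^{\tilde q_1}_x}^{1/2} \cdot \|u(\cdot)\|_{\L^{q_1}_x}$ has $\L^2_t$ norm at most $\||\oa|^2\|_{\L^{\tilde r_1}_t\L^{\tilde q_1}_x}^{1/2} \|u\|_{\L^{r_1}_t\L^{q_1}_x}$. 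One more Cauchy--Schwarz in $t$ then gives $\||\oa|^2\|_{\L^{\tilde r_1}_t\L^{\tilde q_1}_x}^{1/2}\|u\|_{\L^{r_1}_t\L^{q_1}_x}\|\nabla v\|_{\LL}$, and since $\|u\|_{\L^{r_1}_t\L^{q_1}_x} \le \|u\|_{\Deldot^{r_1,q_1}}$ and $\|\nabla v\|_{\LL} \le \|v\|_{\Deldot^{r_1,q_1}}$, this summand is controlled by $\||\oa|^2\|_{\L^{\tilde r_1}_t\L^{\tilde q_1}_x}^{1/2}\|u\|_{\Deldot^{r_1,q_1}}\|v\|_{\Deldot^{r_1,q_1}}$. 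The second and third terms proceed identically, producing $\||\ob|^2\|^{1/2}$ and $\|a\|$ factors respectively, and summing the three gives $|\Angle{\beta u}{v}| \le P_{\tilde r_1,\tilde q_1}\|u\|_{\Deldot^{r_1,q_1}}\|v\|_{\Deldot^{r_1,q_1}}$, which is the asserted bound; the finiteness of $P_{\tilde r_1,\tilde q_1}$ guarantees $\angle{\beta u(t)}{v(t)} \in \L^1_t$ so that \eqref{eq:Beta} makes sense.

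I do not expect any serious obstacle here: the whole lemma is a careful bookkeeping of Hölder exponents, and the only thing to get right is that the scaling relation \eqref{eq:associatedpair} is precisely what makes all the Hölder triples balanced (in $x$: exponents $2\tilde q_1, q_1, 2$; in $t$: the pairing $2\tilde r_1, r_1$ at the $\L^2_t$ level). The one mild subtlety worth flagging is that one should use $q_1, r_1 \ge 2$ (which is part of Definition~\ref{def:Pr1q1}) to be sure the conjugate exponents are $\le 2$ and all the intermediate $\L^p_x$, $\L^p_t$ spaces make sense; and that $\tilde q_1 = \infty$ or $\tilde r_1 = \infty$ is allowed, in which case $q_1 = 2$ or $r_1 = 2$ and the corresponding Hölder step degenerates to an ordinary Cauchy--Schwarz, which causes no trouble. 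Finally, to make \eqref{eq:beta} itself meaningful one needs $u(t), v(t)$ and $\nabla u(t),\nabla v(t)$ to be defined for a.e.\ $t$ with the stated integrability, which is exactly the content of $u,v \in \Deldot^{r_1,q_1}$ together with Fubini.
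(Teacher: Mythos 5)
Your proposal is correct and is essentially the paper's own argument: a three-fold H\"older inequality in $x$ with exponents $2\tilde q_1, q_1, 2$ (resp.\ $\tilde q_1, q_1, q_1$ for the zeroth-order term), followed by H\"older in $t$ using $\frac{1}{2\tilde r_1}+\frac{1}{r_1}+\frac12=1$ and $\frac{1}{\tilde r_1}+\frac{2}{r_1}=1$, which is exactly what the relation \eqref{eq:associatedpair} is designed for. Your remark that the symmetric bound $\|a(t)\|_{\L^{\tilde q_1}_x}\|u(t)\|_{\L^{q_1}_x}\|v(t)\|_{\L^{q_1}_x}$ is the natural output of the third pairing is well taken and causes no discrepancy in the final estimate.
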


\begin{proof}
Use H\"older inequalities in the $x$ and then $t$-variables, taking into account the  relations  
\begin{align*}
	\frac 1 {2\tilde r_{1}}+ \frac 1 {r_{1}}+ \frac 1 {2} = 1, &\quad 
	\frac 1 {2\tilde q_{1}}+ \frac 1 {q_{1}}+ \frac 1 {2}= 1,\\
	\frac 1 {\tilde r_{1}}+ \frac 2 { r_{1}}= 1, &\quad
	\frac 1 {\tilde q_{1}}+ \frac 2 {q_{1}}= 1. \qedhere
\end{align*}
\end{proof}

It is of course natural to relate the choice of pairs to Sobolev embeddings.

\begin{defn}
\label{def:compatiblepair}
A pair $(\tilde r_{1}, \tilde q_{1})$ is said  {compatible} for lower order coefficients if  $\frac 1 { \tilde r_{1}}+\frac {n}{2 \tilde q_{1}}= 1$ and $1< \tilde r_{1}, \tilde q_{1} \le \infty$. {In this case, $(r_{1},q_{1})$ given by \eqref{eq:associatedpair} is its {admissible conjugate} pair.}
\end{defn}
 
This terminology {is motivated by} the following principle.

\begin{lem}
\label{lem:compadm} A pair $(\tilde r_{1}, \tilde q_{1})$ is compatible for lower order coefficients if and only if $(r_{1},q_{1})$ is admissible.
\end{lem}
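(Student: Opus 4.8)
The plan is to translate the two defining relations through the substitution \eqref{eq:associatedpair} and check they become equivalent. Recall a pair $(\tilde r_1,\tilde q_1)$ is compatible when $\frac{1}{\tilde r_1}+\frac{n}{2\tilde q_1}=1$ with $1<\tilde r_1,\tilde q_1\le\infty$, while a pair $(r_1,q_1)$ is admissible when $\frac{1}{r_1}+\frac{n}{2q_1}=\frac{n}{4}$ with $2\le r_1,q_1<\infty$.

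First I would express the conjugate exponents. From $r_1=2(\tilde r_1)'$ we get $\frac{1}{(\tilde r_1)'}=\frac{2}{r_1}$, hence $\frac{1}{\tilde r_1}=1-\frac{2}{r_1}$; similarly $\frac{1}{\tilde q_1}=1-\frac{2}{q_1}$. Note that the correspondence $\tilde r_1\mapsto r_1=2(\tilde r_1)'$ is an order-reversing bijection from $(1,\infty]$ onto $[2,\infty)$, and likewise for the $q$'s, so the endpoint constraints $1<\tilde r_1,\tilde q_1\le\infty$ match exactly the endpoint constraints $2\le r_1,q_1<\infty$. It therefore remains only to verify that the scaling relations correspond.

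Next I would substitute into the compatibility relation:
\begin{align*}
\frac{1}{\tilde r_1}+\frac{n}{2\tilde q_1}
&= \Big(1-\frac{2}{r_1}\Big)+\frac{n}{2}\Big(1-\frac{2}{q_1}\Big)
= 1+\frac{n}{2}-\frac{2}{r_1}-\frac{n}{q_1}.
\end{align*}
Setting this equal to $1$ gives $\frac{2}{r_1}+\frac{n}{q_1}=\frac{n}{2}$, i.e.\ $\frac{1}{r_1}+\frac{n}{2q_1}=\frac{n}{4}$, which is precisely admissibility of $(r_1,q_1)$. Since every step is an equivalence, this proves both implications at once. There is no real obstacle here: the only point requiring a moment's care is matching the open/closed endpoints under conjugation, which I addressed above by observing that $q\mapsto 2q'$ carries $(1,\infty]$ bijectively onto $[2,\infty)$.
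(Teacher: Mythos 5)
Your proposal is correct and follows essentially the same route as the paper, which simply asserts the equivalence of the scaling relations and of the endpoint constraints; you merely carry out the substitution $\frac{1}{\tilde r_1}=1-\frac{2}{r_1}$, $\frac{1}{\tilde q_1}=1-\frac{2}{q_1}$ explicitly and note that $q\mapsto 2q'$ maps $(1,\infty]$ bijectively onto $[2,\infty)$. Nothing further is needed.
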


\begin{proof}
We can see that 
\begin{align*}
 \frac 1 {\tilde r_{1}}+\frac {n}{2\tilde q_{1}}= 1 \Longleftrightarrow   
 \frac 1 {r_{1}}+\frac {n}{2q_{1}} = \frac n 4, \ \ \\
 1<\tilde r_{1}, \tilde q_{1} \le \infty \Longleftrightarrow 2\le  r_{1}, q_{1}<\infty. &\qedhere
\end{align*}
\end{proof}

\begin{rem} The compatibility and admissibility conditions already appear in  Chapter 3 of \cite{LSU}. As in there (see p.~137), we include the case $\tilde r_{1}=\infty$, $\tilde q_{1}=\frac n 2$, $n\ge 3$,  but not the case $\tilde r_{1}=1$ as the variational space is not contained in  $ \L^\infty_{t}\L^2_{x}$.  
\end{rem}
  
We now introduce the  variational setup. We use the Hilbert space $\cVdot$ and its dual $\cVdot'$ for $\Angle{\cdot}{\cdot}$. Since $\cS$ is dense in $\cVdot$, this pairing is consistent with the sesquilinear pairing of tempered distributions and Schwartz functions. We have seen in Section~\ref{sec:preliminaries} that $\cVdot \subset \Deldot^{r,q} $ and $\L^{r'}_{t}\L^{q'}_{\vphantom{t} x} \subset \cVdot'$ if $(r,q)$ is admissible. For $u \in \cVdot$ and $v \in \L^{r'}_{t}\L^{q'}_{\vphantom{t} x}$ the pairing $\Angle{u}{v}$ is therefore the Lebesgue integral $\int_{\R^{n+1}}u(t,x)\overline v(t,x)\, \d x\d t$. This observation will be tacitly used throughout the section. 

\begin{prop}
\label{prop:Lt} 
Assume that $P_{\tilde r_{1}, \tilde q_{1}}<\infty$ for some pair $(\tilde r_{1}, \tilde q_{1})$  compatible for lower order coefficients.
Define the operator
\begin{align*}
\cH\colon \cVdot \to \cVdot',  \quad \cH u= \partial_{t}u+\Lt u
\end{align*}
through
\begin{align}
\label{eq:defcH}
\Angle {\cH u}v =  \Angle {\partial_{t}u} {v}  +
	\int_{\R} \angle{A(t)\nabla u(t)}{\nabla v(t)}+ \angle{\beta u(t)}{v(t)}\, \d t
\end{align}
for $u,v \in \cVdot$. In the same fashion, define the dual operator
\begin{align*}
	\cH^*\colon \cVdot \to \cVdot', \quad \cH^* \tilde u = -\partial_{t}\tilde u+\Ltstar   \tilde u.
\end{align*}
Then $\cH, \cH^*: \cVdot \to \cVdot'$ are well-defined, bounded and adjoint to one another.
\end{prop}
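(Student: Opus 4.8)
The plan is to verify each claimed property of $\cH$ separately, treating $\cH^*$ by the same argument with $A$ replaced by $A^*$, $\oa,\ob,a$ by their conjugates, and $\partial_t$ by $-\partial_t$.

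\textbf{Well-definedness and boundedness of $\cH$.} First I would check that the right-hand side of \eqref{eq:defcH} is a bounded sesquilinear form on $\cVdot\times\cVdot$. For the time-derivative term, for $u,v\in\cS$ Plancherel gives $\Angle{\partial_t u}{v}=\Angle{\i\tau\hat u}{\hat v}/(2\pi)^{n+1}$; writing $\i\tau = (\i\,\mathrm{sgn}\,\tau)|\tau|^{1/2}\cdot|\tau|^{1/2}$ and distributing one factor $|\tau|^{1/2}$ onto $\hat u$ and the other onto $\hat v$ yields $|\Angle{\partial_t u}{v}|\le \|\D_t^{1/2}u\|_{\LL}\|\D_t^{1/2}v\|_{\LL}\le\|u\|_{\cVdot}\|v\|_{\cVdot}$, and this extends to all of $\cVdot$ by density. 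For the leading term, \eqref{eq:Abdd} gives the bound $\Lambda\|\nabla u\|_{\LL}\|\nabla v\|_{\LL}\le\Lambda\|u\|_{\cVdot}\|v\|_{\cVdot}$. For the lower order term, by Lemma~\ref{lem:compadm} the compatible pair $(\tilde r_1,\tilde q_1)$ has admissible conjugate $(r_1,q_1)$, so by Lemma~\ref{ref:embedding} we have $\cVdot\hookrightarrow\Deldot^{r_1,q_1}$, and then Lemma~\ref{lem:beta} gives $|\Angle{\beta u}{v}|\le P_{\tilde r_1,\tilde q_1}\|u\|_{\Deldot^{r_1,q_1}}\|v\|_{\Deldot^{r_1,q_1}}\lesssim P_{\tilde r_1,\tilde q_1}\|u\|_{\cVdot}\|v\|_{\cVdot}$. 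Summing, the form is bounded with norm $\lesssim 1+\Lambda+P_{\tilde r_1,\tilde q_1}$; in particular each of the three integrals in \eqref{eq:defcH} is an absolutely convergent Lebesgue integral (the first after writing $\angle{A(t)\nabla u(t)}{\nabla v(t)}$, which is in $\L^1_t$ by \eqref{eq:Abdd}, and the third via the $\L^1_t$ statement in Lemma~\ref{lem:beta}). By the Lax–Milgram/Riesz representation principle, a bounded sesquilinear form on $\cVdot\times\cVdot$ defines a bounded operator $\cH\colon\cVdot\to\cVdot'$ via $\Angle{\cH u}{v}$ equal to that form, which is exactly \eqref{eq:defcH}; so $\cH$ is well-defined and bounded, and likewise $\cH^*$.

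\textbf{Adjointness.} It remains to check $\Angle{\cH u}{v}=\overline{\Angle{\cH^* v}{u}}$ for all $u,v\in\cVdot$. Since both sides are bounded sesquilinear in $(u,v)$ and $\cS(\R^{n+1})$ is dense in $\cVdot$, it suffices to verify the identity for $u,v\in\cS$. Term by term: for the time derivative, $\Angle{\partial_t u}{v}=-\Angle{u}{\partial_t v}=\overline{\Angle{-\partial_t v}{u}}$ by integration by parts (no boundary terms since we are on all of $\R$ and the functions are Schwartz, using that the pairing is sesquilinear so $\overline{\Angle{-\partial_t v}{u}}=\Angle{u}{-\partial_t v}$). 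For the leading term, $\int_\R\angle{A(t)\nabla u(t)}{\nabla v(t)}\,\d t = \int_\R\overline{\angle{A^*(t)\nabla v(t)}{\nabla u(t)}}\,\d t$ by the defining property of the adjoint matrix $A^*(t)$ pointwise in $(t,x)$. For the lower order term, expand $\angle{\beta u(t)}{v(t)}$ via \eqref{eq:beta} and integrate by parts in $x$ in the first summand $\angle{\oa(t)u(t)}{\nabla v(t)}$ to move the divergence onto the other factor (justified since after multiplying by the coefficients everything is in $\L^1_t\L^1_x$ by the Hölder bookkeeping in Lemma~\ref{lem:beta} and the test functions are Schwartz); matching the three resulting terms against the expansion of $\overline{\angle{\overline{\Ltstar}\text{-part}}}$ and using the explicit form \eqref{eq:Ltstar} of $\Ltstar$ gives $\int_\R\angle{\beta u(t)}{v(t)}\,\d t = \int_\R\overline{\angle{\beta^* v(t)}{u(t)}}\,\d t$, where $\beta^*$ is built from $\overline\ob,\overline\oa,\overline a$ as in \eqref{eq:Ltstar}. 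Adding the three identities yields $\Angle{\cH u}{v}=\overline{\Angle{\cH^* v}{u}}$ on $\cS$, hence on $\cVdot$ by density.

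\textbf{Main obstacle.} The only nontrivial point is the bookkeeping that makes all the integration-by-parts steps legitimate at the level of $\cS$ functions and the verification that the constant in the boundedness estimate for $\Angle{\partial_t u}{v}$ is indeed controlled by $\|u\|_{\cVdot}\|v\|_{\cVdot}$ — i.e. that splitting the symbol $\i\tau$ symmetrically as $|\tau|^{1/2}\cdot|\tau|^{1/2}$ (times a bounded factor) together with Plancherel really gives the $\cVdot$-norms and not something larger. Everything else is an assembly of the embedding lemma (Lemma~\ref{ref:embedding}), the pairing estimate (Lemma~\ref{lem:beta}), the boundedness \eqref{eq:Abdd}, and density of $\cS$; I expect no genuine difficulty, which is presumably why the authors phrased this as a proposition with a short proof.
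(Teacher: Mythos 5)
Your proposal is correct and follows essentially the same route as the paper: the time-derivative term is handled by Plancherel (the paper phrases this as $\partial_t$ mapping $\cVdot$ boundedly into $\cVdot'$, which is the same symmetric splitting of the symbol $\i\tau$), the elliptic part is bounded via \eqref{eq:Abdd} together with the embedding $\cVdot\hookrightarrow\Deldot^{r_1,q_1}$ from Lemmas~\ref{lem:compadm}, \ref{ref:embedding} and the pairing estimate of Lemma~\ref{lem:beta}, and adjointness comes from the antisymmetry of $\partial_t$ plus the symmetry of the form. One small remark: the integration by parts in $x$ you invoke for the term $\angle{\oa(t)u(t)}{\nabla v(t)}$ is neither needed nor justifiable (since $\oa$ is merely measurable, $\div(\oa u)$ has no pointwise meaning); because both $\angle{\beta u}{v}$ and the $\Ltstar$-pairing are already written in weak form, the identity $\int_{\R}\angle{\beta u(t)}{v(t)}\,\d t=\int_{\R}\overline{\angle{\beta^* v(t)}{u(t)}}\,\d t$ follows term by term from complex conjugation alone, which is what the paper's ``by inspection'' means.
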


\begin{proof}  For $u\in \cVdot$ we have  $(\partial_{t}u) \, \hat{}= |\tau|^{1/2} (i\tau |\tau|^{-1/2}\hat u)$ and $i\tau |\tau|^{-1/2}\hat u\in \LL$, so that $\partial_{t}u\in \cVdot'$ with $\|\partial_{t}u\|_{\cVdot'}\le \|u\|_{\cVdot}$. It follows that $\partial_{t}$, seen as an operator acting on tempered distributions in the two variables $(t,x)$, maps $\cVdot$ into $\cVdot'$.  

Next, for the admissible conjugate pair $(r_{1},q_{1})$ and  $u,v\in \Deldot^{ r_{1},  q_{1}}$, the pairing $\Angle {\Lt u} {v}$ is defined as 
\begin{equation}
\label{eq:defLt}
\Angle {\Lt u} {v} = \int_{\R} \angle{A(t)\nabla u(t)}{\nabla v(t)}+ \angle{\beta u(t)}{v(t)}\, \d t,
\end{equation}
so that by Lemma~\ref{lem:beta},  
\begin{align*}
 |\Angle {\Lt u} {v}| \le \|A\|_{\infty} \|\nabla u\|_{\LL}\|\nabla v\|_{\LL} + P_{ \tilde r_{1}, \tilde q_{1}} \|u\|_{\Deldot^{ r_{1}, q_{1}}}\|v\|_{\Deldot^{ r_{1}, q_{1}}}.
\end{align*}
By the embedding $\cVdot\hookrightarrow  \Deldot^{ r_{1}, q_{1}}$  for the admissible pair $(r_{1},q_{1})$,  we conclude that $\Angle {\Lt u} {v}$ is defined on $\cVdot\times \cVdot$ and  that with $C= C(n, r_{1},q_{1})$ we have
\begin{align*}
|\Angle {\Lt u} {v}| \le ( \|A\|_{\infty}+ C P_{ \tilde r_{1}, \tilde q_{1}}) \|u\|_{\cVdot}\|v\|_{\cVdot}. 
\end{align*}
Eventually, for $u,v\in \cVdot$, it follows using Fourier transform that
\begin{equation*}
\Angle {\partial_{t}u}v= -\overline{\Angle  {\partial_{t}v} u}
\end{equation*}
and by inspection that 
\begin{equation*}
\Angle {\Lt u} {v}= \overline{\Angle {\Ltstar   v} u} .
\end{equation*}
Hence, $\cH^*$ is the adjoint of $\cH$ and its boundedness follows.
\end{proof}

\begin{rem}
\label{rem:Delta} We shall often write $\Angle {\Lt u} {v}= \int_{\R} \angle {\Lt u(t)} {v(t)}\, \d t$ to mean \eqref{eq:defLt}. When $u,v\in \Deldot^{ r_{1}, q_{1}}$, we have also shown integrability in $t$ together with  the intermediate estimate
 $$|\Angle {\Lt u} {v}|
  \le (\|A\|_{\infty}  + P_{ \tilde r_{1}, \tilde q_{1}}) \|u\|_{\Deldot^{ r_{1},  q_{1}}}\|v\|_{\Deldot^{ r_{1}, q_{1}}}.
  $$
Hence, $\Lt u\in (\Deldot^{r_{1}, q_{1}})'$ with $\|\Lt u\|_{(\Deldot^{r_{1}, q_{1}})'}\le (\|A\|_{\infty}  + P_{ \tilde r_{1}, \tilde q_{1}}) \|u\|_{\Deldot^{ r_{1},  q_{1}}}$. This implies in particular that $\Lt u$ is defined as a distribution when $u\in \Deldot^{ r_{1}, q_{1}}$, and so is $\partial_{t}u + \Lt u$.
\end{rem}

This remark suggests the following notion of solution. {We try to be very explicit in this regard} in order not to confuse the reader by the versatile terminology of weak solution,
and also because we work on $\R^{n+1}$. 

\begin{defn}
\label{def:Delta-sol}
We say that $u$ is a $\Deldot^ {r_{1},q_{1}}$-solution of 
$ \partial_{t}u+\Lt u= f$ in $\R^{n+1}$ if $u\in \Deldot^{r_{1},q_{1}}$ and the equation is satisfied in the sense of distributions on $\R^{n+1}$, that is  for all $\tphi\in \cD(\R^{n+1})$,
\begin{align}
\label{eq:Deltasol}
-\Angle {u} {\partial_{t}\tphi}  +
\int_{\R} \angle{A(t)\nabla u(t)}{\nabla \tphi(t)}+ \angle{\beta u(t)}{\tphi(t)}\, \d t= \Angle {f}{\tphi},
\end{align}
where $\Angle {u} {\partial_{t}\tphi}= \iint_{\R^{n+1}} u(t,x)  \overline{{\partial_{t}\tphi}(t,x)} \, \d x\d t.$
\end{defn}
%Recall that the first term can be expressed as  $-  \iint_{\R^{n+1}} u(t,x)  \overline{{\partial_{t}\tphi}(t,x)} \, \d x\d t.$ 
There is no regularity in time attached to this definition.  For appropriate right hand side, we shall show that in fact a $\Deldot^{r_{1},q_{1}}$-solution is (up to a constant)  continuous  and bounded in time valued in $\L^2_{x}$, so that in the end we will be able to identify with weak solutions when considering the Cauchy problem, see Section  \ref{sec:CP}.

%Weak solutions for the Cauchy problem on $[0,T]\times \R^n$ are those  %functions in the class $\L^2(0,T; \H^1(\R^n)) \cap \L^{\infty}(0,T; \L^{2}(\R^n))$, 
%which embeds  into $ \L^{r_{1}}(0,T; \L^{\vphantom{r_{1}} q_{1}}(\R^n))$, 
%see Proposition~\ref{prop:GN} for a quick proof. We shall show that in %fact a $\Deldot^{r_{1},q_{1}}$-solution is continuous in time valued in $\L^2_{x}$, 
%so that in the end we will be able to identify solutions from both methods. 

\subsection{Main regularity estimates}
\label{sec:mainregularityestimates}

Our approach builds on the results of the next two sections.  Note that the assumptions have a homogeneous flavor, which is necessary when the time interval is infinite.

We begin with results providing existence and uniqueness of specific solutions for the heat operator $\partial_{t}-\Delta$ in $\R^{n+1}$. (We could also use $\partial_{t}+\Delta$ as the choice of forward or backward time is irrelevant when $t\in \R$.) This relies on Fourier transform arguments with tempered distributions.

\begin{lem}
\label{lem:uniqueness} Let $u\in \cD'(\R^{n+1})$ be a solution of $\partial_{t}u - \Delta u=0$ in $\cD'(\R^{n+1})$ with $\nabla u\in \LL$.  Then $u$ is constant.  
 
\end{lem}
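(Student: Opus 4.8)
The plan is to pass to the Fourier transform on $\R^{n+1}$ and analyze the support of $\hat{u}$. Since $u \in \cD'(\R^{n+1})$ with $\nabla u \in \LL$, and since $u$ a priori is only a distribution rather than a tempered one, the first task is to promote $u$ to a tempered distribution modulo constants. Here I would argue as in the discussion of $\cVdot$: the condition $\nabla u \in \L^2_t\L^2_x$ means each $\partial_{x_j} u$ is a tempered distribution, so $u$ itself differs from a tempered distribution by at most a distribution whose gradient vanishes, i.e.\ (by connectedness of $\R^{n+1}$) by a function of $t$ alone. Actually the cleanest route: mollify $u$ in the $x$-variable only to get $u_\eps = u * \rho_\eps$, which is smooth in $x$, still solves the heat equation, and has $\nabla u_\eps \in \LL$ with uniform bounds; then handle $u_\eps$ and pass to the limit. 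Alternatively, since the statement is qualitative, I would simply reduce to the case where $u$ is already tempered by subtracting a suitable primitive.

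Once $u$ (or its representative) is tempered, the equation $\partial_t u - \Delta u = 0$ gives $(i\tau + |\xi|^2)\hat{u} = 0$ in $\cS'(\R^{n+1})$, so $\hat{u}$ is supported on the set $\{i\tau + |\xi|^2 = 0\} = \{\tau = 0,\ \xi = 0\} = \{(0,0)\}$. A distribution supported at the origin is a finite linear combination of derivatives of the Dirac mass $\delta_{(0,0)}$. The next step is to use the extra information $\nabla u \in \LL$: transferring to Fourier side, $\xi \hat{u} \in \L^2(\R^{n+1})$, in particular $\xi\hat u$ is an $\L^2$ function, hence cannot be a nonzero combination of derivatives of $\delta_{(0,0)}$ — so $\xi \hat u = 0$, which forces $\hat u$ to be supported at $\tau = 0$ only as far as the $\xi$-dependence goes; more precisely $\hat u$ has the form $\delta_0(\xi) \otimes S(\tau)$ for some distribution $S$ supported at $\tau = 0$. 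But any such $u$ is a polynomial in $t$ times nothing in $x$, i.e.\ a polynomial in $t$ alone; plugging back into $\partial_t u - \Delta u = 0$ forces $\partial_t u = 0$, so $u$ is constant.

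I would organize the write-up as: (1) reduce to $u \in \cS'(\R^{n+1})$ by an $x$-mollification or primitive argument; (2) Fourier transform the PDE to localize $\mathrm{supp}\,\hat u \subseteq \{(0,0)\}$; (3) invoke the structure theorem for distributions supported at a point to write $\hat u = \sum_{|\alpha| \le N} c_\alpha \partial^\alpha \delta_{(0,0)}$; (4) use $\xi \hat u \in \L^2$ (equivalently $\nabla u \in \LL$) to kill all terms with nontrivial $\xi$-derivative, leaving $u$ a polynomial in $t$; (5) re-insert into the equation to conclude $u$ is constant.

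The main obstacle I anticipate is step (1): the hypothesis places $u$ only in $\cD'(\R^{n+1})$, not $\cS'(\R^{n+1})$, so one cannot take the Fourier transform directly. One must exploit $\nabla u \in \LL$ to control the growth of $u$. The cleanest fix is probably to mollify in $x$ alone (which preserves the heat equation since the equation has constant coefficients) to reduce to $u$ smooth and polynomially bounded in $x$, combined with the observation that a distribution in $t$ whose $\partial_x$-mollifications are all constant in $x$ must itself be $x$-independent; then one is reduced to a one-variable problem $\partial_t u = 0$. Handling this reduction carefully, rather than the Fourier computation itself, is where the real work lies, though it is standard.
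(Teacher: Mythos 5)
Your Fourier-side computation (steps (2)--(5)) is sound once $u$ is known to be tempered, but the reduction in step (1) is a genuine gap, and it is precisely the point the statement is designed around: the hypothesis only gives $u\in\cD'(\R^{n+1})$, and none of your three proposed routes is actually carried out. The claim that $\nabla u\in\LL$ implies ``$u$ differs from a tempered distribution by a function of $t$ alone'' requires constructing a tempered $v$ with $\nabla_x v=\nabla_x u$; the natural construction $\hat v=\sum_j(-i\xi_j/|\xi|^2)\widehat{\partial_{x_j}u}$ runs into a local integrability problem at $\xi=0$ when $n\le 2$, so this needs a separate argument that you do not supply. The ``cleanest route'' of mollifying in $x$ does not help at all: $u\ast_x\rho_\eps$ is no more tempered than $u$ (nothing controls growth in $t$, nor global growth in $x$), so you face the same obstruction for $u_\eps$, and the closing remark about ``$\partial_x$-mollifications constant in $x$'' does not describe a proof. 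You could partially rescue the plan by noting that the equation itself gives $\partial_t u=\Delta u=\div(\nabla u)\in\cS'(\R^{n+1})$, so that \emph{all} first-order partials of $u$ are tempered; but then you still need the (true, yet nontrivial and unproven in your write-up) fact that a distribution all of whose first-order derivatives are tempered is itself tempered, which again amounts to constructing tempered primitives.

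The paper avoids this issue entirely by never taking the Fourier transform of $u$: it applies your support argument to $u_j:=\partial_{x_j}u$, which already lies in $\LL\subset\cS'(\R^{n+1})$ and solves the same constant-coefficient equation. Testing against the space $\cS_0$ of Schwartz functions whose Fourier transforms vanish to infinite order at the origin, and using that $\partial_t+\Delta$ is an automorphism of $\cS_0$, one finds $\hat u_j$ is supported at the origin, so $u_j$ is a polynomial, hence $u_j=0$ since it is in $\L^2$. Then $\nabla u=0$, so $\Delta u=0$, the equation gives $\partial_t u=0$, and $u$ is constant. I recommend you restructure your argument this way: run your steps (2)--(4) on the derivatives $\partial_{x_j}u$ rather than on $u$, which removes the temperedness reduction altogether.
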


\begin{proof}  
For each $j\in \{1,\ldots, n\}$ we have $u_{j} \coloneqq \partial_{x_{j}}u\in \LL\subset  \cS'(\R^{n+1})$ and $\partial_{t}u_{j}-\Delta u_{j}=0$ in $\cD'(\R^{n+1})$, hence also    in $\cS'(\R^{n+1})$. This implies that  $\Angle{u_{j}}{\partial_{t}\phi+\Delta\phi}=0$ for all  $\phi \in  
 \cS_{0}(\R^{n+1})$, the space of Schwartz functions whose Fourier transforms vanish to infinite order at 0. Now, $\partial_{t}+\Delta$, whose  Fourier symbol is the polynomial $i\tau-|\xi|^2$, is an automorphism of $ 
 \cS_{0}(\R^{n+1})$. We obtain $\Angle{u_{j}}{\phi}=0$ for all  $\phi \in  
 \cS_{0}(\R^{n+1})$, so that  $u_{j}$ is a polynomial. As $u_{j}\in \LL$, this polynomial vanishes. We have shown that $\nabla u=0$, hence $\Delta u=0$, and $\partial_{t}u=0$ follows from the equation. We conclude that $u$ is constant.
 \end{proof}

\begin{prop}
\label{prop:maxregDeldot} Let $(r,q)$ be an admissible pair and $w\in \L^2_{t}\Hdot^{-1}_{x}+ \L^{r'}_{t}\L^{q'}_{\vphantom{t} x}$. Then there exists   $v\in \C_{0}(\L^2_{x})\cap  \cVdot$, unique in the {class of distributions} with $\nabla v\in \LL$, solution of $\partial_{t}v-\Delta v=w$ 
in {$\cD'(\R^{n+1})$}. Moreover, 
$$
\sup_{t\in \R} \|v(t)\|_{\L^2_{x}}+ \|v\|_{\cVdot} \le c(n,q,r)\|w\|_{\L^2_{t}\Hdot^{-1}_{x}+ \L^{r'}_{t}\L^{q'}_{\vphantom{t} x}}.
$$
\end{prop}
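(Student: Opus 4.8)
The plan is to solve the equation via the Fourier transform and then establish the claimed bounds. First I would treat the leading case $w \in \L^2_t\Hdot^{-1}_x$, i.e.\ $\hat w = |\xi| g_1$ with $g_1 \in \LL$. The symbol of $\partial_t - \Delta$ is $i\tau + |\xi|^2$, which never vanishes except at the origin, and moreover $|i\tau + |\xi|^2| \ge c(|\tau| + |\xi|^2)$ for a universal $c>0$. So I would simply \emph{define} $v$ by $\hat v = (i\tau + |\xi|^2)^{-1}\hat w$. Writing $\hat v = (|\xi|^2+|\tau|)^{-1/2} \cdot \big[(|\xi|^2+|\tau|)^{1/2}(i\tau+|\xi|^2)^{-1}|\xi|\, g_1\big]$, the bracketed factor is pointwise bounded by $g_1 \in \LL$ up to a constant, so $v \in \cVdot$ with $\|v\|_{\cVdot} \le c\|w\|_{\L^2_t\Hdot^{-1}_x}$, and clearly $\partial_t v - \Delta v = w$ in $\cS'$, hence in $\cD'$. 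The analogous computation handles $\hat w = |\tau|^{1/2} g_2$, and since a general $w$ in the sum space admits a decomposition $\hat w = |\xi| g_1 + |\tau|^{1/2} g_2$ with $\|g_1\|_{\LL}+\|g_2\|_{\LL} \lesssim \|w\|_{\L^2_t\Hdot^{-1}_x + \Hdot^{-1/2}_t\L^2_x}$ (and $\L^{r'}_t\L^{q'}_x \subset \Hdot^{-1/2}_t\L^2_x$ up to the embedding constant via Lemma~\ref{ref:embedding}'s dual, or directly via $\cVdot' = \L^2_t\Hdot^{-1}_x + \Hdot^{-1/2}_t\L^2_x$), linearity gives the full $\cVdot$-bound $\|v\|_{\cVdot}\le c(n,q,r)\|w\|_{\L^2_t\Hdot^{-1}_x + \L^{r'}_t\L^{q'}_x}$. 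Uniqueness in the class of distributions with $\nabla v \in \LL$ is exactly Lemma~\ref{lem:uniqueness}: the difference of two such solutions solves the homogeneous heat equation with $\L^2$ gradient, hence is constant, and a constant with $\nabla(\cdot)=0$ in $\cVdot$ is $0$; one must be slightly careful that ``unique in the class of distributions with $\nabla v\in\LL$'' is meant modulo constants, or rather that the $\cVdot$ normalization pins down the constant to be $0$.

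The remaining and genuinely substantive point is the pointwise-in-time bound $\sup_{t}\|v(t)\|_{\L^2_x} \le c\,\|w\|$ together with $v \in \C_0(\L^2_x)$. Here I would \emph{not} try to prove continuity in time by hand for this specific $v$; instead I expect the paper has, or will have, a general lemma (the ``improvement of Lions' embedding theorem'' advertised in the introduction, Lemma~\ref{lem:maxreg}) asserting that any $\Deldot^{r,q}$-solution of $\partial_t u - \Delta u = w$ with $w \in \L^2_t\Hdot^{-1}_x + \L^{r'}_t\L^{q'}_x$ lies in $\C_0(\L^2_x)$ with the quantitative bound $\sup_t\|u(t)\|_{\L^2_x} \lesssim \|w\|$. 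Given such a statement, the proof is: we have produced $v \in \cVdot \subset \Deldot^{r,q}$ solving the equation, apply the embedding lemma to $v$, and combine with the $\cVdot$-bound already obtained. This is the step I expect to be the main obstacle in the sense that all the real work is hidden there — establishing $\L^2_x$-continuity up to $t=\pm\infty$ for solutions that are only \emph{a priori} in $\L^2_t\Hdot^1_x$ (no \emph{a priori} $\L^\infty_t\L^2_x$ control) is precisely the novelty the authors flag, and it presumably rests on testing the equation against $v$ itself on truncated time intervals, controlling the resulting energy identity, and passing to the limit, or on a Fourier/Littlewood--Paley argument exploiting the parabolic scaling.

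If Lemma~\ref{lem:maxreg} is not yet available at this point in the text, I would instead argue directly: split $w = w_1 + w_2$ as above, solve each piece, and for each piece estimate $v(t)$ in $\L^2_x$. For the piece with $\hat{w}_1 = |\xi| g_1$ one writes $v_1(t) = \int_{-\infty}^{\infty}$ of the heat semigroup applied to a source — formally $v_1(t) = \int_{s<t} \e^{(t-s)\Delta}(\div \text{-type source})\,\d s$ after identifying $w_1 = \div F$ with $F \in \LL$ — and estimates $\|v_1(t)\|_{\L^2_x}$ by a Schur-type / Young's inequality argument using $\|\e^{\sigma\Delta}\nabla\|_{\L^2\to\L^2} \lesssim \sigma^{-1/2}$, which gives $\|v_1(t)\|_{\L^2_x} \lesssim \|F\|_{\L^2_s\L^2_x}$ with the half-order kernel $\sigma^{-1/2}$ being locally integrable in a way compatible with the $\L^2_s$ norm via Cauchy--Schwarz on dyadic time shells; for $w_2 \in \L^{r'}_t\L^{q'}_x$ one similarly uses the $\L^{q'}\to\L^2$ smoothing of the heat semigroup with the admissibility relation $\tfrac1{r}+\tfrac{n}{2q}=\tfrac n4$ ensuring the time kernel is exactly borderline integrable against $\L^{r'}_t$. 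Continuity in time then follows by density (the bound is uniform, Schwartz data give continuous $v$, and $\C_0(\L^2_x)$ is closed under the $\sup_t\|\cdot\|_{\L^2_x}$ norm), and the decay to $0$ as $|t|\to\infty$ comes from approximating $w$ in the source spaces by compactly supported (in $t$) data. I would expect the paper to prefer the clean route through Lemma~\ref{lem:maxreg}, using the present proposition mainly to record the existence statement and the $\cVdot$-estimate, deferring the $\C_0(\L^2_x)$ part to the general regularity machinery.
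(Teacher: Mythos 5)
Your primary route is the paper's own: uniqueness is exactly Lemma~\ref{lem:uniqueness}, and the paper obtains existence, the bound $\sup_t\|v(t)\|_{\L^2_x}$ and membership in $\C_0(\L^2_x)$ precisely by the machinery you anticipate, namely the embedding $\L^{r'}_{t}\L^{q'}_{\vphantom{t} x}\hookrightarrow \Htheta$ with $\theta=1-\frac{2}{r}$ (Lemma~\ref{lem:H-theta}) followed by Lemma~\ref{lem:maxreg}; your Fourier-multiplier construction of $v$ reproduces Step~4 of that lemma and correctly gives the $\cVdot$ estimate. So as a proof of the proposition itself (a reduction to those two lemmas), your main path coincides with the paper's.

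Two caveats, because the parts of your proposal that go beyond this reduction would not survive. First, the decomposition $\hat w=|\xi|g_{1}+|\tau|^{1/2}g_{2}$, i.e.\ viewing $w$ only as an element of $\cVdot'=\L^2_{t}\Hdot^{-1}_{x}+\Hdot^{-1/2}_{t}\L^2_{x}$, suffices for the $\cVdot$ bound but is useless for the $\L^\infty_{t}\L^2_{x}$ bound: the case $\theta=1$ is excluded in Lemma~\ref{lem:maxreg} (the constant $c(\theta)$ blows up, and Remark~\ref{rem:mainreg2} states the result fails at $\theta=1$), so a source merely in $\Hdot^{-1/2}_{t}\L^2_{x}$ does not yield bounded-in-time $\L^2_{x}$ solutions. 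This is exactly why the paper needs the sharper embedding of Lemma~\ref{lem:H-theta} into $\Htheta$ with $\theta<1$ rather than plain duality with $\cVdot$. Second, your fallback ``direct'' argument via absolute-value kernel bounds fails at both endpoints: for $w=\div F$ the bound $\|\e^{\sigma\Delta}\nabla\|_{\L^2\to\L^2}\lesssim\sigma^{-1/2}$ gives the kernel $(t-s)^{-1/2}$, and the dyadic-shell estimate you sketch produces $\sum_{k}\|F\|_{\L^2(\mathrm{shell}_k)}$, an $\ell^1$ sum of $\L^2$ pieces (a Besov-type quantity) that is not controlled by $\|F\|_{\LL}$; for $g\in\L^{r'}_{t}\L^{q'}_{x}$ the time kernel $(t-s)^{-1/r}$ sits exactly at the non-integrable borderline where H\"older/Young, and even the weak-type endpoint of Hardy--Littlewood--Sobolev into $\L^\infty_t$, fail. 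The $\L^\infty_{t}\L^2_{x}$ estimate genuinely requires cancellation, and the paper gets it in Step~1 of Lemma~\ref{lem:maxreg} by dualizing against the backward heat extension $h(t,x)=1_{(-\infty,0)}(t)(\e^{-t\Delta}\varphi)(x)$ and computing via Plancherel, where the finiteness of $\int_{-\infty}^{\infty}|\sigma|^{\theta}(1+\sigma^{2})^{-1}\,\d\sigma$ is exactly the quantitative expression of $\theta<1$. Keep the clean route and drop the fallback.
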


Uniqueness is provided by Lemma~\ref{lem:uniqueness}. The main work is to produce this solution.  Using $\R$ as the time interval  allows us to use embeddings for homogeneous Sobolev space on $\R$. For $\theta\in [0,1)$, we introduce the space $$\Htheta \coloneqq \Big\{\D_{t}^{\theta/2}(-\Delta)^{(1-\theta) /2} g \, : \,  g\in \LL\Big\}$$ equipped with the norm  $\|w\|_{\Htheta} \coloneqq \|g\|_{\LL}$. We recall that $\D_{t}^\alpha$ is the Fourier multiplier with symbol $|\tau|^\alpha$. For $\theta=0$, we find 
$\L^2_{t}\Hdot^{-1}_{x}$. Elements in $\Htheta$ are tempered distributions with locally square integrable Fourier transforms.
    
\begin{lem}
\label{lem:H-theta} 
If $(r,q)$ is an admissible pair, then
$\L^{r'}_{t}\L^{q'}_{\vphantom{t} x} \hookrightarrow \Htheta$ for $\theta=1-\frac 2 r$.
\end{lem}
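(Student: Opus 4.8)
The plan is to prove $\L^{r'}_{t}\L^{q'}_{\vphantom{t} x} \hookrightarrow \Htheta$ by duality, identifying $\Htheta$ as the dual space of its natural ``predual'' and matching it against the already-established embedding $\Deldot^{r,q}\hookleftarrow\cVdot$. Concretely, $\Htheta = \{\D_t^{\theta/2}(-\Delta)^{(1-\theta)/2} g : g\in\LL\}$ has, via Plancherel, the Fourier-side description $\{w\in\cS' : (|\tau|^\theta|\xi|^{2(1-\theta)})^{-1/2}\hat w \in \LL\}$, so its dual with respect to $\Angle{\cdot}{\cdot}$ is the space $\Hdot^{\theta/2}_t\Hdot^{1-\theta}_x$ of distributions $\phi$ with $\D_t^{\theta/2}(-\Delta)^{(1-\theta)/2}\phi\in\LL$ and norm $\|\D_t^{\theta/2}(-\Delta)^{(1-\theta)/2}\phi\|_{\LL}$. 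Thus, by the duality characterization of embeddings, it suffices to show the dual inclusion $\Hdot^{\theta/2}_t\Hdot^{1-\theta}_x \hookrightarrow \L^{r}_{t}\L^{q}_{\vphantom{t} x}$ with $\theta=1-\tfrac 2 r$.

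The second step is exactly the chain of Sobolev embeddings already carried out inside the proof of Lemma~\ref{ref:embedding}. There, for $\varphi\in\cS(\R^{n+1})$ and the admissible pair $(r,q)$ with $\theta\in[0,1)$ determined by $\tfrac 12-\tfrac{1-\theta}{n}=\tfrac1q$ and $\tfrac\theta2-\tfrac12=-\tfrac1r$ — equivalently $\theta = 1-\tfrac2r$ — inequality \eqref{eq:embed} reads
\begin{equation*}
\|\varphi\|_{\L^{r}_{t}\L^{q}_{\vphantom{t} x}}\le c(n,q)\,c(1,r)\,\|\D_{t}^{\theta/2}(-\Delta)^{(1-\theta)/2}\varphi\|_{\LL} = c(n,q)\,c(1,r)\,\|\varphi\|_{\Hdot^{\theta/2}_t\Hdot^{1-\theta}_x}.
\end{equation*}
Since $\cS(\R^{n+1})$ is dense in $\Hdot^{\theta/2}_t\Hdot^{1-\theta}_x$ (its elements are of the form $\D_t^{\theta/2}(-\Delta)^{(1-\theta)/2}g$ with $g\in\LL$, and Schwartz functions are dense in $\LL$, while the Fourier multiplier $\D_t^{\theta/2}(-\Delta)^{(1-\theta)/2}$ is an isometry onto its image), this extends to all of $\Hdot^{\theta/2}_t\Hdot^{1-\theta}_x$, giving the desired dual inclusion with explicit constant.

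Finally I would unwind the duality to conclude. Given $f\in\L^{r'}_{t}\L^{q'}_{\vphantom{t} x}$, the linear functional $\phi\mapsto\Angle{\phi}{f}$ on $\cS(\R^{n+1})$ satisfies $|\Angle{\phi}{f}|\le\|\phi\|_{\L^{r}_{t}\L^{q}_{\vphantom{t} x}}\|f\|_{\L^{r'}_{t}\L^{q'}_{\vphantom{t} x}}\le c\,\|\phi\|_{\Hdot^{\theta/2}_t\Hdot^{1-\theta}_x}\|f\|_{\L^{r'}_{t}\L^{q'}_{\vphantom{t} x}}$ by the previous step, hence extends to a bounded conjugate-linear functional on $\Hdot^{\theta/2}_t\Hdot^{1-\theta}_x$; by the Riesz/Plancherel identification this functional is represented by a unique element of $\Htheta$, which on the Fourier side must coincide with $f$ itself, yielding $f\in\Htheta$ with $\|f\|_{\Htheta}\le c\,\|f\|_{\L^{r'}_{t}\L^{q'}_{\vphantom{t} x}}$. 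The only mild subtlety — and the step I would be most careful about — is keeping the homogeneous spaces well-defined as honest subspaces of $\cS'(\R^{n+1})$ rather than quotients modulo polynomials; this is handled exactly as in the remark following the definition of $\cVdot$, using that the multiplier $(|\tau|^\theta|\xi|^{2(1-\theta)})^{-1/2}$ is locally integrable for $\theta\in[0,1)$ so that every $g\in\LL$ produces a genuine tempered distribution, and no nonzero polynomial lies in $\Htheta$. Alternatively, and perhaps more cleanly, one can bypass duality entirely: write $f = \D_t^{\theta/2}(-\Delta)^{(1-\theta)/2} g$ with $g = \D_t^{-\theta/2}(-\Delta)^{-(1-\theta)/2} f$, note $g$ is a well-defined tempered distribution since the multiplier is locally integrable, and estimate $\|g\|_{\LL}$ directly using that $\D_t^{-\theta/2}(-\Delta)^{-(1-\theta)/2}$ maps $\L^{r'}_t\L^{q'}_x$ to $\LL$ — which is the statement of the dual embedding above applied in reverse — thereby obtaining the result in one line once the dual Sobolev embedding is recorded.
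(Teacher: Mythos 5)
Your proposal is correct and is essentially the paper's own argument: the paper likewise dualizes the embedding \eqref{eq:embed} (i.e.\ $\|\varphi\|_{\L^{r}_{t}\L^{q}_{\vphantom{t} x}}\lesssim\|\D_{t}^{\theta/2}(-\Delta)^{(1-\theta)/2}\varphi\|_{\LL}$) against the dense subspace $G=\{\D_{t}^{\theta/2}(-\Delta)^{(1-\theta)/2}\varphi:\varphi\in\cS\}$ of $\Htheta$ and concludes with the Riesz representation theorem in $\LL$, exactly your duality-plus-Plancherel identification. The only points to tighten are the density parenthetical (what is needed is density of $M_{\theta}(\cS)$ in $\LL$, not of $\cS$ itself) and the final ``one-line'' alternative, where defining $M_{\theta}^{-1}f$ directly for general $f\in\L^{r'}_{t}\L^{q'}_{\vphantom{t} x}$ is not immediate since $\hat f$ is only a tempered distribution, so the duality route you give first is the safe one.
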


\begin{proof}   
Set
\begin{align*}
	M_{\theta} \coloneqq \D_{t}^{\theta/2}(-\Delta)^{(1-\theta) /2} \quad \text{and} \quad G\coloneqq\{M_{\theta}\varphi\, : \, \varphi\in  \cS(\R^{n+1})\}.
\end{align*} 
As $\cS(\R^{n+1})$ is dense in $\LL$,  we see that $G$ is a dense subspace of $\Htheta$. For any $v\in \L^{r'}_{t}\L^{q'}_{\vphantom{t} x}$ and $g\in G$, we get using \eqref{eq:embed},
$$
|\Angle v {M_{\theta}^{-1}g}| \le \|v\|_{\L^{r'}_{t}\L^{q'}_{\vphantom{t} x}}\|M_{\theta}^{-1}g\|_{\L^{r}_{t}\L^{q}_{\vphantom{t} x}} \le  c\|v\|_{\L^{r'}_{t}\L^{q'}_{\vphantom{t} x}}\|g\|_{\LL}.
$$
By density of $G$ and the Riesz representation theorem there exists a unique $w\in \LL$ with 
$\|w\|_{\LL}\le c\|v\|_{\L^{r'}_{t}\L^{q'}_{\vphantom{t} x}}$ such that 
$\Angle v {M_{\theta}^{-1}g}=\Angle w {g}$ for all $g\in G$,  and we can see that $v=M_{\theta}w$ in $\cS'(\R^{n+1})$. This concludes the proof. 
\end{proof}

Armed with this embedding, it suffices to prove the following stronger statement in purely $\L^2$-based mixed Sobolev spaces.

\begin{lem}
\label{lem:maxreg} 
Let $\theta\in [0,1)$ and  $w\in \Htheta$. Then there exists $v\in \C_{0}(\L^2_{x})\cap  \cVdot$, solution of $\partial_{t}v-\Delta v=w$ 
in the sense of tempered distributions in $\R^{n+1}$, with
$$
\sup_{t\in \R} \|v(t)\|_{\L^2_{x}}+ \|v\|_{\cVdot} \le c(\theta)\|w\|_{\Htheta}.
$$
\end{lem}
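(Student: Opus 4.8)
The plan is to construct $v$ explicitly on the Fourier side and then upgrade the regularity. Write $w = M_\theta g = \D_t^{\theta/2}(-\Delta)^{(1-\theta)/2} g$ with $g \in \LL$, so that $\hat w(\tau,\xi) = |\tau|^{\theta/2}|\xi|^{1-\theta}\hat g(\tau,\xi)$. The parabolic symbol of $\partial_t - \Delta$ is $\i\tau + |\xi|^2$, which never vanishes except at the origin, so we simply set
$$
\hat v(\tau,\xi) \coloneqq \frac{\hat w(\tau,\xi)}{\i\tau + |\xi|^2} = \frac{|\tau|^{\theta/2}|\xi|^{1-\theta}}{\i\tau+|\xi|^2}\,\hat g(\tau,\xi).
$$
First I would check this defines a tempered distribution and lies in $\cVdot$: since $|\i\tau+|\xi|^2| \gtrsim |\tau| + |\xi|^2 \geq (|\xi|^2+|\tau|)^{1/2}(|\xi|^2|\tau|^{0} )^{\dots}$, more precisely $|\tau|^{\theta/2}|\xi|^{1-\theta} \leq (|\xi|^2+|\tau|)^{1/2}$ by the convexity inequality already used in \eqref{eq:convexity}, we get
$$
(|\xi|^2+|\tau|)^{1/2}|\hat v| = \frac{(|\xi|^2+|\tau|)^{1/2}|\tau|^{\theta/2}|\xi|^{1-\theta}}{|\i\tau+|\xi|^2|}|\hat g| \leq \frac{|\xi|^2+|\tau|}{|\i\tau+|\xi|^2|}|\hat g| \leq C|\hat g|,
$$
which by Plancherel shows $v \in \cVdot$ with $\|v\|_{\cVdot} \leq c(\theta)\|w\|_{\Htheta}$, and that $\partial_t v - \Delta v = w$ holds in $\cS'(\R^{n+1})$ by construction. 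Uniqueness in the class of distributions with $\nabla v \in \LL$ is exactly Lemma~\ref{lem:uniqueness} applied to the difference of two solutions.

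The remaining, and main, point is the continuity statement $v \in \C_0(\L^2_x)$ with $\sup_t \|v(t)\|_{\L^2_x} \leq c(\theta)\|w\|_{\Htheta}$. The natural route is to show $t \mapsto v(t)$, a priori only an $\L^2_x$-valued tempered distribution in $t$, is represented by a bounded continuous function vanishing at $\pm\infty$. I would freeze the spatial frequency: for fixed $\xi$, the function $\tau \mapsto \hat v(\tau,\xi)$ has partial Fourier transform in $t$ that one can compute, or better, estimate $\|v(t)\|_{\L^2_x}^2 = (2\pi)^{-1}\int_{\R^n} |\mathcal F_x v(t,\xi)|^2\,\d\xi$ where $\mathcal F_x v(t,\xi) = (2\pi)^{-1}\int_\R \hat v(\tau,\xi)\e^{\i t\tau}\,\d\tau$. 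The key pointwise-in-$\xi$ estimate is a one-dimensional one: for each fixed $\xi$, bound $\sup_t |\mathcal F_x v(t,\xi)|$ by something controlled, after integrating in $\xi$, by $\|g\|_{\LL}^2$. Concretely, $|\mathcal F_x v(t,\xi)| \leq (2\pi)^{-1}\int_\R \frac{|\tau|^{\theta/2}|\xi|^{1-\theta}}{|\i\tau+|\xi|^2|}|\mathcal F_x g(\tau,\xi)|\,\d\tau$, and by Cauchy--Schwarz in $\tau$ this is at most $(2\pi)^{-1}\big(\int_\R \frac{|\tau|^{\theta}|\xi|^{2(1-\theta)}}{\tau^2+|\xi|^4}\,\d\tau\big)^{1/2}\|\mathcal F_x g(\cdot,\xi)\|_{\L^2_\tau}$; substituting $\tau = |\xi|^2 s$ shows the $\tau$-integral equals $|\xi|^{-1}\int_\R \frac{|s|^\theta}{s^2+1}\,\d s$, a finite constant depending only on $\theta \in [0,1)$ times $|\xi|^{-1}$. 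This loses a factor $|\xi|^{-1}$, which is not integrable in $\xi \in \R^n$, so a cruder split is needed.

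The hard part will be handling that borderline: I expect the clean argument is to not use Cauchy--Schwarz globally but to exploit that $v$ solves the heat equation, writing $v = v_+ + v_-$ where $v_\pm$ corresponds to $\mathcal F_x v$ extended causally/anticausally. Better: fix $\xi$, and observe $\psi(t) \coloneqq \mathcal F_x v(t,\xi)$ solves the ODE $\psi' + |\xi|^2\psi = \mathcal F_x w(t,\xi)$ on $\R$; the unique solution in $\cS'(\R)$ with appropriate decay is $\psi(t) = \int_{-\infty}^t \e^{-|\xi|^2(t-s)}\mathcal F_x w(s,\xi)\,\d s$, so $\|\psi\|_{\L^\infty_t} \leq \|\e^{-|\xi|^2\cdot}1_{\{\cdot>0\}}\|_{\L^{(\tilde\theta)'}_t}\|\mathcal F_x w(\cdot,\xi)\|_{\L^{\tilde\theta}_t}$ for a suitable $\tilde\theta$; one then needs $\mathcal F_x w(\cdot,\xi) \in \L^{\tilde\theta}_t$ with a gain, which comes from $\hat w = |\tau|^{\theta/2}|\xi|^{1-\theta}\hat g$ and the one-dimensional Sobolev embedding $\Hdot^{\theta/2}(\R) \hookrightarrow \L^{\tilde\theta}(\R)$ with $\frac 12 - \frac\theta 2 = \frac 1{\tilde\theta}$, i.e. $\tilde\theta = \frac 2{1-\theta}$ — exactly the exponent appearing in Lemma~\ref{lem:H-theta}. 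Tracking the $|\xi|$-powers through this (the $|\xi|^{1-\theta}$ from the symbol, the $|\xi|^{-2/(\tilde\theta)'} = |\xi|^{-(1+\theta)}$ from the exponential norm, and the $|\xi|^{-\theta}$ one should reintroduce via the Sobolev constant scaling) is designed to make the net power of $|\xi|$ zero, yielding $\sup_t\|v(t)\|_{\L^2_x}^2 \lesssim \int_{\R^n}\|\mathcal F_x g(\cdot,\xi)\|_{\L^2_\tau}^2\,\d\xi = c\|g\|_{\LL}^2$. Continuity and decay at $\pm\infty$ of $t\mapsto v(t)$ in $\L^2_x$ then follow by approximating $g$ by Schwartz functions and using the uniform bound; this density step is where one must be slightly careful, so I would isolate it at the end.
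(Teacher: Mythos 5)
The Fourier-side construction is fine as far as it goes: your multiplier bound shows $v\in\cVdot$ with $\|v\|_{\cVdot}\lesssim\|w\|_{\Htheta}$ and that $\partial_t v-\Delta v=w$ in $\cS'(\R^{n+1})$, which is exactly Step~4 of the paper's proof. The genuine gap is in the argument you commit to for the main point, $\sup_t\|v(t)\|_{\L^2_x}\lesssim\|w\|_{\Htheta}$ and $v\in\C_0(\L^2_x)$. After freezing $\xi$, you have $\mathcal{F}_x w(\cdot,\xi)=|\xi|^{1-\theta}\,\D_t^{\theta/2}\bigl[\mathcal{F}_x g(\cdot,\xi)\bigr]$ with $\mathcal{F}_x g(\cdot,\xi)$ merely in $\L^2_t$; applying the positive-order operator $\D_t^{\theta/2}$ to an $\L^2_t$ function produces an element of $\Hdot^{-\theta/2}_t$, in general not a function, and certainly not an element of $\L^{\tilde\theta}_t$. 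The embedding $\Hdot^{\theta/2}(\R)\hookrightarrow\L^{\tilde\theta}(\R)$ runs the other way: it controls $\|f\|_{\L^{\tilde\theta}}$ by $\|\D_t^{\theta/2}f\|_{\L^2}$, and here you would need control of $\|\D_t^{\theta}\mathcal{F}_x g(\cdot,\xi)\|_{\L^2_t}$, which you do not have. Moreover, this homogeneous embedding is scale invariant, so its constant cannot ``reintroduce'' any power of $|\xi|$; your own bookkeeping then leaves a net $|\xi|^{1-\theta}\cdot|\xi|^{-(1+\theta)}=|\xi|^{-2\theta}$ which nothing compensates. So for $\theta>0$ the H\"older-plus-Sobolev step fails, and the key estimate of the lemma is not proved as written.

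Ironically, the attempt you discarded is the one that works: redoing the substitution $\tau=|\xi|^2 s$ in $\int_\R |\tau|^\theta|\xi|^{2(1-\theta)}(\tau^2+|\xi|^4)^{-1}\,\d\tau$ gives the power $|\xi|^{2\theta+2(1-\theta)-4+2}=|\xi|^0$, i.e.\ the integral equals $\int_\R|s|^\theta(1+s^2)^{-1}\,\d s$, finite precisely because $\theta<1$ and \emph{independent of} $\xi$ --- the criticality of the exponents makes the $|\xi|$-powers cancel, and there is no $|\xi|^{-1}$ loss. With that, Cauchy--Schwarz in $\tau$ yields $\sup_t|\mathcal{F}_x v(t,\xi)|\le c(\theta)\|\hat g(\cdot,\xi)\|_{\L^2_\tau}$ (and in particular $\hat v(\cdot,\xi)\in\L^1_\tau$ for a.e.\ $\xi$), and integration in $\xi$ gives $\sup_t\|v(t)\|_{\L^2_x}\le c(\theta)\|g\|_{\LL}$; continuity and the limits $0$ at $\pm\infty$ then follow either from Riemann--Lebesgue pointwise in $\xi$ together with dominated convergence (dominant $c(\theta)\|\hat g(\cdot,\xi)\|_{\L^2_\tau}\in\L^2_\xi$), or by the density argument you sketch, which must in any case be carried out carefully. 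This corrected computation is in essence the paper's proof: there one pairs $v(t)$ against $\varphi\in\L^2_x$ through $h=1_{(-\infty,0)}\e^{-t\Delta}\varphi$, and the identity $\|M_\theta h\|_{\LL}=c(\theta)\|\varphi\|_{\L^2_x}$ with $c(\theta)^2=(2\pi)^{-1}\int_\R|\sigma|^\theta(1+\sigma^2)^{-1}\,\d\sigma$ is exactly the same change of variables, after which the Duhamel formula on the dense class $M_\theta\,\cS_{00}(\R^{n+1})$ delivers continuity, the decay at infinity, and the equation. So your route is salvageable by fixing the arithmetic and dropping the Sobolev detour, but as submitted the central estimate has a gap.
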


\begin{proof} Guided by the Duhamel formula, the function $v(t)$ is formally defined by 
\begin{equation}
\label{eq:v(t)}
v(t)\coloneqq\int_{{-\infty}}^t \e^{(t-s)\Delta}w(s)\, \d s
\end{equation}
and we need to make sense of this integral. To this end, we introduce a smaller dense subspace of $\Htheta$. 

{Let $M_\theta = \D_{t}^{\theta/2}(-\Delta)^{(1-\theta)/2}$ as before and} set $G_{0}\coloneqq\{M_{\theta}g\, : \, g\in  \cS_{00}(\R^{n+1})\}$, 
where $\cS_{00}(\R^{n+1})$ is the space of Schwartz functions whose Fourier transforms are supported away from  $\tau=0$ and $\xi=0$. 
Note that $M_{\theta}$ preserves  $\cS_{00}(\R^{n+1})$
{and that} $G_{0}$ 
is a dense subspace of $\Htheta$. Call $\tau_{t}$ the translation by $t\in \R$: $\tau_{t}g(s)\coloneqq g( s+t)$, not indicating the $x$-variable as usual. 
It commutes with $M_{\theta}.$ 

\medskip
  
 \paragraph{\itshape Step 1: $v(t) \in \L^2_{x}$ with $\|v(t)\|_{\L^2_{x}}\le c(\theta)\|w\|_{\Htheta}$}

We begin with a preliminary estimate. Let $\varphi\in \L^2_{x}$ and define
$$h(t,x) \coloneqq 1_{(-\infty,0)}(t)\,(\e^{-t\Delta}\varphi)(x),$$ where $1_{A}$ denotes the indicator function of $A$.  
A classical calculation shows that $\hat h(\tau,\xi)= (-i\tau+|\xi|^2)^{-1} \hat \varphi(\xi)$, where $\hat \varphi$ is the Fourier transform 
of $\varphi$ on $\R^n$.
Using Plancherel's formula in $\R^{n}$ and $\R$,  Fubini's theorem and the change of variables  $\tau=\sigma|\xi|^2$ when $\xi\ne 0$, we find
\begin{align*}
\|M_{\theta}(h)\|_{\LL}
&=  {{(2\pi)}^{-\frac{n+1}{2}}}  \bigg(\iint_{\R^{n+1}}\frac{ |\tau|^\theta |\xi|^{2-2\theta}}{|- i\tau+|\xi|^2|^{2}}| \hat\varphi(\xi)|^2 \, 
\d \xi\d \tau\bigg)^{1/2}
\\
&
= c(\theta)  \|\varphi\|_{\L^2_{x}}
\end{align*}
with $c(\theta) \coloneqq (2\pi)^{-1/2}\big(\int_{-\infty}^\infty \frac {|\sigma|^\theta}{1+\sigma^2}\, \d\sigma\big)^{1/2}<\infty$ since $\theta\in [0,1)$. 
It follows that for any $g\in \LL$,
\begin{align}
\label{eq:LinftyL2bound} 
\int^{\infty}_{-\infty} |\angle {\tau_{t}g(\sigma)}{M_{\theta} (h)(\sigma)}|\, \d \sigma  
\le c(\theta)\|\tau_{t}g\|_{\LL} \|\varphi\|_{\L^2_{x}}.
\end{align}

Now assume $g\in \cS_{00}(\R^{n+1})$, set $w \coloneqq M_{\theta}g$ and fix $t\in \R$.
Then $w\in \cS_{00}(\R^{n+1})$, so in particular $w \in \L^1_{t}\L^2_{x}$ and the integral in \eqref{eq:v(t)} converges as a Bochner integral in $\L^2_{x}$ thanks to the contractivity of the heat semigroup on $\L^2_{x}$. 
To get the appropriate estimate on $v(t)$, we calculate
\begin{align}
\label{eq:v(t)-duality estimate}
\begin{split}
\angle{v(t)}{\varphi}&= \int_{{-\infty}}^t \angle{\e^{(t-s)\Delta}w(s)}{\varphi}\, \d s
 =   \Angle{\tau_t w}{h}
 \\
 &= \Angle{\tau_t g}{M_{\theta}(h)}
 =\int^{\infty}_{-\infty} \angle {\tau_{t}g(\sigma)}{M_{\theta} (h)(\sigma)}\, \d \sigma
\end{split}
\end{align}
 and \eqref{eq:LinftyL2bound} yields   
 \begin{equation}
 \label{eq:cMbdd}
 \|v(t)\|_{\L^2_{x}}\le c(\theta)\|\tau_{t}g\|_{\LL}= c(\theta)\|g\|_{\LL} = c(\theta) \|w\|_{\Htheta}.
 \end{equation} 
 In total, we have produced a bounded map 
 \begin{align*}
 G_{0}\to \L^\infty_{t}\L^2_{x}, \; w\mapsto v \quad \text{with} \quad \|v\|_{\L^\infty_{t}\L^2_{x}}\le c(\theta)\|w\|_{\Htheta}.
 \end{align*}
By density, it has a bounded extension $\cM:\Htheta\to \L^\infty_{t}\L^2_{x}$. Due to \eqref{eq:v(t)-duality estimate} this extension is defined weakly by
 $$
\angle{\cM w(t)}{\varphi}
 =\int^{\infty}_{-\infty} \angle {\tau_{t}g(\sigma)}{M_{\theta} (h)(\sigma)}\, \d \sigma, 
 $$
where as before $w=M_{\theta}g\in \Htheta$ and $h(t,x)=1_{(-\infty,0)}(t)(e^{-t\Delta}\varphi)(x)$ with $\varphi\in \L^2_{x}$.

\medskip
 
\paragraph{\itshape Step 2: $v \in \C_{0}(\L^2_{x})$}

By density of $G_{0}$ in $\Htheta$ and closedness of   $\C_{0}(\L^2_{x})$ in $\L^\infty_{t}\L^2_{x}$, it is enough by \eqref{eq:cMbdd}  to show that $v=\cM w \in C_{0}(\L^2_{x})$ for all $ w\in G_{0}$. In that case we have seen that $w\in \L^1_{t}\L^2_{x}$.  For the continuity in time, write \eqref{eq:v(t)} as $v(t)= \int_{-\infty}^0 \e^{-s\Delta}w(s+t)\, \d s$, so that continuity follows right away from the continuity of time translations in $\L^1_{t}\L^2_{x}$ and contractivity of the heat semigroup on $\L^2_{x}$. 
For the limits $v(t) \to 0$ as $|t| \to \infty$, we use dominated convergence in \eqref{eq:v(t)} as follows. By contractivity of the heat semigroup, the integrand is bounded in $\L^2_{x}$ by $\|w(s)\|_{\L^2_{x}}$ and this function is integrable with respect to $s$. The limit as $t \to - \infty$ follows immediately, whereas for $t \to \infty$ we additionally use that the heat semigroup tends to $0$ strongly in $\L^2_{x}$.

\medskip

 \paragraph{\itshape Step 3:  $v=\cM w$ is a solution of $\partial_{t}v-\Delta v=w$ in $\cS'(\R^{n+1})$.} 
 
 Assume that $w= M_\theta g \in G_0$ with $g\in \cS_{00}(\R^{n+1})$. Since $w, \Delta w \in \L^1_{t}\L^2_{x}$ and $t\mapsto w(t)$ is continuous as an $\L^2_{x}$-valued function, we obtain $v'(t)=w(t)+\Delta v(t)$ in $\L^2_{x}$ for all $t \in \R$. From this we conclude that $\Angle v {-\partial_{t}\phi -\Delta \phi}=\Angle {w}{\phi}$ for all $\phi\in \cS(\R^{n+1})$. It remains to argue by density, letting $g$ approximate any element in $\L^2_{t}\L^2_{x}$ in this equality for fixed $\phi$. 

\medskip

 \paragraph{\itshape Step 4: ${v=\cM w\in \cVdot}$ with $\|v\|_{\cVdot}\lesssim \|w\|_{\Htheta}$}
 
 Again, it is enough to proceed by density after proving the claim for $w = M_\theta g$ when
 $g\in \cS_{00}(\R^{n+1})$ with a constant that does not depend on this assumption.  By Fourier transform from the equation,  $(i\tau+|\xi|^2)\hat v= |\tau|^{\theta/2}|\xi|^{1-\theta}\hat g$ as tempered distributions so that $\hat v= (i\tau+|\xi|^2)^{-1} |\tau|^{\theta/2}|\xi|^{1-\theta}\hat g$. Here, the right-hand side is again a tempered distribution of the form  $(|\tau|+|\xi|^2)^{-1/2}m\hat g$, so that 
 \begin{align*}
 	\|v\|_{\cVdot} \le (2\pi)^{-(n+1)/2} \|m\hat g\|_{\LL}\le \| m\|_{\infty}\|g\|_{\LL}=\| m\|_{\infty}\|w\|_{\Htheta},
 \end{align*}
where
\begin{align*}
	m=(|\tau|+|\xi|^2)^{1/2} (i\tau+|\xi|^2)^{-1} |\tau|^{\theta/2}|\xi|^{1-\theta}. &\qedhere
\end{align*}
\end{proof}

We observe that there is a substitute result for the non admissible pair $(\infty,2)$ that does not involve the variational space $\cVdot$.  

\begin{prop}
\label{prop:maxregL1L2} 
Let $w\in \L^2_{t}\Hdot^{-1}_{x}+ \L^{1}_{t}\L^{2}_{x}$. Then there exists $v\in \C_{0}(\L^2_{x})\cap\L^2_{t}\Hdot^1_{x}$,  unique in the {class of distributions} with $\nabla v\in \LL$, solution of $\partial_{t}v-\Delta v=w$ 
in {$\cD'(\R^{n+1})$}. It has the estimates
$$
\|\nabla v\|_{\LL} \le \|w\|_{\L^2_{t}\Hdot^1_{x}+ \L^{1}_{t}\L^{2}_{x}}  \quad \& \quad \sup_{t\in \R} \|v(t)\|_{\L^2_{x}} \le \|w\|_{\L^2_{t}\Hdot^1_{x}+ \L^{1}_{t}\L^{2}_{x}}.
$$ 
\end{prop}

\begin{proof} 
Uniqueness is again provided by Lemma~\ref{lem:uniqueness}.
The case where $w=-\div F\in \L^2_{t}\Hdot^{-1}_{x}$ is given by the solution $v$ in   Lemma~\ref{lem:maxreg} with $\theta=0$, and  checking the constants we have
$$
\|v\|_{\L^\infty_{t}\L^2_{x}} \le 2^{-1/2} \|F\|_{\LL} \quad \& \quad \|\nabla v\|_{\LL}\le \|F\|_{\LL}.
$$
Assuming now that $w\in \L^1_{t}\L^2_{x}$,  Steps 1 and 2 of the proof of Lemma~\ref{lem:maxreg} show that $v$ given by  \eqref{eq:v(t)} belongs to $\C_{0}(\L^2_{x})$ with  $\|v(t)\|_{\L^2_{x}} \le \|w\|_{\L^{1}_{t}\L^{2}_{x}}$. To show that $\nabla v\in  \LL$, we take a vector field
$\widetilde \Phi\in \cS(\R^{n+1})$  and $\tilde v$ given by 
$\tilde v(s)=\int^{\infty}_{s} \e^{(t-s)\Delta}(\div\widetilde \Phi)(t)\, \d t$, $s\in \R$, is the solution of Lemma~\ref{lem:maxreg} in the case $\theta=0$ for the backward equation 
$-\partial_{s}\tilde v-\Delta \tilde v= \div \tilde \Phi$. In particular $\tilde v \in 
\C_{0}(\L^2_{x})$ with $\|\tilde v\|_{\L^\infty_{t}\L^2_{x}} \le c(0) \|\tilde \Phi\|_{\LL}$ and $c(0) =  2^{-1/2}$. As 
$$
\Angle{ v}{\div\tilde \Phi}= \Angle {w}{\tilde v},
$$
we deduce that 
$ \|\nabla v\|_{\LL}\le 2^{-1/2} \|w\|_{\L^1_{t}\L^2_{x}}$. 
{Eventually, we get $\partial_{t}v-\Delta v=w$ in $\cD'(\R^{n+1})$ as before.}
\end{proof}

Let us state  consequences of these two propositions. The first one shows a set of lower bounds for the heat operator.

\begin{cor} 
\label{cor:maxreg}
For any distribution $u$ with $\nabla u \in \LL$, there is a constant $c\in \IC$ such that  $u-c\in \C_{0}(\L^2_{x})$ with
$$
\|u-c\|_{\L^\infty_{t}\L^2_{x}} \le c(n,r,q)\|\partial_{t}u-\Delta u\|_{\L^2_{t}\Hdot^{-1}_{x}+ \L^{r'}_{t}\L^{q'}_{\vphantom{t} x} },
$$
provided that $(r,q)$ is admissible or $(r,q)=(\infty,2)$ and that the right hand side is finite.
\end{cor}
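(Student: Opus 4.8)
The plan is to reduce Corollary~\ref{cor:maxreg} to Propositions~\ref{prop:maxregDeldot} and~\ref{prop:maxregL1L2} via a translation-and-uniqueness argument. Let $u$ be a distribution with $\nabla u\in\LL$, and set $w\coloneqq\partial_{t}u-\Delta u$, which is a well-defined distribution since $\Delta u=\div(\nabla u)\in\L^2_{t}\Hdot^{-1}_{x}\subset\cS'(\R^{n+1})$ and $\partial_{t}u$ makes sense distributionally. I would first treat the case $w\in\L^2_{t}\Hdot^{-1}_{x}+\L^{r'}_{t}\L^{q'}_{\vphantom{t}x}$ for admissible $(r,q)$ (the case $(\infty,2)$ being entirely parallel using Proposition~\ref{prop:maxregL1L2} in place of Proposition~\ref{prop:maxregDeldot}); if $w$ does not lie in this space, the right-hand side is infinite and there is nothing to prove.

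Assuming $w$ lies in the stated space, Proposition~\ref{prop:maxregDeldot} produces a solution $v\in\C_{0}(\L^2_{x})\cap\cVdot$ of $\partial_{t}v-\Delta v=w$ in $\cD'(\R^{n+1})$, with
\[
\sup_{t\in\R}\|v(t)\|_{\L^2_{x}}+\|v\|_{\cVdot}\le c(n,q,r)\|w\|_{\L^2_{t}\Hdot^{-1}_{x}+\L^{r'}_{t}\L^{q'}_{\vphantom{t}x}}.
\]
Then $u-v$ is a distribution with $\nabla(u-v)=\nabla u-\nabla v\in\LL$ (using that $\nabla v\in\LL$, which holds because $v\in\cVdot$) and $\partial_{t}(u-v)-\Delta(u-v)=0$ in $\cD'(\R^{n+1})$. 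By Lemma~\ref{lem:uniqueness}, $u-v$ is a constant $c\in\IC$, so $u-c=v\in\C_{0}(\L^2_{x})$ and
\[
\|u-c\|_{\L^\infty_{t}\L^2_{x}}=\|v\|_{\L^\infty_{t}\L^2_{x}}\le c(n,q,r)\|\partial_{t}u-\Delta u\|_{\L^2_{t}\Hdot^{-1}_{x}+\L^{r'}_{t}\L^{q'}_{\vphantom{t}x}},
\]
which is the claimed inequality. For the endpoint $(r,q)=(\infty,2)$, replace $\L^{r'}_{t}\L^{q'}_{\vphantom{t}x}$ by $\L^{1}_{t}\L^{2}_{x}$ and invoke Proposition~\ref{prop:maxregL1L2}, whose solution $v$ lies in $\C_{0}(\L^2_{x})\cap\L^2_{t}\Hdot^1_{x}$, so $\nabla v\in\LL$ and the same uniqueness step applies verbatim.

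The only genuine subtlety is justifying that $\partial_{t}u-\Delta u$ is a priori a well-defined distribution so that the hypothesis ``$(r,q)$ is admissible or $(\infty,2)$'' even makes sense—but this is immediate from $\nabla u\in\LL$, as noted in Remark~\ref{rem:Delta}-type reasoning, and from the fact that any distribution has a distributional time derivative. A second minor point is that the constant $c$ is the \emph{same} whether one works with the admissible norm or the $(\infty,2)$ norm, since in both cases $u-c$ equals the (unique up to constants) solution $v$ and Lemma~\ref{lem:uniqueness} pins down $c$ independently of the representation of $w$; thus no consistency issue arises. I expect no real obstacle here: the corollary is a direct packaging of the two preceding propositions with Lemma~\ref{lem:uniqueness} supplying uniqueness, the point being simply that the roles of existence (Propositions~\ref{prop:maxregDeldot},~\ref{prop:maxregL1L2}) and uniqueness (Lemma~\ref{lem:uniqueness}) are combined into a single lower-bound statement for the heat operator.
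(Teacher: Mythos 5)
Your argument is correct and is essentially the paper's own proof: apply Proposition~\ref{prop:maxregDeldot} (or Proposition~\ref{prop:maxregL1L2} for the pair $(\infty,2)$) to $w=\partial_{t}u-\Delta u$, then use Lemma~\ref{lem:uniqueness} to conclude that $u-v$ is a constant, and read off the estimate from the proposition's bound in the sum norm. The extra remarks on the well-definedness of $w$ and the consistency of the constant $c$ are fine but not substantively different from what the paper does.
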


\begin{proof}  Let  $v$ be the solution of $\partial_{t}v-\Delta v=\partial_{t}u-\Delta u$ given by  Propositions~\ref{prop:maxregDeldot} or \ref{prop:maxregL1L2}.  By uniqueness, $v-u$ is a constant $c$ so that 
$u-c\in \C_{0}(\L^2_{x})$. 
Optimizing over all possible decompositions of $\partial_{t}u-\Delta u$ in $\L^2_{t}\Hdot^{-1}_{x}+ \L^{r'}_{t}\L^{q'}_{\vphantom{t} x}$
yields the estimate.  
\end{proof}
 
The second one {will be our fundamental regularity estimate in the following}.

 \begin{cor}
\label{cor:mainreg}
Let $u\in \cD'(\R^{n+1})$. Assume $\nabla u\in \LL$ and $\partial_{t}u\in\L^2_{t}\Hdot^{-1}_{x}+ \L^{r'}_{t}\L^{q'}_{\vphantom{t} x}$ for $(r,q)$ an admissible pair  or $(r,q)=(\infty,2)$. Then
there is a constant $c\in \IC$ such that $u-c\in   \C_{0}(\L^2_{x})$ with
$$
 \sup_{t\in \R}\|u(t)-c\|_{\L^2_{x}} \le c(n,q,r)(\|\nabla u\|_{\LL}+ \|\partial_{t}u\|_{\L^2_{t}\Hdot^{-1}_{x}+ \L^{r'}_{t}\L^{q'}_{\vphantom{t} x}}).
$$
Moreover, if $(r,q)$ is admissible, then $u-c\in \cVdot$ with the same estimate {on $\|u-c\|_{\cVdot}$}.
\end{cor}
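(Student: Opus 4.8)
The plan is to reduce the whole statement to the results already obtained for the heat operator by absorbing $\Delta u$ into the data. Since $\nabla u\in\LL$, we have $-\Delta u=-\div(\nabla u)\in\L^2_{t}\Hdot^{-1}_{x}$ with $\|\Delta u\|_{\L^2_{t}\Hdot^{-1}_{x}}\le\|\nabla u\|_{\LL}$, so the distribution $w\coloneqq\partial_{t}u-\Delta u$ lies in $\L^2_{t}\Hdot^{-1}_{x}+\L^{r'}_{t}\L^{q'}_{\vphantom{t} x}$ (resp.\ in $\L^2_{t}\Hdot^{-1}_{x}+\L^{1}_{t}\L^{2}_{x}$ when $(r,q)=(\infty,2)$), and combining a near-optimal decomposition of $\partial_{t}u$ with $-\Delta u$ gives
$$
\|w\|_{\L^2_{t}\Hdot^{-1}_{x}+\L^{r'}_{t}\L^{q'}_{\vphantom{t} x}}\le\|\partial_{t}u\|_{\L^2_{t}\Hdot^{-1}_{x}+\L^{r'}_{t}\L^{q'}_{\vphantom{t} x}}+\|\nabla u\|_{\LL}.
$$

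For the first assertion I would apply Corollary~\ref{cor:maxreg} directly: $u$ has $\nabla u\in\LL$ and $\partial_{t}u-\Delta u=w$, so there is a constant $c\in\IC$ with $u-c\in\C_{0}(\L^2_{x})$ and $\|u-c\|_{\L^\infty_{t}\L^2_{x}}\le c(n,r,q)\|w\|_{\L^2_{t}\Hdot^{-1}_{x}+\L^{r'}_{t}\L^{q'}_{\vphantom{t} x}}$; plugging in the bound on $\|w\|$ from the previous paragraph yields the stated $\sup_{t}$ estimate, valid in both the admissible and the $(\infty,2)$ case.

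For the second assertion, assume $(r,q)$ admissible. I would feed the same $w$ into Proposition~\ref{prop:maxregDeldot}, obtaining $v\in\C_{0}(\L^2_{x})\cap\cVdot$, unique among distributions with gradient in $\LL$, solving $\partial_{t}v-\Delta v=w$ in $\cD'(\R^{n+1})$ with $\|v\|_{\cVdot}\le c(n,q,r)\|w\|_{\L^2_{t}\Hdot^{-1}_{x}+\L^{r'}_{t}\L^{q'}_{\vphantom{t} x}}$. Both $u$ and $v$ solve this equation and both have gradient in $\LL$, so $u-v$ solves the homogeneous heat equation with gradient in $\LL$; by Lemma~\ref{lem:uniqueness} it is a constant function on $\R^{n+1}$. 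Then $u-c-v=(u-v)-c$ is also a constant function, and since it lies in $\L^2_{x}$ for (almost) every $t$ it must be $0$; hence $u-c=v\in\cVdot$, and the $\cVdot$-estimate follows from the one on $\|v\|_{\cVdot}$ together with the bound on $\|w\|$.

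There is no genuine obstacle here: every ingredient is already in place. The only point deserving care is the bookkeeping of the constant, namely that the $c$ produced by Corollary~\ref{cor:maxreg} coincides with the constant one must subtract from $u$ to land on the $\cVdot$-solution of Proposition~\ref{prop:maxregDeldot}; this is handled by the uniqueness of Lemma~\ref{lem:uniqueness} together with the elementary fact that a nonzero constant function does not belong to $\L^2(\R^n)$, so no separate argument is needed.
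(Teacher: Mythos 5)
Your proof is correct and follows essentially the same route as the paper: write $\partial_t u-\Delta u$ as an element of $\L^2_{t}\Hdot^{-1}_{x}+\L^{r'}_{t}\L^{q'}_{\vphantom{t} x}$ with the stated norm bound, invoke Corollary~\ref{cor:maxreg} for the $\C_0(\L^2_x)$ conclusion, and identify $u-c$ with the $\cVdot$-solution of Proposition~\ref{prop:maxregDeldot} via the uniqueness Lemma~\ref{lem:uniqueness}. Your explicit bookkeeping of the constant (a nonzero constant is not in $\L^2_x$) is exactly what the paper's terser argument leaves implicit.
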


\begin{proof}
We see that  
$\partial_{t}u-\Delta u  \in \L^2_{t}\Hdot^{-1}_{x} +  \L^{r'}_{t}\L^{q'}_{\vphantom{t} x}$ with 
$$\|\partial_{t}u-\Delta u\|_{\L^2_{t}\Hdot^{-1}_{x}+ \L^{r'}_{t}\L^{q'}_{\vphantom{t} x} }\le \|\nabla u\|_{\LL}+ \|\partial_{t}u\|_{\L^2_{t}\Hdot^{-1}_{x}+ \L^{r'}_{t}\L^{q'}_{\vphantom{t} x}}.$$ We apply Corollary~\ref{cor:maxreg} to get the estimate. That $u-c\in \cVdot$ when $(r,q)$ is admissible is already in Proposition~\ref{prop:maxregDeldot}.
\end{proof}

\begin{rem}
\label{rem:mainreg2}
\begin{enumerate}
\item 
The case $\theta=0$ of Lemma~\ref{lem:maxreg} on the  half-space $(0,\infty)\times \R^{n}$ appears in \cite{AMP}. It can be seen 
as the homogeneous version of Lions' embedding theorem, {which would  apply had we assumed in addition  $u\in \L^2_{t}\L^2_{x}$}. An important point is that the interval is infinite, otherwise 
the homogeneous version is wrong. Here, the statement with the time interval being the real line simplifies some matters 
of the proof in \cite{AMP} when $\theta=0$ and we extend it to $\theta<1$. However, the statement does not hold when $\theta=1$. When $\theta>0$, the situation is not as symmetric as Lions' embedding theorem since the hypothesis cannot be put into a form 
$u\in E$, $\partial_{t}u\in E'$, where $E$ is a Banach space and $E'$ its dual for the $\LL$ duality. We relax on $u$ since we do not want 
to impose more than $u\in \L^2_{t}\Hdot^1_{x}$ for the spatial regularity.  
In any case, the conditions $u\in \cVdot$ and $\partial_{t}u\in \cVdot'$ do not imply $u$ continuous into $\L^2_{x}$. Indeed, we saw that the time derivative maps $\cVdot$ into $\cVdot'$, but $\cVdot $ is not contained in $\C(\L^2_{x})$. 

\item  
In the hypotheses of  Corollaries~\ref{cor:maxreg} and \ref{cor:mainreg}, one can take a finite sum of {spaces} of the same types with different pairs of exponents. {This is a consequence of our constructive proof: $u-c$ is the sum of solutions to the heat equation with right-hand sides equal to the  various different components.}
\end{enumerate}
\end{rem}

\subsection{Integral equalities}
 \label{sec:integralequalities}
 
Integral identities are well-known in our context when the time interval is finite~\cite{LSU}. When the time interval is infinite, they require additional care in both the assumptions and the proofs, and Lemma~\ref{lem:maxreg} becomes the essential instrument. Therefore, the next lemma is not a straightforward generalization of known results.

\begin{lem}
\label{lem:energy} Let $u\in \cD'(\R^{n+1})$. Assume $\nabla u\in \LL$ and $\partial_{t}u= -\div F+ g$ with $F\in \L^2_{t}\L^2_{x}$, $g\in \L^{r'}_{t}\L^{q'}_{\vphantom{t} x} $ and $(r,q)$ an admissible pair or $(r,q)=(\infty,2)$. Then, up to a constant, $u\in  \C_{0}(\L^2_{x})$, 
$t\mapsto \|u(t)\|_{\L^2_{x}}^2 $ is absolutely continuous on $\R$ and  for all $\sigma<\tau$,
\begin{equation}
\label{eq:energy}
\|u(\tau)\|_{\L^2_{x}}^2-\|u(\sigma)\|_{\L^2_{x}}^2= 2\Re \int_{\sigma}^\tau \angle{F(t)}{\nabla u(t)} + \angle {g(t)}{u(t)}\, \d t.
\end{equation}
\end{lem}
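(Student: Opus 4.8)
The plan is to first establish the regularity assertion --- that $u$ agrees up to an additive constant with an element of $\C_{0}(\L^2_{x})$ --- and then prove the integral identity \eqref{eq:energy} by a mollification-and-passage-to-the-limit argument, using the approximate solutions to the heat equation from Lemma~\ref{lem:maxreg} as the workhorse. For the regularity part there is almost nothing to do: the hypotheses $\nabla u\in\LL$ and $\partial_{t}u=-\div F+g\in\L^2_{t}\Hdot^{-1}_{x}+\L^{r'}_{t}\L^{q'}_{\vphantom{t} x}$ are exactly those of Corollary~\ref{cor:mainreg}, which yields the constant $c\in\IC$ with $u-c\in\C_{0}(\L^2_{x})$. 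After subtracting $c$ we may therefore assume $u\in\C_{0}(\L^2_{x})$ directly, and in particular $u\in\L^\infty_{t}\L^2_{x}$, so both sides of \eqref{eq:energy} make sense --- the right-hand side because $F\in\LL$, $\nabla u\in\LL$, $g\in\L^{r'}_{t}\L^{q'}_{\vphantom{t} x}$ and $u\in\L^{r}_{t}\L^{q}_{\vphantom{t} x}$ (the latter by the embedding of Corollary~\ref{cor:mainreg} combined with $\cVdot\hookrightarrow\Deldot^{r,q}$, or directly from $u-c\in\cVdot$ when $(r,q)$ is admissible, and by a separate elementary argument when $(r,q)=(\infty,2)$).

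The core of the proof is the identity itself. The natural approach is to regularize in the time variable. Let $\rho_{\eps}$ be a smooth even approximate identity on $\R$ and set $u_{\eps}=u*_{t}\rho_{\eps}$ (convolution in $t$ only). Then $u_{\eps}$ is smooth in $t$ valued in $\Hdot^1_{x}$, one has $\nabla u_{\eps}\to\nabla u$ in $\LL$, $u_{\eps}\to u$ in $\C_{0}(\L^2_{x})$ (by uniform continuity of $t\mapsto u(t)$ together with the decay at infinity), $\partial_{t}u_{\eps}=-\div(F*_{t}\rho_{\eps})+g*_{t}\rho_{\eps}$ with $F*_{t}\rho_{\eps}\to F$ in $\LL$ and $g*_{t}\rho_{\eps}\to g$ in $\L^{r'}_{t}\L^{q'}_{\vphantom{t} x}$. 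For the smoothed function the chain rule in $t$ is legitimate: $\frac{\d}{\d t}\|u_{\eps}(t)\|_{\L^2_{x}}^2=2\Re\angle{\partial_{t}u_{\eps}(t)}{u_{\eps}(t)}=2\Re\big(\angle{F*_{t}\rho_{\eps}(t)}{\nabla u_{\eps}(t)}+\angle{g*_{t}\rho_{\eps}(t)}{u_{\eps}(t)}\big)$ for a.e.\ $t$, where the pairing $\angle{\partial_{t}u_{\eps}(t)}{u_{\eps}(t)}$ is interpreted through the $\Hdot^{-1}_{x}$--$\Hdot^1_{x}$ and $\L^{q'}_{x}$--$\L^{q}_{x}$ dualities and one integrates $-\div$ by parts in $x$. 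Integrating from $\sigma$ to $\tau$ gives \eqref{eq:energy} for $u_{\eps}$, and all four terms converge to their unsmoothed counterparts: the left-hand side because $u_{\eps}(t)\to u(t)$ in $\L^2_{x}$ pointwise in $t$, the $F$-term because of the $\LL$ convergences, and the $g$-term because $g*_{t}\rho_{\eps}\to g$ in $\L^{r'}_{t}\L^{q'}_{\vphantom{t} x}$ against $u_{\eps}\to u$ bounded in $\L^{r}_{t}\L^{q}_{\vphantom{t} x}$ (splitting $g*_{t}\rho_{\eps}\otimes u_{\eps}-g\otimes u$ in the usual way).

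There is, however, a subtlety that is the real obstacle: one must justify the chain-rule identity for $u_{\eps}$, i.e.\ that $t\mapsto\|u_{\eps}(t)\|_{\L^2_{x}}^2$ is differentiable with the stated derivative. This is where Lemma~\ref{lem:maxreg} enters as the essential instrument, as announced in the text before the statement. The point is that $u_{\eps}$ need not be $\Hdot^1_{x}$-valued \emph{continuously} in $t$ a priori --- only $\partial_{t}u_{\eps}$ lies in a negative-order space --- so the naive pairing $\angle{\partial_{t}u_{\eps}(t)}{u_{\eps}(t)}$ is not obviously an integrable function equal to a.e.\ derivative of the squared norm. One clean way around this: write $u_{\eps}=v_{\eps}+w_{\eps}$ where $v_{\eps}$ solves $\partial_{t}v_{\eps}-\Delta v_{\eps}=\partial_{t}u_{\eps}-\Delta u_{\eps}$ via Lemma~\ref{lem:maxreg} (legitimate since $\partial_{t}u_{\eps}-\Delta u_{\eps}\in\L^2_{t}\Hdot^{-1}_{x}+\L^{r'}_{t}\L^{q'}_{\vphantom{t} x}\hookrightarrow\Htheta$ by Lemma~\ref{lem:H-theta}), so $w_{\eps}$ is constant by Lemma~\ref{lem:uniqueness} and $u_{\eps}$ inherits from $v_{\eps}=\cM(\partial_{t}u_{\eps}-\Delta u_{\eps})$ the $\C_{0}(\L^2_{x})$ regularity \emph{quantitatively}; then on the smooth level the Duhamel representation \eqref{eq:v(t)} gives $v_{\eps}'(t)=(\partial_{t}u_{\eps}-\Delta u_{\eps})(t)+\Delta v_{\eps}(t)$ in $\L^2_{x}$, from which $\angle{u_{\eps}'(t)}{u_{\eps}(t)}$ is honestly a continuous function of $t$ and the fundamental theorem of calculus applies. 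Alternatively one observes that since $F*_{t}\rho_{\eps}$ and $g*_{t}\rho_{\eps}$ are themselves smooth in $t$, $u_{\eps}$ solves a parabolic equation with smooth-in-time data and the required regularity is classical; but routing through Lemma~\ref{lem:maxreg} keeps the argument self-contained and avoids Steklov averages, in the spirit of the paper. Once the chain rule for $u_{\eps}$ is in hand, the absolute continuity of $t\mapsto\|u(t)\|_{\L^2_{x}}^2$ on $\R$ follows from \eqref{eq:energy} (the right-hand side is an integral of an $\L^1_{t}$ function, by Lemma~\ref{lem:beta}-type Hölder estimates), completing the proof.
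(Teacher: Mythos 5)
Your core argument (first two paragraphs) is exactly the paper's proof: subtract the constant furnished by Corollary~\ref{cor:mainreg}, mollify in time, apply the chain rule to $u_{\varepsilon}$, and pass to the limit using the continuity of the pairings on $\LL\times\LL$ and $\L^{r'}_{t}\L^{q'}_{\vphantom{t} x}\times\L^{r}_{t}\L^{q}_{\vphantom{t} x}$ (respectively $\L^{1}_{t}\L^{2}_{x}\times\C_{0}(\L^2_{x})$ when $(r,q)=(\infty,2)$), and it is correct. The ``real obstacle'' you raise in the last paragraph, however, is not there: once the constant is removed you already know $u\in\C_{0}(\L^2_{x})$, so $u_{\varepsilon}=\rho_{\varepsilon}\star_{t}u$ is of class $\C^1$ (indeed $\C^\infty$) as an $\L^2_{x}$-valued function with $u_{\varepsilon}'=\rho_{\varepsilon}'\star_{t}u$, and differentiability of $t\mapsto\|u_{\varepsilon}(t)\|_{\L^2_{x}}^2$ with derivative $2\Re\angle{u_{\varepsilon}'(t)}{u_{\varepsilon}(t)}$ is elementary Hilbert-space calculus (moreover $\nabla u_{\varepsilon}=\rho_{\varepsilon}\star_{t}\nabla u$ is even continuous in $t$ valued in $\L^2_{x}$, by Cauchy--Schwarz in the $t$-convolution, so the worry about $\Hdot^1_{x}$-valued continuity is also unfounded); the only point needing a word is the a.e.\ identification $\angle{u_{\varepsilon}'(t)}{u_{\varepsilon}(t)}=\angle{F_{\varepsilon}(t)}{\nabla u_{\varepsilon}(t)}+\angle{g_{\varepsilon}(t)}{u_{\varepsilon}(t)}$, which is the routine duality/density computation you already sketch in your second paragraph. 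The proposed detour through Lemma~\ref{lem:maxreg} is therefore unnecessary and, as written, not rigorous: the pointwise identity $v_{\varepsilon}'(t)=w(t)+\Delta v_{\varepsilon}(t)$ in $\L^2_{x}$ is proved in Step~3 of that lemma only for data in the dense class $G_{0}$ (your data are not of that form, $\Delta v_{\varepsilon}(t)$ need not lie in $\L^2_{x}$, and the sum $\L^2_{t}\Hdot^{-1}_{x}+\L^{r'}_{t}\L^{q'}_{\vphantom{t} x}$ embeds into a sum of two spaces $\Htheta$ rather than a single one), and since $w_{\varepsilon}=u_{\varepsilon}-v_{\varepsilon}$ is constant the decomposition returns nothing beyond $v_{\varepsilon}'=u_{\varepsilon}'$. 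Dropping that paragraph in favour of the simple observation above, your proof coincides with the paper's.
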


\begin{rem} In the spirit of Remark~\ref{rem:mainreg2}~(ii),  one can  replace $g$ by linear combinations of functions of the same type  with different admissible pairs and the pair $(\infty,2)$. 
\end{rem}

\begin{proof} From  Corollary~\ref{cor:mainreg} we  know that  $u-c\in \C_{0}(\L^2_{x})\cap \L^{r}_{t}\L^{q}_{\vphantom{t} x}$ for some constant $c$. Adjusting the constant to $0$,
the integrand of \eqref{eq:energy} is well-defined and integrable on $\R$. It remains to prove the integral identity. 
	
We begin with assuming that $(r,q)$ is an admissible pair. Let $\varphi_{\varepsilon}=\frac 1 \varepsilon \varphi(\frac \cdot \varepsilon)$, $\varepsilon>0$, be a mollifying sequence of  $\R$ with 
$\varphi$  smooth, compactly supported in $[-1,1]$ and $\int_\R \varphi=1$, and let $u_{\varepsilon} \coloneqq  \varphi_{\varepsilon}\star u$, where 
convolution is in the $t$-variable. Clearly, $t\mapsto u_{\varepsilon}(t)$ is  of class $\C^1$ as an $\L^2_{x}$-valued function. Thus,
\begin{equation}
\label{eq:eps}
\|u_{\varepsilon}(\tau)\|_{\L^2_{x}}^2-\|u_{\varepsilon}(\sigma)\|_{\L^2_{x}}^2= 2\Re \int_{\sigma}^\tau \angle{u_{\varepsilon}'(t)}{ u_{\varepsilon}(t)}  \, \d t.
\end{equation}
Since $u\in \C_{0}(\L^2_{x})$, the left-hand side converges to the one of \eqref{eq:energy} as $\varepsilon\to 0$. 
Next, to justify the convergence of the integral in \eqref{eq:eps} to 
the one in \eqref{eq:energy}, a computation yields
$$
\angle{u_{\varepsilon}'(t)}{ u_{\varepsilon}(t)} = \angle{F_{\varepsilon}(t)}{\nabla u_{\varepsilon}(t)} + \angle {g_{\varepsilon}(t)}{u_{\varepsilon}(t)},
$$
where $F_{\varepsilon} \coloneqq   \varphi_{\varepsilon} \star F$ and $g_{\varepsilon} \coloneqq  \varphi_{\varepsilon} \star g$.  Observe that $ (h_{1},h_{2})\mapsto \int_{\sigma}^\tau \angle {h_{1}(t)}{h_{2}(t)}\, \d t$ is a sesquilinear continuous form on $X\times Y$, where $(X,Y)=(\LL,\LL)$ or $(\L^{r'}_{t}\L^{q'}_{\vphantom{t} x}, \L^{r}_{t}\L^{q}_{\vphantom{t} x})$ {and that the pairings above are of this type} as $(r,q)$ is admissible. In each factor the convolution with $\varphi_{\varepsilon}$ is uniformly bounded and  converges strongly to the identity operator. The convergence follows.

If $(r,q)=(\infty,2)$, then we repeat the above argument  with $(X,Y)=(\L^{1}_{t}\L^{2}_{x},\C_{0}(\L^2_{x}))$.
\end{proof}

The lemma above can be localized to a half-infinite time interval as follows.

\begin{cor}
\label{cor:energy} 
Let $\I$ be an open half-infinite interval of $\R$ and $u\in \cD'(\I\times \R^{n})$. Assume $\nabla u\in \L^2(\I;\L^2_{x})$ and 
$\partial_{t}u= -\div F+ g$ with  $F\in \L^2(\I;\L^2_{x})$, $g\in \L^{r'}(\I;\L^{q'}_{x} )$ and $(r,q)$ an admissible pair or $(r,q)=(\infty,2)$. Then, up to a constant,  $u\in \C_{0}(\overline \I; \L^2_{x})$ and the function 
$t\mapsto \|u(t)\|_{\L^2_{x}}^2 $ is absolutely continuous on $\overline \I$ with 
\begin{equation}
\label{eq:energy2}
\|u(\tau)\|_{\L^2_{x}}^2-\|u(\sigma)\|_{\L^2_{x}}^2= 2\Re \int_{\sigma}^\tau \angle{F(t)}{\nabla u(t)} + \angle  {g(t)}{u(t)}\, \d t
\end{equation}
for all  $\sigma, \tau \in \overline \I$,  $\sigma<\tau$.
\end{cor}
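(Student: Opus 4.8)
The plan is to deduce Corollary~\ref{cor:energy} from Lemma~\ref{lem:energy} by a reflection-and-extension argument, which is exactly the point where ``half-infinite interval'' becomes crucial. Without loss of generality I would treat the case $\I = (a,\infty)$ (the case $\I = (-\infty,b)$ being handled by the time-reversal $t \mapsto -t$, which swaps the roles of $F$ and $g$ but preserves all hypotheses), and after a translation I may assume $a = 0$, so $\I = (0,\infty)$. The idea is to extend $u$, $F$ and $g$ from $(0,\infty)\times\R^n$ to all of $\R^{n+1}$ in such a way that the distributional identity $\partial_t u = -\div F + g$ persists on $\R^{n+1}$, the spatial regularity $\nabla u \in \LL$ and the integrability $F \in \LL$, $g \in \L^{r'}_t\L^{q'}_x$ (or $\L^1_t\L^2_x$) are retained globally, and then apply Lemma~\ref{lem:energy} to the extension.

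The natural extension is even reflection in time for $u$: set $\tilde u(t) \coloneqq u(t)$ for $t>0$ and $\tilde u(t) \coloneqq u(-t)$ for $t<0$. Then, computing distributionally, $\partial_t \tilde u$ equals $-\div F + g$ on $\{t>0\}$ and $+\div F(-t) - g(-t)$ on $\{t<0\}$, plus a possible singular contribution at $t=0$. The first step is to check that there is \emph{no} singular term at $t=0$: because Corollary~\ref{cor:mainreg} (applied on $\I$, or rather its obvious localization, which is what justifies $u \in \C_0(\overline\I; \L^2_x)$ up to a constant as asserted in the statement) gives that $u$ has a well-defined trace $u(0^+) \in \L^2_x$, and even reflection is continuous across $t=0$, the jump vanishes and $\partial_t \tilde u = -\div \tilde F + \tilde g$ holds on all of $\R^{n+1}$, where $\tilde F$ is the \emph{odd} reflection $\tilde F(t) = F(t)$ for $t>0$, $\tilde F(t) = -F(-t)$ for $t<0$, and $\tilde g$ is the odd reflection of $g$. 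One checks $\|\tilde F\|_{\LL} = \sqrt{2}\|F\|_{\L^2(\I;\L^2_x)}$, $\|\tilde g\|_{\L^{r'}_t\L^{q'}_x} = 2^{1/r'}\|g\|_{\L^{r'}(\I;\L^{q'}_x)}$ (and similarly for the $(\infty,2)$ endpoint with $\L^1_t\L^2_x$), and $\|\nabla\tilde u\|_{\LL} = \sqrt 2 \|\nabla u\|_{\L^2(\I;\L^2_x)}$, so all hypotheses of Lemma~\ref{lem:energy} are met by $\tilde u, \tilde F, \tilde g$.

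Now Lemma~\ref{lem:energy} produces a constant $c$ with $\tilde u - c \in \C_0(\L^2_x)$, $t \mapsto \|\tilde u(t) - c\|^2_{\L^2_x}$ absolutely continuous on $\R$, and the identity \eqref{eq:energy} valid for all $\sigma < \tau$ in $\R$ with data $\tilde F, \tilde g$. Restricting to $\sigma, \tau \in \overline\I = [0,\infty)$, the right-hand side integral over $[\sigma,\tau] \subset [0,\infty)$ only sees $\tilde F = F$, $\tilde g = g$, $\nabla \tilde u = \nabla u$, $\tilde u = u$, so \eqref{eq:energy2} follows verbatim; absolute continuity of $t \mapsto \|u(t)-c\|_{\L^2_x}^2$ on $\overline\I$ and $u - c \in \C_0(\overline\I;\L^2_x)$ are inherited by restriction, and $u(t)-c \to 0$ as $t \to \infty$ is likewise inherited (the behaviour as $t \to -\infty$ of $\tilde u$ is irrelevant). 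I expect the only genuinely delicate step to be the justification that the even reflection introduces no distributional mass at $t=0$; this is where one must invoke the $\C_0(\overline\I;\L^2_x)$ regularity already available from Corollary~\ref{cor:mainreg} (localized to a half-infinite interval, for which Remark~\ref{rem:mainreg2} and the fact that the argument there is local in time guarantee the estimate transfers) rather than assuming it, and where the half-infiniteness of $\I$ is essential, since a reflection at a \emph{finite} endpoint on a \emph{bounded} interval would create a second endpoint requiring its own reflection and the homogeneous estimates of Lemma~\ref{lem:maxreg} would no longer be available.
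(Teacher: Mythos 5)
Your overall architecture is the paper's: reflect $u$ evenly and $F,g$ oddly across the finite endpoint, apply Lemma~\ref{lem:energy} to the extensions, and restrict back to $\overline\I$. The norms of the extensions are controlled exactly as you say, and the restriction step at the end is fine. The gap is at the step you yourself single out as delicate: to rule out a singular contribution at $t=0$ you invoke the trace $u(0^+)\in\L^2_x$ coming from ``Corollary~\ref{cor:mainreg} localized to the half-infinite interval.'' No such localized statement is available. Corollary~\ref{cor:mainreg} is proved via Lemma~\ref{lem:maxreg}, i.e.\ a Duhamel/Fourier-multiplier argument on all of $\R^{n+1}$, which is genuinely global in time (homogeneous estimates of this kind fail on finite or half-infinite intervals without further work); Remark~\ref{rem:mainreg2} does not assert any local-in-time transfer. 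Worse, the continuity of $u$ up to the endpoint \emph{is} (part of) the conclusion of Corollary~\ref{cor:energy} itself, and in the paper it is obtained precisely \emph{by} the reflection argument you are constructing — so as written your justification is circular.

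The repair is elementary and is what the paper does: do not use any $\L^2_x$-valued trace at all. For fixed $\psi\in\cD(\R^n)$, the equation gives $\frac{\d}{\d t}\angle{u(t)}{\psi}=\angle{F(t)}{\nabla\psi}+\angle{g(t)}{\psi}$ in $\cD'(\I)$, and the right-hand side is locally integrable up to the endpoint (Cauchy--Schwarz/H\"older against the fixed $\psi$), so $t\mapsto\angle{u(t)}{\psi}$ is absolutely continuous on $\overline\I$. One then \emph{defines} the even extension $v$ as a distribution through tensor-product test functions, $\Angle{v}{f\otimes\psi}=\int_0^\infty\angle{u(t)}{\psi}(\overline f(t)+\overline f(-t))\,\d t$, and a single integration by parts shows $\partial_t v=-\div F_o+g_o$ with no mass at $t=0$, because the boundary term carries the factor $\overline f(0)-\overline f(-0)=0$. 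This also fixes a second, smaller lapse in your write-up: you set $\tilde u(t)\coloneqq u(-t)$ pointwise, but $u$ is a priori only a distribution on $\I\times\R^n$, not known to be a locally integrable function, so the extension must be defined distributionally as above. With these two corrections your proof coincides with the paper's.
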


\begin{proof}
It is enough to consider the case $\I=(a,\infty)$, $a\in \R$.   The strategy is to extend $u$ by even reflection at $t=a$  and  $F$ and $g$ by odd reflection at $t=a$ so that the assumptions of Lemma~\ref{lem:energy} apply to these extensions. The conclusion follows by restricting back to $\overline \I$.
However, some care needs to be taken since we do not assume \emph{a priori} that $u$ is a locally integrable function and we provide details for the convenience of the reader. Take $a=0$ for simplicity. We construct a distribution $v \in \cD'(\R^{n+1})$ (the even extension of $u$) verifying the hypothesis of Lemma~\ref{lem:energy} with the odd extensions of $F$ and $g$, and whose restriction to $(0,\infty)\times \R^{n}$ is $u$.  
 	
For any $\psi \in \cD(\R^n)$ we define the distribution $\angle{u(t)}{\psi}$ on $(0,\infty)$ by
	\begin{align*}
		\angle{\angle{u(t)}{\psi}}{f} \coloneqq \Angle{u}{f \otimes \psi},
	\end{align*}
where $f \otimes \psi$ is the tensor product. The equation $\partial_{t}u= -\div F+ g$ implies that for any $ \psi\in \cD(\R^n)$ we have $\angle{u(t)}{\psi}'=  \angle{F(t)}{\nabla \psi} +\angle{g(t)}{\psi}$ in $\cD'(0,\infty)$. As the right-hand side is locally integrable from the assumptions on $F$ and $g$, this shows that $\angle{u(t)}{\psi}$ can be identified with a continuous function on $(0,\infty)$ that extends continuously to $0$. We continue to use the suggestive notation $t \mapsto \angle{u(t)}{\psi}$. For $\psi\in \cD(\R^n)$ and $f\in \cD(\R)$ the formula 
$$
\Angle{v}{f\otimes \psi}=\int_{0}^\infty \angle{u(t)}{\psi}(\overline f(t)+\overline f(-t))\, \d t
$$
defines a distribution $v \in \cD'(\R^{n+1})$. (Recall that distributions in $\R^{n+1}=\R\times \R^n$ are uniquely determined on tensor products.) Taking $f$ supported in $(0,\infty)$ gives that $v=u$ in $ \cD'((0,\infty)\times \R^{n})$. Next, integration by parts shows that 
\begin{align*}
\Angle{v}{f'\otimes \psi} &= - \int_{0}^\infty (\angle{F(t)}{\nabla \psi} +\angle{g(t)}{\psi})(\overline f(t)-\overline f(-t))\, \d t
\\
&
= -\int_{-\infty}^\infty (\angle{F_{o}(t)}{\nabla \psi} +\angle{g_{o}(t)}{\psi})\overline f(t)\, \d t,
\end{align*}
where $F_{o}$ and $g_{o}$ are the odd extensions of $F$ and $g$, respectively. Thus,  
$\partial_{t} v= -\div  F_{o}+  g_{o}$ in  $\cD'(\R^{n+1})$. 
Lastly, 
\begin{align*}
\Angle{v}{f\otimes -\div \psi} &=  \int_{0}^\infty \angle{\nabla u(t)}{ \psi} (\overline f(t)+\overline f(-t))\, \d t
\\
&
= \int_{-\infty}^\infty \angle{(\nabla u (t))_{e}}{ \psi} \overline f(t)\, \d t,
\end{align*}
where $(\nabla u)_{e}$ is the even extension of $\nabla u$, so that $\nabla v=  (\nabla u)_{e}$ in $\cD'(\R^{n+1})$. We have proved our claim. 
\end{proof}

The conclusion of Lemma~\ref{lem:energy} can be polarized, given two functions $u, \tilde{u}$ that verify the assumptions {(with possibly different pairs $(r,q)$)}. Due to the extendability that we have seen in  the previous proof, the same also works with open, half-infinite intervals and the conclusion reads as follows.

\begin{cor}
\label{cor:energy2}
If $u,\tilde u$ satisfy the same assumptions as in Corollary~\ref{cor:energy} on two open half-infinite intervals $\I$ and $\J$, then  
after eliminating a constant from each of $u$ and $\tilde u$, the function $t\mapsto \angle {u(t)}{\tilde u(t)}$ is absolutely continuous on $\overline{\I\cap \J}$ and
\begin{align}
\label{eq:energypolarisation}
\begin{split}
\angle {u(\tau)}{\tilde u (\tau)}-\angle {u(\sigma)}{\tilde u (\sigma)}&=  \int_{\sigma}^\tau \angle{F(t)}{\nabla \tilde u(t)} + \angle {g(t)}{\tilde u(t)}\, \d t\\ 
&\qquad + \int_{\sigma}^\tau \angle{\nabla  u(t)}{\tilde F(t)} + \angle {u(t)}{\tilde g(t)}\, \d t,
\end{split}
\end{align}
whenever $\sigma,\tau\in \overline{\I\cap \J}$, $\sigma<\tau$.
\end{cor}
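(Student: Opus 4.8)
The plan is to obtain Corollary~\ref{cor:energy2} by polarization from the quadratic identity of Corollary~\ref{cor:energy}, in exact analogy with how one passes from $\|u(\tau)\|^2 - \|u(\sigma)\|^2$ to $\angle{u(\tau)}{\tilde u(\tau)} - \angle{u(\sigma)}{\tilde u(\sigma)}$ in a Hilbert space. First I would reduce to the case of a common interval: since $\I$ and $\J$ are open half-infinite intervals, $\I \cap \J$ is again an open half-infinite interval (or empty, in which case there is nothing to prove), and by Corollary~\ref{cor:energy} applied on $\I$ and on $\J$ separately we may, after subtracting a constant from each, assume $u \in \C_0(\overline{\I};\L^2_x)$ and $\tilde u \in \C_0(\overline{\J};\L^2_x)$; restricting both to $\overline{\I\cap\J}$ the hypotheses of Corollary~\ref{cor:energy} hold there for $u$ (with pair $(r,q)$, data $F,g$) and for $\tilde u$ (with pair $(\tilde r,\tilde q)$, data $\tilde F,\tilde g$).

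Next I would run the polarization. The cleanest route is to note that the bilinear form we want on the left, $(u,\tilde u)\mapsto \angle{u(\tau)}{\tilde u(\tau)} - \angle{u(\sigma)}{\tilde u(\sigma)}$, and the bilinear form on the right, given by the two line integrals in \eqref{eq:energypolarisation}, are both \emph{sesquilinear} (conjugate-linear in $\tilde u$, linear in $u$) in the pair of data $(u,F,g)$ and $(\tilde u,\tilde F,\tilde g)$, and they agree on the diagonal up to the factor $2\Re$ by Corollary~\ref{cor:energy}. Concretely I would apply the real identity \eqref{eq:energy2} to $u+\tilde u$, to $u-\tilde u$, to $u + \i \tilde u$ and to $u - \i \tilde u$ — each of which still satisfies the hypotheses of Corollary~\ref{cor:energy} on $\I\cap\J$ with data $F\pm\tilde F$, $g\pm\tilde g$ and the obvious $\i$-scalings (here one uses that the admissibility range is closed under finite sums of right-hand sides of the admissible and $(\infty,2)$ types, exactly as recorded in Remark~\ref{rem:mainreg2}~(ii) and the remark following Lemma~\ref{lem:energy}) — and then take the standard polarization combination $\tfrac14\big(\|u+\tilde u\|^2 - \|u-\tilde u\|^2 + \i\|u-\i\tilde u\|^2 - \i\|u+\i\tilde u\|^2\big)$ of the four resulting identities. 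On the left this recombines to $\angle{u(\tau)}{\tilde u(\tau)} - \angle{u(\sigma)}{\tilde u(\sigma)}$ by the polarization identity in $\L^2_x$; on the right, each $2\Re\int_\sigma^\tau$ term recombines, by the scalar polarization identity applied pointwise under the integral, to the corresponding (unconjugated, un-$\Re$'d) cross terms $\int_\sigma^\tau \angle{F(t)}{\nabla\tilde u(t)} + \angle{g(t)}{\tilde u(t)}\,\d t + \int_\sigma^\tau \angle{\nabla u(t)}{\tilde F(t)} + \angle{u(t)}{\tilde g(t)}\,\d t$, which is precisely the right-hand side of \eqref{eq:energypolarisation}.

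Finally, absolute continuity of $t \mapsto \angle{u(t)}{\tilde u(t)}$ on $\overline{\I\cap\J}$ follows from the same polarization: it is a $\tfrac14$-combination of the four functions $t\mapsto\|u(t)\pm\tilde u(t)\|^2_{\L^2_x}$ and $t\mapsto\|u(t)\mp\i\tilde u(t)\|^2_{\L^2_x}$, each of which is absolutely continuous on $\overline{\I\cap\J}$ by Corollary~\ref{cor:energy}, and the space of absolutely continuous functions on a bounded-below interval is a vector space. (Alternatively one sees it directly: the right-hand side of \eqref{eq:energypolarisation}, as a function of $\tau$, is an indefinite Lebesgue integral of an $\L^1$ function.) I expect no real obstacle here — the only points needing a word of care are checking that $u\pm\tilde u$ etc.\ genuinely fall under Corollary~\ref{cor:energy} when the two functions come with \emph{different} admissible pairs (handled by the finite-sum remark cited above) and that the interval on which one works is $\overline{\I\cap\J}$, which is legitimate because restriction of the hypotheses to a sub-half-line is immediate. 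A one-line alternative to the $\pm,\pm\i$ bookkeeping is simply to invoke that a bounded sesquilinear form is determined by its quadratic form via polarization, and apply this to the two sesquilinear forms above after fixing $\sigma,\tau$; I would likely present the explicit four-term version for concreteness.
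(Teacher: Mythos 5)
Your core idea — polarize the quadratic identity of Lemma~\ref{lem:energy}/Corollary~\ref{cor:energy}, using the remark on finite sums of right-hand sides to cope with the two different pairs $(r,q)$ and $(\tilde r,\tilde q)$ — is exactly the route the paper takes, and that part is fine. The genuine gap is in your reduction step: it is not true that the intersection of two open half-infinite intervals is again half-infinite or empty. If $\I=(s,\infty)$ and $\J=(-\infty,t)$ with $s<t$, then $\I\cap\J=(s,t)$ is a bounded interval, and Corollary~\ref{cor:energy} — which is stated (and proved, via reflection to $\R$) only for half-infinite intervals — cannot be invoked on $\I\cap\J$ as you do; hence your four applications of \eqref{eq:energy2} to $u\pm\tilde u$ and $u\pm\i\tilde u$ on $\I\cap\J$ are unjustified precisely in this case. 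This is not a discardable corner case: the mixed-orientation configuration is the one actually needed later, e.g.\ the interval $(s,t)$ in the proof of Theorem~\ref{thm:Gammats}~(iii), so the corollary must cover bounded $\I\cap\J$.

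The repair is what the paper's phrase ``due to the extendability seen in the previous proof'' refers to: extend $u$ (with data $F,g$) from $\I$ to all of $\R$ and $\tilde u$ (with data $\tilde F,\tilde g$) from $\J$ to all of $\R$ by the even/odd reflection construction in the proof of Corollary~\ref{cor:energy}. The extensions satisfy the hypotheses of Lemma~\ref{lem:energy} on $\R$, and so do their four linear combinations (again by the finite-sum remark); polarizing Lemma~\ref{lem:energy} on $\R$ and then restricting to $\sigma,\tau\in\overline{\I\cap\J}$, where the extensions and their data coincide with the originals, gives \eqref{eq:energypolarisation} and the absolute continuity on $\overline{\I\cap\J}$. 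Two minor further points: with the paper's convention that pairings are linear in the first slot and conjugate-linear in the second (see the formula following Definition~\ref{def:Delta-sol}), your explicit combination $\tfrac14\big(\|u+\tilde u\|^2-\|u-\tilde u\|^2+\i\|u-\i\tilde u\|^2-\i\|u+\i\tilde u\|^2\big)$ reconstructs $\angle{\tilde u}{u}$ rather than $\angle{u}{\tilde u}$, so the signs of the two imaginary terms must be interchanged (or one argues, as you suggest, that two sesquilinear forms agreeing on the diagonal coincide); and the cross terms on the right-hand side come out with $\angle{\nabla u}{\tilde F}$ and $\angle{u}{\tilde g}$ only after using $2\Re\angle{\tilde F}{\nabla u}=2\Re\angle{\nabla u}{\tilde F}$, which is worth a word when writing the argument out.
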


 \subsection{Existence and uniqueness results}
\label{sec:existenceanduniqueness}

We come back to the study of parabolic equations.  Up until Section~\ref{sec:invertibility} included,  we
\begin{center}
\textbf{fix a single compatible pair $\boldsymbol{(\tilde r_{1}, \tilde q_{1})}$ for lower order coefficients \\ and its corresponding admissible conjugate pair  $\boldsymbol{(r_{1},q_{1})}$.} 
\end{center}
Recall that Proposition~\ref{prop:Lt} yields boundedness of an operator $\cH:\cVdot\to \cVdot'$, which acts as $\partial_{t}+\Lt $.  We develop the existence and uniqueness theory under the hypothesis 
$$
\mathbf{(H_{0}) \qquad \boldsymbol{\cH} \ and \ \boldsymbol{\cH^*} \ are \ invertible.}
$$
This means that the constants in our estimates will also depend on $\Lambda=\|A\|_{\infty}$, $P_{\tilde r_{1}, \tilde q_{1}}$ and the norm of the inverse of $\cH$. We do not make this dependence explicit until Section~\ref{sec:invertibility}, where we discuss a sufficient condition for invertibility. {So, we write for instance $C(n,q,r)$ to mean dependence on $n,q,r$ and possibly the aforementioned quantities.}
 
In the following, we only state results involving the operator $\partial_{t}+\Lt $. All results apply \textit{mutatis mutandis} to $-\partial_{t}+\Ltstar$ since  both operators are indistinguishable from the assumptions at this stage.  We are going to prove uniqueness and existence in the class of $\Deldot^{r_{1},q_{1}}$-solutions that we introduced at the end of Section~\ref{sec:homogeneousparabolicsystems}.

\begin{prop}
\label{prop:regularityofDeldotr1q1solutions} Any $\Deldot^{r_{1},q_{1}}$-solution of $\partial_{t}u+\Lt u=f\in  \L^2_{t}\Hdot^{-1}_{x}+ \L^{r'}_{t}\L^{q'}_{\vphantom{t} x}$ for $(r,q)$ an admissible pair  or $(r,q)=(\infty,2)$ belongs to $\C_{0}(\L^2_{x})$ with
$$\|u\|_{\L^\infty_{t}\L^2_{x}} \le C(n,q_{1},r_{1})\|u\|_{\Deldot^{r_{1}, q_{1}}}+ C(n,q,r)\|f\|_{\L^2_{t}\Hdot^{-1}_{x}+ \L^{r'}_{t}\L^{q'}_{\vphantom{t} x}}.$$
 \end{prop}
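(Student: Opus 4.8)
The plan is to read off $\partial_{t}u$ from the equation, land it in a finite sum of spaces of the kind covered by Corollary~\ref{cor:mainreg}, apply that corollary, and then show the resulting constant vanishes.

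First I would unwind the definition of a $\Deldot^{r_{1},q_{1}}$-solution: it gives $\nabla u\in\LL$, $u\in\L^{r_{1}}_{t}\L^{q_{1}}_{\vphantom{t} x}$, and $\partial_{t}u=f-\Lt u$ in $\cD'(\R^{n+1})$. By Remark~\ref{rem:Delta} we have $\Lt u\in(\Deldot^{r_{1},q_{1}})'=\L^2_{t}\Hdot^{-1}_{x}+\L^{r_{1}'}_{t}\L^{q_{1}'}_{\vphantom{t} x}$ with $\|\Lt u\|_{(\Deldot^{r_{1},q_{1}})'}\le(\|A\|_{\infty}+P_{\tilde r_{1},\tilde q_{1}})\|u\|_{\Deldot^{r_{1},q_{1}}}$, and by Lemma~\ref{lem:compadm} the pair $(r_{1},q_{1})$ is admissible. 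Together with the hypothesis on $f$ this yields
$$\partial_{t}u\in\L^2_{t}\Hdot^{-1}_{x}+\L^{r_{1}'}_{t}\L^{q_{1}'}_{\vphantom{t} x}+\L^{r'}_{t}\L^{q'}_{\vphantom{t} x},$$
a finite sum of spaces of exactly the type allowed in Corollary~\ref{cor:mainreg} (recall $(r,q)$ is admissible or $(\infty,2)$ by assumption). Invoking Corollary~\ref{cor:mainreg} in its additive form, cf.\ Remark~\ref{rem:mainreg2}~(ii), then produces a constant $c\in\IC$ with $u-c\in\C_{0}(\L^2_{x})$ and
$$\sup_{t\in\R}\|u(t)-c\|_{\L^2_{x}}\le C\big(\|\nabla u\|_{\LL}+\|\partial_{t}u\|_{\L^2_{t}\Hdot^{-1}_{x}+\L^{r_{1}'}_{t}\L^{q_{1}'}_{\vphantom{t} x}+\L^{r'}_{t}\L^{q'}_{\vphantom{t} x}}\big).$$

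Next I would check $c=0$; this is the one step that needs a small extra idea, since $u-c\in\L^\infty_{t}\L^2_{x}$ by itself does not rule out $c\neq 0$. Fix $t_{0}$ with $u(t_{0})\in\L^{q_{1}}_{x}$ (almost every $t_{0}$ works, as $r_{1}<\infty$). For any ball $B_{R}\subset\R^n$, H\"older's inequality (using $q_{1}\ge 2$) gives $\|u(t_{0})\|_{\L^2(B_{R})}\le|B_{R}|^{1/2-1/q_{1}}\|u(t_{0})\|_{\L^{q_{1}}_{x}}$, hence
$$|c|\,|B_{R}|^{1/2}\le\|u(t_{0})\|_{\L^2(B_{R})}+\|u(t_{0})-c\|_{\L^2_{x}}\le|B_{R}|^{1/2-1/q_{1}}\|u(t_{0})\|_{\L^{q_{1}}_{x}}+\sup_{t\in\R}\|u(t)-c\|_{\L^2_{x}}.$$
Dividing by $|B_{R}|^{1/2}$ and letting $R\to\infty$, so that $|B_{R}|^{-1/q_{1}}\to 0$ because $q_{1}<\infty$, forces $c=0$; thus $u\in\C_{0}(\L^2_{x})$.

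Finally I would collect constants: insert $\|\nabla u\|_{\LL}\le\|u\|_{\Deldot^{r_{1},q_{1}}}$ and $\|\Lt u\|_{(\Deldot^{r_{1},q_{1}})'}\le(\|A\|_{\infty}+P_{\tilde r_{1},\tilde q_{1}})\|u\|_{\Deldot^{r_{1},q_{1}}}$ into the displayed estimate, keep $\|f\|_{\L^2_{t}\Hdot^{-1}_{x}+\L^{r'}_{t}\L^{q'}_{\vphantom{t} x}}$ as a separate term, and absorb the dependence on $\|A\|_{\infty}$ and $P_{\tilde r_{1},\tilde q_{1}}$ into the constants as per the standing convention of this section; this gives the stated inequality with the split into $C(n,q_{1},r_{1})\|u\|_{\Deldot^{r_{1},q_{1}}}$ and $C(n,q,r)\|f\|$. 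The only genuinely non-mechanical step is the elimination of $c$; everything else is bookkeeping with the spaces and estimates already in place from Sections~\ref{sec:preliminaries}--\ref{sec:integralequalities}, so I do not expect a serious obstacle.
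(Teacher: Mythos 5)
Your proposal is correct and follows essentially the same route as the paper: read off $\partial_{t}u=f-\Lt u$ using Remark~\ref{rem:Delta}, apply Corollary~\ref{cor:mainreg} in the additive form of Remark~\ref{rem:mainreg2}~(ii), and then kill the constant. Your ball-growth argument for $c=0$ is just an explicit unwinding of the paper's one-line observation that the constant must vanish because $u\in \Deldot^{r_{1},q_{1}}$ (a nonzero constant cannot lie in $\L^{q_{1}}_{x}+\L^2_{x}$ with both exponents finite), so there is no substantive difference.
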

 
\begin{proof} 
We have that $\Lt u \in (\Deldot^{r_{1},q_{1}})'=\L^2_{t}\Hdot^{-1}_{x}+ \L^{r_{1}'}_{t}\L^{q_{1}'}_{\vphantom{t} x}$ with bound controlled by $\|u\|_{\Deldot^{r_{1},q_{1}}}$ according to Remark~\ref{rem:Delta}, so that $\partial_{t}u=-\Lt u+ f \in \L^2_{t}\Hdot^{-1}_{x}+ \L^{r_{1}'}_{t}\L^{q_{1}'}_{\vphantom{t} x}+ \L^{r'}_{t}\L^{q'}_{\vphantom{t} x}$.   Remark~\ref{rem:mainreg2}~(ii) yields  the desired conclusion up to a constant for $u$ and the constant  must be 0 as $u\in \Deldot^{r_{1},q_{1}}$.  
\end{proof}

\begin{thm}
\label{thm:uniqueness}
Assume $\mathbf{(H_0)}$.   If $u$ is a $\Deldot^{r_{1},q_{1}}$-solution of $\partial_{t}u+\Lt u= 0$, then $u=0$.
\end{thm}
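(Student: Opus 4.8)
The plan is to reduce uniqueness for $\cH$ to the invertibility hypothesis $\mathbf{(H_0)}$. Suppose $u$ is a $\Deldot^{r_{1},q_{1}}$-solution of $\partial_{t}u+\Lt u=0$. The key point I would establish is that $u$ actually lies in the variational space $\cVdot$; once this is known, the equation $\cH u = 0$ holds in $\cVdot'$ (the pairing against $\cVdot$ agreeing with the distributional pairing by density of $\cS$ and Proposition~\ref{prop:Lt}), and invertibility of $\cH$ forces $u=0$ immediately.

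To upgrade $u$ from $\Deldot^{r_{1},q_{1}}$ to $\cVdot$, I would argue as in Proposition~\ref{prop:regularityofDeldotr1q1solutions}: since $u\in\Deldot^{r_{1},q_{1}}$, Remark~\ref{rem:Delta} gives $\Lt u\in(\Deldot^{r_{1},q_{1}})'=\L^2_{t}\Hdot^{-1}_{x}+\L^{r_{1}'}_{t}\L^{q_{1}'}_{\vphantom{t} x}$, so from the equation $\partial_{t}u=-\Lt u\in\L^2_{t}\Hdot^{-1}_{x}+\L^{r_{1}'}_{t}\L^{q_{1}'}_{\vphantom{t} x}$. Since $(r_{1},q_{1})$ is admissible (Lemma~\ref{lem:compadm}) and $\nabla u\in\LL$, Corollary~\ref{cor:mainreg} applies and yields a constant $c\in\IC$ with $u-c\in\cVdot$. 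But constants are not in $\cVdot$ while $u\in\Deldot^{r_{1},q_{1}}\subset\L^{r_{1}}_{t}\L^{q_{1}}_{\vphantom{t} x}$ forces $c=0$ (a nonzero constant is not in $\L^{r_{1}}_{t}\L^{q_{1}}_{\vphantom{t} x}$ as $r_{1},q_{1}<\infty$), so $u\in\cVdot$.

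Then I would observe that for every $v\in\cVdot\subset\Deldot^{r_{1},q_{1}}$ the distributional identity \eqref{eq:Deltasol} with test functions, extended by density of $\cS(\R^{n+1})$ in $\Deldot^{r_{1},q_{1}}$ and in $\cVdot$, gives $\Angle{\cH u}{v}=\Angle{\partial_{t}u}{v}+\int_{\R}\angle{A(t)\nabla u(t)}{\nabla v(t)}+\angle{\beta u(t)}{v(t)}\,\d t=0$; that is, $\cH u=0$ in $\cVdot'$. By $\mathbf{(H_0)}$, $\cH$ is injective, hence $u=0$.

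I do not expect a serious obstacle here: the argument is essentially a bookkeeping assembly of the regularity machinery already built (Remark~\ref{rem:Delta}, Corollary~\ref{cor:mainreg}, Proposition~\ref{prop:Lt}) plus invoking $\mathbf{(H_0)}$. The one place requiring a little care is the passage from the distributional formulation (test functions in $\cD(\R^{n+1})$) to the variational formulation (test functions in $\cVdot$), which rests on the density statements recorded after the definition of $\Deldot^{r,q}$ and after the definition of $\cVdot$, together with the continuity estimates for $\Lt$ and $\partial_t$ on these spaces from Proposition~\ref{prop:Lt} and Remark~\ref{rem:Delta}. The elimination of the additive constant via the $\L^{r_1}_t\L^{q_1}_x$ membership is the only other subtlety, and it is immediate once stated.
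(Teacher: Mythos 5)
Your proof is correct and follows essentially the same route as the paper: apply Corollary~\ref{cor:mainreg} to get $u-c\in\cVdot$, kill the constant, and conclude $\cH u=0$ so that $\mathbf{(H_0)}$ forces $u=0$. The only (immaterial) difference is that the paper eliminates $c$ via $u\in\C_{0}(\L^2_{x})$ from Proposition~\ref{prop:regularityofDeldotr1q1solutions}, whereas you use the membership $u\in\L^{r_{1}}_{t}\L^{q_{1}}_{\vphantom{t} x}$ with $r_{1},q_{1}<\infty$; both work, and your extra care about passing from test functions to $\cVdot$ test elements is a point the paper leaves implicit.
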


\begin{proof}
We may apply Corollary~\ref{cor:mainreg}, so that $u-c\in \cVdot\cap \C_{0}(\L^2_{x})$. The constant $c$ vanishes as   $u\in \C_{0}(\L^2_{x})$ from the above proposition. Thus, we have $u\in \cVdot$ with  $\cH u=0$ and $u=0$ follows by $\mathbf{(H_0)}$.
 \end{proof}

\begin{thm}
\label{thm:Fg}
Assume $\mathbf{(H_0)}$. Let $(r,q)$ be any admissible pair, $F\in \LL$ and $g\in \L^{r'}_{t}\L^{q'}_{\vphantom{t} x}$. Then   $u \coloneqq \cH^{-1}(-\div F+g)\in \cVdot$ is the unique   $\Deldot^{r_{1},q_{1}}$-solution  of 
 \begin{equation}
 \label{eq:Fg} 
\partial_{t}u+\Lt u= -\div F+g.
\end{equation}
 Moreover, it belongs to $\C_{0}(\L^2_{x})$ with  
\begin{equation}
 \label{eq:uLinftyL2}
\|u\|_{\L^\infty_{t}\L^2_{x}} \le C(n,q,r)(\|F\|_{\LL}+ \|g\|_{\L^{r'}_{t}\L^{q'}_{\vphantom{t} x}}).
\end{equation}
\end{thm}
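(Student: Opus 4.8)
The plan is to exploit the variational invertibility $\mathbf{(H_0)}$ together with the embeddings and regularity results already established. First I would observe that $-\div F + g$ lies in $\cVdot'$: indeed $-\div F \in \L^2_t\Hdot^{-1}_x \subset \cVdot'$ since $F \in \LL$, and $g \in \L^{r'}_t\L^{q'}_x \subset \cVdot'$ by Lemma~\ref{ref:embedding} (the dual of the embedding $\cVdot \hookrightarrow \Deldot^{r,q}$, noting $(\Deldot^{r,q})' \hookrightarrow \cVdot'$ and $\L^{r'}_t\L^{q'}_x \subset (\Deldot^{r,q})'$). Hence $u \coloneqq \cH^{-1}(-\div F + g) \in \cVdot$ is well-defined, and by construction $\Angle{\cH u}{v} = \Angle{-\div F + g}{v}$ for all $v \in \cVdot$, with $\|u\|_{\cVdot} \le C(\|F\|_{\LL} + \|g\|_{\L^{r'}_t\L^{q'}_x})$.

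Next I would check that this $u$ is a $\Deldot^{r_1,q_1}$-solution in the sense of Definition~\ref{def:Delta-sol}. Membership $u \in \Deldot^{r_1,q_1}$ is immediate from $\cVdot \hookrightarrow \Deldot^{r_1,q_1}$ (Lemma~\ref{ref:embedding}, since $(r_1,q_1)$ is admissible by Lemma~\ref{lem:compadm}). To see that the equation holds in $\cD'(\R^{n+1})$, I would test against $\tphi \in \cD(\R^{n+1}) \subset \cS(\R^{n+1}) \subset \cVdot$ and unwind the definition~\eqref{eq:defcH} of $\cH$: the identity $\Angle{\cH u}{\tphi} = \Angle{-\div F + g}{\tphi}$ is exactly~\eqref{eq:Deltasol} once one writes $\Angle{\partial_t u}{\tphi} = -\Angle{u}{\partial_t\tphi}$ (valid for Schwartz test functions by Fourier transform / integration by parts) and interprets the duality pairings with $-\div F$ and $g$ as the Lebesgue integrals $\Angle{F}{\nabla\tphi}$ and $\Angle{g}{\tphi}$. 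Uniqueness of the $\Deldot^{r_1,q_1}$-solution is then precisely Theorem~\ref{thm:uniqueness}: if $u_1, u_2$ are two such solutions, their difference solves $\partial_t w + \Lt w = 0$ in the $\Deldot^{r_1,q_1}$ class, hence $w = 0$.

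For the final assertion, the continuity-in-time and the bound~\eqref{eq:uLinftyL2}, I would invoke Proposition~\ref{prop:regularityofDeldotr1q1solutions} with the right-hand side $f = -\div F + g \in \L^2_t\Hdot^{-1}_x + \L^{r'}_t\L^{q'}_x$: this gives $u \in \C_0(\L^2_x)$ with $\|u\|_{\L^\infty_t\L^2_x} \le C(n,q_1,r_1)\|u\|_{\Deldot^{r_1,q_1}} + C(n,q,r)\|f\|_{\L^2_t\Hdot^{-1}_x + \L^{r'}_t\L^{q'}_x}$. Combining this with the $\cVdot$-bound from the first paragraph (and $\|u\|_{\Deldot^{r_1,q_1}} \lesssim \|u\|_{\cVdot}$, $\|f\|_{\L^2_t\Hdot^{-1}_x + \L^{r'}_t\L^{q'}_x} \le \|F\|_{\LL} + \|g\|_{\L^{r'}_t\L^{q'}_x}$) yields~\eqref{eq:uLinftyL2} with a constant $C(n,q,r)$ (absorbing the dependence on the fixed pair $(r_1,q_1)$ and on $\Lambda$, $P_{\tilde r_1,\tilde q_1}$, $\|\cH^{-1}\|$ as per the running convention). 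I do not expect a serious obstacle here; the only point requiring a little care is the bookkeeping of which duality pairings are genuine Lebesgue integrals versus abstract pairings, and making sure the test-function identity $\Angle{\partial_t u}{\tphi} = -\Angle{u}{\partial_t\tphi}$ is legitimate for $u \in \cVdot$ — but this follows directly from the Fourier-side description of $\cVdot$ and $\partial_t$ established in the proof of Proposition~\ref{prop:Lt}.
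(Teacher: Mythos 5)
Your argument is correct and follows essentially the same route as the paper: identify $-\div F+g$ as an element of $\cVdot'$ via Lemma~\ref{ref:embedding}, use $\mathbf{(H_0)}$ to define $u\in\cVdot\hookrightarrow\Deldot^{r_1,q_1}$, unwind the definition of $\cH$ to see that $u$ is a $\Deldot^{r_1,q_1}$-solution, invoke Theorem~\ref{thm:uniqueness} for uniqueness, and conclude with Proposition~\ref{prop:regularityofDeldotr1q1solutions} for the $\C_0(\L^2_x)$ regularity and the estimate. The extra details you supply (the integration by parts $\Angle{\partial_t u}{\tphi}=-\Angle{u}{\partial_t\tphi}$ and the identification of the pairings as Lebesgue integrals) are exactly what the paper leaves implicit, so there is nothing to correct.
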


\begin{proof}
As $-\div F +g \in \cVdot'$ by Lemma~\ref{ref:embedding}, we see that $u$ is well-defined in $\cVdot$ and belongs to $\Deldot^{r_{1},q_{1}}$  thanks to the same lemma. It is thus a $\Deldot^{r_{1},q_{1}}$-solution of  \eqref{eq:Fg}.
By Theorem~\ref{thm:uniqueness} it is unique and by Proposition~\ref{prop:regularityofDeldotr1q1solutions} it belongs to $\C_{0}(\L^2_{x})$. The estimate \eqref{eq:uLinftyL2} follows from that. 
\end{proof}

The previous theorem does not apply when $g \in \L^{1}_{t}\L^{2}_{x}$ since $(r,q)=(\infty,2)$ is not admissible and $\cH^{-1}g$ does not make sense. Yet, we can construct a $\Deldot^{r_1,q_1}$-solution that falls outside of the variational $\cVdot - \cVdot'$ setting.

\begin{thm}
\label{thm:L1L2} Assume $\mathbf{(H_0)}$. For every $g\in \L^1_{t}\L^2_{x}$, there exists a unique $\Deldot^{r_{1},q_{1}}$-solution of 
\begin{equation}
 \label{eq:L1L2}
\partial_{t}u+\Lt u= g.
\end{equation}
 Moreover, this solution belongs to  $\L^{r}_{t}\L^{q}_{\vphantom{t} x}$ for any admissible pair $(r,q)$ and to $\C_{0}(\L^2_{x})$ with 
\begin{equation}
 \label{eq:uL1L2toGrq}
\|u\|_{\Deldot^{r,q}} \le C(n,q,r)\|g\|_{\L^1_{t}\L^2_{x}}
 \end{equation}
 and  
\begin{equation}
 \label{eq:uLinftyL1L2}
\|u\|_{\L^\infty_{t}\L^2_{x}} \le C(n,q_{1},r_{1}) \|g\|_{\L^{1}_{t}\L^{2}_{x}}.
\end{equation}
\end{thm}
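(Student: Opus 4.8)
The plan is to reduce everything to the already-established theory by approximating $g \in \L^1_t \L^2_x$ by nicer right-hand sides and passing to the limit in the uniform $\Deldot^{r,q}$-estimates provided by the heat-equation lemmas. First I would decompose $g = g_1 + g_2$ where $g_1 \in \L^1_t\L^2_x$ is ``small at infinity'' and more importantly, fix an admissible pair $(r,q)$ and approximate: choose $g^{(k)} \in \L^1_t\L^2_x \cap (\L^2_t\Hdot^{-1}_x + \L^{r'}_t\L^{q'}_x)$ with $g^{(k)} \to g$ in $\L^1_t\L^2_x$ (for instance, $g^{(k)}$ smooth with compact support in $(t,x)$, which lies in every mixed space). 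For each $k$, Theorem~\ref{thm:Fg} (applied with $F = 0$ and the admissible pair) produces the unique $\Deldot^{r_1,q_1}$-solution $u^{(k)} = \cH^{-1}g^{(k)} \in \cVdot \cap \C_0(\L^2_x)$ of $\partial_t u^{(k)} + \Lt u^{(k)} = g^{(k)}$.

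The key point is to bound $u^{(k)}$ in $\Deldot^{r,q}$ and in $\L^\infty_t\L^2_x$ by $C\|g^{(k)}\|_{\L^1_t\L^2_x}$ uniformly in $k$, i.e.\ prove estimates \eqref{eq:uL1L2toGrq} and \eqref{eq:uLinftyL1L2} first for these approximants. This is where the $\L^1_t\L^2_x$ analogue of the variational machinery enters: I would mimic the Duhamel/duality computation behind Lemma~\ref{lem:maxreg} and Proposition~\ref{prop:maxregL1L2}, but now with $\cH$ in place of $\partial_t - \Delta$. Concretely, to estimate $\|\nabla u^{(k)}\|_{\LL}$ and $\|u^{(k)}\|_{\L^r_t\L^q_x}$, I would test the equation for $u^{(k)}$ against solutions of the \emph{adjoint} problem: given $\widetilde F \in \cS$ (resp.\ $\widetilde g \in \L^{r'}_t\L^{q'}_x \cap \cS$), let $\tilde v = (\cH^*)^{-1}(-\div\widetilde F)$ (resp.\ $(\cH^*)^{-1}\widetilde g$); by Theorem~\ref{thm:Fg} applied to $\cH^*$ this lies in $\cVdot \cap \C_0(\L^2_x)$ with $\|\tilde v\|_{\L^\infty_t\L^2_x} \le C\|\widetilde F\|_{\LL}$ (resp.\ $\le C\|\widetilde g\|_{\L^{r'}_t\L^{q'}_x}$, for the admissible conjugate pair controlling $\cVdot \hookrightarrow \Deldot^{r_1,q_1}$). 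Then $\Angle{u^{(k)}}{\cH^* \tilde v} = \Angle{g^{(k)}}{\tilde v}$ reads $\Angle{u^{(k)}}{-\div\widetilde F} = \Angle{g^{(k)}}{\tilde v}$, i.e.\ $\Angle{\nabla u^{(k)}}{\widetilde F}$ is bounded by $\|g^{(k)}\|_{\L^1_t\L^2_x}\|\tilde v\|_{\L^\infty_t\L^2_x} \le C\|g^{(k)}\|_{\L^1_t\L^2_x}\|\widetilde F\|_{\LL}$; taking the supremum over $\widetilde F$ gives $\|\nabla u^{(k)}\|_{\LL} \le C\|g^{(k)}\|_{\L^1_t\L^2_x}$, and the same duality with $\widetilde g$ gives the $\L^r_t\L^q_x$ bound, hence \eqref{eq:uL1L2toGrq}. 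For \eqref{eq:uLinftyL1L2} I would instead apply Proposition~\ref{prop:regularityofDeldotr1q1solutions} to $u^{(k)}$ with right-hand side in $\L^1_t\L^2_x$ (the $(\infty,2)$ case), getting $\|u^{(k)}\|_{\L^\infty_t\L^2_x} \le C(\|u^{(k)}\|_{\Deldot^{r_1,q_1}} + \|g^{(k)}\|_{\L^1_t\L^2_x})$, and $\|u^{(k)}\|_{\Deldot^{r_1,q_1}}$ is controlled by the already-proven $\Deldot^{r,q}$-estimate (using the admissible conjugate pair $(r_1,q_1)$ itself as the admissible pair $(r,q)$).

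With uniform bounds in hand, the differences $u^{(k)} - u^{(\ell)}$ solve $\partial_t w + \Lt w = g^{(k)} - g^{(\ell)}$ and satisfy $\|w\|_{\Deldot^{r,q}} + \|w\|_{\L^\infty_t\L^2_x} \le C\|g^{(k)}-g^{(\ell)}\|_{\L^1_t\L^2_x} \to 0$, so $(u^{(k)})$ is Cauchy in $\Deldot^{r,q}$, in $\L^\infty_t\L^2_x$, and in $\C_0(\L^2_x)$ (a closed subspace); let $u$ be the limit. Passing to the limit in the distributional formulation \eqref{eq:Deltasol} (legitimate since $\nabla u^{(k)} \to \nabla u$ in $\LL$, $u^{(k)}\to u$ in $\L^{r_1}_t\L^{q_1}_x$ via $\Deldot^{r_1,q_1}$, and the lower-order pairing $\Angle{\beta \cdot}{\tphi}$ is continuous on $\Deldot^{r_1,q_1}$ by Lemma~\ref{lem:beta}) shows $u$ is a $\Deldot^{r_1,q_1}$-solution of \eqref{eq:L1L2}, and the estimates \eqref{eq:uL1L2toGrq}, \eqref{eq:uLinftyL1L2} pass to the limit. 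Uniqueness is immediate: the difference of two $\Deldot^{r_1,q_1}$-solutions solves $\partial_t u + \Lt u = 0$ and vanishes by Theorem~\ref{thm:uniqueness}. The main obstacle I anticipate is organizing the duality argument cleanly — one must test against adjoint solves built by $(\cH^*)^{-1}$ and be careful that the relevant pairings $\Angle{u^{(k)}}{\cH^*\tilde v} = \Angle{\cH u^{(k)}}{\tilde v}$ are justified (both sides are genuine Lebesgue integrals because $u^{(k)},\tilde v \in \cVdot \cap \C_0(\L^2_x) \subset \Deldot^{r_1,q_1}$ and $g^{(k)}, \widetilde F$ are regular), and that the $\L^\infty_t\L^2_x$ bound on the adjoint solve uses exactly Theorem~\ref{thm:Fg} for $\cH^*$ with the \emph{admissible} pair, not the borderline $(\infty,2)$ one.
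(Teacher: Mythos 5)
Your proposal is correct and is essentially the paper's own argument: the heart in both cases is duality against adjoint solves, i.e.\ the bound $\Hstarinverse\colon(\Deldot^{r,q})'\to\C_0(\L^2_x)$ from Theorem~\ref{thm:Fg} applied to $\cH^*$, combined with approximation of $g$ by nice data, Proposition~\ref{prop:regularityofDeldotr1q1solutions} for the $\C_0(\L^2_x)$ bound, and Theorem~\ref{thm:uniqueness} for uniqueness. The only (cosmetic) difference is organizational: the paper defines $Tg$ for all $g\in\L^1_t\L^2_x$ at once via the duality formula $\Angle{Tg}{\tphi}=\Angle{g}{\Hstarinverse\tphi}$ and then verifies the equation by approximation, whereas you derive the same uniform bounds on approximants by testing against $(\cH^*)^{-1}$-solves and conclude by a Cauchy-sequence limit.
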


\begin{proof}
 The uniqueness follows from Theorem~\ref{thm:uniqueness}.  For the existence, let $T:\L^1_{t}\L^2_{x} \to \cD'(\R^{n+1})$ defined by
 \begin{equation}
 \label{eq:defT}
 \Angle {Tg}{\tphi} \coloneqq \Angle {g}{\Hstarinverse\tphi}, \quad g\in \L^1_{t}\L^2_{x}, \ \tphi \in \cD(\R^{n+1}).
 \end{equation}
 From  Theorem~ \ref{thm:Fg} applied to $\cH^*$, we have that the restriction
 $\Hstarinverse\colon (\Deldot^{ r, q})' \to \C_{0}(\L^2_{x})$ is bounded for any admissible pair $(r,q)$, so that 
  \begin{equation}
 \label{eq:L1L2toGrq}
\|Tg\|_{\Deldot^{r,q}} \le C(n,q,r)\|g\|_{\L^1_{t}\L^2_{x}}.
\end{equation}
We next show that   $u \coloneqq Tg$ is a $\Deldot^{r_{1},q_{1}}$-solution of  \eqref{eq:L1L2}. Observe that $u$  agrees with $\cH^{-1}g$ for  $g\in (\Deldot^{r,q})' \cap \L^1_{t}\L^2_{x}$, hence for $g$ in a dense subspace, for example $\cD(\R^{n+1})$. For those functions $g$, we have that $u$ is a $\Deldot^{r_{1},q_{1}}$-solution of  \eqref{eq:L1L2}. 
Secondly,  let $g\in \L^1_{t}\L^2_{x}$ and $(g_{k})$ be a sequence in $\cD(\R^{n+1})$ that converges to $g$ in $ \L^1_{t}\L^2_{x}$. Testing the equation for $u_{k} \coloneqq Tg_{k}$ against a function $\tphi \in \cD(\R^{n+1})$, we have
$$
-\Angle {u_{k}} {\partial_{t}\tphi} + \Angle {\Lt u_{k}}{\tphi} = \Angle {g_{k}}{\tphi}.
$$
The estimate \eqref{eq:L1L2toGrq} shows that $u_{k}$ converges to $u$ in the norm of $\Deldot^{r_{1},q_{1}}$, and this allows us to pass to the limit on the left-hand side, using Remark~\ref{rem:Delta} for the second term. This proves \eqref{eq:L1L2} and \eqref{eq:uL1L2toGrq}. 

Eventually, $u\in \C_{0}(\L^2_{x})$ follows from Proposition~\ref{prop:regularityofDeldotr1q1solutions} and we obtain the estimate  \eqref{eq:uLinftyL1L2}  from the estimate in that proposition if we plug in \eqref{eq:uL1L2toGrq} with $(r,q)=(r_{1},q_{1})$.
\end{proof}

{The next result is central for our constructive approach to fundamental solutions and Green operators.}

\begin{thm}
\label{thm:deltasL2} Assume $\mathbf{(H_0)}$. For all $s\in \R$ and $\psi\in \L^2_{x}$ there exists a unique $\Deldot^{r_{1},q_{1}}$-solution  of 
\begin{equation}
 \label{eq:deltasL2}
\partial_{t}u+\Lt u= \delta_{s}\otimes \psi,
\end{equation}
 where $\delta_{s}$ is the Dirac mass at $t=s$ and $\otimes$ denotes the tensor product. Moreover, 
this solution belongs to  $\Deldot^{r,q}$ for any admissible pair $(r,q)$ with 
\begin{equation}
 \label{eq:udeltasL2toGrq}
\|u\|_{\Deldot^{r,q}} \le C(n,q,r)\|\psi\|_{\L^2_{x}}.
\end{equation}
Furthermore, $u\in \C_{0}(\R\setminus \{s\}; \L^2_{x})$, it has $\L^2_{x}$ limits $u(s^\pm)$ when $t\to s^\pm$ with the jump relation
\begin{equation}
\label{eq:split}
u(s^+)-u(s^-)=\psi
\end{equation}
and $t\mapsto \|u(t)\|_{\L^2_{x}}^2$ has an absolutely continuous extension to each of $[s,\infty)$ and $(-\infty,s]$ with estimate
 \begin{equation}
 \label{eq:uLinftydeltas}
\sup_{t\ne s}\|u(t)\|_{\L^2_{x}} \le C(n,q_{1},r_{1})\|\psi\|_{\L^2_{x}},
\end{equation}
where $C(n,q_{1},r_{1})$ does not depend on $s$ and $\psi$.  
Eventually, on each of the intervals $(s,\infty)$ and $(-\infty,s)$, the solution $u$ is the restriction of a function in $\cVdot$.
\end{thm}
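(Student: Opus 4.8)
\emph{Plan.} Uniqueness is immediate from Theorem~\ref{thm:uniqueness}: the difference of two $\Deldot^{r_{1},q_{1}}$-solutions solves $\partial_{t}(\cdot)+\Lt(\cdot)=0$ and hence vanishes. For existence and the bounds \eqref{eq:udeltasL2toGrq} and \eqref{eq:uLinftydeltas}, I would regularize $\delta_{s}$. Fix $\rho\in\cD(\R)$ with $\rho\ge 0$ and $\int_{\R}\rho=1$, set $\rho_{\eps}\coloneqq\eps^{-1}\rho(\cdot/\eps)$ and $g_{\eps}\coloneqq\rho_{\eps}(\cdot-s)\otimes\psi$, so that $g_{\eps}\in\L^{1}_{t}\L^{2}_{x}$ with $\|g_{\eps}\|_{\L^{1}_{t}\L^{2}_{x}}=\|\psi\|_{\L^{2}_{x}}$. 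Let $u_{\eps}$ be the $\Deldot^{r_{1},q_{1}}$-solution of $\partial_{t}u_{\eps}+\Lt u_{\eps}=g_{\eps}$ furnished by Theorem~\ref{thm:L1L2}; it satisfies $\|u_{\eps}\|_{\Deldot^{r,q}}\le C(n,q,r)\|\psi\|_{\L^{2}_{x}}$ for every admissible $(r,q)$, $\|u_{\eps}\|_{\L^{\infty}_{t}\L^{2}_{x}}\le C(n,q_{1},r_{1})\|\psi\|_{\L^{2}_{x}}$ with constants independent of $\eps,s,\psi$, and, by construction \eqref{eq:defT}, $\Angle{u_{\eps}}{\tphi}=\Angle{g_{\eps}}{\Hstarinverse\tphi}$ for $\tphi\in\cD(\R^{n+1})$. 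Since $\Hstarinverse\tphi\in\C_{0}(\L^{2}_{x})$ (Theorem~\ref{thm:Fg} applied to $\cH^{*}$), this equals $\int_{\R}\rho_{\eps}(t-s)\angle{\psi}{(\Hstarinverse\tphi)(t)}\,\d t\to\angle{\psi}{(\Hstarinverse\tphi)(s)}$ as $\eps\to0$, so $(u_{\eps})$ converges in $\cD'(\R^{n+1})$ to the distribution $u$ given by $\Angle{u}{\tphi}\coloneqq\angle{\psi}{(\Hstarinverse\tphi)(s)}$.

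Because every admissible pair lies in $(1,\infty)^{2}$, each $\Deldot^{r,q}$ is reflexive, and $\L^{\infty}_{t}\L^{2}_{x}=(\L^{1}_{t}\L^{2}_{x})^{*}$; weak, respectively weak-$*$, compactness of bounded sets then transfers the above estimates to the $\cD'$-limit $u$, which yields \eqref{eq:udeltasL2toGrq} and $\|u\|_{\L^{\infty}_{t}\L^{2}_{x}}\le C(n,q_{1},r_{1})\|\psi\|_{\L^{2}_{x}}$. To see that $u$ solves \eqref{eq:deltasL2}, test $\partial_{t}u_{\eps}+\Lt u_{\eps}=g_{\eps}$ against $\tphi\in\cD(\R^{n+1})$ and let $\eps\to0$: the $\partial_{t}$-term converges since $u_{\eps}\to u$ in $\cD'$; $\Angle{\Lt u_{\eps}}{\tphi}\to\Angle{\Lt u}{\tphi}$ because $w\mapsto\Angle{\Lt w}{\tphi}$ is a bounded linear functional on $\Deldot^{r_{1},q_{1}}$ (Remark~\ref{rem:Delta}) and $u_{\eps}\rightharpoonup u$ there; and $\Angle{g_{\eps}}{\tphi}\to\angle{\psi}{\tphi(s)}=\Angle{\delta_{s}\otimes\psi}{\tphi}$. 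Hence $u$ is the unique $\Deldot^{r_{1},q_{1}}$-solution.

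Off $t=s$ the equation reads $\partial_{t}u+\Lt u=0$, i.e.\ $\partial_{t}u=-\div F+g$ with $F\coloneqq-(A\nabla u+\oa u)$ and $g\coloneqq-\ob\cdot\nabla u-au$. H\"older's inequality and the exponent relations in Definition~\ref{def:Pr1q1}, together with $u\in\Deldot^{r_{1},q_{1}}$, give $F\in\L^{2}(\I;\L^{2}_{x})$ and $g\in\L^{r_{1}'}(\I;\L^{q_{1}'}_{x})$ on each half-line $\I\in\{(s,\infty),(-\infty,s)\}$, with $(r_{1},q_{1})$ admissible (this is the computation of Lemma~\ref{lem:beta}). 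Corollary~\ref{cor:energy} applies on $\I$: up to an additive constant, $u\in\C_{0}(\overline\I;\L^{2}_{x})$, $t\mapsto\|u(t)\|_{\L^{2}_{x}}^{2}$ is absolutely continuous on $\overline\I$, and the energy identity holds; the constant vanishes since $u\in\L^{\infty}_{t}\L^{2}_{x}$. This yields $u\in\C_{0}(\R\setminus\{s\};\L^{2}_{x})$, the one-sided limits $u(s^{\pm})$, the absolute continuity statement, and \eqref{eq:uLinftydeltas} via $\sup_{t\ne s}\|u(t)\|_{\L^{2}_{x}}=\|u\|_{\L^{\infty}_{t}\L^{2}_{x}}$ with a constant independent of $s,\psi$. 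For \eqref{eq:split}, I would compute $\Angle{\partial_{t}u}{\tphi}=-\Angle{u}{\partial_{t}\tphi}$ for $\tphi\in\cD(\R^{n+1})$ by splitting the $t$-integral at $s$ and integrating by parts on each half-line (legitimate since $t\mapsto\angle{u(t)}{\cdot}$ is there absolutely continuous); the boundary contribution is $\angle{u(s^{+})-u(s^{-})}{\tphi(s)}$, and since $\partial_{t}u=-\Lt u$ off $s$ while $\Angle{\Lt u}{\tphi}=\int_{\R}\angle{\Lt u(t)}{\tphi(t)}\,\d t$ (Remark~\ref{rem:Delta}), comparison with $\Angle{\delta_{s}\otimes\psi}{\tphi}=\angle{\psi}{\tphi(s)}$ forces $u(s^{+})-u(s^{-})=\psi$.

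For the final assertion, $1_{(s,\infty)}u$ cannot serve as the $\cVdot$-extension on $(s,\infty)$ since it jumps in time, hence is not in $\Hdot^{1/2}_{t}\L^{2}_{x}$. Instead I would extend $u|_{(s,\infty)}$ evenly across $t=s$: letting $v$ be this even extension (together with the odd extensions $F_{o},g_{o}$ of $F,g$), constructed as a distribution as in the proof of Corollary~\ref{cor:energy}, one has $v=u$ on $(s,\infty)\times\R^{n}$, $\nabla v\in\LL$, and $\partial_{t}v=-\div F_{o}+g_{o}\in\L^{2}_{t}\Hdot^{-1}_{x}+\L^{r_{1}'}_{t}\L^{q_{1}'}_{\vphantom{t}x}$. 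Corollary~\ref{cor:mainreg} then yields a constant $c$ with $v-c\in\cVdot\cap\C_{0}(\L^{2}_{x})$; since $u|_{(s,\infty)}$ is continuous up to $s$, its even reflection $v$ lies in $\C_{0}(\R;\L^{2}_{x})$, so $c=0$ and $u|_{(s,\infty)}=v|_{(s,\infty)}$ with $v\in\cVdot$. The interval $(-\infty,s)$ is handled by the same reflection at $t=s$. The delicate step throughout is the second paragraph: transporting the $\L^{1}_{t}\L^{2}_{x}$ theory of Theorem~\ref{thm:L1L2} to the singular datum $\delta_{s}\otimes\psi$ by compactness, keeping all constants uniform, and passing to the limit in the term carrying the unbounded lower-order coefficients.
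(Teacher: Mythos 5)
Your proposal is correct and follows essentially the same route as the paper: the solution is the distribution $\Angle{u}{\tphi}=\angle{\psi}{(\Hstarinverse\tphi)(s)}$, obtained as the limit of the $\Deldot^{r_1,q_1}$-solutions with mollified data $\rho_\eps(\cdot-s)\otimes\psi$ via Theorem~\ref{thm:L1L2} and reflexivity of $\Deldot^{r,q}$, with the regularity, jump relation and $\cVdot$-extension handled through Corollaries~\ref{cor:energy}, \ref{cor:energy2} and \ref{cor:mainreg}. The only (harmless) deviations are technical: you get \eqref{eq:uLinftydeltas} by weak-$*$ lower semicontinuity of the uniform $\L^1_t\L^2_x\to\L^\infty_t\L^2_x$ bound instead of the energy identity, and you prove the jump relation and the reflected extension by integration by parts and the explicit identities $\partial_t v=-\div F_o+g_o$ rather than the paper's cutoff argument with $\theta_\eps$, all resting on the same lemmas.
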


\begin{proof}
The uniqueness follows from Theorem~\ref{thm:uniqueness}. For the existence,  given $s\in \R$, 
let $T_{s}:\L^2_{x} \to \cD'(\R^{n+1})$ be defined by
\begin{equation}
\label{eq:defTs}
\Angle {T_{s}\psi}{\tphi} \coloneqq \angle {\psi}{(\Hstarinverse\tphi)(s)}, \quad \psi\in \L^2_{x}, \ \tphi \in \cD(\R^{n+1}).
\end{equation}
For any    admissible pair $(r,q)$,  Theorem~ \ref{thm:Fg} applies to $\Hstarinverse$ so that   
$\Hstarinverse\colon (\Deldot^{r,q})'\to \C_{0}(\L^2_{x})$ is bounded and it follows that
\begin{equation}
\label{eq:deltasL2toGrq}
\|T_{s}\psi\|_{\Deldot^{r,q}} \le C(n,q,r) \|\psi\|_{\L^2_{x}}.
\end{equation} 
In particular,   $u \coloneqq T_{s}\psi$ satisfies \eqref{eq:udeltasL2toGrq}. 
It is no loss of generality to assume $s=0$ as the general argument is the same (or can be deduced from $s=0$ by a translation in time, which preserves all assumptions with uniform constants).

\medskip

\noindent \emph{Step 1: $u$ is a   $\Deldot^{r_{1},q_{1}}$-solution of  \eqref{eq:deltasL2}.}
 Let $(\varphi_{\varepsilon})$ be a mollifying sequence  as in the proof of Lemma~\ref{lem:energy}. We apply Theorem~\ref{thm:L1L2} to $g_{\varepsilon} \coloneqq \varphi_{\varepsilon}\otimes \psi \in  \L^1_{t}\L^2_{x}$. We test the equation for $u_{\varepsilon} \coloneqq Tg_{\varepsilon}$ against a  function $\tphi \in \cD(\R^{n+1})$ and pass to the limit in this equation as $\eps \to 0$. 
First 
\begin{align*}
\Angle {u_{\varepsilon}} {\tphi} = \Angle {g_{\varepsilon}} {\Hstarinverse\tphi} &= \int_{\R} \varphi_{\varepsilon}(s) \angle {\psi}{(\Hstarinverse \tphi)(s)}\, \d s
\\
&\to \angle {\psi}{(\Hstarinverse\tphi)(0)}=
\Angle {T_{0}\psi}{\tphi},
 \end{align*}
where we used  $\Hstarinverse\tphi\in \C_{0}(\L^2_{x})$. Replacing $\tphi$ by ${\partial_{t}\tphi}$, we have that
$$
\Angle {u_{\varepsilon}} {\partial_{t}\tphi} \to \Angle {T_{0}\psi} {\partial_{t}\tphi}
$$
and similarly
$$
\Angle {g_{\varepsilon}}{\tphi}= \int_{\R} \varphi_{\varepsilon}(s) \angle {\psi}{\tphi(s)}\, \d s\to \angle {\psi}{\tphi(0)}=
\Angle {\delta_{0}\otimes \psi}{\tphi}.
$$
Eventually, we have seen that $u_{\varepsilon}$ converges to $u \coloneqq T_{0}\psi$ in $\cD'(\R^{n+1})$. 
The estimate \eqref{eq:L1L2toGrq} shows that $(u_{\varepsilon})$ is uniformly bounded in $\Deldot^{r_{1},q_{1}}$, which is  a dual space (it is even reflexive). Hence, it converges also weakly-star in $\Deldot^{r_{1},q_{1}}$ to $u$ and so Remark~\ref{rem:Delta} shows that
$$\Angle {\Lt u_{\varepsilon}}{\tphi} \to \Angle {\Lt u}{\tphi}.$$ {This proves \eqref{eq:deltasL2} and \eqref{eq:udeltasL2toGrq}.}

\medskip

\noindent \emph{Step 2: Proof of \eqref{eq:uLinftydeltas}.} We can apply  the integral equality \eqref{eq:energy2} of Corollary~\ref{cor:energy},  in which the right-hand side is $-2\Re \int_{\sigma}^\tau \angle{\Lt u(t)}{ u(t)} \, \d t$, when $\sigma,\tau\in (-\infty,0)$ or $\sigma,\tau \in (0,\infty)$.  Thus, letting $\sigma\to -\infty$ in the first case or $\tau\to \infty$ in the second case and using the estimate in Remark~\ref{rem:Delta}, we obtain 
\begin{equation*}
 \sup_{t\ne 0}\|u(t)\|_{\L^2_{x}}^2 \le (\|A\|_{\infty}  + P_{ \tilde r_{1}, \tilde q_{1}}) \|u\|_{\Deldot^{ r_{1},  q_{1}}}^2.
\end{equation*}
We conclude, using  \eqref{eq:udeltasL2toGrq}.

\medskip

\noindent \emph{Step 3: Proof of \eqref{eq:split}}. Let $\tphi \in \cD(\R^{n+1})$
 and $\theta\colon \R\to \R$ even, smooth, $0$ on $[0,1]$ and $1$ on $[2,\infty)$. Set $\theta_{\varepsilon}(t) \coloneqq \theta(t/\varepsilon)$ when $\varepsilon>0$. Using $\tphi\, \theta_{\varepsilon}$ as a test function for the equation,
$$
-\Angle {u} {\partial_{t}(\tphi\, \theta_{\varepsilon})} + \Angle {\Lt u}{\tphi\, \theta_{\varepsilon}} = 0.
$$
Expanding the first term, we obtain 
$$
-\Angle {u} {(\partial_{t}\tphi)\, \theta_{\varepsilon})} + \Angle {\Lt u}{\tphi \,\theta_{\varepsilon}} = \Angle {u} {\tphi \, (\theta_{\varepsilon})'}.
$$
We now pass to the limit as $\varepsilon\to 0$. Using again  the duality between $\Deldot^{r_{1},q_{1}}$ and its pre-dual for the $\LL$ duality, 
$\Angle {\Lt u}{\tphi\, \theta_{\varepsilon}}\to \Angle {\Lt u}{\tphi}$ by dominated convergence. Similarly, $\Angle {u} {(\partial_{t}\tphi)\, \theta_{\varepsilon}} \to \Angle {u} {\partial_{t}\tphi }$. Hence, the left-hand side above converges to $\angle {\psi}{\tphi(0)}$. 
The right-hand side rewrites after a change of variable as
$$
\int_{\R} \theta'(t)\angle {u(\varepsilon t)}{\tphi(\varepsilon t)}\, \d t=
\int_{-\infty}^0 \theta'(t)\angle {u(\varepsilon t)}{\tphi(\varepsilon t)}\, \d t 
+ \int_{0}^\infty \theta'(t)\angle {u(\varepsilon t)}{\tphi(\varepsilon t)}\, \d t,
$$
which, by dominated convergence and the existence of limits from the left and the right at 0 from Corollary~\ref{cor:energy}, tends to 
$$
\int_{-\infty}^0 \theta'(t)\angle {u(0^-)}{\tphi(0)}\, \d t 
+ \int_{0}^\infty \theta'(t)\angle {u(0^+)}{\tphi(0)}\, \d t = - \angle {u(0^-)}{\tphi(0)}+ \angle {u(0^+)}{\tphi(0)}.
$$
This proves \eqref{eq:split}. 

\medskip

\noindent \emph{Step 4: On the left and right of $0$, $u$ is a restriction of an element in $\cVdot$.}
It remains to see this last point.
Consider $w$  the even  extension across $t=0$ of the restriction of $u$ to $(0,\infty)\times \R^n$. Using the same  functions $\tphi$ and $\theta$ as above, we have 
$$
\Angle {w} {\partial_{t}\tphi } = \lim_{\varepsilon\to 0} \Angle {w} {(\partial_{t}\tphi)\, \theta_{\varepsilon} }.
$$
Since $w$ and $\theta_{\varepsilon}$ are even in $t$, the only contribution of $\tphi$ is through its odd part $\tphi_{o}(t) =\frac 1 2(\tphi(t)-\tphi(-t))$ and 
$$\Angle {w} {(\partial_{t}\tphi)\, \theta_{\varepsilon} }= 2\Angle {u} {(\partial_{t}\tphi_{o})\, \theta^+_{\varepsilon} }= 2 \Angle {u} {\partial_{t}(\tphi_{o}\, \theta^+_{\varepsilon}) }- 2\Angle {u} {\tphi_{o}\, (\theta^+_{\varepsilon})' },
$$
where $\theta^+_{\varepsilon}$ is the restriction of $\theta_{\varepsilon}$ to 
$(0,\infty)$. 
The first term on the right-hand side equals $2  \Angle {\Lt u} {\tphi_{o}\, \theta^+_{\varepsilon} }$, which converges to $2  \Angle {\Lt u} {\tphi_{o} }=   \Angle {v} {\tphi }$, where $v$ is the odd extension of $\Lt u $ restricted to $(0,\infty)\times \R^n$. It is clearly an element of $(\Deldot^{r_{1},q_{1}})'$. 
For the  second term,  
$$
|\Angle {u} {\tphi_{o}\, (\theta^+_{\varepsilon})' }|\le  \int_{\varepsilon}^{2\varepsilon} |(\theta^+_{\varepsilon})'(t)| |\angle {u(t)}{\tphi_{o}(t)}|\, \d t \le \int_{\varepsilon}^{2\varepsilon} \frac {\|\theta'\|_{\infty}} \varepsilon\, \|u(t)\|_{\L^2_{x}}\|\tphi_{o}(t)\|_{\L^2_{x}}\, \d t.
$$
As $\|u(t)\|_{\L^2_{x}}$ is bounded and  $\|\tphi_{o}(t)\|_{\L^2_{x}} \le Ct$, we obtain a bound on the order of $\varepsilon$ and this term tends to $0$. 

In conclusion, we proved $\partial_t w \in (\Deldot^{r_{1},q_{1}})'$. Since $w \in \Deldot^{r_{1},q_{1}}\subset \L^2_{t}\Hdot^1_{x}$, we obtain $w \in \cVdot$ from Corollary~\ref{cor:mainreg} and since $w|_{(0,\infty)} = u$, we are done.

The same argument can be done with the restriction of $u$ to $(-\infty,0)\times \R^n$.
\end{proof}

\subsection{Green operators}
\label{sec:propagators}

Theorem~\ref{thm:deltasL2} can be used to construct Green operators for the parabolic equation and its adjoint. We borrow this terminology from \cite{Lions}.

\begin{defn} Assume $\mathbf{(H_0)}$ and let $s,t \in \R$ and $\psi,\tpsi \in \L^2_x$.
\begin{enumerate}
\item For $t\ne s$, define $\Ga(t,s)\psi$ as the value at time $t$  in $\L^2_{x}$ of the  $\Deldot^{r_{1},q_{1}}$-solution $u$  of 
$\partial_{t}u+\Lt u= \delta_{s}\otimes \psi$ in Theorem~\ref{thm:deltasL2}. 
\item For $s\ne t$, define $\tGa(s,t)\tpsi$ as the value at time $s$  in $\L^2_{x}$ of the  $\Deldot^{r_{1},q_{1}}$-solution $\tilde u$ of
$-\partial_{s}\tilde u+\Lt^*\tilde u= \delta_{t}\otimes \tpsi $
in Theorem~\ref{thm:deltasL2}. 
\end{enumerate}
The operators $\Ga(t,s)$ and $\tGa(s,t)$ are called the \emph{Green operators} for the parabolic operator $\partial_{t}+\Lt $ and the (adjoint) parabolic operator $-\partial_{t}+\Ltstar   $, respectively.
\end{defn}

We recall that the orbit  $G(\cdot,s)\psi$ was defined as $T_s \psi$   in \eqref{eq:defTs}, which reads as the ``double duality formula''
\begin{equation}
\label{eq:G vs Ts}
\Angle{G(\cdot,s)\psi}{\tphi} = \angle{\psi}{(\Hstarinverse \tphi)(s)}.
\end{equation}
Indeed, $G(\cdot,s)\psi$ and $\Hstarinverse \tphi$ are solutions of parabolic problems for adjoint operators.

Rephrasing parts of Theorem~\ref{thm:deltasL2} in terms of Green operators yields the following result.

\begin{prop} 
\label{prop:Gamma}
Assume $\mathbf{(H_0)}$.
\begin{enumerate}
\item Let $s\in \R$ and  $\psi \in \L^2_x$. The function $t\mapsto \Ga(t,s)\psi$ is in  $\C_{0}(\R\setminus \{s\}; \L^2_{x})$ and the following limits exist in $\L^2_{x}$:
\begin{align*}
	\P_{s}^\pm \psi\coloneqq \lim_{t\to s^\pm} \Ga(t,s)\psi.
\end{align*}
\item Let $t \in \R$ and  $\tpsi \in \L^2_x$.  The function $s\mapsto \tGa(s,t)\tpsi$ is in $\C_{0}(\R\setminus \{t\}; \L^2_{x})$ 
and the following limits exist in $\L^2_x$: 
\begin{align*}
	\tP_{t}^\pm \tpsi= \lim_{s\to t^\pm} \tGa(s,t)\tpsi.
\end{align*}
\item The operators $\Ga(t,s), \tGa(s,t)$ are uniformly bounded on $\L^2_{x}$ with respect to $(s,t)$ with $ t\ne s$.
\end{enumerate}
\end{prop}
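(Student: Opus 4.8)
The plan is to read off essentially all three assertions from Theorem~\ref{thm:deltasL2}, applied once to $\cH$ and once to $\cH^*$; only the uniformity of the bounds needs a word of care, but it too is already contained in the statement of that theorem.

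\emph{Item (i).} Fix $s\in\R$ and $\psi\in\L^2_x$, and let $u$ be the $\Deldot^{r_1,q_1}$-solution of $\partial_t u+\Lt u=\delta_s\otimes\psi$ produced by Theorem~\ref{thm:deltasL2}. By definition $\Ga(t,s)\psi=u(t)$ for $t\ne s$, so the claim that $t\mapsto\Ga(t,s)\psi$ lies in $\C_0(\R\setminus\{s\};\L^2_x)$ is literally the assertion $u\in\C_0(\R\setminus\{s\};\L^2_x)$ from that theorem. The existence of the one-sided limits $\P_s^\pm\psi=\lim_{t\to s^\pm}\Ga(t,s)\psi$ in $\L^2_x$ is the statement that $u$ has $\L^2_x$-limits $u(s^\pm)$; we simply set $\P_s^\pm\psi\coloneqq u(s^\pm)$.

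\emph{Item (ii).} This is item (i) applied to the adjoint operator. As recorded after the hypothesis $\mathbf{(H_0)}$, all results proved for $\partial_t+\Lt$ hold \textit{mutatis mutandis} for $-\partial_t+\Ltstar$, and $\mathbf{(H_0)}$ is symmetric in $\cH,\cH^*$. Hence Theorem~\ref{thm:deltasL2} applied to $-\partial_s\tilde u+\Lt^*\tilde u=\delta_t\otimes\tpsi$ gives a unique $\Deldot^{r_1,q_1}$-solution $\tilde u\in\C_0(\R\setminus\{t\};\L^2_x)$ with one-sided limits $\tilde u(t^\pm)$; defining $\tGa(s,t)\tpsi=\tilde u(s)$ for $s\ne t$ and $\tP_t^\pm\tpsi\coloneqq\tilde u(t^\pm)$ gives exactly the stated conclusions.

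\emph{Item (iii).} Here the only point is that the bound be independent of $(s,t)$. For $\Ga(t,s)$, estimate \eqref{eq:uLinftydeltas} in Theorem~\ref{thm:deltasL2} reads $\sup_{t\ne s}\|u(t)\|_{\L^2_x}\le C(n,q_1,r_1)\|\psi\|_{\L^2_x}$ with $C(n,q_1,r_1)$ \emph{not depending on $s$ and $\psi$}; since $\Ga(t,s)\psi=u(t)$, this says $\|\Ga(t,s)\psi\|_{\L^2_x}\le C(n,q_1,r_1)\|\psi\|_{\L^2_x}$ for all $t\ne s$ and all $\psi$, i.e. $\Ga(t,s)$ is bounded on $\L^2_x$ uniformly in $(s,t)$ with $t\ne s$. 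Applying the same estimate to the adjoint problem gives the uniform bound for $\tGa(s,t)$. This completes the proof; the main (very minor) obstacle is purely bookkeeping, namely keeping straight that the constant in \eqref{eq:uLinftydeltas} was already asserted to be $s$-independent and that the adjoint statements require no new argument beyond the symmetry of $\mathbf{(H_0)}$.
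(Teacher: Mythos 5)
Your proposal is correct and matches the paper's treatment: the paper itself presents Proposition~\ref{prop:Gamma} as a direct rephrasing of Theorem~\ref{thm:deltasL2} (applied to $\cH$ and, by symmetry of $\mathbf{(H_0)}$, to $\cH^*$), with the uniform bound in (iii) coming from \eqref{eq:uLinftydeltas}, exactly as you argue.
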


Next, we list a number of properties involving the Green operators and their limits.

\begin{thm} Assume $\mathbf{(H_0)}$. 
\label{thm:Gammats} 
\begin{enumerate}

\item For each $s$,  
\begin{align*}
	\P_{s}^+ - \P_{s}^-=\Id, \quad \tP_{s}^+ - \tP_{s}^-=-\Id
\end{align*}
and  $\P_{s}^\pm$ and $\tP_{s}^\mp$ are adjoint operators, respectively.
\item For $s\ne t$, $\Ga(t,s)$ and $\tGa(s,t)$ are adjoint operators.
\item If $t>s$, then 
\begin{align*}
	\P_{t}^+\Ga(t,s) &= \Ga(t,s), \quad  \P_{t}^-\Ga(t,s) =0, \\
	 \Ga(t,s)\P_{s}^+&= \Ga(t,s), \quad \Ga(t,s)\P_{s}^-=0.
\end{align*}
\item If $s>t$, then 
\begin{align*}
	\P_{t}^-\Ga(t,s) &= \Ga(t,s), \quad \P_{t}^+\Ga(t,s)=0,  \\
	\Ga(t,s)\P_{s}^-&= \Ga(t,s), \quad \Ga(t,s)\P_{s}^+=0.
\end{align*}
\item For $s,r,t$ distinct reals with $r$ between $s$ and $t$, 
\begin{align*}
	\Ga(t,s)=\Ga(t,r)\Ga(r,s).
\end{align*}
\end{enumerate}
\end{thm}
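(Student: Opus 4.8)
The plan is to prove the five assertions of Theorem~\ref{thm:Gammats} essentially in the order given, using the characterizations of $\Ga(t,s)$ and $\tGa(s,t)$ from Theorem~\ref{thm:deltasL2} together with the jump relation \eqref{eq:split}, the causality/uniqueness machinery, and the energy identities of Section~\ref{sec:integralequalities}. For (i), the relation $\P_s^+ - \P_s^- = \Id$ is exactly the jump relation \eqref{eq:split} applied to $u = \Ga(\cdot,s)\psi$, and $\tP_s^+ - \tP_s^- = -\Id$ is its analogue for the backward equation $-\partial_t \tilde u + \Ltstar \tilde u = \delta_s \otimes \tpsi$ (where the sign flip comes from the orientation of time). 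The adjointness of $\P_s^\pm$ with $\tP_s^\mp$ should be read off from the double-duality formula \eqref{eq:G vs Ts}: writing $\Angle{G(\cdot,s)\psi}{\tphi} = \angle{\psi}{(\Hstarinverse\tphi)(s)}$ and taking $\tphi$ to be a mollified $\delta_t \otimes \tpsi$, then letting $t \to s^\pm$, one identifies $\angle{\P_s^\pm \psi}{\tpsi}$ with $\angle{\psi}{\tP_s^\mp \tpsi}$; this is where one must be careful about which one-sided limit appears on each side.

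For (ii), the cleanest route is again via \eqref{eq:G vs Ts}: fix $s \ne t$ and approximate $\delta_t \otimes \tpsi$ by $g_\varepsilon = \varphi_\varepsilon(\cdot - t) \otimes \tpsi$, so that $\Hstarinverse$ of (a mollification of) this tends to $\tGa(\cdot,t)\tpsi$ evaluated appropriately; plugging into \eqref{eq:G vs Ts} and evaluating at time $t$ gives $\angle{\Ga(t,s)\psi}{\tpsi} = \angle{\psi}{\tGa(s,t)\tpsi}$ after passing to the limit, using the $\C_0(\L^2_x)$-continuity away from the singular time to justify evaluation. For (iii) and (iv), causality is the key input: I would argue that $\Ga(\cdot,s)\psi$ vanishes on the ``wrong'' side of $s$. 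Concretely, the $\Deldot^{r_1,q_1}$-solution of $\partial_t u + \Lt u = \delta_s \otimes \psi$ restricted to, say, $(-\infty,s)$ solves the homogeneous equation there, extends (by Step~4 of Theorem~\ref{thm:deltasL2}) to an element of $\cVdot$ after even reflection, and one-sided uniqueness forces it to be zero when there is no ``incoming data'' — this is precisely where invertibility $\mathbf{(H_0)}$ and the regularity estimates are used. Once $u \equiv 0$ on the side away from $s$, the statements $\P_s^\mp \Ga(t,s) = 0$ and the corresponding idempotency relations $\P_t^\pm \Ga(t,s) = \Ga(t,s)$ follow because $\Ga(t,\cdot)$ is itself a Green orbit and $\P_t^\pm$ are its one-sided limits. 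The relations $\Ga(t,s)\P_s^\pm = \Ga(t,s)$ or $0$ are obtained by decomposing an initial datum $\psi = \P_s^+\psi - \P_s^-\psi$ and invoking uniqueness again.

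For the Chapman--Kolmogorov identity (v), fix $s < r < t$ (the case $s > r > t$ being symmetric) and consider $u = \Ga(\cdot,s)\psi$. By causality $u$ is a genuine solution of the homogeneous equation on $(s,\infty) \supset [r,t]$, continuous in $\L^2_x$ there, with $u(r) = \Ga(r,s)\psi$. On the other hand, $\Ga(\cdot,r)(\Ga(r,s)\psi)$ is by definition the Green orbit emanating from the datum $\Ga(r,s)\psi$ at time $r$; both functions solve $\partial_t v + \Lt v = 0$ on $(r,\infty)$ with the same $\L^2_x$ value at $r^+$, so uniqueness of the $\Deldot^{r_1,q_1}$-solution (Theorem~\ref{thm:uniqueness}, applied after subtracting and using Step~4 to land in $\cVdot$ on the half-line) gives that they agree on $(r,\infty)$; evaluating at $t$ yields $\Ga(t,s)\psi = \Ga(t,r)\Ga(r,s)\psi$.

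The main obstacle I expect is item (iii)--(iv), that is, establishing \emph{causality} rigorously: one must show that the solution produced abstractly via $T_s = $ ``adjoint of $\Hstarinverse$ evaluated at $s$'' actually vanishes on the side of $s$ opposite to the support of the forcing. The subtlety is that $T_s\psi$ is defined by a duality formula and is only known a priori to lie in $\Deldot^{r_1,q_1}$, so one cannot directly ``read off'' where it is supported; the argument must go through the reflection-and-extension trick of Step~4 of Theorem~\ref{thm:deltasL2} to place the one-sided restriction into $\cVdot$, and then use that $\mathbf{(H_0)}$ gives uniqueness on the whole line for the reflected problem. Everything else — the jump relations, adjointness, idempotency, and the semigroup law — then follows by combining this causality with uniqueness and the double-duality formula \eqref{eq:G vs Ts}, and is largely bookkeeping once causality is in hand.
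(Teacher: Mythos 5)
Your plan for (iii)--(v) rests on \emph{causality} -- that $\Ga(\cdot,s)\psi$ vanishes on the side of $s$ opposite the forcing, and, implicitly in your Chapman--Kolmogorov argument, that the orbit $\Ga(\cdot,r)(\Ga(r,s)\psi)$ takes the value $\Ga(r,s)\psi$ at $r^+$, i.e.\ $\P_r^+=\Id$. Neither is available under $\mathbf{(H_0)}$, and neither is true in general: $\mathbf{(H_0)}$ is completely symmetric with respect to time reversal (take $A=-\Id$, so $\cH=\partial_t+\Delta$; it is invertible by the same hidden-coercivity argument but anti-causal, with $\Ga(t,s)=0$ for $t>s$). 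This is precisely the point of Remark~\ref{rem:no-causality-yet}: causality and recovery of data are only proved later, under ellipticity plus smallness (Theorem~\ref{thm:causality}) or lower bounds (Theorem~\ref{th:lb}), and the content of Theorem~\ref{thm:Gammats} is exactly what survives \emph{without} them. Your proposed mechanism also does not work technically: the restriction of $\Ga(\cdot,s)\psi$ to $(-\infty,s)$ does solve the homogeneous equation there and extends to an element of $\cVdot$ (Step~4), but the even extension does not satisfy $\cH w=0$ on all of $\R^{n+1}$ (the reflected half obeys an equation with time-reflected coefficients), so invertibility gives no conclusion, and the paper has no half-line uniqueness statement to invoke; Theorem~\ref{thm:uniqueness} is a whole-line result. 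For (v) the same issue reappears: one must glue $u=\Ga(\cdot,s)\psi$ on $(-\infty,r)$ with $v=\Ga(\cdot,r)(u(r))$ on $(r,\infty)$, check that the glued function is a global $\Deldot^{r_1,q_1}$-solution of the equation with datum $\delta_s\otimes\psi$, and the potential jump at $r$ is cancelled exactly by the identity $\P_r^+\Ga(r,s)=\Ga(r,s)$ of (iii) -- not by $\P_r^+=\Id$.

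What the paper does instead, and what your proposal is missing, is to apply the polarized integral identity (Corollary~\ref{cor:energy2}) to $u=\Ga(\cdot,s)\psi$ and $\tilde u=\tGa(\cdot,t)\tpsi$ on the intervals cut out by $s$ and $t$ (for (i), on $(-\infty,s)$ and $(s,\infty)$; for (ii)--(iv), on $(-\infty,s)$, $(s,t)$, $(t,\infty)$). The polarized integrand vanishes because the two functions solve adjoint homogeneous equations away from the singular times, and the pairings decay at $\pm\infty$, which yields the orthogonality relations from which both the adjointness statements and the range/annihilation identities in (iii)--(iv) follow simultaneously, using only the jump relation \eqref{eq:split}. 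Your alternative route to (i)--(ii) via mollifying $\delta_t\otimes\tpsi$ in the double-duality formula \eqref{eq:G vs Ts} is closer in spirit but also has an unjustified step: the convergence of $\Hstarinverse(\varphi_\eps(\cdot-t)\otimes\tpsi)$ established in the paper is distributional/weak-star in $\Deldot^{r_1,q_1}$, which does not by itself give convergence of the evaluation at the fixed time $s$; some equicontinuity or an energy-identity argument would have to be supplied. In short, the jump relation and the general architecture are right, but the causality-based core of your argument proves a different (and under $\mathbf{(H_0)}$ alone, false) statement, and the energy/polarization argument that the paper uses is the missing idea.
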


\begin{rem}
\label{rem:no-causality-yet}
\begin{enumerate}
	\item The reader familiar with parabolic problems expects causality, that is, $\Ga(t,s)=0$ when $t<s$, and recovery of initial data, that is $\Ga(t,s)\psi\to \psi$ when $t\to s^+$, meaning that $\P_{s}^+=\Id$. At this stage however, there is yet no reason to expect these  properties since all assumptions apply indifferently to the equation and its adjoint, going backward in time. In view of this, it is remarkable that we can prove the adjointness property (ii) and the Chapman--Kolmogorov formula (v) under the mere assumption $\mathbf{(H_0)}$. 
	
	\item Properties (iii) and (iv) mean that the Green operators $\Ga(t,s)$  propagate the ranges of the limit operators $\P_{t}^+$ when $t>s$ and $\P_{t}^-$ when $t<s$ even though we do not have further information on the range and kernel of these operators at this point.
\end{enumerate}
\end{rem}

\begin{proof}

We proceed as follows.

\medskip

\noindent{\emph{Proof of (i).}} We may assume $s=0$ as usual. The property $\P_{0}^+ - \P_{0}^-=\Id$ is a rephrasing of the jump relation \eqref{eq:split} and similarly $\tP_{0}^+ - \tP_{0}^-=-\Id$, the negative sign coming from the fact that  $-\partial_{t}+\Ltstar   $ is backward in time.  Fix $\psi,\tpsi\in \L^2_{x}$. We apply the integral identity of Corollary~\ref{cor:energy2} to 
$u\coloneqq\Ga(\cdot, 0)\psi$ and $\tilde u\coloneqq \tGa(\cdot, 0)\tpsi$ in the intervals $(0,\infty)$ and $(-\infty,0)$, knowing that 
the integrand vanishes almost everywhere in each interval and that $\angle {u(t)}{\tilde u(t)}\to 0$ in the limit as $t\to \pm\infty$. This gives us
$$
\angle {u(0^+)}{\tilde u(0^+)}=0= \angle {u(0^-)}{\tilde u(0^-)},
$$
that is,
$$
\angle {\P_{0}^+\psi}{\tP_{0}^+\tpsi}=0= \angle {\P_{0}^-\psi}{\tP_{0}^-\tpsi}.
$$
The relations above yield
$$
\angle {\P_{0}^+\psi}{\tpsi}= \angle {\P_{0}^+\psi}{\tP_{0}^-\tpsi}= \angle {\psi}{\tP_{0}^-\tpsi},
$$
which shows that $(\P_{0}^+)^*=\tP_{0}^-$. Since $\P_{0}^+ - \P_{0}^-=\Id$, also $(\P_{0}^-)^*=\tP_{0}^+$ follows.

\medskip
   
\noindent{\emph{Proof of (iii) and (ii) for $t>s$.}} Fix $\psi,\tpsi\in \L^2_{x}$. This time we can apply the integral identity of Corollary~\ref{cor:energy2} to 
$u\coloneqq\Ga(\cdot, s)\psi$ and $\tilde u\coloneqq \tGa(\cdot, t)\tpsi$ in the intervals  $(-\infty,s)$, $(s,t)$ and $(t,\infty)$, knowing that 
the integrand vanishes almost everywhere in each interval and that $\angle {u(\tau)}{\tilde u(\tau)}\to 0$ in the limit as $\tau\to \pm\infty$. We obtain
 \begin{align*}
 \angle {u(t)}{\tilde u(t^+)}&=0,
 \\
\angle {u(t)}{\tilde u(t^-)}&= \angle {u(s^+)}{\tilde u(s)},
\\
0&= \angle {u(s^-)}{\tilde u(s)},
\end{align*}
that is,
\begin{align*}
 \angle {\Ga(t,s)\psi}{\tP_{t}^+\tpsi}&=0,
 \\
\angle {\Ga(t,s)\psi}{\tP_{t}^-\tpsi}&= \angle {\P_{s}^+\psi}{\tGa(s,t)\tpsi},
\\
0&= \angle {\P_{s}^-\psi}{\tGa(s,t)\tpsi}.
\end{align*}
Subtracting the first and third equalities to the second one and using (i), we obtain 
\begin{equation}
\label{eq:propagatorsadjoint}
\angle {\Ga(t,s)\psi}{\tpsi}= \angle {\psi}{\tGa(s,t)\tpsi},
\end{equation}
which proves (ii) in this case. From the adjoint relations in (i) we also see that
$\P_{t}^-\Ga(t,s)\psi=0$ and $ \Ga(t,s)\P_{s}^-\psi=0$. Again by (i) we conclude for the two missing relations in (iii).

\medskip

\noindent{\emph{Proof of (iv) and (ii) for $s>t$.}} The argument is completely symmetric to the previous case.

\medskip

\noindent{\emph{Proof of (v)}.}
Let us first treat the case $s<r<t$. Fix $\psi\in \L^2_{x}$. Let $u\coloneqq\Ga(\cdot, s)\psi$, $v\coloneqq\Ga(\cdot, r)(u(r))$ and define 
\begin{equation*}
w\coloneqq
\begin{cases} v, & \textrm{on}\  (r,\infty)\times\R^n\\
u, & \textrm{on}\ (-\infty, r)\times\R^n
\end{cases}\ .
\end{equation*}
Since the gluing procedure preserves  $ \Deldot^{ r_{1}, q_{1}}$, the equality $w=u$  follows  by uniqueness provided we can show that $w$ is a  $\Deldot^{r_{1},q_{1}}$-solution of 
\begin{align*}
  \partial_{t}w+\Lt w= \delta_{s}\otimes \psi.
\end{align*}
By translation we can assume $r=0$ in order to simplify the exposition. Let $\tphi \in \cD'(\R^{n+1})$. The argument is a reprise of the proof of the jump relation in Theorem~\ref{thm:deltasL2}. Using the same cut-off functions $\theta_{\varepsilon}$ supported outside of $[-\varepsilon, \varepsilon]$, we have, as before,
\begin{align*}
-\Angle {w} {\partial_{t}\tphi} + \Angle {\Lt w}{\tphi } 
 &= \lim_{\varepsilon\to 0} -\Angle {w} {(\partial_{t}\tphi)\, \theta_{\varepsilon})} + \Angle {\Lt w}{\tphi \,\theta_{\varepsilon}} 
 \\
 &=\lim_{\varepsilon\to 0}  - \Angle {w} {\partial_{t}(\tphi\, \theta_{\varepsilon})} + \Angle {\Lt w}{\tphi \,\theta_{\varepsilon}} + \Angle {w} {\tphi\, (\theta_{\varepsilon})'}.
\end{align*}
For the first two terms we see that $\tphi\, \theta_{\varepsilon}$ is the sum of one test function $\tphi_{+}$ supported in $(0,\infty)\times \R^n$ and another one $\tphi_{-}$ supported in $(-\infty,0)\times \R^n$. With the first one we use the equation for $v$ and with the second the equation for $u$, so that we find 
$$
-\Angle {w} {\partial_{t}(\tphi\, \theta_{\varepsilon})} + \Angle {\Lt w}{\tphi \,\theta_{\varepsilon}}= 0+ \angle {\psi}{(\tphi_{-})(s)}= \angle {\psi}{\tphi (s)},
$$
where the second step works provided $\varepsilon$ is small enough in order to guarantee that $\theta_{\varepsilon}(s)=1$.  
For the third term we can argue as in Step 3 of  the proof of Theorem~\ref{thm:deltasL2} to find 
$$
\lim_{\varepsilon\to 0}  \Angle {w} {\tphi\, (\theta_{\varepsilon})'}= \angle {v(0^+)} {\tphi(0)} - \angle {u(0)} {\tphi(0)}.
$$
By definition and (iii), we have 
$$v(0^+)=\P_{0}^+(u(0))= \P_{0}^+\Ga(0,s)\psi= \Ga(0,s)\psi=u(0),$$
so that altogether we have shown that
$$
-\Angle {w} {\partial_{t}\tphi} + \Angle {\Lt w}{\tphi } = \angle {\psi}{\tphi (s)}=\Angle {\delta_{s}\otimes \psi}{\tphi}$$
as desired. The proof when $t<r<s$ is similar, using (iv) instead of (iii).
\end{proof}

\begin{rem} 
\label{rem:weak continuity} The adjointness property implies that $s\mapsto \Ga(t,s)$ is weakly continuous in $\L^2_{x}$ on $\R\setminus \{t\}$. Similarly,  $t\mapsto \tGa(s,t)$ is weakly continuous in $\L^2_{x}$ on $\R\setminus \{s\}$.
 \end{rem}

\subsection{Representation with Green operators}
\label{sec:schwartz}

In this section we shall detail how the Green operators can be seen as operator-valued Schwartz kernels for the inverse of $\cH$. This illustrates nicely how we can re-discover objects of the classical theory for smooth coefficients as part of our `universal' construction. While Proposition~\ref{prop:kernel}  is not used {in other sections}, a key ingredient in its proof implies representations for solutions (Theorem~\ref{thm:representation}) that will be useful  when we deal with Cauchy problems. 

The  operator-valued Schwartz kernels result  is as follows.

\begin{prop}
 \label{prop:kernel} Assume $\mathbf{(H_0)}$. For any $f,\tf\in \cD(\R)$, $\psi,\tpsi\in \cD(\R^{n})$,
\begin{equation}
\label{eq:kernel}
\Angle {\cH^{-1}( f\otimes \psi)}{\tf\otimes \tpsi}= \iint_{\R^2}f(s) \angle {\Ga(t,s)\psi}{\tpsi}\overline\tf(t)\, \d s\d t.
\end{equation}
\end{prop}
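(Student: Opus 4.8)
The plan is to reduce the claimed identity \eqref{eq:kernel} to the ``double duality formula'' \eqref{eq:G vs Ts} by peeling off the time integrals one variable at a time. First I would recall that by definition $G(\cdot,s)\psi = T_s\psi$ is the $\Deldot^{r_{1},q_{1}}$-solution of $\partial_t u + \Lt u = \delta_s\otimes\psi$ from Theorem~\ref{thm:deltasL2}, so that \eqref{eq:defTs} reads $\Angle{T_s\psi}{\tphi} = \angle{\psi}{(\Hstarinverse\tphi)(s)}$ for every $\tphi\in\cD(\R^{n+1})$, and that $s\mapsto (\Hstarinverse\tphi)(s)$ is a $\C_0(\L^2_x)$ function by Theorem~\ref{thm:Fg} applied to $\cH^*$. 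The natural test function to plug in is $\tphi = \tf\otimes\tpsi$, which indeed lies in $\cD(\R^{n+1})$, giving $\angle{G(s,\cdot)\text{-orbit at }t\,}{\cdots}$ bookkeeping; more precisely I would first establish
\begin{equation*}
\Angle{G(\cdot,s)\psi}{\tf\otimes\tpsi} = \int_{\R} \angle{G(t,s)\psi}{\tpsi}\,\overline{\tf(t)}\, \d t,
\end{equation*}
which is legitimate because $t\mapsto G(t,s)\psi$ is in $\C_0(\R\setminus\{s\};\L^2_x)$ with a uniform bound by \eqref{eq:uLinftydeltas}, hence the right-hand side is an absolutely convergent Bochner-type integral and unfolding the $\L^2_x$ pairing against the (compactly supported, smooth) function $\tf$ recovers the $\R^{n+1}$ pairing on the left.

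Next I would integrate the identity \eqref{eq:G vs Ts}, i.e.\ $\Angle{G(\cdot,s)\psi}{\tphi} = \angle{\psi}{(\Hstarinverse\tphi)(s)}$, against $f(s)\,\d s$. The left-hand side becomes $\Angle{\int_\R f(s) G(\cdot,s)\psi\, \d s}{\tphi}$; by the linearity of $T_s$ in the ``source'' and Theorem~\ref{thm:L1L2} (applied to $g = f\otimes\psi \in \L^1_t\L^2_x$), the function $s\mapsto G(\cdot,s)\psi$ integrates to $Tg = \cH^{-1}(f\otimes\psi)$ — indeed this is exactly the mollification/superposition argument used in Step~1 of the proof of Theorem~\ref{thm:deltasL2}, run in reverse: a Dirac source smeared by $f$ produces the $\L^1_t\L^2_x$ source $f\otimes\psi$, whose solution is $\cH^{-1}(f\otimes\psi)$. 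On the right-hand side, $\int_\R f(s)\angle{\psi}{(\Hstarinverse\tphi)(s)}\,\d s$. Combining with the previous display (with $\tphi = \tf\otimes\tpsi$) yields
\begin{equation*}
\Angle{\cH^{-1}(f\otimes\psi)}{\tf\otimes\tpsi} = \int_\R f(s)\Big(\int_\R \angle{G(t,s)\psi}{\tpsi}\,\overline{\tf(t)}\,\d t\Big)\d s,
\end{equation*}
and Fubini (justified by the uniform $\L^2_x$-bound \eqref{eq:uLinftydeltas} on $G(t,s)$, finiteness of $\|\psi\|_2\|\tpsi\|_2$, and the compact supports of $f,\tf$) rearranges this into \eqref{eq:kernel}.

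The main obstacle is the interchange of the $s$-integration with the operator $\cH^{-1}$ — i.e.\ showing rigorously that $\int_\R f(s)T_s\psi\,\d s = T(f\otimes\psi)$ as elements of $\Deldot^{r_1,q_1}$ (or at least when tested against $\tphi\in\cD$). I would handle this exactly as in Step~1 of Theorem~\ref{thm:deltasL2}: write $f\otimes\psi$ as a limit in $\L^1_t\L^2_x$ of Riemann sums $\sum_k f(s_k)\varphi_{\varepsilon,k}\otimes\psi$ (or approximate $f(s)\d s$ weakly by mollified masses), use the continuity of $T\colon \L^1_t\L^2_x\to\Deldot^{r_1,q_1}$ from \eqref{eq:L1L2toGrq} together with linearity $T(\varphi_\varepsilon\otimes\psi)\to T_s\psi$ weakly-$*$ in $\Deldot^{r_1,q_1}$, and conclude by testing against the fixed $\tphi = \tf\otimes\tpsi$. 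Alternatively, and perhaps more cleanly, one bypasses this altogether by working entirely on the test-function side: apply \eqref{eq:G vs Ts} with $\tphi$ ranging over $\tf\otimes\tpsi$, observe that $(\Hstarinverse(\tf\otimes\tpsi))(s)$ is continuous in $s$, integrate against $f(s)$ directly, and recognize $\int_\R f(s)\angle{\psi}{(\Hstarinverse(\tf\otimes\tpsi))(s)}\,\d s = \angle{f\otimes\psi}{\Hstarinverse(\tf\otimes\tpsi)}_{\R^{n+1}} = \Angle{\cH^{-1}(f\otimes\psi)}{\tf\otimes\tpsi}$ by the very definition \eqref{eq:defT} of $T = \cH^{-1}$ on $\L^1_t\L^2_x$ sources. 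This second route makes the Fubini step the only analytic point and is the one I would write up.
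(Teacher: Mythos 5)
Your second route is correct and is the one to keep; it reaches \eqref{eq:kernel} from the same two ingredients as the paper -- the adjointness $\Angle{\cH^{-1}(f\otimes\psi)}{\tphi}=\Angle{f\otimes\psi}{\Hstarinverse\tphi}$ together with the double duality formula \eqref{eq:G vs Ts} and the $\C_{0}(\L^2_{x})$ regularity that makes Fubini legitimate -- but organized differently. The paper first proves the pointwise-in-$t$ identity of Lemma~\ref{lem:representation} by testing against $\varphi_{\eps}\otimes\tpsi$ with a mollifier and passing to the limit $\eps\to 0$ (dominated convergence plus $\cH^{-1}(f\otimes\psi)\in\C_{0}(\L^2_{x})$), and only then integrates in $t$ against $\overline{\tf}$. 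You test directly against the fixed $\tphi=\tf\otimes\tpsi$, which eliminates the mollification and limiting step entirely and leaves Fubini as the only analytic point; for the Proposition alone this is shorter. What the paper's detour buys is Lemma~\ref{lem:representation} itself, which is reused in the proof of Theorem~\ref{thm:representation}, so the intermediate statement is not wasted there. Two points to make explicit when writing your argument up: first, to read the iterated integral you obtain (inner in $t$, outer in $s$) as the double integral $\iint_{\R^2}$ of \eqref{eq:kernel} you need joint measurability of $(s,t)\mapsto\angle{\Ga(t,s)\psi}{\tpsi}$ off the diagonal; separate continuity (continuity in $t$ from Theorem~\ref{thm:deltasL2}, continuity in $s$ via the adjointness with $\tGa$) gives Borel measurability, and then your bound $C\|f\|_{\L^1}\|\tf\|_{\L^1}\|\psi\|_{\L^2_{x}}\|\tpsi\|_{\L^2_{x}}$ closes the Fubini--Tonelli argument, exactly as in \eqref{eq:|kernel|}. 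Second, your first sketch (pulling $\int_{\R} f(s)T_{s}\psi\,\d s$ through $\cH^{-1}$) is indeed the delicate path and is rightly discarded in favour of staying on the test-function side.
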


Before we come to the proof, let us interpret the result in the context of the Schwartz kernel representation~\cite{Schwartz}. Indeed, there exists a unique $K\in \cD'(\R^{n+1}\times \R^{n+1})$ such that 
$$
\Anglep {\cH^{-1}(f\otimes \psi)}{\tf \otimes \tpsi}= (K_{t,x,s,y}, f_{s}\otimes \psi_{y}\otimes \tf_{t}\otimes \tpsi_{x}).
$$
We indicate the dummy variables for notational simplicity and the bracket with parentheses are the bilinear dualities. For example, building $\cH$ from the heat operator $\partial_{t}-\Delta$, we see that $K_{t,x,s,y}$ can be identified with the heat kernel 
$$1_{\{t>s\}} \frac{1}{(4\pi (t-s))^{n/2}}\ \e^{-\frac{|x-y|^2}{4(t-s)}}.$$
Not all such operators may have kernels with pointwise bounds. In any case, we can also proceed by fixing $\psi,\tpsi\in \cD(\R^n)$ and looking at the bilinear map $(f,\tf)\mapsto \Anglep {\cH^{-1}(f\otimes \psi)}{\tf \otimes \tpsi}$ on $\cD(\R)\times \cD(\R)$. Again, the Schwartz kernel theorem provides a unique distribution $K_{\psi,\tpsi}\in \cD'(\R^2)$ such that 
$$ \Anglep {\cH^{-1}(f\otimes \psi)}{\tf \otimes \tpsi}= (K_{\psi,\tpsi,t,s}, f_{s}\otimes \tf_{t}).$$
Thus, Theorem~\ref{prop:kernel}   establishes that $K_{\psi,\tpsi,t,s}$ can be identified with a locally integrable  function and  we can set 
$$K_{\psi,\tpsi,t,s} \coloneqq \anglep{\Ga(t,s)\psi}{\tpsi} \quad \textrm{on\ }\R^2\setminus\{(t,t):  t\in \R\},$$
the values on the diagonal being irrelevant. In particular, $K_{\psi,\tpsi,t,s}$ agrees with a separately continuous function on $\R^2\setminus\{(t,t):  t\in \R\}$  that vanishes if $|s|$ or  $|t|$ tend to $\infty$.
	
In order to prove Proposition~\ref{prop:kernel}, we begin with a pointwise variant for all $t \in \R$. 

\begin{lem}
\label{lem:representation} Assume $\mathbf{(H_0)}$. For any $f\in \cD(\R)$, $t\in \R$  
and $ \psi,\tpsi \in \cD(\R^n)$, 
\begin{equation}
\label{eq:kernelt}
\angle {\cH^{-1}(f\otimes \psi)(t)}{\tpsi}= \int_{\R} f(s)\angle {\Ga(t,s)\psi}{\tpsi}\, \d s.
\end{equation}
\end{lem}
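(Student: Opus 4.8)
The plan is to unwind what is already available — the definition of $\cH^{-1}$, the ``double duality'' formula \eqref{eq:G vs Ts} for the Green orbits, and the adjointness of the Green operators, Theorem~\ref{thm:Gammats}~(ii) — and then to recognise a $\cD'$--$\cD$ pairing as a scalar Lebesgue integral. To begin, set $u\coloneqq\cH^{-1}(f\otimes\psi)$. Since $f\otimes\psi\in\cD(\R^{n+1})\subset\L^{r'}_{t}\L^{q'}_{\vphantom{t} x}$ for any admissible pair $(r,q)$ (it is compactly supported), Theorem~\ref{thm:Fg} gives $u\in\cVdot\cap\C_{0}(\L^2_{x})$; in particular $u(t)$ is a well-defined element of $\L^2_{x}$ for every $t\in\R$, so the left-hand side of \eqref{eq:kernelt} is meaningful.

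The core step is a short chain of identities. I would feed the test function $\tphi=f\otimes\psi\in\cD(\R^{n+1})$ into the adjoint version of \eqref{eq:G vs Ts}, namely $\Angle{\tGa(\cdot,t)\tpsi}{\tphi}=\angle{\tpsi}{(\cH^{-1}\tphi)(t)}$, which holds because $\tGa(\cdot,t)\tpsi$ is the $\Deldot^{r_{1},q_{1}}$-solution of the adjoint problem with datum $\delta_{t}\otimes\tpsi$ and $-\partial_{t}+\Ltstar$ plays the role of $\partial_{t}+\Lt$ throughout. This yields $\Angle{\tGa(\cdot,t)\tpsi}{f\otimes\psi}=\angle{\tpsi}{u(t)}$, and taking complex conjugates (the bracket $\Angle{\cdot}{\cdot}$ extends the conjugate-symmetric $\L^2$ inner product) gives $\angle{u(t)}{\tpsi}=\Angle{f\otimes\psi}{\tGa(\cdot,t)\tpsi}$.

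It remains to compute the right-hand side. By Theorem~\ref{thm:deltasL2}, $\tGa(\cdot,t)\tpsi$ lies in $\Deldot^{r_{1},q_{1}}$ — hence is a locally integrable function of $(s,y)$ — and, as a function of $s$, belongs to $\C_{0}(\R\setminus\{t\};\L^2_{x})$ and is uniformly bounded in $\L^2_{x}$, with $\tGa(s,t)\tpsi$ being precisely its time-$s$ slice. Since $f\otimes\psi$ is compactly supported and smooth, the pairing $\Angle{f\otimes\psi}{\tGa(\cdot,t)\tpsi}$ is therefore the absolutely convergent integral $\int_{\R}f(s)\,\angle{\psi}{\tGa(s,t)\tpsi}\,\d s$ (the single instant $s=t$ is a null set). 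Finally, the adjointness of the Green operators, Theorem~\ref{thm:Gammats}~(ii), gives $\angle{\psi}{\tGa(s,t)\tpsi}=\angle{\Ga(t,s)\psi}{\tpsi}$ for every $s\ne t$, and substituting this into the integral produces exactly \eqref{eq:kernelt}.

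I do not expect a serious obstacle here: the argument is short and the only points requiring a little care are the bookkeeping of complex conjugates in the sesquilinear brackets, and the passage from the distributional pairing of $\tGa(\cdot,t)\tpsi$ to the scalar integral, which rests on the compatibility — built into Theorem~\ref{thm:deltasL2} — between the $\Deldot^{r_{1},q_{1}}$ realisation of the solution and its continuous $\L^2_{x}$ realisation away from the source time. If one prefers to avoid testing \eqref{eq:G vs Ts} against a datum of product form, the same conclusion follows by approximating $\delta_{t}$ by a temporal mollifier $\varphi_{\eps}(\cdot-t)$: then $\angle{u(t)}{\tpsi}=\lim_{\eps\to0}\int_{\R}\varphi_{\eps}(s-t)\,\angle{u(s)}{\tpsi}\,\d s=\lim_{\eps\to0}\Angle{f\otimes\psi}{\Hstarinverse\big(\varphi_{\eps}(\cdot-t)\otimes\tpsi\big)}$ by the adjointness of $\cH^{-1}$ and $\Hstarinverse$, and one passes to the limit using that $\Hstarinverse\big(\varphi_{\eps}(\cdot-t)\otimes\tpsi\big)\to\tGa(\cdot,t)\tpsi$ weakly in the reflexive space $\Deldot^{r_{1},q_{1}}$ — exactly as in Step~1 of the proof of Theorem~\ref{thm:deltasL2} — while testing against $f\otimes\psi$, which lies in the predual of $\Deldot^{r_{1},q_{1}}$.
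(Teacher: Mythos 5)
Your proof is correct, and it takes a genuinely different route from the paper's. The paper proves \eqref{eq:kernelt} by mollifying the evaluation at time $t$: it pairs $\cH^{-1}(f\otimes\psi)$ against $\varphi_\varepsilon\otimes\tpsi$, uses the adjointness of $\cH^{-1}$ and $\Hstarinverse$ together with Fubini and the forward double-duality formula \eqref{eq:G vs Ts} to rewrite this as $\int_\R f(s)\Angle{\Ga(\cdot,s)\psi}{\varphi_\varepsilon\otimes\tpsi}\,\d s$, and then passes to the limit $\varepsilon\to 0$ using $\C_0(\L^2_x)$-continuity on the left and dominated convergence (via the uniform $\L^\infty_t\L^2_x$ bound on the Green orbits) on the right. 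You instead invoke the adjoint version of \eqref{eq:G vs Ts}, i.e.\ $\Angle{\tGa(\cdot,t)\tpsi}{\tphi}=\angle{\tpsi}{(\cH^{-1}\tphi)(t)}$, test it directly with $\tphi=f\otimes\psi$, identify the resulting space-time pairing with the absolutely convergent integral $\int_\R f(s)\angle{\psi}{\tGa(s,t)\tpsi}\,\d s$ using the $\Deldot^{r_1,q_1}$/$\C_0(\R\setminus\{t\};\L^2_x)$ compatibility from Theorem~\ref{thm:deltasL2}, and finish with the adjointness relation \eqref{eq:propagatorsadjoint} of Theorem~\ref{thm:Gammats}~(ii). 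This is legitimate (the adjoint double-duality formula follows from the construction in Theorem~\ref{thm:deltasL2} applied \emph{mutatis mutandis} to $-\partial_t+\Ltstar$, together with $(\cH^*)^*=\cH$, and Theorem~\ref{thm:Gammats} is established before this lemma, so there is no circularity), and it buys a shorter, limit-free argument; the price is that it leans on the Green-operator adjointness, whereas the paper's mollification argument uses only the defining formula \eqref{eq:G vs Ts} plus regularity, and so is more self-contained at the level of the Green-operator calculus. Your fallback argument in the last paragraph is essentially the paper's proof, with weak-star convergence in $\Deldot^{r_1,q_1}$ replacing the paper's dominated-convergence step.
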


\begin{proof} 
In order to simplify the exposition, we assume $t=0$. The general case follows by translation as usual. 

Let $(\varphi_{\varepsilon})$ be a standard mollifying sequence in the $t$-variable. Since $\Hstarinverse$ is the adjoint of $\cH^{-1}$ and $f \otimes \psi$, $\varphi_{\varepsilon} \otimes \tpsi$ belong to $\cVdot$, we have
\begin{align*}
	\Angle {\cH^{-1}(f\otimes \psi)}{\varphi_\varepsilon \otimes \tpsi}
	= \Angle {f\otimes \psi}{\Hstarinverse(\varphi_\varepsilon \otimes \tpsi)}.
\end{align*}
Since  $f \otimes \psi$ and $\varphi_{\varepsilon} \otimes \tpsi$ are in $\L^{1}_t \L^{2}_x$, we know from Theorem~\ref{thm:L1L2} (and its proof) that $\cH^{-1}(f \otimes \psi)$  and $\Hstarinverse(\varphi_\varepsilon \otimes \tpsi)$ belong to $ \C_0(\L^2_x)$. Both duality pairings are given by absolutely convergent Lebesgue integrals and we can apply Fubini's theorem on the right-hand side in order to write
\begin{align*}
		\Angle {\cH^{-1}(f\otimes \psi)}{\varphi_\varepsilon \otimes \tpsi} = \int_{\R}{f(s)} \angle {\psi}{( \Hstarinverse(\varphi_\varepsilon \otimes \tpsi)(s)}\, \d s.
\end{align*}
For fixed $s$ we use \eqref{eq:G vs Ts} in order to arrive at
\begin{align}
\label{eq:repofH-1}
	\Angle {\cH^{-1}(f\otimes \psi)}{\varphi_\varepsilon \otimes \tpsi}
	&= \int_{\R}{f(s)} \Angle {\Ga(\cdot,s)\psi}{\varphi_\varepsilon \otimes \tpsi}\, \d s.
\end{align}
Now, we pass to the limit as $\varepsilon \to 0$ as follows.

Since $\cH^{-1}(f\otimes \psi) \in \C_0(\L^2_x)$ as mentioned before, we see that the left-hand side of \eqref{eq:repofH-1} tends to the left-hand side of \eqref{eq:kernelt} as $\varepsilon \to 0$.

On the right-hand side we use the properties of the Green operators from Proposition~\ref{prop:Gamma}. Since $G(\cdot,s)\psi$ is continuous with values in $\L^2_x$ except at $s$, we have pointwise convergence
\begin{align*}
	\lim_{\varepsilon\to 0} \Angle {\Ga(\cdot,s)\psi}{\varphi_\varepsilon \otimes \tpsi} =  \angle {\Ga(0,s)\psi}{\tpsi}
\end{align*}
for all $s \neq 0$. Moreover, $G(\cdot,s)\psi$ is bounded on $\R \setminus \{s\}$ with  values in $\L^2_x$ and the duality pairing on the right-hand side of \eqref{eq:repofH-1} is another convergent Lebesgue integral that obeys the estimate
\begin{align*}
	| \Angle {\Ga(\cdot,s)\psi}{\varphi_\varepsilon \otimes \tpsi} |
	\leq  \|\Ga(\cdot,s)\psi\|_{\L^\infty_t \L^2_x} \|\varphi_\varepsilon \otimes \tpsi\|_{\L^1_t \L^2_x} \leq C \|\psi\|_{\L^2_x} \|\tpsi\|_{\L^2_x}
\end{align*}
with $C$ independent of $s$ and $\varepsilon$. Since $f$ is integrable, the right-hand side of \eqref{eq:repofH-1} tends to the right-hand side of \eqref{eq:kernelt} as $\varepsilon \to 0$ by dominated convergence.
\end{proof}

\begin{proof}[Proof of Proposition~\ref{prop:kernel}]
We use  the continuity of $t\mapsto \cH^{-1}(f\otimes \psi)(t)$ in $\L^2_{x}$ to rewrite the left-hand side of \eqref{eq:kernel} as a Lebesgue integral and use \eqref{eq:kernelt} for the integral in $x$ in order to obtain
\begin{align*}
	\Angle {\cH^{-1}(f\otimes \psi)}{\tf \otimes \tpsi}&= \int_{\R}\angle {\cH^{-1}(f\otimes \psi)(t)}{\tpsi}\overline\tf(t)\, \d t \\
	&= \int_{\R}  \bigg(\int_{\R} f(s)\angle {\Ga(t,s)\psi}{\tpsi}\, \d s\bigg)\overline\tf(t)\, \d t.
\end{align*}
The remaining question is therefore whether the above iterated integral can be taken in the sense of Lebesgue on $\R^2$, so that we can use Fubini's theorem to conclude the proof of \eqref{eq:kernel}. 
	
Since $(s,t)\mapsto \angle {\Ga(t,s)\psi}{\tpsi}$ is separately continuous on $\R^2\setminus\{(t,t)\, : \,  t\in \R\}$, it is a (Borel) measurable function on $\R^2\setminus\{(t,t)\, : \,  t\in \R\}$ and we can consider it as an almost everywhere  defined measurable function on $\R^2$. Finally, uniform boundedness of the Green operators in $\L^2_x$ yields
\begin{equation}
\label{eq:|kernel|}
\iint_{\R^2} \big| f(s)\angle {\Ga(t,s)\psi}{\tpsi}\overline\tf(t) \big|\, \d s\d t \le C \|f\|_{\L^1_{t}}\|\tf\|_{\L^1_{t}}\|\psi\|_{\L^2_{x}}\|\tpsi\|_{\L^2_{x}} . \qedhere
\end{equation}
%and we may apply Fubini's theorem.
\end{proof}

Lemma~\ref{lem:representation}  in turn implies a representation formula for solutions to equations with general right-hand side.

\begin{thm}
\label{thm:representation}
Assume $\mathbf{(H_0)}$.  
Let $(r,q)$ be an admissible pair or $(r,q)=(\infty, 2)$,  $F\in \LL$, $g\in \L^{r'}_{t}\L^{q'}_{\vphantom{t} x}$, $s\in \R$ and $\psi \in \L^2_{x}$. Then the $\L^2_{x}$ value at time $t$ of the unique  $\Deldot^{r_{1},q_{1}}$-solution $u $ of 
\begin{equation}
\partial_{t}u+\Lt u= \delta_{s}\otimes \psi -\div F+g
\end{equation}
obtained by combining Theorems~\ref{thm:Fg}, \ref{thm:L1L2} and \ref{thm:deltasL2},  can be represented by the  equality 
\begin{equation}
\label{eq:representation}
u(t)= \Ga(t,s)\psi- \int_{\R}\Ga(t,\sigma)(\div F(\sigma))\, \d \sigma + \int_{\R}\Ga(t,\sigma)g(\sigma)\, \d \sigma
\end{equation}
when $t\ne s$ for the first term and where the integrals are defined in the weak sense, that is,
\begin{equation}
\label{eq:weaksense}
\angle{u(t)}{\tpsi}= \angle{\Ga(t,s)\psi}{\tpsi}+ \int_{\R}\angle{F(\sigma)}{\nabla\tGa(\sigma,t)\tpsi}\, \d \sigma + \int_{\R}\angle{g(\sigma)}{\tGa(\sigma,t)\tpsi}\, \d \sigma
\end{equation}
for all $\tpsi \in \L^2_{x}$.
\end{thm}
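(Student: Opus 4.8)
The plan is to prove the representation formula \eqref{eq:representation} by linearity, treating the three source terms $\delta_s \otimes \psi$, $-\div F$ and $g$ separately, and then adding the resulting $\Deldot^{r_1,q_1}$-solutions. For the first term, there is nothing to do: $\Ga(t,s)\psi$ is by definition the value at time $t$ of the $\Deldot^{r_1,q_1}$-solution of $\partial_t u + \Lt u = \delta_s\otimes\psi$ from Theorem~\ref{thm:deltasL2}, and this solution is unique by Theorem~\ref{thm:uniqueness}. Hence I only need to establish the formula for right-hand sides of the form $-\div F + g$ with $F\in\LL$ and $g\in\L^{r'}_t\L^{q'}_x$ (including the case $(r,q)=(\infty,2)$), i.e. for $u=\cH^{-1}(-\div F + g)$ (or its analogue from Theorem~\ref{thm:L1L2} when $(r,q)=(\infty,2)$). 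The target identity, tested against $\tpsi\in\L^2_x$, is
\begin{equation*}
\angle{u(t)}{\tpsi} = \int_{\R}\angle{F(\sigma)}{\nabla\tGa(\sigma,t)\tpsi}\,\d\sigma + \int_{\R}\angle{g(\sigma)}{\tGa(\sigma,t)\tpsi}\,\d\sigma.
\end{equation*}

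First I would reduce to dense classes and to $\tpsi\in\cD(\R^n)$, using that both sides are continuous in $\tpsi\in\L^2_x$: the left-hand side because $u\in\C_0(\L^2_x)$ by Theorem~\ref{thm:Fg} (resp.\ Theorem~\ref{thm:L1L2}), and the right-hand side because $\tGa(\cdot,t)$ and $\nabla\tGa(\cdot,t)$ obey the uniform $\L^2$ bounds from Proposition~\ref{prop:Gamma} and Theorem~\ref{thm:deltasL2}, combined with H\"older against $F\in\LL$ and $g\in\L^{r'}_t\L^{q'}_x$ (for $g\in\L^1_t\L^2_x$ one uses $\sup_\sigma\|\tGa(\sigma,t)\tpsi\|_{\L^2_x}\le C\|\tpsi\|_{\L^2_x}$). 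This also shows the integrals on the right are absolutely convergent Lebesgue integrals, so they genuinely define elements of $\L^2_x$ in the weak sense of \eqref{eq:weaksense}. The core of the argument is then to identify $\angle{u(t)}{\tpsi}$ with the double-duality expression for $\cH^{-1}$. Using that $\Hstarinverse$ is the adjoint of $\cH^{-1}$ and the definition \eqref{eq:defTs} of $\tGa(\cdot,t)\tpsi = \tilde T_t\tpsi$ as the $\Deldot^{r_1,q_1}$-solution of $-\partial_\sigma\tilde u + \Ltstar\tilde u = \delta_t\otimes\tpsi$, I would run the same mollification scheme as in Lemma~\ref{lem:representation}: replace $\delta_t$ by $\varphi_\varepsilon$, write $\angle{u(t)}{\tpsi} = \lim_\varepsilon \Angle{u}{\varphi_\varepsilon\otimes\tpsi} = \lim_\varepsilon\Angle{-\div F + g}{\Hstarinverse(\varphi_\varepsilon\otimes\tpsi)}$, expand $\Hstarinverse(\varphi_\varepsilon\otimes\tpsi)$ using \eqref{eq:G vs Ts} (the adjoint version), apply Fubini on the product of $\R$ in $s$ with the $(t,x)$-duality, and pass to the limit by dominated convergence using the pointwise convergence $\Angle{\tGa(\cdot,\sigma)^{\,*}\text{-orbit}}{\varphi_\varepsilon\otimes\tpsi}\to\angle{\cdot}{\tGa(t,\sigma)^*\tpsi}$ away from the diagonal and the uniform bounds above. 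The pairing $\Angle{-\div F}{\cdot}$ turns into $\angle{F(\sigma)}{\nabla\tGa(\sigma,t)\tpsi}$ after integration by parts in $x$, which is licit since $\tGa(\cdot,t)\tpsi\in\Deldot^{r_1,q_1}\subset\L^2_t\Hdot^1_x$.

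The last point is to justify the pointwise $\L^2_x$ equality \eqref{eq:representation} for $t\ne s$, not merely the scalar equality \eqref{eq:weaksense} for each $\tpsi$: this is automatic, since \eqref{eq:weaksense} holding for all $\tpsi\in\L^2_x$ together with the fact that both sides of \eqref{eq:representation} are well-defined elements of $\L^2_x$ (the integrals converging weakly, hence defining bounded linear functionals on $\L^2_x$, which by Riesz are represented by $\L^2_x$ vectors) forces the vector identity. I expect the main obstacle to be bookkeeping in the mollification/Fubini step for the $-\div F$ term: one must make sure the integration by parts in $x$ commutes with the $s$-integral and the $\varepsilon$-limit, which relies on the uniform control of $\nabla\tGa(\sigma,t)\tpsi$ in $\LL$ from \eqref{eq:udeltasL2toGrq} and the continuity of $\sigma\mapsto\tGa(\sigma,t)\tpsi$ in $\L^2_x$ away from $t$; the $(r,q)=(\infty,2)$ case needs the variant of Lemma~\ref{lem:representation} built on Theorem~\ref{thm:L1L2} rather than on $\cH^{-1}$ directly, but otherwise the argument is identical.
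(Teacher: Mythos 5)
Your overall plan coincides with the paper's skeleton: split by linearity, observe that the $\delta_s\otimes\psi$ term is definitional, prove the scalar identity \eqref{eq:weaksense}, and control both sides via $\|u\|_{\L^\infty_{t}\L^2_{x}}\lesssim \|F\|_{\LL}+\|g\|_{\L^{r'}_{t}\L^{q'}_{\vphantom{t} x}}$ together with the uniform $\Deldot^{r,q}$ and $\L^\infty_{t}\L^2_{x}$ bounds on the adjoint orbits $\tGa(\cdot,t)\tpsi$. The place where your argument does not close as written is the limit passage in the mollification carried out with \emph{general} $F\in\LL$ and $g\in\L^{r'}_{t}\L^{q'}_{\vphantom{t} x}$. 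After writing $\Angle{u}{\varphi_\varepsilon\otimes\tpsi}=\Angle{-\div F+g}{\Hstarinverse(\varphi_\varepsilon\otimes\tpsi)}$, you invoke Fubini plus dominated convergence based on "pointwise convergence away from the diagonal and the uniform bounds". But the only pointwise-in-$\sigma$ information available is the uniform $\L^2_{x}$ operator bound on the Green operators and the \emph{weak} $\L^2_{x}$-continuity of $\tau\mapsto\tGa(\sigma,\tau)\tpsi$. The $g$-pairing at fixed $\sigma$ is an $\L^{q'}_{x}$--$\L^{q}_{x}$ pairing (with $q>2$ for a general admissible pair), and the $F$-pairing requires convergence of $\nabla\Hstarinverse(\varphi_\varepsilon\otimes\tpsi)(\sigma)$ in $\L^2_{x}$ for a.e.\ $\sigma$; neither is supplied by those facts, and no integrable dominating function in $\sigma$ is at hand, because the control of $\|\tGa(\sigma,\tau)\tpsi\|_{\L^{q}_{x}}$ and of $\|\nabla\tGa(\sigma,\tau)\tpsi\|_{\L^{2}_{x}}$ is only in the $\L^{r}_{\sigma}$, respectively $\L^{2}_{\sigma}$, average and not uniform in $\tau$ near $t$ (you would need the norm of the supremum, not the supremum of the norm).

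Two repairs are possible, and you already hold the ingredients for the first, which is exactly the paper's proof: perform the density reduction not only in $\tpsi$ but also in the data. For $g$ and $F$ tensor products of test functions, \eqref{eq:weaksense} is literally Lemma~\ref{lem:representation} combined with the adjointness $\tGa(\sigma,t)=\Ga(t,\sigma)^*$, the $F$-term being handled through $\div(f\otimes\psi)=f\otimes\div\psi$ and one integration by parts; then both sides of \eqref{eq:weaksense} are bounded bilinear forms in $(g,\tpsi)$, respectively $(F,\tpsi)$ -- these are precisely the continuity estimates you state -- and agreement on a dense set concludes, with no mollification against rough data needed at all. Alternatively, your direct route can be saved by replacing dominated convergence with a weak-$*$ compactness argument as in Step~1 of the proof of Theorem~\ref{thm:deltasL2} applied to $\cH^*$: the functions $\Hstarinverse(\varphi_\varepsilon\otimes\tpsi)$ are uniformly bounded in every $\Deldot^{r,q}$ and in $\L^\infty_{t}\L^2_{x}$ and converge in the sense of distributions to $\tGa(\cdot,t)\tpsi$, hence weak-$*$ in these spaces, while $-\div F+g$ lies in the corresponding preduals; this yields the limit without any pointwise-in-$\sigma$ control. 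Your final remark that \eqref{eq:weaksense} for all $\tpsi\in\L^2_{x}$ gives \eqref{eq:representation} is indeed immediate, and your treatment of the case $(r,q)=(\infty,2)$ via the operator $T$ from Theorem~\ref{thm:L1L2} is the right adjustment.
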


\begin{proof} We prove \eqref{eq:weaksense}. By uniqueness, it suffices to consider the three terms in the right-hand side of the equation individually, assuming the other two vanish. Fix $t\in \R$.

For the term involving $\psi$, this is the definition of $\Ga(t,s)\psi$ if $t\ne s$. 
	
For the term involving $g$, Lemma~\ref{lem:representation}  and the adjoint relation $\tGa(\sigma,t)= \Ga(t,\sigma)^*$   yield the result when $\tpsi$ is a test function and $g = f \otimes \psi$ a tensor product of test functions (or a linear combinations of such tensor products). Hence, $\tpsi$ and $g$ already describe dense subsets of $\L^2_x$ and $\L^{r'}_{t}\L^{q'}_{\vphantom{t} x}$, respectively.  Consider  $g\in \L^{r'}_{t}\L^{q'}_{\vphantom{t} x}$ and $\tpsi \in \L^2_{x}$.  If $(r,q)$ is admissible, then \eqref{eq:uLinftyL2} in Theorem~\ref{thm:Fg}  and  Cauchy-Schwarz yield 
\begin{align*}
	|\angle{(\cH^{-1}g)(t)}{\tpsi}| \leq C(n,q,r)  \|g\|_{\L^{r'}_{t}\L^{q'}_{\vphantom{t} x}} \|\tpsi\|_{\L^2_x}
\end{align*} 
and by  \eqref{eq:udeltasL2toGrq} for the adjoint equation,
\begin{align*}
	\int_{\R} |\angle{g(\sigma)}{\tGa(\sigma,t)\tpsi}| \, \d \sigma 
 \leq C(n,q,r) \|g\|_{\L^{r'}_{t}\L^{q'}_{\vphantom{t} x}} \|\tpsi\|_{\L^2_x}.
\end{align*}
 If $(r,q)=(\infty,2)$,  then Theorem~\ref{thm:L1L2} yields
\begin{align*}
	|\angle{u(t)}{\tpsi}| \leq C(n,q_{1},r_{1})  \|g\|_{\L^{1}_{t}\L^{2}_{x}} \|\tpsi\|_{\L^2_x}
\end{align*} 
and by  \eqref{eq:uLinftydeltas} for the adjoint equation,
\begin{align*}
	\int_{\R} |\angle{g(\sigma)}{\tGa(\sigma,t)\tpsi}| \, \d \sigma 
 \leq C(n,q_{1},r_{1}) \|g\|_{\L^{1}_{t}\L^{2}_{x}} \|\tpsi\|_{\L^2_x}.
\end{align*} 
Under either assumption one can thus pass to the limit by density in \eqref{eq:kernelt}, and \eqref{eq:weaksense} is proved in this case.

For the term involving $F$ the proof is analogous. As before,  for $F\in \LL$ and $\tpsi\in \L^2_{x}$, 
\begin{align*}
	|\angle{\cH^{-1}(\div F)(t)}{\tpsi}| \leq  C \|F\|_{\LL} \|\tpsi\|_{\L^2_x}
\end{align*}
and by construction of $\tGa$ and Theorem~\ref{thm:deltasL2}, 
$\nabla  \tGa(\cdot,t)\tpsi \in \LL$
and $$
\iint_{\R^{n+1}} {|F(\sigma,y)|}{|\nabla\tGa(\sigma,t)\tpsi(y)|}\, \d \sigma\d y \le C(n,q,r) \|F\|_{\LL}\|\tpsi\|_{\L^2_{x}}.
$$
Hence, the integral involving $F$ on the right-hand side of \eqref{eq:weaksense} is defined and, by density,  it remains to check that it agrees with  $-\angle {\cH^{-1}( \div F)(t)}{\tpsi}$  when $\tpsi$ is a test function and $F$ is a tensor product $f\otimes \psi$ of test functions. But in this case we have $\div F= f\otimes \div \psi$ and Lemma~\ref{lem:representation} along with the adjointness relations for $\Ga$ yields
$$-\angle {\cH^{-1}( \div F)(t)}{\tpsi}= -\int_{\R} f(\sigma)\angle {\Ga(t,\sigma)\div\psi}{\tpsi}\, \d \sigma= \int_{\R} \angle {F(\sigma)}{\nabla \tGa(\sigma,t)\tpsi}\, \d \sigma $$
as required.
\end{proof}

\begin{rem} 
We draw the reader's attention to the following regularity result that is implicit from the equality proved in Theorem~\ref{thm:representation}. The integral involving $g$  is   continuous in $\L^2_{x}$  as a function of $t$, while its definition merely yields  continuity for the weak $\L^2_{x}$ topology. The same comment applies to the integral  involving $F$. 
\end{rem}

\subsection{Invertibility, Causality}
\label{sec:invertibility}

So far, we have not addressed sufficient conditions for invertibility of $\cH$ and the question of causality. The true use of the space $\cVdot$ will become transparent here. The first two results require a smallness assumption on the lower order terms, hence are of perturbative nature from the purely second order case. The third one {is non-perturbative and} uses lower bounds.

At this stage, we eventually impose ellipticity to $A$  in the sense of G\aa rding : for some $\lambda>0$ we assume that for all $u \in \cVdot$,
\begin{equation}
\label{eq:Aelliptic}
\Re \int_{\R} \angle{A(t)\nabla u(t)}{\nabla u(t)}\, \d t \ge \lambda  \|\nabla u\|_{\LL}^2.
\end{equation}
Note that this is equivalent to having for almost every $t$ and every $w\in \Hdot^1(\R^n)$ that
$$
\Re \angle{A(t)\nabla w}{\nabla w } \ge \lambda  \|\nabla w\|_{\L^2_{x}}^2.
$$
This lower bound would also be the one to assume for systems. 
When $A(t)$ is a matrix with real measurable entries with respect to $x$, this is also equivalent  to a pointwise lower bound for $A(t,x)$, see~\cite{Hendrik}. However, this last observation is not valid for complex matrices or systems. 

\begin{thm}[Invertibility]
\label{thm:invertible} Assume that $A$ is elliptic and bounded with parameters $\lambda,\Lambda$ as in \eqref{eq:Abdd}, \eqref{eq:Aelliptic}. There is  $\varepsilon_{0}>0$ small enough depending on $\lambda, \Lambda, n, q_{1}, r_{1}$ such that 
$P_{\tilde r_{1},\tilde q_{1}}\le \varepsilon_{0}$ implies that $\cH$ is invertible. 
\end{thm}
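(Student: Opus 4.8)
\emph{Proof proposal.} The plan is to first prove invertibility in the purely second-order case (no lower order coefficients) by exhibiting Kaplan's hidden coercivity, and then to absorb the lower order part $\beta$ as a small perturbation via a Neumann series. Throughout I write $\cH_{0}\colon\cVdot\to\cVdot'$, $\cH_{0}u\coloneqq\partial_{t}u-\div(A\nabla u)$, which is bounded just as in Proposition~\ref{prop:Lt} (it equals $\cH-\beta$), with adjoint $\cH_{0}^{*}u=-\partial_{t}u-\div(A^{*}\nabla u)$.

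\textbf{Step 1: hidden coercivity of $\cH_{0}$.} Let $H$ be the Fourier multiplier in the time variable with symbol $-\i\operatorname{sgn}(\tau)$ (the Hilbert transform in $t$). Since $|{-\i\operatorname{sgn}(\tau)}|=1$ a.e.\ and $H$ commutes with $\nabla_{x}$ and $\D_{t}^{1/2}$, it is an isometry of $\LL$ and of $\cVdot$. Given $u\in\cVdot$, I test $\cH_{0}u$ against $v\coloneqq u-\gamma Hu$ with $\gamma\coloneqq\lambda/(2\Lambda)\in(0,\tfrac12]$. By density of $\cS(\R^{n+1})$ in $\cVdot$ and Plancherel's formula, $\Re\Angle{\partial_{t}u}{u}=0$ and $\Re\Angle{\partial_{t}u}{Hu}=-\|\D_{t}^{1/2}u\|_{\LL}^{2}$, so $\Re\Angle{\partial_{t}u}{v}=\gamma\|\D_{t}^{1/2}u\|_{\LL}^{2}$. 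Combining this with the G\aa rding inequality \eqref{eq:Aelliptic}, the bound \eqref{eq:Abdd}, and $\|\nabla Hu\|_{\LL}=\|\nabla u\|_{\LL}$ gives
\begin{align*}
\Re\Angle{\cH_{0}u}{v}\ge\gamma\|\D_{t}^{1/2}u\|_{\LL}^{2}+(\lambda-\gamma\Lambda)\|\nabla u\|_{\LL}^{2}\ge\min\{\gamma,\tfrac{\lambda}{2}\}\,\|u\|_{\cVdot}^{2}.
\end{align*}
Since $\|v\|_{\cVdot}\le(1+\gamma)\|u\|_{\cVdot}$, this yields $\|\cH_{0}u\|_{\cVdot'}\ge\kappa\|u\|_{\cVdot}$ for some $\kappa=\kappa(\lambda,\Lambda)>0$.

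\textbf{Step 2: $\cH_{0}$ is an isomorphism.} The matrix $A^{*}$ satisfies \eqref{eq:Abdd} and \eqref{eq:Aelliptic} with the same $\lambda,\Lambda$, and $-\partial_{t}$ enjoys the same antisymmetry; running Step~1 for $\cH_{0}^{*}$ with the test function $v=u+\gamma Hu$ gives $\|\cH_{0}^{*}u\|_{\cVdot'}\ge\kappa\|u\|_{\cVdot}$ as well. The first bound makes $\cH_{0}$ injective with closed range, the second makes $\operatorname{ran}\cH_{0}$ dense in $\cVdot'$; hence $\cH_{0}\colon\cVdot\to\cVdot'$ is bijective and, by the open mapping theorem, $\|\cH_{0}^{-1}\|_{\cVdot'\to\cVdot}\le\kappa^{-1}$.

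\textbf{Step 3: perturbation by the lower order terms.} By Lemma~\ref{lem:beta} together with the embedding $\cVdot\hookrightarrow\Deldot^{r_{1},q_{1}}$ (Lemma~\ref{ref:embedding}, applicable since $(r_{1},q_{1})$ is admissible by Lemma~\ref{lem:compadm}), the form $(u,v)\mapsto\Angle{\beta u}{v}$ defines a bounded operator $\beta\colon\cVdot\to\cVdot'$ with $\|\beta\|_{\cVdot\to\cVdot'}\le C(n,q_{1},r_{1})\,P_{\tilde r_{1},\tilde q_{1}}$, and $\cH=\cH_{0}+\beta$. Factoring $\cH=\cH_{0}(\Id+\cH_{0}^{-1}\beta)$ and setting $\varepsilon_{0}\coloneqq\kappa/(2C(n,q_{1},r_{1}))$, which depends only on $\lambda,\Lambda,n,q_{1},r_{1}$, the hypothesis $P_{\tilde r_{1},\tilde q_{1}}\le\varepsilon_{0}$ forces $\|\cH_{0}^{-1}\beta\|_{\cVdot\to\cVdot}\le\tfrac12$, so $\Id+\cH_{0}^{-1}\beta$ is invertible on $\cVdot$ by the Neumann series and $\cH$ is invertible. (Invertibility of $\cH^{*}$, if needed for $\mathbf{(H_{0})}$, follows either by the same argument or because the Hilbert-space adjoint of an isomorphism is an isomorphism.)

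\textbf{Main obstacle.} The delicate point is Step~1: making the ``hidden coercivity'' rigorous on the \emph{homogeneous} space $\cVdot$. One must check that $v=u-\gamma Hu$ genuinely lies in $\cVdot$ with the stated norm, that the pairings $\Angle{\partial_{t}u}{v}$ and $\Angle{\div(A\nabla u)}{v}$ are legitimate (using $\partial_{t}u\in\cVdot'$ from Proposition~\ref{prop:Lt} and the embedding into $\Deldot^{r_{1},q_{1}}$), and that the identity $\Re\Angle{\partial_{t}u}{Hu}=-\|\D_{t}^{1/2}u\|_{\LL}^{2}$ --- first verified on $\cS(\R^{n+1})$ by Plancherel and then extended by density, with no spurious boundary term --- is correct. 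Everything downstream is soft functional analysis.
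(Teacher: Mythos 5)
Your proof is correct and rests on the same idea as the paper's: Kaplan's hidden coercivity obtained by testing against $u+\gamma\H_{t}u$ (your $H=-\H_{t}$, so $v=u-\gamma Hu$ is the same test function), with the smallness of $P_{\tilde r_{1},\tilde q_{1}}$ entering through the bound $\|\beta\|_{\cVdot\to\cVdot'}\le C(n,q_{1},r_{1})P_{\tilde r_{1},\tilde q_{1}}$ from Lemma~\ref{lem:beta} and the embedding of Lemma~\ref{ref:embedding}. The only difference is organizational: the paper absorbs $\beta$ directly into the coercivity estimate for $\cH$ and invokes Lax--Milgram for $(\Id+\delta\H_{t})^{*}\cH$, whereas you first make $\cH_{0}$ an isomorphism via two-sided a priori bounds and then add $\beta$ by a Neumann series, which yields the same $\varepsilon_{0}$-dependence.
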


\begin{proof} For the sesquilinear form $\beta$ corresponding to the lower order coefficients, we have seen in Lemma~\ref{lem:beta} that
$$
|\Angle {\beta u}{v}| \le P_{\tilde r_{1},\tilde q_{1}} \|u\|_{\Deldot^{ r_{1}, q_{1}}}\|v\|_{\Deldot^{ r_{1}, q_{1}}}.
$$
By the embedding $\cVdot \hookrightarrow  \Deldot^{ r_{1}, q_{1}}$, we have with $C= C(n, r_{1}, q_{1})$, 
$$
|\Angle {\beta u}{v}| \le C P_{\tilde r_{1},\tilde q_{1}} \|u\|_{\cVdot}\|v\|_{\cVdot}.
$$
Next, we let
$$\cH_{0}\colon \cVdot \to \cVdot',  \quad \cH_{0} u= \partial_{t}u-\div(A \nabla u)
$$
be of the same type as $\cH$ but without lower order terms. We use the classical hidden coercivity inequality
$$
\Re \Angle {\cH_{0}u}{(\Id+\delta \H_{t})u} \ge  \delta \|\D_{t}^{1/2}\!u\|_{\LL}^2 + (\lambda-\delta\Lambda) \|\nabla u\|_{\LL}^2,
$$
where $\delta>0$ and $\H_{t}$ is the Hilbert transform in the $t$-variable with symbol $i\tau/|\tau|$, see~\cite{Kaplan}. This inequality follows from the factorization $\partial_{t}= \D_{t}^{1/2}\H_{t}\D_{t}^{1/2}$, so that, using also the  
skew-adjointness of the Hilbert transform and commutation, 
\begin{align*}
\Re \Angle {\partial_{t}u} {(\Id+\delta \H_{t})u}
&= 
\int_{-\infty}^\infty \Re \angle {\H_{t}\D_{t}^{1/2}\!u(t)}{\D_{t}^{1/2}(\Id+\delta \H_{t})u(t)}\, \d t \\
&= \delta\|\H_{t}\D_{t}^{1/2}\! u\|_{\LL}^2
= \delta\|\D_{t}^{1/2}\! u\|_{\LL}^2.
\end{align*}
Altogether, we get
$$
\Re \Angle {\cH u}{(\Id+\delta \H_{t})u} \ge \delta \|\D_{t}^{1/2}\!u\|_{\LL}^2 + (\lambda-\delta\Lambda) \|\nabla u\|_{\LL}^2 - C P_{\tilde r_{1},\tilde q_{1}} \sqrt{1+\delta^2} \|u\|_{\cVdot}^2.
$$
As $\|u\|_{\cVdot}^2= \|\D_{t}^{1/2}\!u\|_{\LL}^2 +  \|\nabla u\|_{\LL}^2$,  we can now set $\delta \coloneqq \lambda/(1+\Lambda)$ and define $\varepsilon_0$ through $C\varepsilon_{0} \sqrt{1+\delta^2}= \delta/2$ in order to conclude that $P_{\tilde r_{1},\tilde q_{1}}\le \varepsilon_{0}$ implies that
\begin{equation}
\label{eq:lowerbound}
\Re \Angle {\cH u}{(\Id+\delta \H_{t})u}  \ge \frac{\delta}{2}  \|u\|_{\cVdot}^2.
\end{equation}
{It follows from the Lax--Milgram lemma that $(\Id+\delta \H_{t})^*\cH$ is invertible from $\cVdot $ to $\cVdot'$.}
As  $\Id+\delta \H_{t}$ is also invertible on $\cVdot$ and its dual, this proves the invertibility of $\cH$. \end{proof}

\begin{thm}[Causality]
\label{thm:causality}
Assume that $A$ is elliptic and bounded with parameters $\lambda,\Lambda$ as in \eqref{eq:Abdd}, \eqref{eq:Aelliptic}. There is  $\varepsilon_{0}>0$ small enough depending on $\lambda, \Lambda,  n, q_{1}, r_{1}$ such that $P_{\tilde r_{1},\tilde q_{1}}\le \varepsilon_{0}$ implies that $\cH$ is causal  in the following sense:
\begin{enumerate}
\item If $u$ is  a  $\Deldot^{r_{1},q_{1}}$-solution of 
$\partial_{t}u+\Lt u=-\div F+g$ as in Theorems~\ref{thm:Fg} or \ref{thm:L1L2} or combination of both, and if $F, g$ vanish  on $(-\infty, s)\times \R^n$ for some $s\in \R$, then $u=0$ in $(-\infty,s]\times \R^n$.
\item  If  $u $ is  a $\Deldot^{r_{1},q_{1}}$-solution of 
$\partial_{t}u+\Lt u=\delta_{s}\otimes \psi$ as in Theorem~\ref{thm:deltasL2}, then $u=0$ in $(-\infty,s)\times \R^n$.
\end{enumerate}
\end{thm}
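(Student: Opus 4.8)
The plan is to localize the problem to the half-infinite interval $\I:=(-\infty,s)$, on which the right-hand side of the equation vanishes, to run the energy identity of Corollary~\ref{cor:energy} there, and then to absorb the lower order contribution using the smallness of $P_{\tilde r_{1},\tilde q_{1}}$. Note that ellipticity and smallness are genuinely needed: by Remark~\ref{rem:no-causality-yet} nothing of this kind follows from $\mathbf{(H_0)}$ alone. First, shrinking $\varepsilon_{0}$ if necessary, I would also assume that $P_{\tilde r_{1},\tilde q_{1}}$ lies below the threshold of Theorem~\ref{thm:invertible}, so that $\mathbf{(H_0)}$ holds and the solutions appearing in (i) and (ii) are exactly the ones produced by Theorems~\ref{thm:Fg}, \ref{thm:L1L2} and \ref{thm:deltasL2}; in particular $u\in\Deldot^{r_{1},q_{1}}$, $u$ is $\L^2_{x}$-continuous on $\I$, and $\|u(t)\|_{\L^2_{x}}\to0$ as $t\to-\infty$.

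On $\I\times\R^{n}$ the forcing term vanishes (in case (ii), $\delta_{s}$ is supported at $s\notin\I$), so $\partial_{t}u=-\Lt u$ there. Expanding $\Lt$ through \eqref{eq:Lt} and using H\"older's inequality together with the exponent relations in the proof of Lemma~\ref{lem:beta}, one can write $\partial_{t}u=-\div F_{1}+g_{1}$ on $\I\times\R^{n}$ with $F_{1}:=-(A\nabla u+\oa\, u)\in\L^2(\I;\L^2_{x})$ and $g_{1}:=-(\ob\cdot\nabla u+au)\in\L^{r_{1}'}(\I;\L^{q_{1}'}_{x})$, where $(r_{1},q_{1})$ is admissible. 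Hence Corollary~\ref{cor:energy} applies on $\I$; since $\angle{F_{1}(t)}{\nabla u(t)}+\angle{g_{1}(t)}{u(t)}=-\angle{\Lt u(t)}{u(t)}$ for a.e.\ $t$ and $\|u(\sigma)\|_{\L^2_{x}}\to0$ as $\sigma\to-\infty$, letting $\sigma\to-\infty$ in \eqref{eq:energy2} and invoking the ellipticity \eqref{eq:Aelliptic} for a.e.\ $t$ gives
\[
\|u(\tau)\|_{\L^2_{x}}^2+2\lambda\|\nabla u\|_{\L^2((-\infty,\tau);\L^2_{x})}^2\leq 2\int_{-\infty}^{s}|\angle{\beta u(t)}{u(t)}|\,\d t=:2J,\qquad \tau<s.
\]
Taking the supremum over $\tau<s$ and then $\tau\uparrow s$, this yields $X:=\sup_{t\in\I}\|u(t)\|_{\L^2_{x}}^2\leq 2J$ and $Y:=\|\nabla u\|_{\L^2(\I;\L^2_{x})}^2\leq\lambda^{-1}J$, both quantities being finite \emph{a priori}.

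It then remains to estimate $J$ and close the loop. The pointwise bound of Lemma~\ref{lem:beta} and H\"older's inequality in $t$ give $J\leq P_{\tilde r_{1},\tilde q_{1}}\bigl(\|u\|_{\L^{r_{1}}(\I;\L^{q_{1}}_{x})}\|\nabla u\|_{\L^2(\I;\L^2_{x})}+\|u\|_{\L^{r_{1}}(\I;\L^{q_{1}}_{x})}^2\bigr)$, while the Gagliardo--Nirenberg inequality (Proposition~\ref{prop:GN}, which is scale invariant and so holds on the half-line with a constant depending only on $n,q_{1},r_{1}$) bounds $\|u\|_{\L^{r_{1}}(\I;\L^{q_{1}}_{x})}^2$ by $C(n,q_{1},r_{1})(X+Y)$. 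With Young's inequality this gives $J\leq C(n,q_{1},r_{1})P_{\tilde r_{1},\tilde q_{1}}(X+Y)\leq C(n,q_{1},r_{1})(2+\lambda^{-1})P_{\tilde r_{1},\tilde q_{1}}\,J$, so choosing $\varepsilon_{0}$ small enough in terms of $\lambda,\Lambda,n,q_{1},r_{1}$ forces $J=0$, hence $X=Y=0$ and $u=0$ a.e.\ on $\I$. By $\L^2_{x}$-continuity on $\I$ this is the claim in case (ii); in case (i) the solution is moreover continuous across $s$, so $u(s)=\lim_{t\to s^{-}}u(t)=0$ and $u=0$ on $(-\infty,s]$. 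I do not expect a serious obstacle here: the point requiring care is simply that the energy identity is used on a \emph{half-infinite} interval under homogeneous hypotheses --- which is exactly what Corollary~\ref{cor:energy}, and behind it Lemma~\ref{lem:maxreg}, were built to supply --- together with the mixed-exponent bookkeeping ensuring the lower order term is genuinely absorbed; the crucial ``hidden coercivity'' has already been extracted once and for all in Theorem~\ref{thm:invertible}.
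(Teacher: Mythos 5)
Your proof is correct and follows essentially the same route as the paper: the energy identity on the half-line $(-\infty,s)$, ellipticity, Lemma~\ref{lem:beta} together with the Gagliardo--Nirenberg estimate of Proposition~\ref{prop:GN} and Young's inequality, and absorption via the smallness of $P_{\tilde r_{1},\tilde q_{1}}$. The only (immaterial) difference is bookkeeping: the paper evaluates at a time where $\sup_{t\le s}\|u(t)\|_{\L^2_{x}}^2$ is attained and absorbs directly, whereas you close the self-bounding inequality $J\le C P_{\tilde r_{1},\tilde q_{1}}(2+\lambda^{-1})J$ with $J$ finite \emph{a priori}.
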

 
\begin{proof} We begin with the proof of (i). We know that $u\in \C_{0}(\L^2_{x})$. As usual, we may assume $s=0$ to simplify the exposition. We let $S\coloneqq \sup_{t \le 0}\|u(t)\|_{\L^2_{x}}^2$. The integral identities of  Lemma~\ref{lem:energy} {apply to $u$}  and for $ \sigma\le \tau\le 0$ we obtain 
$$
\|u(\tau)\|_{\L^2_{x}}^2-\|u(\sigma)\|_{\L^2_{x}}^2= 2\Re \int_{\sigma}^\tau -\angle{A(t)\nabla u(t)}{\nabla u(t)} - \angle{\beta u(t)}{u(t)}\, \d t
$$
{since $F,g$ vanish in this range.}
We send $\sigma\to -\infty$ and take some $\tau\le 0$ at which the supremum $S$ is attained. Then, we have 
$$
S\le -2\lambda I + 2\int_{-\infty}^\tau | \angle{\beta u(t)}{u(t)}| \, \d t,
$$
where  $I \coloneqq \int_{-\infty}^\tau \|\nabla u(t)\|_{\L^2_{x}}^2\, \d t. $
The standard mixed  estimates, taking into account integration on $(-\infty,\tau)$ in $t$,
give us
$$
\|u\|_{\L^{r_{1}}(-\infty,\tau; \L^{q_{1}}_{x})} \le C S^{\frac{1}{2}-\frac{1}{ r_{1}}}I^{\frac{1}{ r_{1}}},
$$
see 
{Section~\ref{sec:GN} for a very quick proof.}
Using the pair $(\tilde r_{1}, \tilde q_{1})$ for the lower order coefficients and H\"older's inequality as in the proof of Lemma~\ref{lem:beta}, we get
$$
\int_{-\infty}^\tau | \angle{\beta u(t)}{u(t)}|\, \d t \le  C'P_{\tilde r_{1},\tilde q_{1}}(S^{\frac{1}{2}-\frac{1}{ r_{1}}}  I^{\frac{1}{2}+\frac{1}{r_{1}}} + S^{1-\frac{2}{ r_{1}}} I^{\frac{2}{ r_{1}}}).
$$
Altogether,
\begin{align*}
S
&\le -2\lambda I+ 2C'P_{\tilde r_{1},\tilde q_{1}}(S^{\frac{1}{2}-\frac{1}{ r_{1}}}  I^{\frac{1}{2}+\frac{1}{r_{1}}} + S^{1-\frac{2}{ r_{1}}} I^{\frac{2}{ r_{1}}}) \\
&\leq -2\lambda I+ 2C'P_{\tilde r_{1},\tilde q_{1}}((\tfrac{3}{2}-\tfrac{3}{r_{1}})S+ (\tfrac{1}{2}+\tfrac{3}{r_{1}}) I),
\end{align*}
where the second step is by Young's inequality, keeping in mind that $2\le  r_{1}< \infty$. If $P_{\tilde r_{1},\tilde q_{1}}\le \varepsilon_{0}$ is small enough, then we can hide the contribution of $S$ on the left and obtain that $S\le 0$. Hence, $u(t)=0$ for all $t\le 0$. 

For the proof of (ii), we know that $u\in \C_{0}(-\infty,s; \L^2_{x})$ with $\L^2_{x}$ limit when $t\to s^-$. In particular, we can argue with $S= \sup_{t \le s}\|u(t)\|_{\L^2_{x}}^2$ where $u(s)$ means $u(s^-)$ and the proof is the same. 
\end{proof}
 
We turn to lower bounds assumptions. We assume $P_{\tilde r_{1}, \tilde q_{1}}$ finite but not necessarily small. Here, we do not explicitly need the lower bound \eqref{eq:Aelliptic} on $A$ but it is hidden in checking the assumptions.
\begin{thm} [Invertibility through lower bounds]
\label{th:lb} \
\begin{enumerate}
\item Assume that there exists $c>0$ such that
$$\Re \Angle {\Lt  u}{u}  \ge c  \|\nabla u\|_{\LL}^2$$ for all $u\in \cVdot$.
Then $\cH$ is invertible.
\item Assume that  $$ \Re \angle {\Lt  w}{w}  \ge 0$$ almost everywhere for all $w\in \Hdot^1(\R^n)$.
Then $\cH$ is causal  in the sense of   Theorem~\ref{thm:causality}.\end{enumerate}
\end{thm}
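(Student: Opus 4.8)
\emph{Part (i): invertibility.} The plan is to run the proof of Theorem~\ref{thm:invertible} with Kaplan's test function $(\Id+\delta\H_{t})u$, $\H_{t}$ the Hilbert transform in $t$, but to replace the smallness-based absorption of the lower order terms by a direct use of the hypothesis. Using $\partial_{t}=\D_{t}^{1/2}\H_{t}\D_{t}^{1/2}$ and skew-adjointness of $\H_{t}$ as in the cited proof, $\Re\Angle{\partial_{t}u}{(\Id+\delta\H_{t})u}=\delta\|\D_{t}^{1/2}u\|_{\LL}^{2}$, so that for $u\in\cVdot$,
\[
\Re\Angle{\cH u}{(\Id+\delta\H_{t})u}=\delta\|\D_{t}^{1/2}u\|_{\LL}^{2}+\Re\Angle{\Lt u}{u}+\delta\Re\Angle{\Lt u}{\H_{t}u}\ \geq\ \delta\|\D_{t}^{1/2}u\|_{\LL}^{2}+c\|\nabla u\|_{\LL}^{2}-\delta\,\bigl|\Re\Angle{\Lt u}{\H_{t}u}\bigr|,
\]
where the assumption of (i) was used. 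Writing $\Lt u=-\div(A\nabla u)+\beta u$, the leading part is harmless since $\H_{t}$ commutes with $\nabla$ and is an $\LL$-isometry: $|\Re\Angle{-\div(A\nabla u)}{\H_{t}u}|=|\Re\Angle{A\nabla u}{\H_{t}\nabla u}|\leq\Lambda\|\nabla u\|_{\LL}^{2}$. For the lower order part, Lemma~\ref{lem:beta} together with boundedness of $\H_{t}$ on $\Deldot^{r_{1},q_{1}}$ (it is an isometry on $\L^{2}_{t}\Hdot^{1}_{x}$ and the $\L^{q_{1}}_{x}$-valued Hilbert transform on $\L^{r_{1}}_{t}\L^{q_{1}}_{x}$, $1<r_{1}<\infty$) give $|\Angle{\beta u}{\H_{t}u}|\leq C_{1}P_{\tilde r_{1},\tilde q_{1}}\|u\|_{\Deldot^{r_{1},q_{1}}}^{2}$. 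The crucial point is that the embedding $\cVdot\hookrightarrow\Deldot^{r_{1},q_{1}}$ can be quantified with a free parameter: applying Young's inequality $|\tau|^{\theta/2}|\xi|^{1-\theta}\leq\theta\eta|\tau|^{1/2}+(1-\theta)\eta^{-\theta/(1-\theta)}|\xi|$ ($\theta=1-2/r_{1}\in[0,1)$, $\eta>0$) to the Fourier multiplier in \eqref{eq:embed} yields $\|u\|_{\L^{r_{1}}_{t}\L^{q_{1}}_{x}}\leq c(\eta\|\D_{t}^{1/2}u\|_{\LL}+\eta^{-\theta/(1-\theta)}\|\nabla u\|_{\LL})$, hence $\|u\|_{\Deldot^{r_{1},q_{1}}}^{2}+\|\H_{t}u\|_{\Deldot^{r_{1},q_{1}}}^{2}\leq C_{2}(\eta^{2}\|\D_{t}^{1/2}u\|_{\LL}^{2}+\eta^{-2\theta/(1-\theta)}\|\nabla u\|_{\LL}^{2})$ with $C_{2}=C_{2}(n,r_{1},q_{1})$. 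Inserting these bounds,
\[
\Re\Angle{\cH u}{(\Id+\delta\H_{t})u}\ \geq\ \delta\bigl(1-C_{3}P_{\tilde r_{1},\tilde q_{1}}\eta^{2}\bigr)\|\D_{t}^{1/2}u\|_{\LL}^{2}+\bigl(c-\delta\Lambda-\delta C_{3}P_{\tilde r_{1},\tilde q_{1}}\eta^{-2\theta/(1-\theta)}\bigr)\|\nabla u\|_{\LL}^{2}.
\]
Now fix $\eta$ small enough that $C_{3}P_{\tilde r_{1},\tilde q_{1}}\eta^{2}\leq\tfrac12$ (possible since $P_{\tilde r_{1},\tilde q_{1}}$ is finite, though not assumed small), then $\delta$ small enough that $\delta(\Lambda+C_{3}P_{\tilde r_{1},\tilde q_{1}}\eta^{-2\theta/(1-\theta)})\leq\tfrac{c}{2}$. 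Since $\|u\|_{\cVdot}^{2}=\|\D_{t}^{1/2}u\|_{\LL}^{2}+\|\nabla u\|_{\LL}^{2}$, this gives $\Re\Angle{\cH u}{(\Id+\delta\H_{t})u}\geq\min(\tfrac{\delta}{2},\tfrac{c}{2})\|u\|_{\cVdot}^{2}$, the analogue of \eqref{eq:lowerbound}, and exactly as in Theorem~\ref{thm:invertible} Lax--Milgram plus invertibility of $\Id+\delta\H_{t}$ on $\cVdot$ and $\cVdot'$ gives invertibility of $\cH$; the same argument for $-\partial_{t}+\Ltstar$ (which satisfies the same hypothesis, as $\Re\Angle{\Ltstar v}{v}=\Re\Angle{\Lt v}{v}$), or taking adjoints, yields $\mathbf{(H_0)}$.

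\emph{Part (ii): causality.} I would follow the proof of Theorem~\ref{thm:causality} verbatim up to the energy identity, the gain being that no absorption is needed. Let $u$ be a $\Deldot^{r_{1},q_{1}}$-solution of $\partial_{t}u+\Lt u=-\div F+g$ with $F,g$ vanishing on $(-\infty,s)\times\R^{n}$ (the combined source $\delta_{s}\otimes\psi-\div F+g$ is treated the same way, all data vanishing there). By Proposition~\ref{prop:regularityofDeldotr1q1solutions}, after normalizing the constant, $u\in\C_{0}(\L^{2}_{x})$, and Lemma~\ref{lem:energy} applies to $u$ with $F,g$ augmented by $A\nabla u+\oa u\in\LL$ and $\ob\cdot\nabla u+au\in\L^{r'}_{t}\L^{q'}_{x}$ (an admissible pair, by the Hölder computations of Lemma~\ref{lem:beta}, using compatibility of $(\tilde r_{1},\tilde q_{1})$). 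Since $F,g$ vanish on $(-\infty,s)$, for $\sigma<\tau\leq s$,
\[
\|u(\tau)\|_{\L^{2}_{x}}^{2}-\|u(\sigma)\|_{\L^{2}_{x}}^{2}=-2\Re\int_{\sigma}^{\tau}\angle{\Lt u(t)}{u(t)}\,\d t\ \leq\ 0,
\]
where the inequality is the hypothesis of (ii) applied with $w=u(t)$ for a.e.\ $t$: this is legitimate because $u(t)\in\H^{1}(\R^{n})$ for a.e.\ $t$ (as $u\in\C_{0}(\L^{2}_{x})$ and $\nabla u\in\LL$), the form $w\mapsto\Re\angle{\Lt(t)w}{w}$ is continuous on $\H^{1}(\R^{n})$ (by Lemma~\ref{lem:beta} and $\H^{1}(\R^{n})\hookrightarrow\L^{q_{1}}(\R^{n})$, $q_{1}$ being in the Sobolev range), and passing to a countable dense subset replaces the $w$-dependent exceptional set by a single null set, exactly as for the two equivalent forms of \eqref{eq:Aelliptic}. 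Letting $\sigma\to-\infty$ and using $u\in\C_{0}(\L^{2}_{x})$ forces $\|u(\tau)\|_{\L^{2}_{x}}=0$, i.e.\ $u=0$ on $(-\infty,s]\times\R^{n}$. For a pure source $\delta_{s}\otimes\psi$ the same computation runs on $(-\infty,s)$, with $u\in\C_{0}((-\infty,s);\L^{2}_{x})$ from Theorem~\ref{thm:deltasL2} and $u(s)$ read as $u(s^{-})$, giving $u=0$ on $(-\infty,s)\times\R^{n}$.

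\emph{Expected obstacle.} The only genuine difficulty is the cross term $\Re\Angle{\Lt u}{\H_{t}u}$ in (i): it cannot be controlled by $CP_{\tilde r_{1},\tilde q_{1}}\|u\|_{\cVdot}^{2}$ without smallness. The decisive observation is that its dangerous $\|\D_{t}^{1/2}u\|_{\LL}^{2}$-component can be made arbitrarily small at the price of a large multiple of $\|\nabla u\|_{\LL}^{2}$ — and $\|\nabla u\|_{\LL}^{2}$ is exactly what the lower bound $\Re\Angle{\Lt u}{u}\geq c\|\nabla u\|_{\LL}^{2}$ pays for. Part (ii), by contrast, is essentially immediate: the full lower-order-plus-leading contribution to the energy identity has the right sign, so the identity alone — with no Gronwall step and no smallness — forces the solution to vanish, a cleaner mechanism than in Theorem~\ref{thm:causality}.
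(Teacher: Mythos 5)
Your proof is correct and follows essentially the same route as the paper: for (i), Kaplan's test function $(\Id+\delta\H_{t})u$ together with a parameter-refined version of the embedding $\cVdot\hookrightarrow\Deldot^{r_{1},q_{1}}$ that trades the $\|\D_{t}^{1/2}u\|_{\LL}$-contribution of the cross term against a large multiple of $\|\nabla u\|_{\LL}$, which the hypothesis $\Re\Angle{\Lt u}{u}\ge c\|\nabla u\|_{\LL}^2$ then absorbs after choosing $\delta$ small; for (ii), the energy identity on $(-\infty,\tau]$ plus the sign hypothesis. The only deviations are cosmetic: you treat $r_{1}=2$ and $r_{1}>2$ uniformly where the paper splits cases, you invoke (correctly but unnecessarily) the vector-valued Hilbert transform on $\L^{r_{1}}_{t}\L^{q_{1}}_{x}$ where the isometry of $\H_{t}$ on $\cVdot$ suffices, and you spell out the null-set/measurability point in (ii) that the paper leaves implicit.
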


\begin{proof} To prove (i),  arguing as before and using the assumption, we have
$$
\Re \Angle {\cH u}{(\Id+\delta \H_{t})u} \ge \delta \|\D_{t}^{1/2}\!u\|_{\LL}^2 + c \|\nabla u\|_{\LL}^2 - C\delta  \|u\|_{\Deldot^{ r_{1},  q_{1}}}\|\H_{t}u\|_{\Deldot^{ r_{1}, q_{1}}}
$$
with $C \coloneqq \|A\|_{\infty}  + P_{ \tilde r_{1}, \tilde q_{1}}.$ 

When $r_{1}=2$ (hence $n\ge 3$ and $q_{1}=\frac {2n}{n-2}$), the Sobolev inequality gives us  $$\|u\|_{\Deldot^{ r_{1},  q_{1}}} \le c(n,q_{1},r_{1})\|\nabla u\|_{\LL}.$$ Since the Hilbert transform is isometric on $\L^2$ and commutes with the gradient, we see that  if  $\delta>0$ is so small that $c-c(n,q_{1},r_{1})C\delta>0$, then $\cH$ is invertible.  

When $r_{1}>2$,  we refine the first embedding of Lemma~\ref{ref:embedding}, by replacing \eqref{eq:convexity} with 
\begin{equation*}
\|\D_{t}^{\theta/2}(-\Delta)^{(1-\theta) /2} \varphi\|_{\LL}^2
\le \varepsilon^{1/\theta} \theta \|\D_{t}^{1/2} \!\varphi\|_{\LL}^2 + \varepsilon^{-1/(1-\theta)}(1-\theta)\|(-\Delta)^{1 /2} \varphi\|_{\LL}^2
\end{equation*}
for $\varepsilon>0$. As $\|(-\Delta)^{1 /2} \varphi\|_{\LL}= \|\nabla \varphi\|_{\LL}$, we get 
\begin{equation*}
\|u\|_{\L^{r_{1}}_{t}\L^{q_{1}}_{x}}^2  \le c(n,q_{1},r_{1}) \Big(\varepsilon^{1/\theta} \theta
\|\D_{t}^{1/2}\! u\|_{\LL}^2 +\varepsilon^{-1/(1-\theta)}(1-\theta)  \|\nabla u\|_{\LL}^2 \Big).
\end{equation*}
Using that $\H_{t}$ is isometric on $\cVdot$,  we obtain
\begin{align*}
 \Re \Angle {&\cH u}{(\Id+\delta \H_{t})u} \\ 
 &\ge \delta \|\D_{t}^{1/2}\! u\|_{\LL}^2 + c \|\nabla u\|_{\LL}^2\\
 & \quad - C'\delta \Big((\varepsilon^{1/\theta} \theta
\|D_{t}^{1/2}\! u\|_{\LL}^2 + \varepsilon^{-1/(1-\theta)}(1-\theta) \|\nabla u\|_{\LL}^2+ \|\nabla u\|_{\LL}^2 \Big)
\end{align*}
with $C'=c'(n,q_{1},r_{1})C$.   One chooses first $\varepsilon$ with $0<C' \varepsilon^{1/\theta} \theta<1$ and then $\delta$ with $ 0<C'((1-\theta) \varepsilon^{-1/(1-\theta)}+1)\delta < c$.  This yields the desired invertibility for $\cH$. 

The proof of (ii) is  easy.  Let $u$ be a $\Deldot^{r_{1},q_{1}}$-solution of  $\partial_{t}u+\Lt u=\delta_{s}\otimes \psi -\div F+g$.  Then we know that for $\tau<s$, $$
\|u(\tau)\|_{\L^2_{x}}^2= -2\Re \int_{-\infty}^\tau \angle {\Lt  u(t)}{u(t)}\, \d t.
$$ We conclude right away that $u(\tau)=0$. 
\end{proof}

\begin{rem}
\label{rem:lb}
{In practice,} the hypothesis in (i) follows from elliptic inequalities of the form 
$\Re\angle{\beta w}{w}\le (1-\gamma)\Re\angle{A\nabla w}{\nabla w}$ with $\gamma<1$, when there is a lower bound $\lambda>0$ for $A$ as in \eqref{eq:Aelliptic}. It is not so much the smallness of $P_{\tilde r_{1},\tilde q_{1}}$ that matters (although its size can be used in proofs). 
\end{rem}

We obtain as a corollary the further identities for the Green operators that have been mentioned in Remark~\ref{rem:no-causality-yet} (i) earlier on.
 
\begin{cor} If the previous results on invertibility and causality under smallness assumptions or lower bounds hold, then $\Ga(t,s)=0$ if $t<s$ and  $\Ga(t,s)\to I$ strongly as $t\to s^+$. 
 \end{cor}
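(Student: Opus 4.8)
The plan is to read off both assertions directly from causality combined with the jump relation of Theorem~\ref{thm:deltasL2}, with essentially no computation. Fix $s\in\R$ and $\psi\in\L^2_{x}$, and let $u$ be the $\Deldot^{r_{1},q_{1}}$-solution of $\partial_{t}u+\Lt u=\delta_{s}\otimes\psi$ furnished by Theorem~\ref{thm:deltasL2}, so that by definition $\Ga(t,s)\psi=u(t)$ for $t\ne s$, and recall from that theorem that $u$ admits $\L^2_{x}$ limits $u(s^{\pm})$ as $t\to s^{\pm}$, together with the jump relation $u(s^{+})-u(s^{-})=\psi$.

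First I would invoke the relevant causality statement: under the smallness hypothesis this is Theorem~\ref{thm:causality}~(ii), and under the lower bound hypothesis it is Theorem~\ref{th:lb}~(ii); in either case the conclusion is that $u=0$ on $(-\infty,s)\times\R^{n}$. In particular $u(t)=0$ for every $t<s$, which is exactly the statement $\Ga(t,s)=0$ for $t<s$. Passing to the limit $t\to s^{-}$, which exists in $\L^2_{x}$ by Theorem~\ref{thm:deltasL2}, we also get $u(s^{-})=0$, i.e.\ $\P_{s}^{-}=0$.

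Then the jump relation gives $u(s^{+})=u(s^{-})+\psi=\psi$, that is $\P_{s}^{+}\psi=\psi$; since $\psi\in\L^2_{x}$ was arbitrary, $\P_{s}^{+}=\Id$. As $\P_{s}^{+}\psi=\lim_{t\to s^{+}}\Ga(t,s)\psi$ in $\L^2_{x}$ by Proposition~\ref{prop:Gamma}, this is precisely the asserted strong convergence $\Ga(t,s)\to I$ as $t\to s^{+}$. There is no genuine obstacle here — every ingredient has already been established — and the only point requiring a little care is to cite the correct form of causality according to which of the two hypotheses (smallness of $P_{\tilde r_{1},\tilde q_{1}}$ or the lower bound) is assumed, both of which deliver the same vanishing of $u$ on $(-\infty,s)\times\R^{n}$.
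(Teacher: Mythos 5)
Your proposal is correct and follows essentially the same route as the paper: causality kills the solution on $(-\infty,s)$, giving $\Ga(t,s)=0$ for $t<s$ and $\P_{s}^{-}=0$, and the jump relation then yields $\P_{s}^{+}=\Id$. The only cosmetic difference is that you invoke the jump relation \eqref{eq:split} of Theorem~\ref{thm:deltasL2} directly, whereas the paper cites Theorem~\ref{thm:Gammats}~(i), which is merely that relation rephrased in terms of $\P_{s}^{\pm}$.
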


\begin{proof} In both cases, the solution $u$ of \eqref{eq:deltasL2} is given by $\Ga(\cdot,s)\psi$, $t\ne s$. Thus, $\Ga(t,s)=0$ if $t<s$ and $\lim_{t\to s^-}\Ga(t,s)\psi=0$. Hence, $\lim_{t\to s^+}\Ga(t,s)\psi=\psi$ follows from (i) in Theorem~\ref{thm:Gammats}.
 \end{proof}
 
 \subsection{Inhomogeneous assumptions on lower order terms}
\label{sec:inhomogneous}
 
So far, we have put ourselves in the situation where the lower order terms bring a contribution that is homogeneous to the gradient in $\L^2$. 
However, if the size of this contribution is not small enough, then invertibility of $\cH$ is not clear and we have to consider inhomogeneous assumptions by adding a positive constant.  

We define the inhomogeneous versions of $\cVdot$ and $\Deldot^{r, q}$. We set
 $ \cV \coloneqq \cVdot \cap \LL$  with norm
 $$
\|u\|_{\cV} \coloneqq \big(\|u\|_{\LL}^2+ \|\nabla u\|_{\LL}^2+ \|\D_{t}^{1/2}\!u\|^2_{\LL}\Big)^{1/2}
$$
and 
 $\Del^{r, q}: =\Deldot^{r, q} \cap \LL= \L^2_{t}\H^{1}_{x} \cap \L^{r}_{t}\L^{q}_{\vphantom{t} x}$
with norm 
$$
\|u\|_{\Del^{r, q}}: =\|u\|_{\LL}+ \|\nabla u \|_{\LL}+ \|u\|_{\L^{r}_{t}\L^{q}_{\vphantom{t} x}}.
$$
The continuous inclusion $\cV \hookrightarrow \Del^{r, q}$ for admissible pairs {follows from} Lemma~\ref{ref:embedding}. For admissible pairs, we still miss the extreme cases  $r=\infty$  that one obtains when $\L^{\infty}_{t}\L^2_{x}$ replaces $\H^{1/2}_{t}\L^2_{x}$,  and $q=\infty$.  The descriptions of the dual or pre-dual of $ \Del^{r, q}$  are similar. 

We may as well enlarge the class of coefficients  and assume from now on that 
\begin{equation}
\label{eq:sum} |\oa  |^2+|\ob  |^2+|a|\in \L^{\tilde r_{1}}_{t}\L^{\tilde q_{1}}_{\vphantom{t} x}+ \L^\infty_{t}\L^\infty_{x}
\end{equation}
with $(\tilde r_{1},\tilde q_{1})$ being a compatible pair for lower order coefficients, as in Section~\ref{sec:existenceanduniqueness}. Recall that this means $\frac 1 {\tilde r_{1}}+\frac {n}{2\tilde q_{1}}=1$ with $(\tilde r_{1},\tilde q_{1})\in (1,\infty]^2$. 
%As before, the pair  $(\tilde r_{1},\tilde q_{1})$ may vary from coefficient to coefficient as long as it remains   compatible for lower  order coefficients but we avoid making the distinction in the notation here, too. 

\begin{rem}[Subcritical exponents]
\label{rem:subcritical}
Let  $(\tilde r, \tilde q)\in [1,\infty]^2$ satisfy the subcritical compatibility relation $\frac 1 {\tilde r}+\frac {n}{2\tilde q}<1$. Coefficients with
 $$
 |\oa  |^2+|\ob  |^2+|a|\in \L^{\tilde r}_{t}\L^{\tilde q}_{\vphantom{t} x}
 $$
have such a decomposition when, in addition, 
\begin{align*}
\begin{cases}
	\tilde q<\infty & \text{if } n\ge 3 \\
	\max (\tilde r, \tilde q)<\infty & \text{if } n=2 \\
	\frac 1 {\tilde r}  > \frac 1 {2\tilde q}>0 & \text{if } n=1
\end{cases}.
\end{align*}
{Indeed, this is immediately seen from visualizing exponents in a $(\frac{1}{\tilde r}, \frac{1}{\tilde q})$-plane and truncating coefficients at a fixed height.}  In \cite{Ar68}, subcritical compatibility is assumed because the goal is to deal with bounded solutions. {The same condition} appears for Cauchy problems in \cite{LSU}.
See also \cite{KRW}. 
\end{rem}

We can define
$$\cH\colon \cV \to \cV', \quad \cH u= \partial_{t}u+\Lt u
$$
and $$\cH^*\colon \cV \to \cV', \quad \cH^* \tilde u = -\partial_{t}\tilde u+\Ltstar   \tilde u,
$$
where  $\cV'$ is the dual of $\cV$ with respect to $\LL$ duality. We use the same notation $\cH$ as before, {although $\cV$ now is a smaller space}.  These operators are bounded and adjoint to one another. Instead of $\mathbf{(H_{0})}$ we now work under the hypothesis that
$$
\mathbf{(H_{\kappa}) \qquad  there \ is \ \kappa>0 \ such \ that \ \cH +\kappa \ and \ \cH^*+\kappa \ are \ invertible.}
$$ 
Then we naturally work with $\cH +\kappa$, which means that we add the assumption $u\in \LL$ in most statements. Again the estimates will depend on $\Lambda=\|A\|_{\infty}$, {the bound implicit in} \eqref{eq:sum}, with fixed compatible pair $(\tilde r_{1},\tilde q_{1})$ for lower order coefficients, and the norm of the inverse of $\cH +\kappa$. Here is a description of changes in Sections \ref{sec:variationalspace} - \ref{sec:invertibility}:

\begin{enumerate}
\item In the modification of Corollary~\ref{cor:mainreg}  we assume $u\in \L^2_{t}\H^1_{x}$ and $\partial_{t}u\in \L^2_{t}\H^{-1}_{x}+ \L^{r'}_{t}\L^{q'}_{\vphantom{t} x}+\LL$. We obtain $u\in \cV \cap \C_{0}(\L^2_{x})$ when $(r,q)$ is an admissible pair or just  $u\in  \C_{0}(\L^2_{x})$  when $(r,q)=(\infty,2)$. The proofs of the technical lemmas use  estimates for the operator $\partial_{t}- \Delta+1$. (Note that we  still cannot use the Lions embedding theorem to deduce continuity because we do not, and do not want to, assume that $u\in \cV$.)

\item In Lemma~\ref{lem:energy}, we assume $u\in \L^2_{t}\H^1_{x}$ and $\partial_{t}u= -\div F+ g+ h$, where $F\in \L^2_{t}\L^2_{x}$, $g\in \L^{r'}_{t}\L^{q'}_{\vphantom{t} x} $ with $(r,q)$ an admissible pair or $(r,q)=(\infty,2)$ and $h\in \LL$. Then the conclusion is the same (without a constant) with the extra term $\angle{h(t)}{u(t)}$ in \eqref{eq:energy}.  The statements that follow in the same section are adapted similarly.

\item In all statements of Section~\ref{sec:existenceanduniqueness}, Assumption $\mathbf{(H_0)}$ is replaced by $\mathbf{(H_{\kappa})}$ and the equation to solve is for the operator $\partial_{t}+\Lt +\kappa$ in the sense of  $\Del^{r_{1},q_{1}}$-solutions, {which are defined by changing $\Deldot^{r,q}$ to $\Del^{r,q}$ in Definition~\ref{def:Delta-sol}. Such solutions belong to $\C_{0}(\L^2_{x})$.}
Uniqueness is in that class, and existence theorems through the inverse of $\cH +\kappa$ or its adjoint are proved for this operator with possible addition of an extra term  $h\in \LL$ in Theorem~\ref{thm:Fg}.

\item In Section~\ref{sec:propagators}, if we assume $\mathbf{(H_{\kappa})}$ instead of $\mathbf{(H_0)}$, we  obtain exactly the same statements for the Green operators 
$\Ga_{\kappa}(t,s)$ and $\tGa_{\kappa}(s,t)$ of $\partial_{t}+\Lt +\kappa$ and $-\partial_{t}+\Ltstar   +\kappa$, respectively.  In particular, they are uniformly bounded operators on $\L^2_{x}$ provided $t\ne s$.

\item Sections \ref{sec:schwartz}   can be adapted \textit{mutatis mutandis} with the Green operators $\Ga_{\kappa}$ under $\mathbf{(H_{\kappa})}$.  In the statement corresponding to Theorem~\ref{thm:representation}, one can add again an extra forcing term $h\in \LL$. 
\end{enumerate}

In order to check invertibility and causality,  we introduce the following property on the lower order coefficients, {where $\varepsilon\ge 0$ will be chosen appropriately small later}. 

\medskip

\paragraph{\bf Assumption $\mathbf{(D_{\varepsilon})}$}  For some   compatible pair $(\tilde r_{1},\tilde q_{1})$ for lower order  coefficients,   one can find a decomposition 
\begin{align*}
\oa  &= \oa_{0}+\oa_{\infty},
\\
\ob  &= \ob_{0}+\ob_{\infty},
\\
{a}&= {a_{0}}+{a_{\infty}},
\end{align*}
with 
\begin{align}
P_{\tilde r_{1},\tilde q_{1}}:&=\||\oa_{0}|^2\|_{\L^{\tilde r_{1}}_{t}\L^{\tilde q_{1}}_{\vphantom{t} x}}^{1/2}+ \|\ob_{0}|^2\|_{\L^{\tilde r_{1}}_{t}\L^{\tilde q_{1}}_{\vphantom{t} x}}^{1/2}+ \|a_{0}\|_{\L^{\tilde r_{1}}_{t}\L^{\tilde q_{1}}_{\vphantom{t} x}} \le \varepsilon,
\\
P_{\infty}:&= \||\oa_{\infty}|\|_{\infty}+ \||\ob_{\infty}|\|_{\infty}+ \|a_{\infty}\|_{\infty}^{1/2} <\infty.
\end{align}

\begin{rem}
\label{rem:Deps}
{By truncation at large height} one can always do such a decomposition {for any $\eps > 0$} starting from $|\oa  |^2+|\ob  |^2+|a|\in \L^{\tilde r_{1}}_{t}\L^{\tilde q_{1}}_{\vphantom{t} x}+ \L^\infty_{t}\L^\infty_{x}$, except if $\tilde r_{1}=\infty$. In this case, one needs further assumptions, such as that the part in $\L^\infty_{t}\L^{\tilde q_{1}}_{x}$  is uniformly continuous in time. For example,  independence of time is a valid hypothesis. In other words, $\mathbf{(D_{\varepsilon})}$ always holds for all $\varepsilon>0$ except when $\tilde r_{1}=\infty$. 
\end{rem}

The quantities $P_{\tilde r_{1},\tilde q_{1}}, P_{\infty}$ turn out to quantify nicely some estimates.  

\begin{thm}[Invertibility,  inhomogeneous case]
\label{thm:invertiblealpha}  Assume that $A$ has bounds  $\lambda,\Lambda$ as in \eqref{eq:Abdd}, \eqref{eq:Aelliptic} and that $\mathbf{(D_{\varepsilon})}$ holds for $\varepsilon>0$ small enough.  There exists  $\kappa_{0}>0$ large enough so that 
$\cH+\kappa$ is invertible from $\cV$ onto $\cV'$ for any $\kappa\ge \kappa_{0}$. Here, $\eps,\kappa_{0}$ and the lower bound  depend on $\lambda,\Lambda, n, q_{1},r_{1}$ and $\kappa_{0}$ depends additionally on $P_{\infty}$.
\end{thm}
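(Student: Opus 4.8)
The plan is to mimic the proof of Theorem~\ref{thm:invertible}, replacing the homogeneous hidden-coercivity estimate by an inhomogeneous one that also produces the $\LL$-norm of $u$, and then absorbing the lower order terms by splitting them into a small $\L^{\tilde r_{1}}_{t}\L^{\tilde q_{1}}_{\vphantom{t} x}$-part and a bounded part. First I would record that on the space $\cV$ one has the Kaplan-type lower bound
\begin{equation*}
\Re \Angle {\cH_{0}u + \kappa u}{(\Id+\delta \H_{t})u} \ge \delta \|\D_{t}^{1/2}\!u\|_{\LL}^2 + (\lambda-\delta\Lambda) \|\nabla u\|_{\LL}^2 + \kappa\|u\|_{\LL}^2 - \kappa\delta\|u\|_{\LL}\|\H_{t}u\|_{\LL},
\end{equation*}
where $\cH_{0}u=\partial_{t}u-\div(A\nabla u)$ and $\H_{t}$ is the Hilbert transform. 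Since $\H_{t}$ is isometric on $\LL$, the last term is $\le \tfrac\kappa2\delta(\|u\|_{\LL}^2+\|\H_{t}u\|_{\LL}^2)=\kappa\delta\|u\|_{\LL}^2$, so for $\delta\le \tfrac12$ we retain a full positive multiple of $\|u\|_{\LL}^2$ (say $\tfrac\kappa2$) on the right-hand side, together with $\delta\|\D_{t}^{1/2}\!u\|_{\LL}^2$ and $(\lambda-\delta\Lambda)\|\nabla u\|_{\LL}^2$. Fixing $\delta\coloneqq \min(\lambda/(2\Lambda),\tfrac12)$ gives a lower bound of the form $c_0(\|u\|_{\LL}^2+\|\nabla u\|_{\LL}^2+\|\D_{t}^{1/2}\!u\|_{\LL}^2)+\tfrac\kappa4\|u\|_{\LL}^2 = c_0\|u\|_{\cV}^2+\tfrac\kappa4\|u\|_{\LL}^2$ with $c_0$ depending only on $\lambda,\Lambda$.

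Next I would handle the lower order form $\beta$ using the decomposition from $\mathbf{(D_{\varepsilon})}$. Write $\beta=\beta_0+\beta_\infty$ according to $\oa=\oa_0+\oa_\infty$, etc. For the small part, Lemma~\ref{lem:beta} (with the pair $(\tilde r_1,\tilde q_1)$) and the embedding $\cVdot\hookrightarrow\Deldot^{r_1,q_1}$ give $|\Angle{\beta_0 u}{(\Id+\delta\H_t)u}|\le C P_{\tilde r_1,\tilde q_1}\sqrt{1+\delta^2}\,\|u\|_{\cVdot}^2 \le C\varepsilon\|u\|_{\cV}^2$, which for $\varepsilon$ small enough is absorbed into the $c_0\|u\|_{\cV}^2$ term exactly as in Theorem~\ref{thm:invertible}. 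For the bounded part, Cauchy--Schwarz gives, pointwise in $t$ and then integrated,
\begin{equation*}
|\Angle{\beta_\infty u}{(\Id+\delta\H_t)u}| \le C P_\infty\big(\|u\|_{\LL}\|\nabla u\|_{\LL} + \|\nabla u\|_{\LL}\|u\|_{\LL} + \|u\|_{\LL}^2\big)\sqrt{1+\delta^2} \le C'P_\infty\big(\|u\|_{\LL}\|\nabla u\|_{\LL}+\|u\|_{\LL}^2\big),
\end{equation*}
using that $\H_t$ is bounded on $\LL$ with the same norm. By Young's inequality $C'P_\infty\|u\|_{\LL}\|\nabla u\|_{\LL}\le \tfrac{c_0}{2}\|\nabla u\|_{\LL}^2 + C''P_\infty^2\|u\|_{\LL}^2$, so the bounded part is controlled by $\tfrac{c_0}{2}\|\nabla u\|_{\LL}^2 + (C''P_\infty^2+C'P_\infty)\|u\|_{\LL}^2$. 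Choosing $\kappa_0$ large enough that $\tfrac{\kappa_0}{4}\ge C''P_\infty^2+C'P_\infty$ lets us absorb this entire contribution into the $\tfrac{\kappa}{4}\|u\|_{\LL}^2$ reserve for every $\kappa\ge\kappa_0$. Collecting everything yields
\begin{equation*}
\Re \Angle {(\cH+\kappa) u}{(\Id+\delta \H_{t})u}  \ge \frac{c_0}{2}\|u\|_{\cV}^2
\end{equation*}
for all $\kappa\ge\kappa_0$, with $\varepsilon,\kappa_0$ depending on $\lambda,\Lambda,n,q_1,r_1$ and $\kappa_0$ additionally on $P_\infty$.

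Finally I would conclude exactly as in Theorem~\ref{thm:invertible}: the operator $\Id+\delta\H_t$ is invertible on $\cV$ and on $\cV'$ (its inverse being the Fourier multiplier $(1+\delta\,\mathrm{i}\tau/|\tau|)^{-1}$, which is bounded since $|1+\delta\,\mathrm{i}s/|s||\ge 1$ for real $s$), so the coercivity estimate above and the Lax--Milgram lemma show that $(\Id+\delta\H_t)^*(\cH+\kappa)\colon\cV\to\cV'$ is invertible, hence $\cH+\kappa$ is invertible from $\cV$ onto $\cV'$; the same argument applied to $\cH^*$ (which is indistinguishable from $\cH$ at the level of these estimates, with $\H_t$ replaced by $-\H_t$) gives invertibility of $\cH^*+\kappa$. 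The main obstacle I anticipate is bookkeeping the two-parameter smallness/largeness: one must fix $\delta$ first (from $\lambda,\Lambda$), then $\varepsilon$ (so that the small lower order part is absorbed by $c_0$, again depending only on $\lambda,\Lambda,n,q_1,r_1$ through the embedding constant), and only then $\kappa_0$ (depending on $P_\infty$), making sure that the reserve $\tfrac\kappa4\|u\|_{\LL}^2$ simultaneously absorbs the cross term $\kappa\delta\|u\|_{\LL}^2$ coming from $\H_t$ on the principal part and the bounded lower order contribution. A minor subtlety, already present in the homogeneous case, is that for $r_1=2$ (so $q_1=2n/(n-2)$, $n\ge3$) the embedding $\cVdot\hookrightarrow\Deldot^{r_1,q_1}$ still works and no refined interpolation is needed here since the reserve $\kappa$ provides the missing room; no analogue of the $\varepsilon$-refinement of Theorem~\ref{th:lb} is required.
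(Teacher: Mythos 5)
Your proof is correct and follows essentially the same route as the paper: decompose the lower order terms via $\mathbf{(D_{\varepsilon})}$, test against $(\Id+\delta \H_{t})u$ to exploit the hidden coercivity, absorb the small part into the $\cVdot$-norm and the bounded part via Young's inequality and a large $\kappa$, then conclude with Lax--Milgram and the invertibility of $\Id+\delta\H_{t}$. Two cosmetic remarks: since $\H_{t}$ is skew-adjoint, $\Re\Angle{u}{\H_{t}u}=0$, so the cross term $\kappa\delta\Re\Angle{u}{\H_{t}u}$ vanishes exactly and the restriction $\delta\le\tfrac12$ is unnecessary; and since $\|a_{\infty}\|_{\infty}\le P_{\infty}^2$, the zeroth-order bounded contribution should read $P_{\infty}^2\|u\|_{\LL}^2$ rather than $P_{\infty}\|u\|_{\LL}^2$, which only affects the (harmless) quantitative dependence of $\kappa_{0}$ on $P_{\infty}$.
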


\begin{proof} As in the proof of Theorem~\ref{thm:invertible} we may write with  obvious notation
$$
\cH+\kappa= \cH_{0}+ \beta_{0}+\beta_{\infty}+\kappa,
 $$
 so that 
 $$
|\Angle {\beta_{0}u}{v}| \le C\varepsilon \|u\|_{\cVdot}\|v\|_{\cVdot}
$$
and for the other term $|\Angle {\beta_{\infty}u}{v}| $ we have a simple bound by
$$
  \||\oa_{\infty}|\|_{\infty}\|u\|_{\LL}\|\nabla v\|_{\LL}+  \||\ob_{\infty}|\|_{\infty}\|v\|_{\LL}\|\nabla u\|_{\LL} + \|a_{\infty}\|_{\infty}\|u\|_{\LL}\| v\|_{\LL}.
$$
Hence, with $\delta>0$ and $\varepsilon=\varepsilon_{0}$ chosen as before, we have  
\begin{align*}
 \Re &\Angle {(\cH+\kappa )u}{(\Id+\delta \H_{t})u}  \ge \frac  \delta 2\  \|u\|_{\cVdot}^2
\\& -\sqrt{1+\delta^2} \big(P_{\infty}\|u\|_{\LL}\|\nabla u\|_{\LL} +P_{\infty}^2\|u\|_{\LL}^2\big)  + \kappa\|u\|_{\LL}^2. 
\end{align*}
By Young's inequality $ab \le  (1/4\gamma)a^2+ \gamma b^2$ with $\gamma=\delta/4$,  we see that 
  \begin{align*}
 \Re &\Angle {(\cH+\kappa )u}{(\Id+\delta \H_{t})u}  \ge \frac \delta 4  \|u\|_{\cVdot}^2 
 + \bigg(\kappa -  P_{\infty}^2\bigg(\frac{ (1+\delta^2)}{\delta}+ \sqrt{1+\delta^2}\bigg)\bigg)\|u\|_{\LL}^2.
\end{align*}
Hence, for, say,  $\kappa_{0} \coloneqq \frac \delta 4+ P_{\infty}^2\big(\frac{ (1+\delta^2)}{\delta}+ \sqrt{1+\delta^2}\big)$ and $\kappa\ge \kappa_{0}$, we get
\begin{equation}
\label{eq:lowerboundinh}
\Re \Angle {(\cH+\kappa )u}{(\Id+\delta \H_{t})u} \ge \frac \delta 4\  \|u\|_{\cV}^2
\end{equation}
 and invertibility follows.
\end{proof}

\begin{rem} The proof shows that 
 under the assumptions of Theorem~\ref{thm:invertible}, which corresponds to $P_{\tilde r_{1},\tilde q_{1}}\le \varepsilon_{0}$ and $P_{\infty}=0$, that  $\cH+\kappa\colon \cV\to \cV'$ is invertible for all $\kappa>0$. 
\end{rem}

For causality, Theorem~\ref{thm:causality} becomes the following result.

\begin{thm}[Causality, inhomogeneous case]
\label{thm:causalityalpha} With the assumptions of Theorem~\ref{thm:invertiblealpha} there exists $\kappa_{0}> 0$ such that $\cH+\kappa $ is causal for $\kappa\ge\kappa_{0}$ in the following sense:  
\begin{enumerate}
\item If $u $ is  a  $\Del^{r_{1},q_{1}}$-solution of 
$\partial_{t}u+\Lt u+\kappa u=-\div F+g+h$ as in the modifications of  Theorems~\ref{thm:Fg} or \ref{thm:L1L2} (here, $h\in \LL$, while $g\in \L^{r'}_{t}\L^{q'}_{\vphantom{t} x}$) and $F, g,h$ vanish  on $(-\infty, s)\times \R^n$ for some $s\in \R$, then $u=0$ identically on $(-\infty,s]\times \R^n$.
\item  If $u$ is  a $\Del^{r_{1},q_{1}}$-solution of 
$\partial_{t}u+\Lt u+\kappa u=\delta_{s}\otimes \psi$ as in the modification of Theorem~\ref{thm:deltasL2}, then $u=0$ identically on $(-\infty,s)\times \R^n$.
\end{enumerate}
 \end{thm}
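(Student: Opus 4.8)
The plan is to adapt the energy argument from the proof of Theorem~\ref{thm:causality}, now using two independent mechanisms: smallness of $\varepsilon$ in $\mathbf{(D_{\varepsilon})}$ will absorb the critical part $\beta_{0}$ of the lower order terms, while largeness of $\kappa$ will absorb the bounded part $\beta_{\infty}$. As usual we reduce to $s=0$ by a time translation, and we take $\kappa_{0}$ at least as large as the invertibility threshold of Theorem~\ref{thm:invertiblealpha}, so that the solutions in question actually exist. Consider case (i): by the modified regularity statements (item~(1) of the present section) $u\in\Del^{r_{1},q_{1}}\cap\C_{0}(\L^2_{x})$, and since $\Del^{r_{1},q_{1}}\subset\L^2_{t}\L^2_{x}$ the modified integral identity of Lemma~\ref{lem:energy} applies on $\R$; because $F,g,h$ vanish on $(-\infty,0)$ it reads, for $\sigma\le\tau\le 0$,
\[
\|u(\tau)\|_{\L^2_{x}}^2-\|u(\sigma)\|_{\L^2_{x}}^2=-2\Re\int_{\sigma}^{\tau}\angle{A(t)\nabla u(t)}{\nabla u(t)}+\angle{\beta u(t)}{u(t)}+\kappa\|u(t)\|_{\L^2_{x}}^2\,\d t.
\]
Setting $S\coloneqq\sup_{t\le 0}\|u(t)\|_{\L^2_{x}}^2$ (attained at some $\tau_{0}\le 0$), sending $\sigma\to-\infty$ (so $\|u(\sigma)\|_{\L^2_{x}}\to 0$) and using ellipticity of $A$, we get $S\le -2\lambda I-2\kappa J+2\int_{-\infty}^{\tau_{0}}|\angle{\beta u(t)}{u(t)}|\,\d t$, where $I\coloneqq\int_{-\infty}^{\tau_{0}}\|\nabla u\|_{\L^2_{x}}^2$ and $J\coloneqq\int_{-\infty}^{\tau_{0}}\|u\|_{\L^2_{x}}^2$ are both finite (the latter again because $\Del^{r_{1},q_{1}}\subset\LL$).

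Next I would split $\beta=\beta_{0}+\beta_{\infty}$ according to $\mathbf{(D_{\varepsilon})}$. The term $\beta_{0}$ is treated verbatim as in the proof of Theorem~\ref{thm:causality}: the bound $\|u\|_{\L^{r_{1}}(-\infty,\tau_{0};\L^{q_{1}}_{x})}\le CS^{1/2-1/r_{1}}I^{1/r_{1}}$ (Proposition~\ref{prop:GN}), H\"older's inequality in the pair $(\tilde r_{1},\tilde q_{1})$, and Young's inequality give
\[
2\int_{-\infty}^{\tau_{0}}|\angle{\beta_{0}u}{u}|\le 2C'\varepsilon\Big(\big(\tfrac32-\tfrac3{r_{1}}\big)S+\big(\tfrac12+\tfrac3{r_{1}}\big)I\Big).
\]
The term $\beta_{\infty}$ is bounded crudely by the $\L^\infty$-norms and Cauchy--Schwarz, and then Young's inequality yields
\[
2\int_{-\infty}^{\tau_{0}}|\angle{\beta_{\infty}u}{u}|\le 4P_{\infty}J^{1/2}I^{1/2}+2P_{\infty}^2 J\le \lambda I+\Big(\tfrac{4P_{\infty}^2}{\lambda}+2P_{\infty}^2\Big)J.
\]
Inserting these two estimates, the coefficient of $J$ becomes $-2\kappa+\tfrac{4P_{\infty}^2}{\lambda}+2P_{\infty}^2$, which is $\le 0$ once $\kappa\ge\kappa_{0}\coloneqq\tfrac{2P_{\infty}^2}{\lambda}+P_{\infty}^2$; the coefficient of $I$ becomes $-\lambda+2C'\varepsilon(\tfrac12+\tfrac3{r_{1}})$, which is $\le 0$ once $\varepsilon$ is small enough depending only on $\lambda,n,r_{1},q_{1}$. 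One is then left with $S\le 2C'\varepsilon(\tfrac32-\tfrac3{r_{1}})S$, and shrinking $\varepsilon$ further so that $2C'\varepsilon(\tfrac32-\tfrac3{r_{1}})<1$ forces $S=0$, i.e. $u\equiv 0$ on $(-\infty,0]$.

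For case (ii) the distribution $\delta_{s}\otimes\psi$ vanishes on $(-\infty,s)\times\R^n$; the modified Theorem~\ref{thm:deltasL2} gives $u\in\C_{0}((-\infty,s);\L^2_{x})$ with an $\L^2_{x}$-limit $u(s^-)$, and $u$ restricted to $(-\infty,s)$ extends to an element of $\cV\subset\L^2_{t}\L^2_{x}$, so the localized identity of Corollary~\ref{cor:energy} applies on $(-\infty,s)$ and the argument above runs unchanged with $S\coloneqq\sup_{t\le s}\|u(t)\|_{\L^2_{x}}^2$ and the convention $u(s)\coloneqq u(s^-)$, yielding $u\equiv 0$ on $(-\infty,s)$. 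The one genuinely new point compared with Theorem~\ref{thm:causality}, and the thing to get right, is the order of the parameter choices: $\varepsilon$ must be fixed small in terms of $\lambda$ and the embedding constants \emph{alone}, before and independently of $\kappa_{0}$, which is then taken large in terms of $\lambda$ and $P_{\infty}$ — so that the bounded part $\beta_{\infty}$ of the coefficients is allowed to be arbitrarily large. Beyond this bookkeeping I do not anticipate a real obstacle, the proof being a recombination of ingredients already established.
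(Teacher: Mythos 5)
Your proposal is correct and follows essentially the same route as the paper: reduce to $s=0$, apply the (inhomogeneous) energy identity up to the time $\tau$ where $S=\sup_{t\le s}\|u(t)\|_{\L^2_{x}}^2$ is attained, split $\beta=\beta_{0}+\beta_{\infty}$ via $\mathbf{(D_{\varepsilon})}$, absorb $\beta_{0}$ through the mixed-norm (Gagliardo--Nirenberg) bound, H\"older and Young with $\varepsilon$ small, absorb $\beta_{\infty}$ through Cauchy--Schwarz and Young with $\kappa$ large compared to $P_{\infty}^2$, and handle case (ii) with $u(s)\coloneqq u(s^-)$. Your explicit bookkeeping of constants and of the order of the choices ($\varepsilon$ first, depending only on $\lambda$ and embedding constants; then $\kappa_{0}$ depending on $\lambda,P_{\infty}$) matches what the paper does implicitly.
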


\begin{proof} 
It begins as the proof of Theorem~\ref{thm:causality}. 
In the first case, as $u\in \C_{0}(\L^2_{x})$, we fix $s=0$ to simplify the exposition, let $S \coloneqq \sup_{t \le 0}\|u(t)\|_{\L^2_{x}}^2$, which is attained at some $\tau$, and obtain 
$$
S\le -2\lambda I + 2\int_{-\infty}^\tau | \angle {\beta_{0}u(t)}{u(t)}|+| \angle {\beta_{\infty}u(t)}{u(t)}|- \kappa\|u(t)\|_{\L^2_{x}}^2 \, \d t,
$$
where $I \coloneqq \int_{-\infty}^\tau \|\nabla u(t)\|_{\L^2_{x}}^2\, \d t. $
The treatment of the term $\beta_{0}$ is as before: we have 
$$
\int_{-\infty}^\tau | \angle {\beta_{0}u(t)}{u(t)}|\, \d t \le  C'\varepsilon(S^{\frac{1}{2}-\frac{1}{r_{1}}}  I^{\frac{1}{2}+\frac{1}{r_{1}}} + S^{1-\frac{2}{r_{1}}} I^{\frac{2}{r_{1}}})
$$
and Young's inequality allows us to hide the contribution coming from  $S$ on the left-hand side up to loosing a little on $-2\lambda I$ by choosing $\varepsilon$ small enough. Next the contribution of  $\beta_{\infty}$ is 
$$
\int_{-\infty}^\tau |  \angle {\beta_{\infty}u(t)}{u(t)}|\, \d t \le   \int_{-\infty}^\tau \big(P_{\infty} \|u(t)\|_{\L^2_{x}}\|\nabla u(t)\|_{\L^2_{x}} +P_{\infty}^2\|u(t)\|_{\L^2_{x}}^2\big)\, \d t
$$
and Young's inequality yields again a contribution of 
$\int_{-\infty}^\tau \|u(t)\|_{\L^2_{x}}^2\, \d t$ that is compensated if  $\kappa$ is larger than a constant times $P_{\infty}^2$, up to loosing again a little on $-2\lambda I$. We obtain $S\le0$, and thus $u(t)=0$ for $t\le 0$.
 
 In the second case, we know that $u\in \C_{0}(-\infty,s; \L^2_{x})$ with $\L^2_{x}$ limit when $t\to s^-$. In particular, we can argue with $S= \sup_{t \le s}\|u(t)\|_{\L^2_{x}}^2$, where $u(s)$ means $u(s^-)$, and the proof is the same. 
 \end{proof}
 
 Finally the result with lower bounds (Theorem~\ref{th:lb}) becomes the following and we skip the easy adaptation of the proof. Again, a lower bound on $A$ is implicit in order  to check the assumptions. 
 
\begin{thm}[Invertibility through lower bounds, inhomogeneous case]
 \label{thm:lbi} \
\begin{enumerate}
\item Assume that there exists $c,c'>0$ such that
$$
\Re \Angle {\Lt  u}{u}  \ge c \|\nabla u\|_{\LL}^2- c'\|u\|_{\LL}^2
$$
for all $u\in \cV$.
Then $\cH+\kappa$ is invertible for all $\kappa \ge \kappa_{1}$ with $\kappa_{1}(\Lambda, n, q_{1}, r_{1}, c, c')>0$ large enough.
\item Assume that $$\Re \angle {\Lt  w}{w}  \ge - c'\|w\|_{\L^2_{x}}^2$$ almost everywhere for some $c'>0$ and  for all $w\in \H^1(\R^n)$.
Then $\cH+ \kappa $ is causal in the sense of   Theorem~\ref{thm:causalityalpha} for any $\kappa\ge c'$.
\end{enumerate}
\end{thm}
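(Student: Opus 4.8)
The plan is to transcribe the proofs of Theorem~\ref{th:lb} and Theorem~\ref{thm:invertiblealpha}, carrying the zeroth-order shift $+\kappa$ through the coercivity inequality. For part~(i), I would test $(\cH+\kappa)u$ against $(\Id+\delta\H_{t})u$, with $\delta>0$ to be fixed later. As in the proof of Theorem~\ref{thm:invertible}, the factorization $\partial_{t}=\D_{t}^{1/2}\H_{t}\D_{t}^{1/2}$ together with the skew-adjointness of $\H_{t}$ gives $\Re\Angle{\partial_{t}u}{(\Id+\delta\H_{t})u}=\delta\|\D_{t}^{1/2}\!u\|_{\LL}^{2}$, while $\Re\Angle{u}{\H_{t}u}=0$ forces $\Re\Angle{\kappa u}{(\Id+\delta\H_{t})u}=\kappa\|u\|_{\LL}^{2}$. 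Combining the hypothesis on $\Re\Angle{\Lt u}{u}$ with the inhomogeneous analogue of Remark~\ref{rem:Delta}, namely $|\Angle{\Lt u}{v}|\le C_{0}\|u\|_{\Del^{r_{1},q_{1}}}\|v\|_{\Del^{r_{1},q_{1}}}$ with $C_{0}=\|A\|_{\infty}+(\text{the bound in }\eqref{eq:sum})$, and the fact that $\H_{t}$ is isometric on $\L^{2}$ and commutes with $\nabla$ and $\D_{t}^{1/2}$, one arrives at
\begin{equation*}
\Re\Angle{(\cH+\kappa)u}{(\Id+\delta\H_{t})u}\ge\delta\|\D_{t}^{1/2}\!u\|_{\LL}^{2}+c\|\nabla u\|_{\LL}^{2}-c'\|u\|_{\LL}^{2}-C_{0}\delta\|u\|_{\Del^{r_{1},q_{1}}}\|\H_{t}u\|_{\Del^{r_{1},q_{1}}}+\kappa\|u\|_{\LL}^{2}.
\end{equation*}

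Next I would bound $\|u\|_{\Del^{r_{1},q_{1}}}$, and $\|\H_{t}u\|_{\Del^{r_{1},q_{1}}}$ which obeys the same estimate, exactly as in Theorem~\ref{th:lb}~(i): if $r_{1}=2$ the Sobolev embedding gives $\|u\|_{\Del^{r_{1},q_{1}}}\lesssim\|\nabla u\|_{\LL}+\|u\|_{\LL}$, and if $r_{1}>2$ the refined convexity inequality with parameter $\varepsilon>0$ gives $\|u\|_{\L^{r_{1}}_{t}\L^{q_{1}}_{x}}^{2}\lesssim\varepsilon^{1/\theta}\|\D_{t}^{1/2}\!u\|_{\LL}^{2}+\varepsilon^{-1/(1-\theta)}\|\nabla u\|_{\LL}^{2}$ with $\theta=1-\tfrac{2}{r_{1}}$. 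Inserting these bounds and using Young's inequality to redistribute all cross terms into multiples of $\|\D_{t}^{1/2}\!u\|_{\LL}^{2}$, $\|\nabla u\|_{\LL}^{2}$ and $\|u\|_{\LL}^{2}$, I would fix the parameters in this order: $\varepsilon$ small enough that the coefficient of $\|\D_{t}^{1/2}\!u\|_{\LL}^{2}$ stays $\ge\delta/2$ (this step being vacuous if $r_{1}=2$), then $\delta$ small enough that the coefficient of $\|\nabla u\|_{\LL}^{2}$ stays $\ge c/2$, then $\kappa_{1}$ large enough that, for $\kappa\ge\kappa_{1}$, the coefficient of $\|u\|_{\LL}^{2}$, namely $\kappa-c'-O(\delta)$, is bounded below by a positive constant. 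This yields $\Re\Angle{(\cH+\kappa)u}{(\Id+\delta\H_{t})u}\ge c_{1}\|u\|_{\cV}^{2}$ with $c_{1}>0$, so $(\Id+\delta\H_{t})^{*}(\cH+\kappa)$ is coercive on $\cV$; since $\Id+\delta\H_{t}$ is invertible on $\cV$ and on $\cV'$, the Lax--Milgram lemma gives invertibility of $\cH+\kappa$, and the same for $\cH^{*}+\kappa$. Here $\varepsilon,\delta$ and the implicit constants depend on $\Lambda,n,q_{1},r_{1},c$ and the bound in \eqref{eq:sum}, and $\kappa_{1}$ additionally on $c'$, as claimed.

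For part~(ii), I would use the inhomogeneous version of the energy identity (Lemma~\ref{lem:energy} and Corollary~\ref{cor:energy} with an extra $\LL$ term, as described in the list of Section~\ref{sec:inhomogneous}). Let $u$ be a $\Del^{r_{1},q_{1}}$-solution of $\partial_{t}u+\Lt u+\kappa u=\delta_{s}\otimes\psi-\div F+g+h$ with $F,g,h$ vanishing on $(-\infty,s)$, or of $\partial_{t}u+\Lt u+\kappa u=\delta_{s}\otimes\psi$. On $(-\infty,s)$ one then has $\partial_{t}u=-\Lt u-\kappa u$, and writing $\Lt u$ in divergence form the energy identity on any $(\sigma,\tau)\subset(-\infty,s)$ becomes
\begin{equation*}
\|u(\tau)\|_{\L^2_{x}}^{2}-\|u(\sigma)\|_{\L^2_{x}}^{2}=-2\Re\int_{\sigma}^{\tau}\angle{\Lt u(t)}{u(t)}\,\d t-2\kappa\int_{\sigma}^{\tau}\|u(t)\|_{\L^2_{x}}^{2}\,\d t.
\end{equation*}
Since $u\in\C_{0}(\L^2_{x})$ on the relevant half-line, letting $\sigma\to-\infty$ and invoking the pointwise hypothesis $\Re\angle{\Lt u(t)}{u(t)}\ge-c'\|u(t)\|_{\L^2_{x}}^{2}$, which holds for a.e.\ $t$ because $u(t)\in\H^{1}(\R^{n})$, yields $\|u(\tau)\|_{\L^2_{x}}^{2}\le2(c'-\kappa)\int_{-\infty}^{\tau}\|u(t)\|_{\L^2_{x}}^{2}\,\d t\le0$ whenever $\kappa\ge c'$. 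Hence $u\equiv0$ on $(-\infty,s)$, and on $(-\infty,s]$ in the forcing-only case by continuity of $u$ at $s$; the corresponding statements for $-\partial_{t}+\Ltstar+\kappa$ follow \emph{mutatis mutandis}.

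The only delicate point I foresee is the ordering of the parameter choices in part~(i) when $r_{1}>2$: one must fix $\varepsilon$, then $\delta$, then $\kappa_{1}$ so that the three quadratic quantities $\|\D_{t}^{1/2}\!u\|_{\LL}^{2}$, $\|\nabla u\|_{\LL}^{2}$ and $\|u\|_{\LL}^{2}$ all survive with positive coefficients in the lower bound. Everything else is a line-by-line transcription of arguments already carried out in the homogeneous setting (Theorem~\ref{th:lb}) and in the bounded zeroth-order setting (Theorem~\ref{thm:invertiblealpha}).
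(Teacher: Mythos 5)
Your proposal is correct and follows exactly the route the paper intends: the paper explicitly skips this proof as an ``easy adaptation'' of Theorems~\ref{th:lb} and \ref{thm:invertiblealpha}, and your argument (hidden coercivity tested against $(\Id+\delta\H_{t})u$ with the $\varepsilon$-then-$\delta$-then-$\kappa_{1}$ parameter choice for (i), and the inhomogeneous energy identity on $(-\infty,s)$ absorbing $-c'$ by $\kappa\ge c'$ for (ii)) is precisely that adaptation.
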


\begin{cor} Under either smallness assumption { (Theorem~\ref{thm:causalityalpha}) or lower bounds (Theorem~\ref{thm:lbi} (ii))} it follows for all $\kappa$ large enough that $\Ga_{\kappa}(t,s)=0$ if $t<s$ and $\Ga_{\kappa}(t,s)\to I$ strongly if $t\to s^+$. 
 \end{cor}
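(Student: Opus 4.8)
The plan is to reduce the corollary entirely to the tools already assembled for the homogeneous case, since the only difference is the presence of the shift $\kappa$. First I would observe that once $\kappa \ge \kappa_0$ is fixed large enough, Theorem~\ref{thm:invertiblealpha} gives that $\cH+\kappa$ and $(\cH+\kappa)^*$ are invertible, i.e.\ hypothesis $\mathbf{(H_\kappa)}$ holds, and Theorem~\ref{thm:causalityalpha} gives causality in the two senses listed there. By the adaptation of Section~\ref{sec:propagators} described in item~(iv) of the list of changes in Section~\ref{sec:inhomogneous}, all the statements of Theorem~\ref{thm:deltasL2} and Proposition~\ref{prop:Gamma} hold verbatim with $\Ga$ replaced by $\Ga_\kappa$: for $\psi \in \L^2_x$ the function $t \mapsto \Ga_\kappa(t,s)\psi$ is the value at time $t$ of the unique $\Del^{r_1,q_1}$-solution $u$ of $\partial_t u + \Lt u + \kappa u = \delta_s \otimes \psi$, it lies in $\C_0(\R \setminus \{s\}; \L^2_x)$, and it has one-sided $\L^2_x$-limits at $s$ with the jump relation $u(s^+) - u(s^-) = \psi$, i.e.\ $\P_{s,\kappa}^+ - \P_{s,\kappa}^- = \Id$.

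Next I would invoke causality. By Theorem~\ref{thm:causalityalpha}~(ii) (under the smallness assumption on $P_{\tilde r_1, \tilde q_1}$) or by Theorem~\ref{thm:lbi}~(ii) (under the lower bound assumption), for $\kappa$ large enough the solution $u$ of $\partial_t u + \Lt u + \kappa u = \delta_s \otimes \psi$ vanishes identically on $(-\infty, s) \times \R^n$. Since $\Ga_\kappa(t,s)\psi = u(t)$ for $t \ne s$, this is exactly the statement $\Ga_\kappa(t,s) = 0$ for $t < s$. In particular $u(s^-) = \lim_{t \to s^-} u(t) = 0$, so $\P_{s,\kappa}^- = 0$.

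Finally, combining the jump relation $\P_{s,\kappa}^+ - \P_{s,\kappa}^- = \Id$ with $\P_{s,\kappa}^- = 0$ yields $\P_{s,\kappa}^+ = \Id$, which by definition means $\Ga_\kappa(t,s)\psi \to \psi$ in $\L^2_x$ as $t \to s^+$, for every $\psi \in \L^2_x$; that is, $\Ga_\kappa(t,s) \to \Id$ strongly as $t \to s^+$. This is precisely the proof of the corollary following Theorem~\ref{thm:causality} in the homogeneous case, transposed word for word to $\Ga_\kappa$. There is no real obstacle here: the only thing to be careful about is bookkeeping the value of $\kappa_0$, which must be taken large enough to simultaneously satisfy the hypotheses of the relevant invertibility theorem (Theorem~\ref{thm:invertiblealpha} or Theorem~\ref{thm:lbi}~(i)) and of the matching causality theorem (Theorem~\ref{thm:causalityalpha} or Theorem~\ref{thm:lbi}~(ii)); taking the maximum of the finitely many thresholds produced by those statements suffices, and all of them depend only on the admissible quantities $\lambda, \Lambda, n, q_1, r_1$ and, for the inhomogeneous invertibility, on $P_\infty$.
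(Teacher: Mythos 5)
Your proposal is correct and follows essentially the same route as the paper: identify $\Ga_\kappa(\cdot,s)\psi$ with the unique $\Del^{r_1,q_1}$-solution of $\partial_t u+\Lt u+\kappa u=\delta_s\otimes\psi$, use causality to get vanishing for $t<s$ (hence zero left limit), and conclude $\P_{s,\kappa}^+=\Id$ from the jump relation in the inhomogeneous version of Theorem~\ref{thm:Gammats}~(i). The only addition is your explicit bookkeeping of $\kappa_0$, which the paper leaves implicit.
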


\begin{proof} The {$\Del^{r_1,q_1}$-solution} $u$ of $\partial_{t}u+\Lt u+\kappa u=\delta_{s}\otimes \psi$ is given by $\Ga_{\kappa}(t,s)\psi$ for $t\ne s$. Thus, $\Ga_{\kappa}(t,s)=0$ if $t<s$ and $\lim_{t\to s^-}\Ga_{\kappa}(t,s)\psi=0$. Hence, $\lim_{t\to s^+}\Ga_{\kappa}(t,s)\psi=\psi$ follows from (i) in {(the modification of)} Theorem~\ref{thm:Gammats}.
 \end{proof}

\subsection{The Cauchy problem and the fundamental solution operator}
\label{sec:CP}
We consider \textit{in fine} the  Cauchy problem on the strip $[0,T]\times \R^n$ with $0<T<\infty$. (We shall say a word concerning $T=\infty$ in Remark \ref{rem:T=infty}) . It is of course sufficient to consider coefficients only on this strip and the foregoing results will allow us to work under the following set of assumptions.
\begin{itemize}
\item[(A1)]$\Lt $ is given as in \eqref{eq:Lt} with coefficients  $A,\oa  , \ob  , a$  defined almost everywhere in $(0,T)\times \R^n$.
\item[(A2)] $A$ has bounds   $\lambda,\Lambda$ as in \eqref{eq:Abdd}, \eqref{eq:Aelliptic} for almost every $t\in (0,T)$. 
\item[(A3)]  The lower order coefficients satisfy  $\mathbf{(D_{\varepsilon})}$ on $(0,T)\times \R^n$   for all $\varepsilon$ small enough with compatible pair $(\tilde r_{1},\tilde q_{1})$ as in Definition \ref{def:compatiblepair}.
\end{itemize}
Recall that we take coefficients with $|\oa  |^2+|\ob  |^2+|a|\in \L^{\tilde r_{1}}_{t}\L^{\tilde q_{1}}_{\vphantom{t} x}+ \L^\infty_{t}\L^\infty_{x}$ in $(0,T)\times \R^n$. As in the case of $\R^{n+1}$, (A3) automatically holds except if $\tilde r_{1}=\infty$ (hence $n\ge 3$ and $\tilde q_{1}=n/2$), in which case we can proceed by imposing it or by assuming (uniform) $t$-continuity on $[0,T]$ instead of mere boundedness, valued in $\L^{n/2}_{x}$, compare with Remark~\ref{rem:Deps}. An alternative in the case $(\tilde r_1, \tilde q_1) = (\infty, n/2)$ is to replace (A3) by the following condition.
\begin{itemize}
 \item[(A3)']  $\tilde r_{1}=\infty$ and  there exist $c>0$ and $c'\ge 0$ such that  $$
\Re \angle {\Lt  v}{v}  \ge c \|\nabla v\|_{\L^2_{x}}^2- c'\|v\|_{\L^2_{x}}^2$$ for almost every $t\in (0,T)$ and all $v \in \cV$.
\end{itemize}

Let $(r_{1},q_{1})$ be the admissible conjugate pair to $(\tilde r_{1},\tilde q_{1})$ defined in \eqref{eq:associatedpair}.

For  $\psi\in \L^2_{x}$, $F\in \L^2(0,T; \L^2_{x})$, $g\in \L^{r'}(0,T; \L^{q'}_{x})$, where $(r,q)$ is an arbitrary admissible pair as in Definition \ref{def:admissible} or $(r,q)=(\infty,2)$, and $h\in \L^2(0,T; \L^2_{x})$,  the Cauchy problem with initial condition $\psi$ and forcing terms $-\div F+g+h$ consists  of finding $$u\in  \Del^{r_{1},q_{1}}_{0,T}\coloneqq\L^2(0,T; \H^1_{x}) \cap \L^{r_{1}}(0,T; \L^{q_{1}}_{x})$$  solving
 \begin{equation}
 \label{eq:Cauchypb}
 \begin{cases}
\partial_{t}u+\Lt u= -\div F+g+h & \textrm{in}\  (0,T)\times \R^n,
\\
\quad \; u(0,\cdot)=\psi & \textrm{on}\  \R^n,
\end{cases}
\end{equation}
in the sense that the first equation is satisfied weakly against test functions $\tphi \in \cD((0,T)\times \R^n)$ as in \eqref{eq:Deltasol} and that the second equation means $u(t,\cdot) \to \psi$ in $\cD'(\R^n)$ as $t\to 0$. 
%If $T=\infty$, we may replace the non-homogeneous Sobolev space $\H^1_{x}$ by its homogeneous version when $h=0$. 
Weak solutions for the Cauchy problem \eqref{eq:Cauchypb} on $[0,T]\times \R^n$ are those solutions in the class $\L^2(0,T; \H^1(\R^n)) \cap \L^{\infty}(0,T; \L^{2}(\R^n))$. This space 
 embeds  into $ \L^{r_{1}}(0,T; \L^{\vphantom{r_{1}} q_{1}}(\R^n))$, 
see Proposition~\ref{prop:GN} for a quick proof. So we have defined an {\it a priori} larger class of solutions. We next show that in fact a $\Deldot^{r_{1},q_{1}}$-solution is continuous in time valued in $\L^2_{x}$, 
so that in the end it is a weak solution. 
This continuity can be obtained as a regularity result in the inhomogeneous variational setting.

\begin{lem}
\label{lem:energyfiniteinterval} 
Let $\I=(0,T)$ and  $ u\in \L^2(\I;\H^1_{x})$ with $\partial_{t}u= -\div F+ g+h$,  where $F\in \L^2(\I;\L^2_{x})$, $g\in \L^{r'}(\I;\L^{q'}_{x} )$ for $(r,q)$ an admissible pair  or $(r,q)=(\infty,2)$, and $h\in \L^2(\I;\L^2_{x})$. 
Then   $u\in \C([0,T]; \L^2_{x})$ and we have the integral equalities \eqref{eq:energy} for $0 \leq \sigma < \tau \leq T$.
\end{lem}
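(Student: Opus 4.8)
The plan is to reduce the finite-interval statement to the half-infinite-interval case already established in its inhomogeneous form, namely the analogue of Corollary~\ref{cor:energy} obtained by the modification~(ii) of Lemma~\ref{lem:energy} (which adds the term $\angle{h(t)}{u(t)}$ to \eqref{eq:energy} and, because now one assumes $u\in\L^2_{t}\H^1_{x}$, dispenses with the additive constant). One localizes $u$ near each endpoint of $\I=(0,T)$ and extends by zero past that endpoint, so that this half-infinite version applies. We may assume $T<\infty$, since for $T=\infty$ the interval $(0,\infty)$ is already half-infinite and the inhomogeneous Corollary~\ref{cor:energy} applies directly (the constant then vanishes because $u\in\L^2((0,\infty);\L^2_{x})$).

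\emph{Localization at the left endpoint.} First I would fix $\chi_{-}\in\C^\infty(\R)$ with $\chi_{-}\equiv1$ on $(-\infty,\tfrac{3T}{4}]$ and $\chi_{-}\equiv0$ on $[\tfrac{7T}{8},\infty)$. Then $\chi_{-}u$ has support contained in $(0,\tfrac{7T}{8}]\times\R^n$, hence extends by zero to a distribution $u_{-}\in\cD'((0,\infty)\times\R^n)$, and $u_{-}\in\L^2((0,\infty);\H^1_{x})$. Since $\chi_{-}$ depends only on $t$ one has $\chi_{-}\div F=\div(\chi_{-}F)$, so on $(0,T)\times\R^n$ the product rule gives $\partial_{t}(\chi_{-}u)=\chi_{-}'u+\chi_{-}\partial_{t}u=-\div(\chi_{-}F)+\chi_{-}g+(\chi_{-}h+\chi_{-}'u)$; every term on the right vanishes near $t=T$, so this extends by zero to an identity $\partial_{t}u_{-}=-\div F_{-}+g_{-}+h_{-}$ in $\cD'((0,\infty)\times\R^n)$ with $F_{-}\in\L^2((0,\infty);\L^2_{x})$, $g_{-}\in\L^{r'}((0,\infty);\L^{q'}_{x})$ and $h_{-}=\chi_{-}h+\chi_{-}'u\in\L^2((0,\infty);\L^2_{x})$ (the hypothesis $u\in\L^2(\I;\L^2_{x})$ is used for the $\chi_{-}'u$ term). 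Applying the inhomogeneous Corollary~\ref{cor:energy} on $(0,\infty)$ yields $u_{-}\in\C_{0}([0,\infty);\L^2_{x})$ together with the integral identity for $u_{-}$ (containing the $h_{-}$ term). On $[0,\tfrac{3T}{4}]$ one has $\chi_{-}\equiv1$ and $\chi_{-}'\equiv0$, hence $u_{-}=u$, $F_{-}=F$, $g_{-}=g$, $h_{-}=h$ there; thus $u$ has a representative in $\C([0,\tfrac{3T}{4}];\L^2_{x})$ and \eqref{eq:energy}, augmented by the term $2\Re\int_{\sigma}^{\tau}\angle{h(t)}{u(t)}\,\d t$ on the right, holds for $0\le\sigma<\tau\le\tfrac{3T}{4}$.

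\emph{Localization at the right endpoint and conclusion.} Symmetrically, with $\chi_{+}\in\C^\infty(\R)$ equal to $1$ on $[\tfrac{T}{4},\infty)$ and to $0$ on $(-\infty,\tfrac{T}{8}]$, the function $\chi_{+}u$ extends by zero to $(-\infty,T)\times\R^n$ and the same reasoning on the half-infinite interval $(-\infty,T)$ gives that $u$ has a representative in $\C([\tfrac{T}{4},T];\L^2_{x})$ and that the augmented identity \eqref{eq:energy} holds for $\tfrac{T}{4}\le\sigma<\tau\le T$. Since $[0,\tfrac{3T}{4}]\cup[\tfrac{T}{4},T]=[0,T]$ and the two continuous representatives agree a.e.\ on the overlap $[\tfrac{T}{4},\tfrac{3T}{4}]$, hence everywhere there by continuity, they glue to give $u\in\C([0,T];\L^2_{x})$. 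Finally, for arbitrary $0\le\sigma<\tau\le T$: if $[\sigma,\tau]$ lies inside $[0,\tfrac{3T}{4}]$ or inside $[\tfrac{T}{4},T]$ the identity has been proved; otherwise $\sigma<\tfrac{T}{4}<\tfrac{3T}{4}<\tau$, and splitting at $\rho=\tfrac{T}{2}$ (which lies in both subintervals) and adding the identities on $[\sigma,\rho]$ and $[\rho,\tau]$ — both sides of \eqref{eq:energy} being additive in the interval of integration — yields it on $[\sigma,\tau]$.

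\emph{Where the care goes.} There is no genuine obstacle; the only slightly delicate points are the legitimacy of the two extensions by zero (valid precisely because $\chi_{\pm}$, and thus $\chi_{\pm}u$ and $\chi_{\pm}'u$, vanish on a neighborhood of the endpoint across which one extends) and the verification that the product rule produces a right-hand side of exactly the shape required by the inhomogeneous Corollary~\ref{cor:energy}. One may also observe that on the finite interval $\I$ the term $h\in\L^2(\I;\L^2_{x})$ lies in $\L^1(\I;\L^2_{x})$ and could be absorbed into $g$ via the pair $(r,q)=(\infty,2)$, which makes the statement literally an instance of \eqref{eq:energy}; this does not shorten the argument, however, since the case $(\infty,2)$ is exactly the one the cut-off reduction must handle anyway.
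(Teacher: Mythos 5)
Your argument is correct, and it reaches the conclusion by a slightly different reduction than the paper, although both rest on the same key ingredient: the inhomogeneous, half-infinite-interval version of Corollary~\ref{cor:energy} provided by item (ii) of Section~\ref{sec:inhomogneous}. The paper performs a \emph{single} extension: it reflects $u$ evenly across $t=T$ and multiplies by a cutoff $\chi$ equal to $1$ for $t\le T$ and $0$ for $t\ge 2T$, so that $\partial_t v=(-\div F_o+g_o+h_o)\chi+u(2T-\cdot)\chi'$ and $\nabla v=(\nabla u)_e\chi$ on $(0,\infty)\times\R^n$; the extra reflected term $u(2T-\cdot)\chi'$ sits in $\L^2_t\L^2_x$ and is absorbed into the $h$-slot, and one application of the half-infinite result gives continuity on all of $[0,T]$ and the identity at once. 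You instead cut off near each endpoint, extend by zero, apply the half-infinite result twice (on $(0,\infty)$ and on $(-\infty,T)$), and glue, with $\chi'_\pm u$ absorbed into the $h$-slot exactly as the paper absorbs its reflected term. Your route avoids the reflection bookkeeping and the distributional construction of the extension (you only need the Leibniz rule, the locality of distributions across the gluing point, and additivity of \eqref{eq:energy} in the time interval), at the cost of two applications and a patching step; it is worth noting that this cutoff trick is available here precisely because the inhomogeneous setting admits an $\L^2_t\L^2_x$ forcing term — in the homogeneous Corollary~\ref{cor:energy} the term $\chi' u\in\L^2_t\L^2_x$ is not an allowed right-hand side (the pair $(2,2)$ is not admissible), which is why the paper's reflection argument is used there and is simply re-used in this proof. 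Your closing observation that $h$ could be folded into $g$ via the pair $(\infty,2)$ on a finite interval is also accurate and harmless.
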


\begin{proof}  
According to the discussion (ii) in Section  \ref{sec:inhomogneous}, the result holds when $\I=(0,\infty)$.
We proceed as in Corollary~\ref{cor:energy} by constructing an extension $v$ of $u$ to which this applies. The situation here is easier since $u$ is locally integrable; still the language of distributions is convenient.  

More precisely, for $f\in \cD(0,\infty)$ and $\psi\in \cD(\R^n)$, we first note that $t\mapsto \angle{u(t)}{\psi}$ is absolutely continuous on $[0,T]$ with derivative equal to $\angle{F(t)}{\nabla \psi}+ \angle{g(t)}{\psi}+ \angle{h(t)}{\psi}$ almost everywhere on $(0,T)$. Hence,  if $\chi$ denotes a smooth function  that is 1 for $t\le T$ and 0 for $t\ge 2T$, the expression
$$
\Angle{v}{f\otimes \psi}=\int_{0}^\infty (\angle{u(t)}{\psi}1_{(0,T]}(t)+ \angle{u(2T-t)}{\psi}1_{(T,2T)]}(t))\chi(t)\overline f(t)\, \d t
$$
defines a distribution in $(0,\infty)\times \R^n$, which  equals $u$ when restricted to $(0,T)\times \R^n$. Calculations show that 
$\partial_{t}v= (-\div F_{o}+ g_{o}+h_{o})\chi+ u(2T-\cdot)\chi'$
and $\nabla v= (\nabla u)_{e}\chi$, where the subscripts ${\tiny o,e}$ denote odd and even extensions at $t=T$, respectively. Hence, all the required assumptions can be verified to conclude that $v\in \C_{0}([0,\infty), \L^2_{x})$.
\end{proof}

As usual one can add to $g$ several other terms of the same type with different admissible pairs. 

\begin{cor}\label{cor:continuous}  Any $\Del^{r_{1},q_{1}}_{0,T}$-solution  of the Cauchy problem \eqref{eq:Cauchypb}  
%belongs to $\C([0,T]; \L^2_{x})$.  In particular it 
is  a weak solution.
 \end{cor}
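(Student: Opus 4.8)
\emph{Plan.} Write $\I\coloneqq(0,T)$. The whole argument reduces to Lemma~\ref{lem:energyfiniteinterval}: once we know that $\partial_{t}u$ can be written as $-\div\widetilde F+\widetilde g+\widetilde h$ with $\widetilde F,\widetilde h\in\L^2(\I;\L^2_{x})$ and $\widetilde g$ a finite sum of functions lying in $\L^{r'}(\I;\L^{q'}_{x})$ for admissible pairs (or the pair $(\infty,2)$), that lemma — together with the remark following it, which allows several admissible pairs for the $g$-type term — yields $u\in\C([0,T];\L^2_{x})$ directly, along with the energy identities on $[0,T]$. So the proof is essentially the verification of this decomposition, followed by the identification of the initial trace.

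First I would use that $u$ is a $\Del^{r_{1},q_{1}}_{0,T}$-solution to write, in $\cD'(\I\times\R^n)$,
\[
\partial_{t}u=-\Lt u-\div F+g+h ,
\]
and analyze $\Lt u=-\div(A\nabla u+\oa\,u)+\ob\cdot\nabla u+a u$ on $\I$. Since $u\in\L^2(\I;\H^1_{x})$ we have $\nabla u\in\L^2(\I;\L^2_{x})$ and $u\in\L^{r_{1}}(\I;\L^{q_{1}}_{x})\cap\L^2(\I;\L^2_{x})$, so $A\nabla u\in\L^2(\I;\L^2_{x})$ by boundedness of $A$. Splitting the lower-order coefficients through $|\oa|^2+|\ob|^2+|a|\in\L^{\tilde r_{1}}_{t}\L^{\tilde q_{1}}_{x}+\L^\infty_{t}\L^\infty_{x}$ (that is, $\oa=\oa_{0}+\oa_{\infty}$ etc.\ as in $\mathbf{(D_{\varepsilon})}$, or as permitted by (A3)$'$), the bounded parts contribute $\oa_{\infty}u,\ \ob_{\infty}\cdot\nabla u,\ a_{\infty}u\in\L^2(\I;\L^2_{x})$, while the H\"older bookkeeping behind Lemma~\ref{lem:beta} — in particular the identities $\tfrac1{2\tilde q_{1}}+\tfrac1{q_{1}}=\tfrac12$, $\tfrac1{2\tilde r_{1}}+\tfrac1{r_{1}}=\tfrac12$, $\tfrac1{\tilde q_{1}}+\tfrac2{q_{1}}=1$, $\tfrac1{\tilde r_{1}}+\tfrac2{r_{1}}=1$ — gives $\oa_{0}u\in\L^2(\I;\L^2_{x})$ and $\ob_{0}\cdot\nabla u,\ a_{0}u\in\L^{r_{1}'}(\I;\L^{q_{1}'}_{x})$. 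In other words this is the finite-interval, inhomogeneous counterpart of Remark~\ref{rem:Delta}: $\Lt u=-\div\widetilde F_{1}+\widetilde g_{1}+\widetilde h_{1}$ with $\widetilde F_{1},\widetilde h_{1}\in\L^2(\I;\L^2_{x})$ and $\widetilde g_{1}\in\L^{r_{1}'}(\I;\L^{q_{1}'}_{x})$.

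Combining, $\partial_{t}u=-\div(F+\widetilde F_{1})+(g+\widetilde g_{1})+(h+\widetilde h_{1})$, which has exactly the structure required by Lemma~\ref{lem:energyfiniteinterval}, the only point being that $g+\widetilde g_{1}$ is a sum of two $g$-type terms attached to the possibly distinct admissible pairs $(r,q)$ and $(r_{1},q_{1})$ (or $(\infty,2)$), which is covered by the remark after that lemma. Hence $u\in\C([0,T];\L^2_{x})$ and \eqref{eq:energy} holds on $[0,T]$. Finally, since $u\in\C([0,T];\L^2_{x})$, we have $u(t,\cdot)\to u(0,\cdot)$ in $\L^2_{x}$, hence in $\cD'(\R^n)$, as $t\to0^+$; comparing this with the assumed convergence $u(t,\cdot)\to\psi$ in $\cD'(\R^n)$ and using uniqueness of limits in $\cD'(\R^n)$ yields $u(0,\cdot)=\psi$. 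Thus $u$ is a weak solution of \eqref{eq:Cauchypb} lying in $\C([0,T];\L^2_{x})$, which is the assertion. The main (and essentially only) obstacle is the bookkeeping of the middle step: one must be careful about which lower-order terms become first order (the $\oa$-term, entering through $\div$) and which become zeroth order, so that the $\H^{-1}$-part of $\Lt u$ really has the $-\div F+h$ shape demanded by Lemma~\ref{lem:energyfiniteinterval}; all the needed exponent relations, however, are already recorded in Lemma~\ref{lem:beta}, so no new estimate is required, and everything else is a direct application of Lemma~\ref{lem:energyfiniteinterval}.
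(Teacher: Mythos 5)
Your argument is correct and is exactly the route the paper intends: express $\partial_{t}u$ from the equation, use the H\"older bookkeeping of Lemma~\ref{lem:beta} together with the splitting of the lower order coefficients to put $-\Lt u-\div F+g+h$ into the form required by Lemma~\ref{lem:energyfiniteinterval} (with several $g$-type terms, as allowed by the remark following it), and conclude continuity and the trace identification. The only blemish is a harmless sign slip when you combine ($\partial_{t}u=-\div(F-\widetilde F_{1})+(g-\widetilde g_{1})+(h-\widetilde h_{1})$ with your normalization of $\widetilde F_{1},\widetilde g_{1},\widetilde h_{1}$), which does not affect the space memberships and hence not the conclusion.
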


{Our main result on the Cauchy problem is a synthesis of most of the theory that we developed so far.}

\begin{thm}
\label{thm:Cauchy} Assume  (A1), (A2) and one of (A3) or (A3)'. 
There is a   solution $u$ of \eqref{eq:Cauchypb}, unique in the class $  \L^2(0,T; \H^1_{x}) \cap \L^{r_{1}}(0,T; \L^{q_{1}}_{x})$, {with the following properties.}
\begin{enumerate}
	\item $u$ belongs to $\C([0,T]; \L^2_{x})$ and is the restriction to  ${(0,T)}$ of a function in $\Hdot^{1/2}_{t}\L^2_{x}$.  
	
	\item  Changing the origin of time,  for $0\le s\le t\le T$ there exists a  unique $\L^2_{x}$-bounded operator $\Gamma(t,s)$,  called \emph{fundamental solution operator} for the Cauchy problem on $(s,T)\times \R^n$ with no forcing terms and initial condition in $\L^2_{x}$, that sends the initial condition at time $s$ to the value of the unique solution at time $t$. 
	\item For $t\in [0,T]$ the solution $u$ above is then given by
	\begin{equation}
		\label{eq:representationCauchy}
		u(t)= \Gamma(t,0)\psi- \int_{0}^t\Gamma(t,s)\div F(s)\, \d s + \int_{0}^t\Gamma(t,s)g(s)\, \d s+ \int_{0}^t\Gamma(t,s)h(s)\, \d s,
	\end{equation}
	where the first two integrals are weakly defined in $\L^2_{x}$, while the last one converges strongly (i.e. in the Bochner sense).
	\item {Define $\cH$ on extending $A$ by the identity and the lower order coefficients by $0$ on $(\R\setminus (0,T))\times \R^n$.}\footnote{There are many ways to do this extension, but this one does not increase the various constants on the coefficients and {could be called canonical.} Alternately, if the coefficients are already defined on full space-time, then one only needs their restrictions anyway.} The fundamental solution operators themselves are given by 
	\begin{equation}
		\label{eq:GammaGreen}
		\Gamma(t,s)= \e^{\kappa(t-s)}\Ga_{\kappa}(t,s), \quad { 0\le s \le t\le T,}
	\end{equation}
	for all $\kappa\ge 0$ for which $\cH+\kappa$  is invertible and causal and   $\Ga_{\kappa}(t,s)$ is  the Green operator obtained  under this assumption.
\end{enumerate}
 
\end{thm}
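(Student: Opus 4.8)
The plan is to reduce the Cauchy problem on the strip $(0,T)\times\R^n$ to the theory developed on $\R^{n+1}$ for the operator $\cH+\kappa$. First I would fix a compatible pair $(\tilde r_1,\tilde q_1)$ as in (A3) (or work with (A3)'). The key reduction is the change of unknown $u\mapsto \e^{-\kappa t}u$, which replaces $\partial_t+\Lt$ by $\partial_t+\Lt+\kappa$ and rescales the forcing terms by the bounded factor $\e^{-\kappa t}$; this is why \eqref{eq:GammaGreen} has the exponential prefactor. Next I would \emph{extend the coefficients} from $(0,T)$ to all of $\R$: set $A=\Id$ (or any fixed elliptic matrix) and $\oa=\ob=a=0$ outside $(0,T)$. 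The extended $A$ still satisfies \eqref{eq:Abdd}, \eqref{eq:Aelliptic} with the same $\lambda,\Lambda$ (the minimum of the two), and the extended lower order coefficients still satisfy $\mathbf{(D_\varepsilon)}$ with the same $\varepsilon$ and $P_\infty$, since we only added zero coefficients; under (A3)' the elliptic lower bound $\Re\angle{\Lt v}{v}\ge c\|\nabla v\|^2-c'\|v\|^2$ also persists. Hence by Theorem~\ref{thm:invertiblealpha} (resp. Theorem~\ref{thm:lbi}(i)) there is $\kappa_0>0$ so that $\cH+\kappa$ and $\cH^*+\kappa$ are invertible on $\cV$ for all $\kappa\ge\kappa_0$, and by Theorem~\ref{thm:causalityalpha} (resp. Theorem~\ref{thm:lbi}(ii)) they are also causal for $\kappa$ large.

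With $\mathbf{(H_\kappa)}$ and causality in hand, I would construct the solution as follows. Given data $\psi\in\L^2_x$, $F,h\in\L^2(0,T;\L^2_x)$, $g\in\L^{r'}(0,T;\L^{q'}_x)$, extend $F,g,h$ by zero outside $(0,T)$ and form $\delta_0\otimes\psi-\div F+g+h$; by the inhomogeneous versions of Theorems~\ref{thm:Fg}, \ref{thm:L1L2}, \ref{thm:deltasL2} (items (1)--(3) of Section~\ref{sec:inhomogneous}) there is a $\Del^{r_1,q_1}$-solution $w$ of $(\partial_t+\Lt+\kappa)w=\delta_0\otimes\psi-\div F+g+h$ on $\R^{n+1}$, belonging to $\C_0(\L^2_x)$ away from $0$ with the jump relation $w(0^+)-w(0^-)=\psi$. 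Causality gives $w=0$ on $(-\infty,0)\times\R^n$, so $w(0^+)=\psi$ and $w$ restricted to $(0,T)\times\R^n$ is a $\Del^{r_1,q_1}_{0,T}$-solution attaining the initial datum $\psi$ (continuity in $\L^2_x$ up to $t=0$ comes from Lemma~\ref{lem:energyfiniteinterval} once we know $w\in\C([0,T];\L^2_x)$, which is Corollary~\ref{cor:continuous}). Undoing the exponential change of variable gives the desired $u$ on $(0,T)$, proving existence in the stated class together with (1). For \emph{uniqueness} within $\L^2(0,T;\H^1_x)\cap\L^{r_1}(0,T;\L^{q_1}_x)$: if $u_1,u_2$ are two such solutions, their difference $v$ solves the homogeneous equation with $v(0,\cdot)=0$; extending $v$ by zero to $(-\infty,0)\times\R^n$ (legitimate because $v(0)=0$ in $\L^2_x$ and $v\in\C([0,T];\L^2_x)$, so the extension still has the distributional derivative structure, as in the reflection argument of Corollary~\ref{cor:energy}) one gets a $\Del^{r_1,q_1}$-solution on $(-\infty,T)\times\R^n$ of the homogeneous equation; Theorem~\ref{thm:uniqueness} (its inhomogeneous version) then forces $v=0$.

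For the \emph{fundamental solution operator} (item (2)): changing the origin of time to $s$ (all hypotheses (A1)--(A3)/(A3)' are translation invariant up to shifting the strip to $(s,T)$), define $\Gamma(t,s)\psi$ for $s\le t\le T$ to be the value at $t$ of the unique solution of the Cauchy problem on $(s,T)\times\R^n$ with initial condition $\psi$ at $s$ and no forcing; boundedness on $\L^2_x$ and independence of $\kappa$ follow from the estimate \eqref{eq:uLinftydeltas} (inhomogeneous version) together with \eqref{eq:GammaGreen}, which I verify by observing that $\e^{\kappa(t-s)}\Ga_\kappa(t,s)\psi$ is precisely the value at $t$ of the (unique, by the uniqueness just proved) solution with datum $\psi$: indeed $\e^{\kappa(t-s)}$ times the $\Del^{r_1,q_1}$-solution of $(\partial_t+\Lt+\kappa)\tilde u=\delta_s\otimes\psi$ solves $(\partial_t+\Lt)u=\delta_s\otimes\psi$, and causality of $\cH+\kappa$ kills it for $t<s$, so its restriction to $(s,T)$ is the Cauchy solution. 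This simultaneously gives (4). Finally (3) is the representation formula \eqref{eq:weaksense} of (the inhomogeneous version of) Theorem~\ref{thm:representation}, restricted to $t\in(0,T)$ and rewritten via \eqref{eq:GammaGreen}: the zero-extension of $F,g,h$ turns the integrals over $\R$ into integrals over $(0,s]$ by causality of $\tGa_\kappa$, i.e. over $(0,t)$, and the strong convergence of the $h$-integral is the regularity remark following Theorem~\ref{thm:representation}.

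\textbf{Main obstacle.} The delicate point is the interplay between the reflection/extension arguments and the two distinct notions of solution: one must check carefully that extending a $\Del^{r_1,q_1}_{0,T}$-solution by zero across $t=0$ (for uniqueness) and restricting a global $\Del^{r_1,q_1}$-solution to $(0,T)$ (for existence) both produce objects satisfying the distributional equation \emph{with the Dirac source correctly accounted for at the endpoint}. This is exactly the kind of bookkeeping carried out in Corollary~\ref{cor:energy} and in Step 1 of Theorem~\ref{thm:deltasL2}, and the zero-extension is cleaner than even reflection because no odd/even decomposition of the data is needed, but the identification $u(0^+)=\psi$ (rather than just the jump $u(0^+)-u(0^-)=\psi$) hinges on causality, so the logical order — invertibility, then causality, then the endpoint identification — must be respected.
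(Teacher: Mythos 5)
Your overall route is the same as the paper's (extend the coefficients trivially outside $(0,T)$, conjugate by $\e^{\kappa t}$, solve globally for $\cH+\kappa$ with the Dirac source, use causality to identify $u(0^+)=\psi$, and identify $\Gamma(t,s)$ with $\e^{\kappa(t-s)}\Ga_\kappa(t,s)$ via uniqueness on the strip). However, your uniqueness argument has a genuine gap. You take the difference $v$ of two solutions on $(0,T)\times\R^n$, extend it by zero across $t=0$, and then invoke (the inhomogeneous version of) Theorem~\ref{thm:uniqueness}. That theorem is a statement about $\Del^{r_{1},q_{1}}$-solutions of the homogeneous equation on \emph{all} of $\R^{n+1}$; your extended function solves the equation only on $(-\infty,T)\times\R^n$, so the theorem simply does not apply, and nothing in your argument rules out a nonzero $v$ that "lives" only near $t=T$. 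The same objection blocks any attempt to use causality instead, since the causality theorems are likewise formulated for global solutions. The paper closes exactly this hole by also extending \emph{forward}: it solves the Cauchy problem for $\partial_t+\Lt+\kappa$ on $(T,\infty)\times\R^n$ with initial datum $v(T)$ (using the existence theory already established), and glues $0$ on $(-\infty,0]$, $v$ on $(0,T)$ and this forward solution on $(T,\infty)$ -- the gluing being legitimate because all pieces are continuous in $\L^2_x$ at the matching times -- to obtain a genuine global $\Del^{r_{1},q_{1}}$-solution of the homogeneous equation, which then vanishes by the global uniqueness theorem. Alternatively, you could avoid the forward extension altogether by running the energy-absorption argument of Theorem~\ref{thm:causalityalpha} directly on $[0,\tau]\subset[0,T]$ with $S=\sup_{0\le t\le T}\|v(t)\|_{\L^2_x}^2$, but some argument of this kind must be supplied; as written, the step fails.

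Two smaller points. In your existence paragraph you announce the rescaling of the forcing terms by $\e^{-\kappa t}$ but then solve $(\partial_t+\Lt+\kappa)w=\delta_0\otimes\psi-\div F+g+h$ without that factor; undoing the conjugation then does not return the original equation, so the data must be $-\div(F\e^{-\kappa t})+g\e^{-\kappa t}+h\e^{-\kappa t}$ as in the paper (a bookkeeping slip rather than a conceptual one). Also, item (iv) for $\kappa=0$ requires the \emph{homogeneous} theory (invertibility and causality of $\cH$ itself, with $\Deldot^{r_1,q_1}$-solutions), which your argument does not address; the case $\kappa>0$ alone does not cover it, since the conjugation by $\e^{\kappa t}$ does not preserve the global solution classes.
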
          

\begin{proof} 
  We extend the forcing terms   $F, g, h$ by 0,  keeping the same notation for the extensions and $\Lt$: they satisfy the same conditions on full space-time. We fix $\kappa> 0$ for which $\cH+\kappa$ is invertible and causal (Theorems \ref{thm:invertiblealpha} and  \ref{thm:causalityalpha} or \ref{thm:lbi}). 

First, we can {use the inhomogeneous version of Theorems~\ref{thm:Fg}}, \ref{thm:L1L2} and \ref{thm:deltasL2} to build a (unique)  $\Del^{r_{1},q_{1}}$-solution $v $ to
$$
\partial_{t}v+\Lt v+\kappa v= \delta_{0}\otimes \psi -\div (F \e^{-\kappa t})+g \e^{-\kappa t}+h \e^{-\kappa t}
$$
and take $u \coloneqq v\e^{\kappa t}$ restricted to $[0,T]\times \R^n$. {The assumption of causality implies indeed that $u(t) \to \Gamma(t,0)\psi = \psi$ in $\L^2_{x}$ as $t \to 0$.} Applying the inhomogeneous version of Theorem~\ref{thm:representation} to the Green operators $\Ga_{\kappa}(t,s)$ of $\cH+\kappa$ gives us \eqref{eq:representationCauchy} with $\Gamma(t,s)= \e^{\kappa(t-s)}\Ga_{\kappa}(t,s)$. We refer in particular to (v) in Section~\ref{sec:inhomogneous}. 

Next, we check uniqueness in the class $  \L^2(0,T; \H^1_{x}) \cap \L^{r_{1}}(0,T; \L^{q_{1}}_{x})$. Assume that $u$ solves the Cauchy problem with $\psi=0$,   $F=0$, $g=0$, $h=0$. By Corollary~\ref{cor:continuous} we know that $u\in \C([0,T]; \L^2_{x})$. With  $\kappa$ as above, $v \coloneqq u\e^{-\kappa t}$ solves the Cauchy problem with $0$-data for $\partial_{t}v+\Lt v+\kappa v=0$ in $(0,T)\times \R^n$. By restriction, as before, we can use the global parabolic operator also to build a solution $u^T \in \L^2(T,\infty; \H^1_{x}) \cap \L^{ r_{1}}(T,\infty; \L^{q_{1}}_{x})\cap \C_0([T,\infty); \L^2_{x})$ to the same equation with initial data $u^T(T) = u(T)$ and in $(-\infty,0]\times \R^n$ we set $u^0(t) \coloneqq 0$. By continuity valued in $\L^{2}_{x}$, we can glue $u^0, u, u^T$ together to a $\Del^{r_{1},q_{1}}$-solution $w$ of $\partial_{t}w+\Lt w+\kappa w=0 $ in $\R^{n+1}$, which vanishes identically by the inhomogeneous version of Theorem~\ref{thm:uniqueness}. 
Hence, we have $u=0$. 

The rest of the statement follows easily. By construction, $u$ is the restriction of a function which belongs to $\cVdot$. Next,  uniqueness implies that the formula \eqref{eq:GammaGreen} is valid for all $\kappa> 0$ for which $\cH+\kappa$ is invertible and causal. 

It remains to include the case $\kappa=0$ for \eqref{eq:GammaGreen}   when $\cH$ is invertible and causal. To this end, we can apply the homogeneous versions in Sections \ref{sec:existenceanduniqueness}, \ref{sec:schwartz} and \ref{sec:invertibility} {with $F,g,h$ being all zero}.  In that case, we consider a  $\Deldot^{r_{1},q_{1}}$-solution on $\R^{n+1}$, which produces  a solution by restriction.  This solution has a representation using the Green operators $G_{0}(t,s)$. Already established uniqueness shows that   $\Gamma(t,s)=G_{0}(t,s)$.    
\end{proof}

\begin{rem} 
\label{rem:T=infty}
In  case that $T=\infty$ and $h=0$, inspection of the {proof above with $\kappa=0$  shows that if}  $\cH$ is invertible and causal,  then existence and uniqueness hold  in the class $\L^2(0,\infty; \Hdot^1(\R^n)) \cap \L^{ r_{1}}(0,\infty; \L^{ q_{1}}(\R^n))$, with  regularity in $\C_{0}([0,\infty);\L^2_{x})$.  
\end{rem}

\begin{rem}
Having forcing terms in $\L^{ r'}(0,T; \L^{ q'}(\R^n))+\L^{2}(0,T; \L^{2}(\R^n))$ allows us to cover for example supercritical forcing terms $g\in \L^{\rho'}(0,T; \L^{\eta'}_{x})$, {by which we mean $2< \rho,\eta< \infty$ with $\frac 1 \rho +\frac n {2\eta} > \frac n 4$ and, when $n=1$, additionally  $\frac 1 \rho -\frac 1 {2\eta}{<} \frac 1 4$.}
{Indeed, visualizing the exponents in a $(\frac{1}{\rho},\frac{1}{\eta})$-plane immediately reveals that they can be decomposed as required.}
\end{rem}

\begin{rem}
\label{rem:independence} 
{If the coefficients for $\cH$ are defined on $\R^{n+1}$, then by \eqref{eq:GammaGreen}} we have $$\e^{\kappa (t-s)}\Ga_{\kappa}(t,s)= \e^{\kappa_{0} (t-s)}\Ga_{\kappa_{0}}(t,s)$$ for all $\kappa\ge \kappa_{0}\ge 0$  and $t,s\in \R$, where $\kappa_{0}$ is such that $\cH+\kappa_{0}$ is invertible and causal (setting also $\Ga_{0}\coloneqq\Ga$). It is interesting to note that this relation between Green operators cannot be seen directly in $\R^{n+1}$ because the conjugation by the exponentials does not preserve the spaces of solutions. Another interesting consequence is that it implies exponential decay estimates for the operator norm: 
$$
\| \Ga_{\kappa}(t,s)\|_{\L^2_{x}\to \L^2_{x}} \le  \e^{(\kappa_{0}-\kappa) (t-s)}
\sup_{\sigma\le \tau} \| \Ga_{\kappa_{0}}(\tau,\sigma)\|_{\L^2_{x}\to \L^2_{x}},
 $$
recalling that $t-s\ge 0$ for the Green operators to be non-zero.
\end{rem}
  
\begin{rem}
\label{rem:dependanceonT} 
If (A3) is used, then constants in the implicit estimates for $u$ depend on the choice of $\varepsilon_{0}$ for invertibility and causality, 
which was seen to depend on $\lambda,\Lambda, n,q_{1},r_{1}$, and $P_{\infty}$ in the decomposition $\mathbf{(D_{\varepsilon_{0}})}$. They do not depend  on $T$ unless $P_{\infty}$ does. 
\end{rem}

\begin{cor}
\label{cor:adjoint}  
Under the same assumptions as in Theorem~\ref{thm:Cauchy} we have for all $t>s$ the equality 
\begin{equation}
\Gamma(t,s)=\widetilde\Gamma(s,t)^*,
\end{equation}
where $^*$ is the complex adjoint  and $\widetilde\Gamma(s,t)$ is the generalized  fundamental solution of the adjoint problem.
\end{cor}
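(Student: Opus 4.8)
The plan is to read off the identity from two facts that are already in hand: the adjointness of the Green operators, Theorem~\ref{thm:Gammats}~(ii), carried over to the inhomogeneous framework by point~(iv) of Section~\ref{sec:inhomogneous}; and the representation \eqref{eq:GammaGreen} of the fundamental solution operator through those Green operators. First I would fix $\kappa\ge 0$ large enough that $\cH+\kappa$ is invertible and causal, as furnished by Theorems~\ref{thm:invertiblealpha} and~\ref{thm:causalityalpha} (or~\ref{thm:lbi}) under (A1), (A2) and one of (A3), (A3)$'$. Because these hypotheses are insensitive to replacing $A,\oa,\ob,a$ by $A^*,\overline\ob,\overline\oa,\overline a$ — $A^*$ shares the ellipticity constants $\lambda,\Lambda$, and $\mathbf{(D_\varepsilon)}$ is unaffected by complex conjugation — the same $\kappa$ makes $\cH^*+\kappa=(\cH+\kappa)^*$ invertible and causal in the backward sense. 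Hence, exactly as in Theorem~\ref{thm:Cauchy}, the adjoint (backward) Cauchy problem with terminal data at time $t$ is well posed in the corresponding class $\L^2(\H^1_{x})\cap\L^{r_{1}}(\L^{q_{1}}_{x})$, its unique solution is the restriction of the $\Del^{r_{1},q_{1}}$-solution of $-\partial_{t}\tilde u+\Ltstar\tilde u+\kappa\tilde u=\delta_{t}\otimes\tpsi$, and $\widetilde\Gamma(s,t)$ is built from $\tGa_{\kappa}$ precisely as $\Gamma(t,s)$ is built from $\Ga_{\kappa}$.

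Next I would remove the exponential shift. The substitutions $u\mapsto \e^{-\kappa\cdot}u$ for the forward operator and $\tilde u\mapsto \e^{\kappa\cdot}\tilde u$ for the backward one intertwine the equations with and without the zeroth-order term $\kappa$ and preserve all the function spaces on the strip; unwinding them at the level of solution operators gives, for $t>s$,
$$
\Gamma(t,s)=\e^{\kappa(t-s)}\Ga_{\kappa}(t,s),\qquad \widetilde\Gamma(s,t)=\e^{\kappa(t-s)}\tGa_{\kappa}(s,t),
$$
the first identity being \eqref{eq:GammaGreen} of Theorem~\ref{thm:Cauchy}~(iv) and the second its analogue for the adjoint problem (see point~(iv) of Section~\ref{sec:inhomogneous}). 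Both carry the same positive real factor $\e^{\kappa(t-s)}$.

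Finally, by the inhomogeneous version of Theorem~\ref{thm:Gammats}~(ii) one has $\Ga_{\kappa}(t,s)=\tGa_{\kappa}(s,t)^{*}$ for $t\ne s$, and since a positive real scalar commutes with taking Hilbert-space adjoints,
$$
\Gamma(t,s)=\e^{\kappa(t-s)}\Ga_{\kappa}(t,s)=\e^{\kappa(t-s)}\tGa_{\kappa}(s,t)^{*}=\bigl(\e^{\kappa(t-s)}\tGa_{\kappa}(s,t)\bigr)^{*}=\widetilde\Gamma(s,t)^{*},
$$
which is the claim. The only genuinely non-routine point is the first step: one must be sure that $\widetilde\Gamma$ is obtained from $\cH^{*}+\kappa$ by the identical construction, i.e.\ that the ``\emph{mutatis mutandis}'' passage to $-\partial_{t}+\Ltstar$ invoked throughout Sections~\ref{sec:existenceanduniqueness}--\ref{sec:invertibility} stays valid at the stage where ellipticity is imposed; granting that, the identity reduces to the bookkeeping of \eqref{eq:GammaGreen} and Theorem~\ref{thm:Gammats}~(ii).
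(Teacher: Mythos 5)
Your argument is correct and coincides with the paper's own proof: both rest on the adjointness $\Ga_{\kappa}(t,s)=\tGa_{\kappa}(s,t)^{*}$ from (the inhomogeneous version of) Theorem~\ref{thm:Gammats}~(ii), on running the construction of Theorem~\ref{thm:Cauchy} for the backward operator $-\partial_{t}+\Ltstar+\kappa$ to get $\widetilde\Gamma(s,t)=\e^{\kappa(t-s)}\tGa_{\kappa}(s,t)$, and on \eqref{eq:GammaGreen} together with the fact that the positive scalar $\e^{\kappa(t-s)}$ passes through the adjoint. The one point you flag as delicate (that the \emph{mutatis mutandis} passage to the adjoint operator is legitimate, since the hypotheses are symmetric under conjugation) is exactly what the paper also relies on, so there is no gap.
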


\begin{proof} We know that the Green operators $\Ga_{\kappa}(t,s)$ and $\tGa_{\kappa}(s,t)$ are adjoint operators. If we adapt the proof above to the adjoint backward operator $-\partial_{t}+\Ltstar   $, we produce solutions in $(0,T)\times \R^n$  on restricting the ones in $\R^{n+1}$ for $-\partial_{t}+\Ltstar   +\kappa$ multiplied by  $\e^{\kappa(T-s)}$ (in the variable $s$). Changing the initial time $T$ to $t$, this yields that the fundamental solution operator $\widetilde \Gamma(s,t)$ for the adjoint problem agrees with $ \e^{\kappa(t-s)}\tGa_{\kappa}(s,t)= \Gamma(t,s)^*$, where the last equality follows from \eqref{eq:GammaGreen}.
 \end{proof}

\begin{cor}
\label{cor:agree} The  solution $u$ of Theorem~\ref{thm:Cauchy} agrees with the ones build in \cite{LSU} and \cite{Ar68} under assumptions in these references.  
 \end{cor}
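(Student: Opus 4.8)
The plan is to deduce the statement from the uniqueness part of Theorem~\ref{thm:Cauchy}. Recall that both \cite{LSU} and \cite{Ar68} solve the Cauchy problem on $(0,T)\times\R^n$ inside the energy class $\L^2(0,T;\H^1_{x})\cap\L^\infty(0,T;\L^2_{x})$ (for real-valued coefficients in the case of \cite{Ar68}), the solution being continuous in time with values in $\L^2_{x}$ and attaining its initial datum $\psi\in\L^2_{x}$ strongly in $\L^2_{x}$, hence \emph{a fortiori} in $\cD'(\R^n)$. Calling such a solution $\tilde u$, I would show that $\tilde u$ is a $\Del^{r_{1},q_{1}}_{0,T}$-solution of \eqref{eq:Cauchypb} in the precise sense used in Theorem~\ref{thm:Cauchy}, and then the uniqueness assertion there forces $\tilde u=u$.

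First I would check that the standing hypotheses of the cited works are covered by (A1), (A2) together with one of (A3) or (A3)'. Boundedness and (G\aa rding) ellipticity of $A$ give (A2) and (A1) is just the structural form \eqref{eq:Lt}. For the lower order coefficients, \cite{Ar68} assumes subcritical integrability, $|\oa|^2+|\ob|^2+|a|\in\L^{\tilde r}_{t}\L^{\tilde q}_{x}$ with $\frac1{\tilde r}+\frac{n}{2\tilde q}<1$, so Remark~\ref{rem:subcritical} supplies the decomposition required by $\mathbf{(D_\varepsilon)}$ for every $\varepsilon>0$ and (A3) holds; in \cite{LSU} the relevant condition is precisely the compatibility relation $\frac1{\tilde r_{1}}+\frac{n}{2\tilde q_{1}}=1$, and when $\tilde r_{1}<\infty$ the truncation argument of Remark~\ref{rem:Deps} gives $\mathbf{(D_\varepsilon)}$, while in the borderline case $\tilde r_{1}=\infty$, $\tilde q_{1}=n/2$ one invokes the additional hypotheses made there (uniform time-continuity of the $\L^{n/2}_{x}$-part, or a coercivity bound) to land in (A3) or (A3)'.

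Next I would verify that $\tilde u\in\Del^{r_{1},q_{1}}_{0,T}$ and solves \eqref{eq:Cauchypb} in our sense. Membership in $\L^2(0,T;\H^1_{x})$ is automatic, and the Gagliardo--Nirenberg inequality (Proposition~\ref{prop:GN}) gives the embedding $\L^2(0,T;\H^1_{x})\cap\L^\infty(0,T;\L^2_{x})\hookrightarrow\L^{r_{1}}(0,T;\L^{q_{1}}_{x})$, so $\tilde u\in\Del^{r_{1},q_{1}}_{0,T}$; in particular, by Lemma~\ref{lem:beta}, the pairings with the lower order terms occurring in \eqref{eq:Deltasol} are absolutely convergent. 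One then reconciles the weak formulations: after an integration by parts the formulation used in \cite{LSU,Ar68} against smooth functions vanishing near the final time becomes testing against arbitrary $\tphi\in\cD((0,T)\times\R^n)$ as in \eqref{eq:Deltasol}, and the initial trace is attained in $\cD'(\R^n)$ as recalled above. Hence $\tilde u$ is a $\Del^{r_{1},q_{1}}_{0,T}$-solution and uniqueness in Theorem~\ref{thm:Cauchy} yields $\tilde u=u$; consequently the fundamental solution operators $\Gamma(t,s)$ agree with the propagators carried implicitly by those constructions, and in the real-valued case the kernel of $\Gamma(t,s)$ is Aronson's generalized fundamental solution (compare the discussion preceding Lemma~\ref{lem:representation} and Corollary~\ref{cor:adjoint}).

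The main obstacle is not analytic but bookkeeping: the delicate point is the faithful translation of frameworks, since \cite{LSU} often works on bounded domains or finite cylinders with a $W^{1,0}_2$-type weak formulation and \cite{Ar68} phrases everything through bounded solutions and a pointwise fundamental-solution kernel. One must therefore carefully check that (i) those solutions genuinely belong to $\L^2(0,T;\H^1_{x})$ over all of $\R^n$, not merely locally, (ii) the admissible test-function classes coincide after a density/approximation step, and (iii) the distributional notion of initial trace matches ours. Once these identifications are in place, there is no new estimate to prove and the uniqueness theorem does the rest.
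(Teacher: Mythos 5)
Your proposal is correct and takes essentially the same route as the paper: it embeds the energy class $\L^2(0,T;\H^1_{x})\cap\L^\infty(0,T;\L^2_{x})$ into the uniqueness class $\Del^{r_{1},q_{1}}_{0,T}$ of Theorem~\ref{thm:Cauchy} via the mixed (Gagliardo--Nirenberg) embedding of Proposition~\ref{prop:GN}, checks that the hypotheses of \cite{LSU} and \cite{Ar68} fall under (A1), (A2) and (A3), and concludes by the uniqueness assertion of Theorem~\ref{thm:Cauchy}. The additional bookkeeping you spell out (reconciling the weak formulations and the initial trace) is left implicit in the paper's short proof.
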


\begin{proof} 
{By mixed embeddings (Proposition~\ref{prop:GN}), the space for uniqueness in Theorem~\ref{thm:Cauchy} contains the standard} energy space  $\L^2(0,T; \H^1_{x}) \cap \L^{\infty}(0,T; \L^{2}_{x})$. In Chapter~3 of  \cite{LSU}, weak solutions in the latter class are constructed exactly under the same assumptions (A1), (A2) and subcritical (Remark \ref{rem:subcritical}) or critical (a.k.a compatible) conditions on the coefficients (which are {even} assumed real there). In \cite{Ar68} this is being done under the more restrictive conditions of subcritical and real coefficients with the structural conditions (A1) and (A2).
\end{proof}

\subsection{$\mathbf{\L^2}$ off-diagonal estimates}
\label{sec:ode}

Aronson further proved pointwise Gaussian estimates of the generalized fundamental solution when the coefficients are real-valued \cite{Ar67,Ar68}. As already mentioned,  assumptions on lower order coefficients in \cite{Ar68} amount to what we called subcritical compatibility (Remark~\ref{rem:subcritical}), used in an essential way together with the fact that the coefficients are real, to obtain local boundedness of weak solutions. Already in the elliptic case with leading term the Laplacian on the unit ball, explicit examples show existence of unbounded weak solutions for some first order coefficients in $\L^n$ or some zero order coefficients  in $\L^{n/2}$, see \cite{KS}. 

We know from Corollary~\ref{cor:agree} that our solutions agree with the ones of Aronson under his assumptions; in particular his generalized fundamental solution operator and ours are identical.
Hence, pointwise bounds under (critical) compatibility assumptions are not to be expected. Still, under this assumption, we will be able to show $\L^2$ off-diagonal estimates (or Gaffney estimates) for the fundamental solution operator, that is, decay of localized $\L^2$ norms. 

When there are no lower order terms, the method of Aronson has been streamlined with the exponential trick of   Davies \cite{Da}  for time independent $A$ and this has been adapted by Fabes--Stroock \cite{FaSt}   when $A$ is time-dependent, see also Hofmann--Kim \cite{HK04} for a nice presentation, using the Gronwall lemma as a starting point. The same ideas go through with bounded lower order coefficients but when they are allowed to be unbounded, it is not clear how to set up the arguments properly. In \cite{AMP}, a construction is proposed in absence of lower order terms, starting from the semigroup case. This approach is not possible when using mixed norms on  lower order coefficients because there is no  semigroup to begin with.  Our  approach allows us to overcome these difficulties by extending Davies' ideas  to the context of variational parabolic forms.

\begin{thm}
\label{thm:ODE} Assume the conditions  of Theorem~\ref{thm:Cauchy}. 
Then there are constants $0<C,c_{0},\omega <\infty$ such that for all $0\le s<t\le T$, all closed sets $E,F \subset \R^n$ and  all $\psi\in \L^2_{x}$ with support in $F$, we have 
\begin{equation}
\label{eq:ODE}
\|\Gamma(t,s)\psi\|_{\L^2(E)}\le C\e^{- \tfrac{d(E,F)^2}{4c_{0}(t-s)}+\omega (t-s)} \|\psi\|_{\L^2(F)}.
\end{equation} 
\end{thm}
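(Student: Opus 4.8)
The plan is to prove \eqref{eq:ODE} via the classical Davies--Gaffney exponential perturbation trick, adapted to the variational framework and made to work with unbounded lower order coefficients. Fix a Lipschitz function $\rho\colon\R^n\to\R$ with $\|\nabla\rho\|_\infty\le 1$ (later one takes $\rho(x)=d(x,F)$ or a truncation of it) and a parameter $\mu\in\R$, and conjugate the problem by the multiplication operator $e^{\mu\rho}$. The key observation is that $u_\mu\coloneqq e^{\mu\rho}u$ solves a parabolic equation $\partial_t u_\mu+\Lt_\mu u_\mu=0$ (with initial data $e^{\mu\rho}\psi$ at time $s$) whose leading coefficient is still $A$ but whose \emph{first order} coefficients pick up extra terms of the form $\mu A\nabla\rho$, $\mu\,\ob\cdot\nabla\rho\cdot(\,\cdot\,)$, and a zero order term $\mu^2 A\nabla\rho\cdot\nabla\rho+\mu\,\ob\cdot\nabla\rho+\mu(\div(\oa\rho)\text{-type})$. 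Crucially $\nabla\rho\in\L^\infty$, so the perturbation introduced by the conjugation stays \emph{bounded} and is controlled by $\mu$ and $\mu^2$; the genuinely unbounded part $\oa,\ob,a$ is untouched and is handled exactly as in the invertibility/causality proofs (Theorems~\ref{thm:invertiblealpha}, \ref{thm:causalityalpha}).

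The core estimate is an energy (Gronwall) argument on the interval $(s,t)$. I would apply the integral identity of Lemma~\ref{lem:energyfiniteinterval} (or rather its consequence for $\Del^{r_1,q_1}_{0,T}$-solutions, using that $u=\Gamma(\cdot,s)\psi\in\C([s,T];\L^2_x)$) to $u_\mu$ truncated so that everything is a legitimate $\L^2_x$ function, obtaining for $\tau\in(s,T]$
\begin{equation*}
\|u_\mu(\tau)\|_{\L^2_x}^2+2\Re\int_s^\tau\angle{A(r)\nabla u(r)\,e^{\mu\rho}}{\nabla u(r)\,e^{\mu\rho}}\,\d r = \|e^{\mu\rho}\psi\|_{\L^2_x}^2 + (\text{lower order}).
\end{equation*}
Using G\aa rding ellipticity \eqref{eq:Aelliptic} on the second term and absorbing the bounded $\mu$-dependent contributions by Young's inequality (losing a fraction of the coercivity), while the unbounded-coefficient contributions are hidden using the mixed-norm Hölder estimates of Lemma~\ref{lem:beta} together with the Gagliardo--Nirenberg interpolation $\|u\|_{\L^{r_1}_t\L^{q_1}_x}\lesssim S^{1/2-1/r_1}I^{1/r_1}$ (exactly as in the proof of Theorem~\ref{thm:causality}), one arrives at a differential inequality of the form $\frac{\d}{\d\tau}\|u_\mu(\tau)\|_{\L^2_x}^2\le (c_0\mu^2+2\omega)\|u_\mu(\tau)\|_{\L^2_x}^2$ for suitable constants $c_0,\omega$ depending only on $\lambda,\Lambda,n,q_1,r_1,P_\infty$. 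Integrating from $s$ to $t$ gives $\|e^{\mu\rho}\Gamma(t,s)\psi\|_{\L^2_x}^2\le e^{(c_0\mu^2+2\omega)(t-s)}\|e^{\mu\rho}\psi\|_{\L^2_x}^2$.

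To finish, take $\rho$ to be a bounded Lipschitz approximation of $x\mapsto d(x,F)$ (so that on $F$ one has $\rho\le 0$ hence $e^{\mu\rho}\le 1$, and on $E$ one has $\rho\ge d(E,F)$ when $d(E,F)<\infty$; if $d(E,F)=\infty$ the estimate is an easy limiting case), so that
\begin{equation*}
e^{2\mu d(E,F)}\|\Gamma(t,s)\psi\|_{\L^2(E)}^2\le\|e^{\mu\rho}\Gamma(t,s)\psi\|_{\L^2_x}^2\le e^{(c_0\mu^2+2\omega)(t-s)}\|\psi\|_{\L^2(F)}^2,
\end{equation*}
and then optimize over $\mu>0$ by choosing $\mu=d(E,F)/(c_0(t-s))$, which yields \eqref{eq:ODE} with the stated Gaussian factor and an extra $e^{\omega(t-s)}$. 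A density argument reduces the manipulations to smooth data where the truncations and integrations by parts are justified. The main obstacle I anticipate is the rigorous bookkeeping of the conjugation: checking that $e^{\mu\rho}u$ is actually an admissible solution (it lies in $\L^2(s,T;\H^1_x)\cap\L^{r_1}(s,T;\L^{q_1}_x)$ since $e^{\mu\rho}$ and $\nabla\rho$ are bounded), that the energy identity of Lemma~\ref{lem:energyfiniteinterval} genuinely applies to it with the new (still admissible-type) right-hand side, and that the constants in the hidden-absorption step can be taken uniform in $\mu$ after the $\mu$-dependent bounded terms have been split off — i.e.\ that the smallness threshold $\varepsilon_0$ for $P_{\tilde r_1,\tilde q_1}$ and the size of $\kappa_0$ needed are not disturbed by the conjugation, which is exactly where having $\|\nabla\rho\|_\infty\le 1$ and pushing the $\mu$'s into $c_0$ and $\omega$ rather than into the unbounded coefficients is essential.
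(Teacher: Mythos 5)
Your overall strategy (Davies' exponential conjugation followed by optimization in the weight) is the same as the paper's, but there is a genuine gap at the central step. When you conjugate by $\e^{\mu\rho}$, the new zero order coefficient contains, besides the harmless bounded term $\mu^2 A\nabla\rho\cdot\nabla\rho$, the cross term $\mu(\oa-\ob)\cdot\nabla\rho$. Your argument rests on the claim that ``the perturbation introduced by the conjugation stays bounded and is controlled by $\mu$ and $\mu^2$''; this is false, because $\oa,\ob$ are only controlled in the critical mixed norm, so $\mu(\oa-\ob)\cdot\nabla\rho$ is an unbounded potential lying merely in $\L^{2\tilde r_1}_{t}\L^{2\tilde q_1}_{\vphantom{t}x}$ with norm of order $\mu\varepsilon_0$. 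It can neither be pushed into the Gronwall rate $c_0\mu^2+\omega$ (it is not in $\L^\infty$) nor absorbed by the smallness/Gagliardo--Nirenberg step with constants uniform in $\mu$, since the smallness threshold $\varepsilon_0$ is fixed while $\mu\to\infty$. Handling exactly this term is the crux of the paper's proof: one truncates $(\oa-\ob)\cdot\nabla h$ at a height $m$ proportional to $\|\nabla h\|_\infty$, so that the large part $V_0$ has critical norm $4\varepsilon_0^2m^{-1}\|\nabla h\|_\infty=\eta$, small independently of $h$, while the remaining bounded part $V_\infty$ has size $m\|\nabla h\|_\infty\sim\|\nabla h\|_\infty^2$ and is dominated by choosing $\kappa\sim 1+c_0(\|\nabla h\|_\infty^2+P_\infty^2)$ (under (A3)', an analogous splitting in $\L^\infty_t\L^{n/2}_x+\L^\infty_t\L^\infty_x$ is used). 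Without this extra decomposition your absorption step simply does not close.

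Beyond this, your route also differs from the paper's in a way the authors explicitly flag: they do not run a Gronwall/energy estimate on $(s,t)$ at all, and state that they do not know how to adapt the Fabes--Stroock/Hofmann--Kim argument directly in the presence of unbounded lower order coefficients. Instead, after the truncation above they re-prove uniform invertibility of $\e^{h}(\partial_t+\Lt)\e^{-h}+\kappa$ on the inhomogeneous variational space $\cV$ with inverse bounded independently of $h$, deduce uniform $\L^2_x$ bounds for the conjugated Green operators, and (under (A3)) use a parabolic scaling argument to pass from the time scale $t-s=1$ to the correct exponent in \eqref{eq:Daviestrick}, before optimizing in $h$ as you do. Note also that even with the cross term repaired, the pointwise differential inequality $\frac{\d}{\d\tau}\|u_\mu(\tau)\|_{\L^2_x}^2\le(c_0\mu^2+2\omega)\|u_\mu(\tau)\|_{\L^2_x}^2$ is not what the critical part of $\beta$ yields: as in the proof of causality, that part only gives bounds in terms of $\sup_\tau\|u_\mu(\tau)\|_{\L^2_x}^2$ and $\int\|\nabla u_\mu\|_{\L^2_x}^2$ over the whole interval (and when $\tilde r_1=\infty$ no integrable-in-time rate is available), so the energy argument would have to be reorganized into a sup-absorption with the exponential growth coming only from the genuinely bounded pieces.
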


Let us comment  on the three constants. If (A3) is used, then $\omega=c_{0}P_{\infty}^2$ with $P_{\infty}$ from the decomposition  given by $\mathbf{(D_{\varepsilon_{0}})}$ and $C, c_{0}$ depend only on  $\lambda,\Lambda, n, q_{1}, r_{1}$, where $\varepsilon_{0}$ is such that the arguments for invertibility and causality apply. As in Remark~\ref{rem:dependanceonT}, they may depend on $T$ but only through $P_{\infty}$. If (A3)' is used, then $C, \omega= c_{0}$ depend  on  $\Lambda, n$ and $c, c'$ in (A3)'.

\begin{proof} 
We extend the coefficients to full space-time as in the proof of Theorem~\ref{thm:Cauchy} and use the same notation. Henceforth, we work in $\R^{n+1}$ and prove \eqref{eq:ODE} for all $ s<t$.

For a function $h:\R^n\to [0,\infty[$ bounded and Lipschitz, consider the operator obtained in $\R^{n+1}$ on conjugating $\cH\, (=\partial_{t}+\Lt )$ with the multiplication by $\e^h$.
A calculation (in the weak sense) shows that 
\begin{equation}
\label{eq:conjugation} 
\e^{h}
\cH 
%(\partial_{t}+\Lt )
\e^{-h}= 
\cH
%\partial_{t}+\Lt 
+\beta_{h},
\end{equation}where 
\begin{align}
\label{eq:beta}
	\Angle{\beta_{h} u}{v}= \Angle{\oa_{h} u}{  \nabla v} + \Angle{\ob_{h}\cdot \nabla u}{  v} + \Angle{a_{h} u}{ v}
\end{align}
and with  $A^t$ being the real transpose of $A$,
\begin{equation}
\label{eq:coefh}
\begin{split}
\oa_{h}&= -A \nabla h,	 
 \\
 \ob_{h}&= \ \ A^t \nabla h,
 \\
{a_{h}}&= -A \nabla h	\cdot \nabla h + (\oa  -\ob  )\cdot 	\nabla h.
\end{split}
 \end{equation}
The coefficients $\oa_{h}$ and $ \ob_{h}$ are bounded by $\|A\|_{\infty}\|\nabla h\|_{\infty}$. In ${a_{h}}$, the first term is bounded by $\|A\|_{\infty}\|\nabla h\|_{\infty}^2$. To handle the second term, we distinguish the two assumptions. 

\medskip

\noindent \emph{Proof under (A3)}.  The number $\varepsilon_{0}$ is chosen in particular such that \eqref{eq:lowerboundinh} for $\cH+\kappa$ holds with $\kappa\ge \kappa_{0}$ where $\kappa_{0} =\delta/4+ c_{\delta}P_{\infty}^2$. Our first goal is to check that  \eqref{eq:lowerboundinh}  for $\cH+\kappa+\beta_{h}$ holds with $\delta/4$ replaced by, say, $\delta/8$ for large enough $\kappa$ that will also depend on $\|\nabla h\|^2_{\infty}$. To this end, it will suffice to revisit the proof of that inequality after adding the contribution of the coefficients in  \eqref{eq:coefh}. 

\

\noindent \emph{Step 1: Proof of the lower bound for the perturbed $\cH+\kappa+\beta_{h}$.} We decompose $\oa  -\ob=(\oa_{0}  -\ob_{0})+(\oa_{\infty}  -\ob_{\infty}) $ as in the assumption $\mathbf{(D_{\varepsilon})}$ with $\varepsilon=\varepsilon_{0}$. The term coming from $(\oa_{\infty}-\ob_{\infty})\cdot 	\nabla h$ brings a bounded contribution of size $P_{\infty}\|\nabla h\|_{\infty}$. 
For the other term, we observe that $\oa_{0}-\ob_{0}$ belongs to $\L^{2\tilde r_{1}}_{t}\L^{2\tilde q_{1}}_{\vphantom{t} x}$ with norm not exceeding $2\varepsilon_{0}$ and $(2\tilde r_{1}, 2\tilde q_{1})$ is a subcritically compatible pair for  coefficients of order $0$. 
We decompose further this term as suggested in Remark~\ref{rem:subcritical}. To this end, call $\Lt_{0}$ the elliptic operator with coefficients $A, \oa_{0}, \ob_{0}, a_{0}$ and $\cH_{0}$ the corresponding parabolic operator. Through the choice of $\eps_0$, we can make sure that \eqref{eq:lowerbound} holds for $\cH_{0}$.  We also know that the multiplication by $V\in \L^{\tilde r_{1}}_{t}\L^{\tilde q_{1}}_{\vphantom{t} x}$ is a bounded operator $\cVdot \to \cVdot'$. 
Thus, we can choose $\eta>0$  (depending on $n, q_{1},r_{1}, \delta$) so small that  $\|V\|_{\L^{\tilde r_{1}}_{t}\L^{\tilde q_{1}}_{\vphantom{t} x}}\le \eta$ implies 
$$
\Re \Angle {(\cH_{0}+V) u}{(1+\delta \H_{t})u}  \ge \frac{\delta}{4}  \|u\|_{\cVdot}^2.
$$
For $m>0$, the truncation  $V_{0} \coloneqq 1_{|\oa_{0}-\ob_{0}|>m}(\oa_{0}-\ob_{0})\cdot \nabla h$ satisfies $$\|V_{0}\|_{\L^{\tilde r_{1}}_{t}\L^{\tilde q_{1}}_{\vphantom{t} x}} \le 4\varepsilon_{0}^2 m^{-1}\|\nabla h\|_{\infty}.$$ We choose $m$  so that $4\varepsilon_{0}^2m^{-1}\|\nabla h\|_{\infty}=\eta$. On the other hand, $V_{\infty} \coloneqq 1_{|\oa_{0}-\ob_{0}|\le m}(\oa_{0}-\ob_{0})\cdot \nabla h$ satisfies $$\|V_{\infty}\|_{\infty} \le m\|\nabla h\|_{\infty}=4\varepsilon_{0}^2\eta^{-1}\|\nabla h\|_{\infty}^2.$$  Setting $\beta_{\infty}= \cH -\cH_{0}$ and $\tilde \beta_{h}=\beta_{h}-V_{0}$, we have established the decomposition
\begin{align}
\label{eq:ODE-decomp}
\cH
%\partial_{t}+\Lt 
+\beta_{h}+\kappa= (\cH_{0}+V_{0})+(\tilde  \beta_{h}+\beta_{\infty})+\kappa
\end{align}
with $\tilde \beta_{h}+\beta_{\infty}$ having first order coefficients bounded by  $\|\nabla h\|_{\infty}+P_{\infty}$ and zero order coefficients bounded by  $\|\nabla h\|_{\infty}^2+P_{\infty}^2$ up to multiplicative constants that depend only on $\lambda, \Lambda, n, q_{1},r_{1}$. {This was the key point.}  

Applying the same {simple absorption argument} as in Theorem~\ref{thm:invertiblealpha} to this decomposition reveals that for some constant $c_{0}$ with the same dependency and  $\kappa=1+ c_{0}(\|\nabla h\|_{\infty}^2+P_{\infty}^2)$, the operator in \eqref{eq:ODE-decomp} is invertible from $\cV$ onto $\cV'$ with a lower bound $\delta/8$ in \eqref{eq:lowerboundinh}. 

Our next goal is to transfer such lower lower bounds to operator norms for the perturbed fundamental solution operator, following the dependency in $h$. 

\

\noindent \emph{Step 2: Norm bounds for the perturbed fundamental solution operator.}
With the constraints on $\kappa$ and $h$ above, the norm of the  inverse of $\cH+\kappa+\beta_{h}$ depends on $\lambda, \Lambda, n, q_{1},r_{1}$ but not on $h$. Altogether, it follows that the Green operators $G_{h,\kappa}(t,s)$ associated to $ \e^{h}\cH \e^{-h} + \kappa$ are uniformly bounded on $\L^2_{x}$ with respect to $(t,s)$ with a bound $C_{0}$ depending only on $\lambda, \Lambda, n, q_{1},r_{1}$.

Now, by construction we have $G_{h,\kappa}(t,s)=\e^h G_{\kappa}(t,s)\e^{-h}$ and by Theorem~\ref{thm:Cauchy} we have $\Gamma(t,s)=\e^{\kappa(t-s)}G_{\kappa}(t,s)$. Hence, $\e^h\Gamma(t,s)\e^{-h}= \e^{\kappa(t-s)} G_{h,\kappa}(t,s)$. This infers that  for all $t-s=1$ and $\psi\in \L^2_{x}$,
$$
\|\e^h\Gamma(t,s)\psi\|_{\L^2_{x}}\le (C_{0}\e) \e^{c_{0}(\|\nabla h\|_{\infty}^2+P_{\infty}^2)} \|\e^h\psi\|_{\L^2_{x}}.
$$ 
A scaling argument will now provide us with the right dependence of $\omega$. 

Fix $s=0$ to simplify matters by time translation invariance of the assumptions. Set  $u(t,\cdot)\coloneqq \Gamma(t,0)\psi$. Recall that $u$ solves $\partial_{t}u+\Lt u= \delta_{0}\otimes \psi$, so that if $R>0$,  then $u^R(t,x) \coloneqq  u(R^2t,Rx)$ solves $\partial_{t}u^R+\Lt ^Ru^R= \delta_{0}\otimes \psi^R$, with $\psi^R(x)=\psi(Rx)$ and $\Lt ^R$ has coefficients $A(R^2t,Rx)$, $R\, \oa  (R^2t,Rx)$, $R\, \ob  (R^2t,Rx)$, $R^2a(R^2t,Rx)$. The quantity $P_{\tilde r_{1},\tilde q_{1}}$ is scale invariant and therefore does not depend on $R$. The same applies to the ellipticity constants $\lambda, \Lambda$, while $P_{\infty}$ becomes $P_{\infty}R$. Applying  the above conclusion to the Green operator  of $\partial_{t}+\Lt ^R$ at $t=1$ with $h^R(x)=h(Rx)$, and changing variables in space, yields 
$$
\|\e^h\Gamma(R^2,0)\psi\|_{\L^2_{x}}\le (C_{0}\e) \e^{c_{0}(\|\nabla h\|_{\infty}^2+P_{\infty}^2) R^2} \|\e^h\psi\|_{\L^2_{x}}.
$$
Altogether, this shows for all $t>s$ and $\psi\in \L^2_{x}$,
\begin{equation}
\label{eq:Daviestrick}
\|\e^h\Gamma(t,s)\psi\|_{\L^2_{x}}\le (C_{0}\e) \e^{c_{0}(\|\nabla h\|_{\infty}^2+P_{\infty}^2)(t-s)} \|\e^h\psi\|_{\L^2_{x}}.
\end{equation}
It remains to optimize $h$ appropriately.

\medskip

\noindent \emph{Step 3: Choice of $h$.}
 Fix $E,F$ closed sets, let $t>s$  and assume $d(E,F)^2>t-s$, since otherwise there is nothing to prove. Let $h(x) \coloneqq \inf( \frac{d(E,F)d(x,F)}{2c_{0}(t-s)}, N)$ with $N>\frac{d(E,F)^2}{2c_{0}(t-s)}$. We see that $h\ge \frac{d(E,F)^2}{2c_{0}(t-s)}$ on $E$, $h=0$ on $F$, and $\|\nabla h\|_{\infty}=\frac{d(E,F)}{2c_{0}(t-s)}$. Thus, if $\psi $ has support in $F$, we obtain 
\eqref{eq:ODE} with $C=C_{0}\e$ and $\omega=c_{0}P_{\infty}^2$. 

\medskip

\noindent \emph{Proof under (A3)'}.
 We modify the argument, explaining how to adapt the proof of Theorem~\ref{thm:lbi} ({or Theorem~\ref{th:lb} in the inhomogeneous setting, to be precise}). As $\tilde r_{1}=\infty$, we have $n\ge 3$ and $\tilde q_{1}=n/2$.  

There are two key observations. First, if we add  lower order terms with bounded coefficients to $\Lt$, then we still have the lower bound in (A3)' up to taking $c$ smaller and $c'$ larger. Second, if $V\in \L^{\infty}_{t}\L^{n/2}_{x}$, then 
$$
|\Angle{Vu}{v}|\le  \|V\|_{\L^{\infty}_{t}\L^{n/2}_{\vphantom{t} x}}\|u\|_{\L^{2}_{t}\L^{2^*}_{\vphantom{t} x}}\|v\|_{\L^{2}_{t}\L^{2^*}_{\vphantom{t} x}} \le c(n) \|V\|_{\L^{\infty}_{t}\L^{n/2}_{\vphantom{t} x}}\|\nabla u\|_{\LL}\|\nabla v\|_{\LL},
$$
so that in particular if $\eta= c(n)^{-1}c/2$ and $ \|V\|_{\L^{\infty}_{t}\L^{n/2}_{x}}\le \eta$, then we  preserve the lower bound assumption of  Theorem~\ref{th:lb} on adding $V$. 

In order to make use of these two observations, we recall that $\oa  -\ob  \in \L^{\infty}_{t}\L^{n}_{x}$ and decompose $(\oa-\ob)\cdot \nabla h$ further in $\L^{\infty}_{t}\L^{n/2}_{x}+ \L^{\infty}_{t}\L^{\infty}_{x}$ 
as $V_{0}+V_{\infty}$, where as usual we take $V_{0} \coloneqq 1_{|\oa-\ob|>m}(\oa-\ob)\cdot \nabla h$ and $V_{\infty} \coloneqq 1_{|\oa-\ob|\le m}(\oa-\ob)\cdot \nabla h$. We have  $$\|V_{0}\|_{\L^{\infty}_{t}\L^{n/2}_{x}} \le \||\oa-\ob|\|_{\L^{\infty}_{t}\L^{n}_{x}}^2\  m^{-1}\|\nabla h\|_{\infty},$$ and we choose $m$  so that this bound equals $\eta$. Thus, $$\|V_{\infty}\|_{\infty}\le \eta^{-1} \||\oa-\ob|\|_{\L^{\infty}_{t}\L^{n}_{x}}^2\ \|\nabla h \|_{\infty}^2.$$  The decomposition replacing \eqref{eq:ODE-decomp} is 
$$
%\partial_{t}+\Lt 
\cH +\beta_{h}+\kappa= 
%\partial_{t}+\Lt
\cH+ \tilde   \beta_{h}+ V_{0}+\kappa,
$$
where $\tilde \beta_{h}$ has first order coefficients bounded by $C\|\nabla h\|_{\infty}$ and zeroth order coefficients bounded by $C(1+\|\nabla h\|_{\infty}^2)$. 

Applying the two introductory observations and choosing $\kappa=c_{0}(1+\|\nabla h\|_{\infty}^2)$ for an appropriate constant $c_{0}$, we see that the inverse of 
$\cH+\tilde  \beta_{h}+ V_{0}+\kappa$ has a norm that is bounded by a constant  independent of $h$. The rest of the proof is as in the first case but the scaling argument is not needed: we first obtain 
$$
\|\e^h\Gamma(t,s)\psi\|_{\L^2_{x}}\le C_{0} \e^{c_{0}(1+\|\nabla h\|_{\infty}^2)(t-s)} \|\e^h\psi\|_{\L^2_{x}}
$$ 
for all $t>s$ and then {then same choice of $h$ as before} leads to \eqref{eq:ODE} with $\omega=c_{0}$ and $C=C_{0}$. 
\end{proof}

\subsection{Pointwise Gaussian bounds}
\label{sec:Gaussian_pointwise}

We prove that pointwise Gaussian bounds for the fundamental solution operator follow from an assumption of local boundedness on weak solutions of both the parabolic equation and its adjoint. To this end, we extend the argument presented in \cite{HK04} without lower order coefficients.  This argument adapts once we have \eqref{eq:Daviestrick} at hand. As said before, we do not know how to modify the argument in \cite{HK04} for this inequality directly in the presence of lower order coefficients.

We recall that a weak solution of $\partial_{t}u+\Lt u=0$ in an open set $\I\times \Omega$ is a function $u$ that is  in the class $\L^\infty( \I; (\L^2(\Omega))$ with $\nabla u$  in $\L^2(\I; (\L^2(\Omega))$ which satisfies the  equation weakly against test functions $\tphi \in \cD(\I\times \Omega)$ as in \eqref{eq:Deltasol}. It is well-known that $u$ is continuous in time locally in $\L^2$, see also  Lemma~\ref{lem:energyfiniteinterval}.  {The following definition introduces quantitative boundedness in the two variables.}

For $(t,x)\in \R^{n+1}$ and $r>0$, we let $Q_{r}(t,x)=(t-r^2, t]\times B(x,r)$ and  $Q_{r}^*(t,x)=[t,t+r^2)\times B(x,r)$ {be the usual forward and backward in time parabolic cylinders}.

\begin{defn} 
 We say that $\partial_{t}+\Lt $ and $-\partial_{t}+\Ltstar   $ have the local boundedness property if there are  $\rho\in (0,\infty]$  and $0<B<\infty$ such that for all $(t,x)\in \R^{n+1}$ and $0<r<\rho$, any weak solution of 
$\partial_{t}u+\Lt u=0$ and $-\partial_{t}\tilde u+\Ltstar   \tilde u=0$
on neighborhoods of $Q_{2r}(t,x)$ and $Q_{2r}^*(t,x)$, respectively, has local bounds of the form 
\begin{align}\label{eq:localbound}
\esssup_{B(x,r)}|u(t,\cdot)|^2 &\le \frac {B^2} {r^{n+2}} \iint_{Q_{2r}(t,x)} |u|^2,
 \\
\label{eq:localbound*}  \esssup_{B(x,r)}|\tilde u(t,\cdot)|^2 &\le \frac {B^2} {r^{n+2}} \iint_{Q_{2r}^*(t,x)} |\tilde u|^2.
\end{align}
\end{defn}

\begin{rem}
If $\rho=\infty$, the condition is scale invariant; here we will also encounter non-scale invariant situations, in which we need to consider $\rho<\infty$.
\end{rem}

Note that these conditions are usually presented by taking suprema on $Q_{r}(t,x)$, $Q_{r}^*(t,x)$ respectively, which means that one needs to know that solutions have pointwise values. Our weaker formulation suffices.

\begin{thm}
\label{thm:GUB} Assume the conditions of Theorem~\ref{thm:Cauchy} and that $\partial_{t}+\Lt $ and $-\partial_{t}+\Ltstar $ have the local boundedness property for some $\rho\in (0,\infty]$.  
Then, for all $t>s$, the fundamental solution operator $\Gamma(t,s)$ has  a kernel  $\Gamma(t,x,s,y)$, called generalized fundamental solution, with almost everywhere pointwise Gaussian upper bound
\begin{equation}
\label{eq:GUB}
|\Gamma(t,x,s,y)|\le \frac {\mu^{k+1}} {(16\pi c_{0}(t-s))^{n/2}}\e^{-\tfrac{ |x-y|^2}{16c_{0}(t-s)}+\omega (t-s)},
\end{equation}
whenever $k\rho^2\le t-s<(k+1)\rho^2$ for some $k\in \IN$. (If $\rho=\infty$, the only non-void case is $k=0$.)  Here,
$$
\mu= (32\pi c_{0})^{n/2}2^{n/2} \e^{2/c_{0}}(2^{1+n/2}BC)^2,
$$
where the constants $0< C, \omega, c_{0}<\infty$ are the ones explicated in  Theorem~\ref{thm:ODE}.
 \end{thm}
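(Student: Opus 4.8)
The plan is to deduce the pointwise Gaussian bound from the $\L^2$ off-diagonal estimate of Theorem~\ref{thm:ODE} together with the local boundedness property, following the Moser-type iteration-free argument of Hofmann--Kim \cite{HK04} but retaining the weight $\e^{\omega(t-s)}$. First I would fix $t>s$ and write $\tau \coloneqq t-s$. The key mechanism is: the function $u \coloneqq \Gamma(\cdot,s)\psi$ is a weak solution of $\partial_t u + \Lt u = 0$ on $(s,\infty)\times \R^n$ (by causality and Theorem~\ref{thm:deltasL2}), so on any parabolic cylinder $Q_{2r}(t,x)$ with $2r < \rho$ and $t-4r^2 > s$ the local bound \eqref{eq:localbound} applies. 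Symmetrically, $\tGa(\cdot,t)\tpsi$ solves the adjoint equation and \eqref{eq:localbound*} applies. The strategy is to sandwich $\Gamma(t,s)$ between two half-time steps, apply local boundedness at each end, and use the off-diagonal decay in the middle.

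Concretely, I would argue as follows. By the Chapman--Kolmogorov identity (Theorem~\ref{thm:Gammats}~(v), in the form \eqref{eq:GammaGreen}) split $\Gamma(t,s) = \Gamma(t,m)\Gamma(m,s)$ with $m = \tfrac{s+t}{2}$ the midpoint. Testing against $\psi$ supported near $y$ and reading off the value near $x$: the operator $\Gamma(m,s)$ maps $\L^2$ into $\L^2$ with the off-diagonal decay from \eqref{eq:ODE} over time $\tau/2$, and then the local boundedness property for the forward equation promotes $\L^2$-control of $u(m,\cdot)$ on a ball to an $\L^\infty$ bound on $u(t,\cdot)$ on a smaller ball, on the scale $r \sim \sqrt\tau$ (or $r\sim \rho$ when $\tau$ is large, which is exactly where the factor $\mu^{k+1}$ enters). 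Dually, the adjoint local boundedness controls $\tGa(\cdot,t)\tpsi$, i.e. by the adjointness $\Gamma(t,s)=\tGa(s,t)^*$ one gets the needed sup bound on the other variable. Combining, for $\psi$ supported in $B(y,r)$ and evaluating on $B(x,r)$ with $r\sim\sqrt{c_0\tau}$ one obtains
\begin{equation*}
\|\Gamma(t,s)\psi\|_{\L^\infty(B(x,r))} \le \frac{C'}{(c_0\tau)^{n/4}}\,\e^{-\tfrac{d(B(x,r),B(y,r))^2}{4c_0\tau}+\omega\tau}\,\|\psi\|_{\L^2_x},
\end{equation*}
and a second application of local boundedness for the adjoint (or a duality/transpose argument) converts the $\L^2$ on the right into an $\L^\infty$-to-$\L^1$ statement, which by the Dunford--Pettis theorem yields an a.e.\ pointwise kernel $\Gamma(t,x,s,y)$ with the bound \eqref{eq:GUB} after tracking constants: the two applications of \eqref{eq:localbound}/\eqref{eq:localbound*} contribute $(2^{1+n/2}BC)^2$, the Gaussian pieces combine with the elementary inequality $\e^{-a/2}\e^{-b/2}\le \e^{-(\sqrt a+\sqrt b)^2/4}\cdot(\text{harmless})$, and the $k$-dependence comes from iterating Chapman--Kolmogorov $k+1$ times when $\tau \ge \rho^2$.

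The main obstacle I expect is the bookkeeping of the non-scale-invariant case $\rho<\infty$: when $t-s$ exceeds $\rho^2$ one cannot use a single cylinder of radius $\sim\sqrt\tau$, so one must chop $[s,t]$ into $\sim k+1$ subintervals of length $\sim\rho^2$, apply the one-step local-boundedness-plus-off-diagonal estimate on each, and multiply the resulting constants $\mu$ — being careful that the Gaussian exponent only degrades by a universal factor (hence the $16c_0$ instead of $4c_0$) while the polynomial prefactor and the $\e^{\omega(t-s)}$ weight are handled additively in the exponent. The secondary technical point is justifying that $\Gamma(\cdot,s)\psi$ genuinely is a weak solution in the Moser sense (locally $\L^\infty_t\L^2_x$ with gradient locally in $\L^2$) away from $t=s$, which follows from Theorem~\ref{thm:deltasL2} — specifically the statement that on $(s,\infty)$ the solution is the restriction of a $\cVdot$-function, hence in $\L^2_{t}\Hdot^1_x$ locally, together with $u\in\C_0(\R\setminus\{s\};\L^2_x)$ — so there is no real difficulty there, only a need to state it cleanly before invoking \eqref{eq:localbound}.
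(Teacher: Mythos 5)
Your skeleton coincides with the paper's: local boundedness to upgrade an $\L^2$ bound to an $\L^\infty$ bound at the endpoint, the adjoint local boundedness plus duality for an $\L^1\to\L^2$ bound, Chapman--Kolmogorov at the midpoint to get $\L^1\to\L^\infty$, Dunford--Pettis to produce the kernel, and an iteration when $t-s\ge \rho^2$. The essential divergence is the off-diagonal input. The paper does not run this scheme from the set-to-set estimate \eqref{eq:ODE}, but from the stronger weighted inequality \eqref{eq:Daviestrick} established inside the proof of Theorem~\ref{thm:ODE}: it keeps the conjugated operators $\Gamma^h=\e^h\Gamma(t,s)\e^{-h}$ throughout, applies \eqref{eq:localbound} to the genuine weak solution $u^h=\Gamma(\cdot,s)(\e^{-h}\psi)$, absorbs the Lipschitz weight on the small ball through $\e^{2\gamma|z-y|}\le \e^{4\gamma\sqrt{t-s}}$, and only at the very end, at the level of the a.e.\ kernel bound, chooses $h$ depending on $x,y$. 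This buys two things your route forfeits: the midpoint composition is a plain product of globally weighted operator norms (no decomposition of the intermediate space is needed), and the explicit constants of the statement, in particular $\mu$ and the exponent $16c_0$, come out exactly.

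As written, your route has two gaps. First, the midpoint step: $\Gamma(m,s)\psi$ is not supported in a ball, so to retain Gaussian decay between $x$ and $y$ from ball-to-ball bounds you must insert a covering of $\R^n$ at time $m$, sum the products of the two off-diagonal factors, and control the sum using $a^2+b^2\ge \tfrac12 (a+b)^2$ plus a counting argument; this is standard but it is exactly the step the weighted approach avoids, it is absent from your sketch, and it produces a constant different from (larger than) the stated $\mu$. Relatedly, \eqref{eq:localbound} requires $\L^2$ control of $u$ over the whole backward cylinder $Q_{2r}(t,x)$, not only of $u(m,\cdot)$ at one time; you obtain it by applying \eqref{eq:ODE} at every intermediate time and integrating, which is what the paper does with its weighted analogue. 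Second, the case $t-s\ge\rho^2$: if you ``apply the one-step estimate on each subinterval and multiply'' at the level of off-diagonal operator bounds, each composition degrades the Gaussian constant, so after $k$ steps the exponent deteriorates with $k$ and you do not reach \eqref{eq:GUB}, which keeps $16c_0$ uniformly in $k$ with only the prefactor $\mu^{k+1}$ growing. The iteration must be performed at the kernel level, composing the already established one-step pointwise Gaussian bounds via Chapman--Kolmogorov and the exact convolution identity $g_{\alpha}\star g_{\beta}=g_{\alpha+\beta}$, which leaves the exponent untouched; this is how the paper concludes, and your sketch should be amended accordingly.
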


\begin{proof} 
Under the hypotheses of Theorem~\ref{thm:ODE} we have proved \eqref{eq:Daviestrick}, which we rewrite for all $t>s$, $\psi\in \L^2_{x}$ and real, Lipschitz and bounded $h$ as
\begin{equation}
\label{eq:Daviestrick2}
\|\Gamma^h(t,s)\psi\|_{\L^2_{x}}\le C\e^{\omega (t-s)}\e^{c_{0} \gamma^2 (t-s)} \|\psi\|_{\L^2_{x}}
\end{equation}
with $\Gamma^h(t,s)\coloneqq\e^h\Gamma(t,s)e^{-h}$ and $\|\nabla h\|_{\infty}=\gamma$.  {By duality} this inequality holds also for $\widetilde \Gamma^{h}(s,t)=\Gamma^{-h} (t,s)^*$. Let $$u^h(t,\cdot) \coloneqq \e^{-h}\Gamma^h(t,s)\psi= \Gamma(t,s)(e^{-h}\psi).$$ We may apply \eqref{eq:localbound} to $u^h$ and obtain for $0<t-s<\rho^2/2$ and $x\in \R^n$ that for almost every $z\in B(x,\sqrt {t-s}/2)$,
\begin{align*}
 |u^h(t,z)|^2  &\le \frac {2^{2+n}B^2} {(t-s)^{1+n/2}} \int_{s}^t\int_{B(x,\sqrt{t-s})} |u^h(\tau,y)|^2 \, \d y\d \tau
 \\
 &\le \frac {2^{2+n} B^2} {(t-s)^{1+n/2}} \int_{s}^t\int_{B(x,\sqrt{t-s})} \e^{-2h(y)}|\Gamma^h(\tau,s)\psi(y)|^2 \, \d y\d \tau,
\end{align*}
hence 
\begin{align*}
 |\Gamma^h(t,s)\psi(z)|^2  &\le \frac {2^{2+n}B^2} {(t-s)^{1+n/2}} \int_{s}^t\int_{B(x,\sqrt{t-s})} \e^{2h(z)-2h(y)}|\Gamma^h(\tau,s)\psi(y)|^2 \, \d y\d \tau
 \\
 &\le \frac {2^{2+n}B^2} {(t-s)^{1+n/2}} \int_{s}^t\int_{B(x,\sqrt{t-s})} \e^{2\gamma|z-y|}|\Gamma^h(\tau,s)\psi(y)|^2 \, \d y\d \tau
 \\
 &\le \frac {2^{2+n}B^2 \e^{4\gamma\sqrt{t-s}}} {(t-s)^{1+n/2}} \int_{s}^t\int_{B(x,\sqrt{t-s})} |\Gamma^h(\tau,s)\psi(y)|^2 \, \d y\d \tau
\\
&\le \frac {2^{2+n}B^2 \e^{4\gamma\sqrt{t-s}}} {(t-s)^{1+n/2}} \int_{s}^t C^2\e^{2\omega (\tau-s)}\e^{2c_{0} \gamma^2 (\tau-s)}  \|\psi\|_{\L^2_{x}}^2 \, \d \tau.
\end{align*}
Note that the right-hand side does not depend on the space variable. As $\tau-s\le t-s$, this implies
\begin{equation}
\label{eq:Gammah2infty}
\|\Gamma^h(t,s)\psi\|_{\L^\infty_{x}}\le \frac {2^{1+n/2}BC \e^{2\gamma\sqrt{t-s}}\e^{\omega (t-s)}\e^{c_{0} \gamma^2 (t-s)}} {(t-s)^{n/4}} \ \|\psi\|_{\L^2_{x}}.\end{equation}
Using \eqref{eq:localbound*} and \eqref{eq:Daviestrick2} for the adjoint of $\Gamma^h(t,s)$ and duality, this yields
\begin{equation}
\label{eq:Gammah12}
\|\Gamma^h(t,s)\psi\|_{\L^2_{x}}\le \frac {2^{1+n/2}BC \e^{2\gamma\sqrt{t-s}}\e^{\omega (t-s)}\e^{c_{0} \gamma^2 (t-s)}} {(t-s)^{n/4}} \ \|\psi\|_{\L^1_{x}}.\end{equation}
Let us momentarily assume $0<t-s<\rho^2$, that is $k=0$. We shall remove this in the final step. By the Chapman-Kolmogorov identity of Theorem~\ref{thm:Gammats}, which implies  
$\Gamma(t,s)=\Gamma(t,r)\Gamma(r,s)$ with $r=\frac{t+s}2$,  we obtain
%if $0<t-s<\rho^2$, 
\begin{equation}
\label{eq:Gammah1infty}
\|\Gamma^h(t,s)\psi\|_{\L^\infty_{x}}\le \frac {2^{n/2}(2^{1+n/2}BC)^2 \e^{2\sqrt 2\gamma \sqrt{t-s}}\e^{\omega (t-s)}\e^{c_{0} \gamma^2 (t-s)}} {(t-s)^{n/2}} \ \|\psi\|_{\L^1_{x}}.\end{equation}
By the Dunford--Pettis theorem (Theorem 1.3 in \cite{Arendt-Bukhvalov}),  this amounts to the fact that for all $t>s$,  $\Gamma(t,s)= \e^{-h}\Gamma^h(t,s)\e^h $ is an integral operator with  measurable kernel that we denote by $\Gamma(t,x,s,y)$, having an almost everywhere bound
\begin{equation}
\label{eq:Gamma(t,x,s,y)h}
|\Gamma(t,x,s,y)|\le \e^{h(y)-h(x)} \frac {2^{n/2}(2^{1+n/2}BC)^2 \e^{2\sqrt 2 \gamma \sqrt{t-s}}\e^{\omega (t-s)}\e^{c_{0} \gamma^2 (t-s)}} {(t-s)^{n/2}} .\end{equation}
Taking $h=0$ already gives us a uniform almost everywhere bound
\begin{equation}
\label{eq:Gamma(t,x,s,y)uniform}
|\Gamma(t,x,s,y)|\le  \frac {2^{n/2}(2^{1+n/2}BC)^2 \e^{\omega (t-s)}} {(t-s)^{n/2}} .\end{equation} 
In order to prove \eqref{eq:GUB}, we fix $x,y,t,s$ and assume $\frac {|x-y|}{2\sqrt {2}\sqrt{t-s}}\ge 2$; otherwise we can simply use \eqref{eq:Gamma(t,x,s,y)uniform} since $1 \le \e^{\frac {2}{c_{0}}}\e^{-\frac{ |x-y|^2}{16 c_{0}(t-s)}}$. 
We pick $h(z)= \inf(\gamma|z-y|, N)$ with $\gamma= \tfrac {|x-y|}{{4c_{0}(t-s)}}$ and $N>\gamma|x-y|$. Thus, $h$ is bounded and Lipschitz with $\|\nabla h\|_{\infty}=\gamma$ and $h(x)=\gamma|x-y|$, $h(y)=0$. 
Observe that  $2\sqrt 2 \gamma \sqrt{t-s} \le \frac {h(x)}{2}$   and 
$-\frac {h(x)}{2}+ {c_{0}\gamma^2}(t-s)= - \frac {|x-y|^2}{16c_{0}(t-s)}$. Hence, 
$$
|\Gamma(t,x,s,y)|\le  \frac {2^{n/2}(2^{1+n/2}BC)^2 \e^{\omega (t-s)}} {(t-s)^{n/2}}\  \e^{ - \tfrac {|x-y|^2}{16c_{0}(t-s)}}.
$$
This concludes the argument when $0<t-s<\rho^2$.

We are of course done when $\rho=\infty$. 
To conclude the proof when $\rho<\infty$, we iteratively apply  the Chapman-Kolmogorov formula for $\Gamma(t,s)$ together with the upper bound just found and the convolution rule $g_{\alpha}\star g_{\beta}=g_{\alpha+\beta}$, where $g_{\alpha}(x)= (4\pi \alpha)^{-n/2} \e^{-|x|^2/4\alpha}$ for $\alpha,\beta>0$.
\end{proof}

\begin{cor} 
Under the same assumptions as in Theorem~\ref{thm:GUB} we have for all $t>s$ the equality 
\begin{equation}
\Gamma(t,x,s,y)=\widetilde\Gamma(s,y,t,x)^*
\end{equation}
for almost every $x,y\in \R^{n+1}$, where $^*$ is the complex adjoint (here the conjugation as the kernels are complex-valued) and $\widetilde\Gamma(s,y,t,x)$ is the generalized  fundamental solution of the adjoint problem.
\end{cor}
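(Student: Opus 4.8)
The plan is to transfer the already-established operator adjointness $\Gamma(t,s) = \widetilde\Gamma(s,t)^*$ from Corollary~\ref{cor:adjoint} into the language of integral kernels. First I would recall that Theorem~\ref{thm:GUB}, applied simultaneously to $\partial_t + \Lt$ and to the adjoint backward operator $-\partial_t + \Ltstar$ (which satisfies the same structural hypotheses and, by assumption, has the local boundedness property too), produces for all $t>s$ measurable kernels $\Gamma(t,x,s,y)$ and $\widetilde\Gamma(s,y,t,x)$ with Gaussian bounds; in particular both kernels are locally integrable and represent $\Gamma(t,s)$ and $\widetilde\Gamma(s,t)$ as genuine integral operators on $\L^2_x$.

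Next I would unwind the operator identity against test functions. Fix $t>s$ and $\psi, \tpsi \in \cD(\R^n)$ (or simply $\L^2_x$). On the one hand,
\begin{equation*}
\angle{\Gamma(t,s)\psi}{\tpsi} = \iint_{\R^n\times\R^n} \Gamma(t,x,s,y)\,\psi(y)\,\overline{\tpsi(x)}\,\d y\,\d x,
\end{equation*}
using Fubini, which is justified by the Gaussian (hence absolute) integrability of the kernel against the bounded compactly supported densities. On the other hand, by Corollary~\ref{cor:adjoint},
\begin{equation*}
\angle{\Gamma(t,s)\psi}{\tpsi} = \angle{\psi}{\widetilde\Gamma(s,t)\tpsi} = \overline{\angle{\widetilde\Gamma(s,t)\tpsi}{\psi}} = \iint_{\R^n\times\R^n} \overline{\widetilde\Gamma(s,y,t,x)}\,\psi(y)\,\overline{\tpsi(x)}\,\d x\,\d y,
\end{equation*}
again with Fubini permitted by the Gaussian bound on $\widetilde\Gamma$. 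Comparing the two expressions for all $\psi,\tpsi$ in a dense class of $\L^2_x$ forces $\Gamma(t,x,s,y) = \overline{\widetilde\Gamma(s,y,t,x)}$ for a.e.\ $(x,y)$, which is exactly $\Gamma(t,x,s,y) = \widetilde\Gamma(s,y,t,x)^*$ since the kernels are scalar and $^*$ is complex conjugation here.

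I do not anticipate a genuine obstacle: the heavy lifting — existence of the kernels with Gaussian control and the operator-level adjointness — is already done in Theorem~\ref{thm:GUB} and Corollary~\ref{cor:adjoint}. The only points requiring a line of care are the Fubini applications (covered by the Gaussian upper bounds, which guarantee the relevant double integrals converge absolutely for test-function densities) and the passage from "for all $\psi,\tpsi$ in a dense set" to the a.e.\ pointwise identity of the two locally integrable kernels, which is the standard uniqueness statement for Schwartz/integral kernels. If one prefers to be fully rigorous about the null set, one can invoke that two locally integrable functions on $\R^n\times\R^n$ agreeing as distributions against $\cD(\R^n)\otimes\cD(\R^n)$ coincide a.e.
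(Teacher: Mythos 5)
Your proposal is correct and follows essentially the same route as the paper, whose entire proof is the one-line observation that $\Gamma(t,s)=\widetilde\Gamma(s,t)^*$ (Corollary~\ref{cor:adjoint}) and that both operators have integral kernels by Theorem~\ref{thm:GUB}; you simply spell out the standard kernel-identification details (testing against dense classes, Fubini via the Gaussian bounds, a.e.\ uniqueness of kernels) that the paper leaves implicit.
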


\begin{proof} We know that $\Gamma(t,s)=\widetilde \Gamma(s,t)^*$ and both have integral kernels.
\end{proof}

\begin{rem}
 Aronson's prerequisite to obtaining Gaussian upper bounds for their generalized fundamental solution (which we now know agree with ours) is a condition on coefficients that insures the local boundedness property with the supremum, see Theorem~B in \cite{Ar68}. Thus,  Theorem~\ref{thm:GUB} reproves Aronson's upper bound 
 in a constructive way through identification of the general fundamental solution operators with integral kernels.
\end{rem}

\begin{rem}
The stability result in Proposition~2.1 of \cite{HK04} for pure second-order $\Lt$ {could} be adapted but not with full lower order terms. Although formulated as a perturbation result for local bounds, it proves more, {namely}: if weak solutions of $\partial_{t}- \div A \nabla  + \ob  \cdot \nabla$ satisfy local H\"older bounds with proper scaling, one preserves this regularity up to changing the H\"older exponent, on 
 perturbing of $A$ in $\L^\infty$ and $\ob  $ in the compatible mixed Lebesgue space. It is not clear what happens when adding the other terms with $\oa  $ or $a$. 
\end{rem}

\subsection{Pure second order elliptic part}
\label{sec:puresecondorder}

When the lower order coefficients are zero, that is, the elliptic part is the pure second order operator $\Lt_{0}\coloneqq -\div A \nabla$, we see that there is no need to introduce the compatible pair $(\tilde r_{1},\tilde q_{1})$ to define $\cH_{0}=\partial_{t}+\Lt_{0}\colon \cVdot \to \cVdot'$ in Proposition~\ref{prop:Lt} and the information that $\nabla u \in \LL$ suffices. Thus, we can introduce the (larger) class of $\L^2_{t}\Hdot^{1}_{\vphantom{t} x}$-solutions of $\partial_{t}u+\Lt_{0}u=f$ in $\R^{n+1}$, which we define as the class of distributions $u$ with $\nabla u \in \LL$ such that $ \partial_{t}u+\Lt_{0} u= f$ in $\cD'(\R^{n+1})$.  

Inspection of the arguments in Section~\ref{sec:existenceanduniqueness} reveals that if $\cH_{0}$ is invertible, then the statements extend by replacing systematically $\cH$, $\Deldot^{r_{1},q_{1}}$  and $\|u\|_{\Deldot^{r_{1}, q_{1}}}$ by  $\cH_{0}$, $\L^2_{t}\Hdot^{1}_{\vphantom{t} x}$ and $\|\nabla u\|_{\LL}$, respectively. In particular, uniqueness up to a constant (assuming invertibility) is obtained  in the larger class $\L^2_{t}\Hdot^{1}_{\vphantom{t} x}$.

From there on, the theory develops analogously in this special case. The Cauchy problem for $\partial_{t}+\Lt_{0}$ can be posed and solved uniquely in $\L^2(0,T; \H^1(\R^n))$ when $T<\infty$ for arbitrary data $\psi, F,g,h$ in appropriate spaces, or in $\L^2(0,\infty; \Hdot^1(\R^n))$ when $T=\infty$ and $h=0$ (recovering and extending the result in \cite{AMP}). The elimination of the constant comes from the initial data being in $\L^2_{x}$. The $\L^2$ off-diagonal decay was already known in this case (see the beginning of Section~\ref{sec:ode}) but we still offer a different proof.   

\subsection{Lower order coefficients in Lorentz spaces} 
\label{sec:Lorentz}

We have developed our variational approach under control of mixed Lebesgue norms on the lower order coefficients.  {We shall now explain why} these conditions can be relaxed {with hardly any effort}, using the Lorentz spaces  $\L^{p,\infty}$. 
Recall that on a measure space $(M,\mu)$,  a measurable function $f$ belongs to $\L^{p,q}$ in the case $1\le p,q<\infty$ if
$$
\|f\|_{\L^{p,q}} \coloneqq \bigg(\frac q p \int_{0}^\infty \big( t^{1/p} f^*(t)\big)^q \, \frac {\d t}t \bigg)^{1/q} <\infty
$$
and in the case $1\le p<\infty, q=\infty$ if 
$$
\|f\|_{\L^{p,q}} \coloneqq \sup_{t>0} t^{1/p} f^*(t) <\infty.
$$
Here, $f^*$ is the non-increasing rearrangement of $f$. It is known that $\L^{p,p}=\L^p$ and that $\|f\|_{\L^{p,q}}$ is non-increasing as a function of $q$, so that $\L^{p,q}\subset \L^{p,p} \subset  \L^{p,r}$ if $q\le p\le r$. Details are found in Chapter~5 of \cite{SW}. {Mixed Lorentz spaces in $(t,x)$ have been introduced by Fernandez~\cite{Fe}, who also proved that they behave in the same way as Lebesgue spaces concerning duality and multiplication (H\"older's inequality). Simple functions are dense in spaces for which all exponents are finite.}

The extension mainly relies on  the following lemma.

\begin{lem}
Let $(\tilde r_{1}, \tilde q_{1})$ be a compatible pair for lower order coefficients with admissible conjugate $(r_{1}, q_{1})$. Then $\cVdot \hookrightarrow \L^{r_{1},2}_{t} \L^{r_{2},2}_{x}$ with continuous inclusion. Consequently, if 
\begin{equation}
\label{eq:LorentzLorentz}
|\oa  |^2+|\ob  |^2+|a|\in \L^{\tilde r_{1},\infty}_{t}\L^{\tilde q_{1},\infty}_{\vphantom{t} x},
\end{equation} 
then $\cH:\cVdot \to \cVdot'$ is well-defined  and bounded and if
\begin{equation}
\label{eq:LorentzLorentzinfty}
|\oa  |^2+|\ob  |^2+|a|\in \L^{\tilde r_{1},\infty}_{t}\L^{\tilde q_{1},\infty}_{\vphantom{t} x} + \L^\infty_{t}\L^\infty_{x},
\end{equation} 
then $\cH:\cV \to \cV'$ is well-defined  and bounded.
\end{lem}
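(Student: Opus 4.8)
The plan is to mimic the structure of Lemma~\ref{ref:embedding} and its consequences, replacing every Lebesgue exponent by its Lorentz counterpart and invoking the Lorentz analogues of the Sobolev embeddings and of H\"older's inequality. First I would establish the embedding $\cVdot \hookrightarrow \L^{r_{1},2}_{t}\L^{q_{1},2}_{x}$. The key point is that the two one-dimensional Sobolev embeddings used in \eqref{eq:embed} are in fact Lorentz-improving: the scalar embedding $\Hdot^{1-\theta}(\R^n) \hookrightarrow \L^{q_{1}}(\R^n)$ (with $\tfrac12 - \tfrac{1-\theta}{n} = \tfrac1{q_{1}}$, $2\le q_{1}<\infty$) refines to $\Hdot^{1-\theta}(\R^n) \hookrightarrow \L^{q_{1},2}(\R^n)$ because $\L^2 = \L^{2,2}$ on the source side, and likewise the $\R$-embedding $\Hdot^{\theta/2-1/2+1/2}_{t} = \Hdot^{\theta/2}(\R) \hookrightarrow \L^{r_{1},2}(\R)$ refines in the same way (these are the standard Sobolev--Lorentz embeddings, valid as soon as the primary exponent is finite and $\ge 2$; see e.g. the references on Lorentz spaces cited in the paper). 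Running the convexity inequality \eqref{eq:convexity} exactly as in Lemma~\ref{ref:embedding} to pick $\theta\in[0,1)$ with $(r_{1},q_{1})$ admissible — which is possible precisely because $(\tilde r_1,\tilde q_1)$ is compatible, by Lemma~\ref{lem:compadm} — and then composing the two refined embeddings in the vector-valued setting (the $t$-embedding applies with values in $\L^{q_{1},2}_{x}$, which is legitimate since it is an embedding on $\R$ for a fixed Banach target) yields $\|\varphi\|_{\L^{r_{1},2}_{t}\L^{q_{1},2}_{x}} \le C\|\varphi\|_{\cVdot}$ for $\varphi\in\cS(\R^{n+1})$, hence for all of $\cVdot$ by density.

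Next I would run the boundedness argument for $\cH$ verbatim from Proposition~\ref{prop:Lt} and Lemma~\ref{lem:beta}, with the only change being that the application of H\"older's inequality in the $x$- and $t$-variables now uses the Lorentz H\"older inequality of Fernandez: the exponent identities
\[
\frac{1}{2\tilde q_{1}} + \frac{1}{q_{1}} + \frac12 = 1, \qquad \frac{1}{2\tilde r_{1}} + \frac{1}{r_{1}} + \frac12 = 1,
\]
together with the secondary-exponent bookkeeping $\tfrac1{\infty}+\tfrac12+\tfrac12 = 1$ (for the pairing against the two $\cVdot$-factors, which sit in $\L^{\cdot,2}$), show that the products $|\oa|\,u\,\nabla v$ etc. are integrable, with
\[
|\Angle{\beta u}{v}| \le C\,\||\oa|^2\|_{\L^{\tilde r_{1},\infty}_{t}\L^{\tilde q_{1},\infty}_{x}}^{1/2}\,\|u\|_{\cVdot}\,\|v\|_{\cVdot} + (\text{analogous }\ob, a\text{ terms}).
\]
Combined with the already-known bound \eqref{eq:Abdd} on the leading term and the bound $\|\partial_t u\|_{\cVdot'}\le\|u\|_{\cVdot}$ from the proof of Proposition~\ref{prop:Lt}, this gives that $\cH:\cVdot\to\cVdot'$ is well-defined and bounded under \eqref{eq:LorentzLorentz}; taking adjoints (the adjointness computation is unchanged) handles $\cH^*$.

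For the inhomogeneous claim under \eqref{eq:LorentzLorentzinfty}, I would decompose $|\oa|^2+|\ob|^2+|a| = P_0 + P_\infty$ with $P_0\in\L^{\tilde r_{1},\infty}_{t}\L^{\tilde q_{1},\infty}_{x}$ and $P_\infty\in\L^\infty_{t}\L^\infty_{x}$, split $\beta = \beta_0 + \beta_\infty$ accordingly, bound $\beta_0$ by the Lorentz estimate above on the $\cVdot$-part, and bound $\beta_\infty$ trivially using $\cV\hookrightarrow\LL$ exactly as in the argument preceding Theorem~\ref{thm:invertiblealpha}. This yields boundedness of $\cH:\cV\to\cV'$. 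The main obstacle is purely the first step: one must be sure that the classical Sobolev embeddings genuinely improve to the Lorentz scale $\L^{\cdot,2}$ on both the $\R^n$ and the $\R$ factor, and that the intermediate interpolation space $\Htheta$-type argument still closes with the secondary exponent equal to $2$ throughout — but this is standard (the endpoint secondary index is inherited from $\L^2=\L^{2,2}$ via real interpolation), so once it is quoted the rest is a line-by-line transcription of the Lebesgue proofs already in the paper.
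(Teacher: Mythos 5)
Your proof is correct and takes essentially the same route as the paper: O'Neil's Lorentz refinement of the Sobolev embeddings combined with the convexity inequality \eqref{eq:convexity} yields $\cVdot \hookrightarrow \L^{r_{1},2}_{t}\L^{q_{1},2}_{\vphantom{t} x}$, and then H\"older's inequality in mixed Lorentz spaces (Fernandez) with secondary exponents $\infty,2,2$ summing to $1$ bounds the lower-order form, the bounded part under \eqref{eq:LorentzLorentzinfty} being absorbed exactly as in Section~\ref{sec:inhomogneous}. The only cosmetic deviation is that you apply the one-dimensional embedding in $t$ with values in $\L^{q_{1},2}_{x}$ (which is justified by the positivity of the Riesz potential kernel), whereas the paper keeps $\L^{2}_{x}$-values at that step and applies the spatial embedding first; both orderings work.
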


\begin{proof}  
Sobolev embeddings  are equivalent to  $\L^p-\L^q$ boundedness of Riesz potentials with $p<q$. However, it was observed by O'Neil \cite{ONeil} that such Riesz potentials also have $\L^{p,s}-\L^{q,s}$ boundedness for the same $p,q$ and all $1\le s\le \infty$. In particular, they are $\L^{p}-\L^{q,p}$ bounded as $\L^p=\L^{p,p}$. Thus, with the same relations between  $q,r$ and $\theta$ as in Lemma~ \ref{ref:embedding} but with different constants, 
\begin{equation*}
\label{eq:embedLorentz}
\|\varphi\|_{\L^{r,2}_{t}\L^{q,2}_{\vphantom{t} x}}  \le c(n,q) \|(-\Delta)^{(1-\theta) /2} \varphi\|_{\L^{r,2}_{t}\L^2_{\vphantom{t} x}}\le c(n,q) c(1,r)\|\D_{t}^{\theta/2}(-\Delta)^{(1-\theta) /2} \varphi\|_{\LL}
\end{equation*}
and the continuous inclusion for $\cVdot$ follows from \eqref{eq:embed}.

Now, we assume \eqref{eq:LorentzLorentz}. A modification of Lemma~\ref{lem:beta}, using H\"older's inequality in Lorentz spaces to guarantee that a product of three functions in $\L^{p_{i},s_{i}}$  belongs to $\L^1$ if $1 = \frac 1 {p_{1}} +\frac 1 {p_{2}}+\frac 1 {p_{3}}$ and $1 = \frac 1 {s_{1}} +\frac 1 {s_{2}}+ \frac 1 {s_{3}}$,  yields
$$
|\Angle{\beta u}{v}| \lesssim   \|u\|_{\cVdot}\|v\|_{\cVdot}.
$$
With this at hand, the boundedness of  $\cH$  from $\cVdot$ to its dual follows exactly as in  Proposition~\ref{prop:Lt}.  

Likewise, if we assume \eqref{eq:LorentzLorentzinfty}, then we proceed with the modifications as in Section~\ref{sec:inhomogneous}.
\end{proof}

{Assuming that    \eqref{eq:LorentzLorentz} holds for the compatible pair $(\tilde r_{1}, \tilde q_{1})$, one can define $\cH$ and develop the variational theory upon replacing in the definition of the space  $\Deldot^{r_{1},q_{1}}$, where $(r_{1},q_{1})$ is  the conjugate admissible pair, the mixed Lebesgue space $\L^{r_{1}}_{t}\L^{q_{1}}_{x}$  by the mixed Lorentz space $\L^{r_{1},2}_{t}\L^{q_{1},2}_{x}$.  With this precaution and these changes,  the estimates in Corollary~\ref{cor:mainreg} and the integral equalities in Lemma~\ref{lem:energy} hold. (When $(r,q)=(\infty,2)$, {there is no weakening of assumptions} and we keep working with the space $ \L^{1}_{t}\L^{2}_{x}$.)
We may proceed with the regularity Proposition~\ref{prop:regularityofDeldotr1q1solutions},  the uniqueness  Theorem~ \ref{thm:uniqueness}, the well-posedness Theorem~\ref{thm:Fg} with $g\in \L^{r',2}_{t}\L^{q',2}_{x}$ on the right-hand side,  and so on up until Theorem~\ref{thm:invertible}. It is only for Theorem~\ref{thm:causality} that we need a stronger assumption on the coefficients to guarantee causality, as we have used inequalities in the spirit of Gagliardo-Nirenberg.  It follows from Proposition~\ref{prop:GN} and H\"older inequalities that it is enough to  impose
\begin{equation}
\label{eq:LebesgueLorentz}
|\oa  |^2+|\ob  |^2+|a|\in \L^{\tilde r_{1}}_{t}\L^{\tilde q_{1},\infty}_{\vphantom{t} x}.
\end{equation}

One can also develop the corresponding inhomogeneous theory {with coefficients as in \eqref{eq:LorentzLorentzinfty}, working mainly under the Lorentz-Lorentz analogue of Assumption~$(\mathbf{D_\eps)}$. While this amounts to the same symbolic changes from Lebesgue to Lorentz spaces in $(\mathbf{D_\eps)}$ itself, the succeeding Remark~\ref{rem:Deps} has to be interpreted correctly: it says that by truncation a decomposition as in $(\mathbf{D_\eps)}$ for arbitrarily small  $\eps>0$ can be achieved starting from $|\oa  |^2+|\ob  |^2+|a|\in \L^{\tilde r_{1}, \tilde r_{2}}_{t}\L^{\tilde q_{1}, \tilde q_{2}}_{\vphantom{t} x}+\L^{\infty}_{t}\L^{\infty}_{x}$ with $1 \leq \tilde q_{2}, \tilde r_{2} < \infty$, but not when one of $\tilde q_{2}, \tilde r_{2}$ is infinite. Hence, the lower bounds assumption (A3)' becomes more interesting here.} In particular there is a statement corresponding to Theorem~\ref{thm:Cauchy} in which mixed Lebesgue norms are replaced with mixed Lebesgue-Lorentz norms on the lower order coefficients with the same pairs $(\tilde r_{1}, \tilde q_{1})$ and $$|\oa  |^2+|\ob  |^2+|a|\in \L^{\tilde r_{1}}_{t}\L^{\tilde q_{1},\infty}_{\vphantom{t} x}+\L^{\infty}_{t}\L^{\infty}_{x},$$ and in the equation the forcing term  $g$ can be taken in $ \L^{r',2}(0,T; \L^{q',2}_{\vphantom{t} x})$ when $(r,q)$ is admissible (but not when $(r,q)=(\infty,2)$, where we take $g\in  \L^{1}(0,T; \L^{2}_{x})$ as before).

All the direct consequences of this result also extend: Corollary~\ref{cor:adjoint}, Theorem~\ref{thm:ODE} and Theorem~\ref{thm:GUB}. In the latter theorem it depends on whether the local boundedness assumption is true for the particular $\Lt$ and its adjoint. Note that neither \cite{LSU} nor \cite{Ar68} consider coefficients in mixed Lebesgue-Lorentz spaces. Hence this extension is quite a new observation.

Let us give an example in the case $(\tilde r_{1},\tilde q_{1})=(\infty,n/2)$, when $n\ge 3$.   Consider  parabolic Schr\"odinger operators $$\cH=\partial_{t}-\Delta+ c(t,x)|x|^{-2}:\cVdot\to \cVdot'$$ with $c$ a complex-valued measurable and bounded function. One cannot use the assumption $\mathbf{(D_{\varepsilon})}$ here. But the classical Hardy inequality
 $$
 \int_{\R^n} \frac {|f(x)|^2}{|x|^2}\, \d x \le \bigg(\frac {2}{n-2}\bigg)^2 \int_{\R^n} |\nabla f(x)|^2\, \d x,
 $$
which follows from Hardy's one dimensional inequality \cite[Appendix~A]{St} using polar coordinates, allows one to apply Theorem~\ref{th:lb} when $\operatorname{ess\,inf} \Re c> -(\frac {n-2}{2})^2 \eqqcolon c_{n}$. Thus, $\cH$ is invertible and causal (for causality, $\Re c \ge   c_{n}$ works). One can therefore solve the Cauchy problem as above and obtain $\L^2$ off-diagonal Gaussian decay of its fundamental solution operator. In \cite{BG}, the slightly different but related question of  existence of a distributional non-negative solution to the Cauchy problem for $\partial_{t}-\Delta+ c|x|^{-2}$ with non-negative initial $\L^1$ or measure data  and $c$ a constant with $c\in [c_{n}, 0]$ is considered.

\subsection {Adding a skew-symmetric real $\BMO$ matrix to higher order coefficients}
\label{sec:BMO}
Motivated by fluid dynamics, it has become interesting to add to the usual elliptic matrix $A$  a skew-symmetric term with boundedness replaced by a BMO condition. Indeed, formally,
 pointwise lower ellipticity of the matrix $A$ does not change if one adds to it a real and skew-symmetric matrix $D(t,x)$  as 
$$
\Re \angle {D(t)\nabla u(t) }{\nabla u(t)}=0
$$
and, if $D(t,x)$ has finite $\BMO$ norm in the $x$-variable, uniformly for each $t$, then for $u,v\in \cVdot$,
$$
|\angle {D(t)\nabla u(t) }{\nabla v(t)}| \le C(n)\|D(t)\|_{\BMO_{x}}  \|\nabla u(t)\|_{\L^2_{x}}\|\nabla v(t)\|_{\L^2_{x}}
$$ 
using the $\BMO_{x}-\cH^1_{x}$ duality and compensated compactness~\cite{CLMS}.
Integrating this in time guarantees boundedness and ellipticity of the second order term in $\Lt $ if $A$ is changed to $A+D$ with $\|D\|_{\L^\infty_{t}\BMO_{x}}<\infty$.  We shall make this precise below.

All the results obtained up to this point extend with $A$ replaced by $A+D$ under this assumption on $D$. {Indeed, the extension only affects the second order term, which has been treated via bounds for the pairing $\angle{A \nabla u}{\nabla v}$ at each occurrence rather than concrete bounds on $A$, with one sole exception that we address next.}

The only subtle thing to handle is the proof of the $\L^2$ off-diagonal estimates  \eqref{eq:ODE} as in Theorem~\ref{thm:ODE} (with a less precise control on the constants $C, \omega, c_{0}$), {the difficulty being that {$D$} re-appears in lower order coefficients when using Davies'~exponential trick in \eqref{eq:conjugation}.} We first give rigorous definitions of the bracket terms to justify computations. 

We would like to  set
$$
\Angle {D\nabla u }{\nabla v} =\int_{\R}\angle {D(t)\nabla u(t) }{\nabla v(t)}\, \d t,
$$
but the inner term is usually not an honest Lebesgue integral for arbitrary $u,v\in \cVdot$. 

We introduce the set $\cE$ of functions in $\cVdot$ that are in $\cS(\R^{n+1})$  with bounded support in the $x$-variable, which is dense in $\cVdot$ (resp.\ $\cV$). Indeed, we know that $\cS(\R^{n+1})$ is dense in $\cVdot$ and from there, we can use smooth truncations. Consider $u,v\in \cE$. Let $Q$ be a cube containing their support. Set for $i,j\in \{1,\ldots, n\}^2$,  
 \begin{equation}
\label{eq:gij}
g_{i,j} (t,x) \coloneqq \partial_{x_{j}}u(t,x) \partial_{x_{i}}\overline v(t,x) -\partial_{x_{i}}u(t,x) \partial_{x_{j}}\overline v(t,x). 
\end{equation}
For each $t$, this is a bounded function with support in $Q$ and mean value zero. Hence, it is a constant multiple of an atom in $\cH^1_{x}$, the real Hardy space on $\R^n$, and the $\BMO_{x}-\cH^1_{x}$ duality is realized in this case as a Lebesgue integral 
$$
\angle {d_{i,j}(t)}{\overline{g_{i,j}}(t)}= \int_{\R^n} d_{i,j}(t,x)g_{i,j} (t,x)\, \d x.$$
As we know from \cite{CLMS} that 
$$
\|g_{i,j}(t)\|_{\cH^1_{x}} \le C(n) \|\nabla u(t)\|_{\L^2_{x}}\|\nabla v(t)\|_{\L^2_{x}},
$$
we deduce 
$$
 \frac 1 2 \int_{\R}  |\angle {d_{i,j}(t)}{\overline{g_{i,j}}(t)}|\, \d t \le C(n)\|D\|_{\L^\infty_{t}\BMO_{x}} \|\nabla u\|_{\LL}\|\nabla v\|_{\LL}.
 $$
Using the skew-symmetry of $D$, that is, $d_{i,j}=-d_{j,i}$, we can set
\begin{equation}
\label{eq:defDform}
\Angle {D\nabla u }{\nabla v}  \coloneqq \frac 1 2 \sum_{i,j} \int_{\R}  \angle {d_{i,j}(t)}{\overline{g_{i,j}}(t)}\, \d t
\end{equation}
and this form extends boundedly to $\cVdot\times \cVdot$. We now explain the necessary modifications. 

\begin{proof}[Proof of Theorem~\ref{thm:ODE}, $\BMO$-case]

To check the invertibility, it suffices as before  to look for lower bounds of 
$\e^{h}(\cH +\kappa )\e^{-h}u$.
Thus, we study again 
$\e^{h}\cH\e^{-h}$ with $h$ Lipschitz. We do not want to assume (qualitative) boundedness of $h$ this time. Hence, we first restrict the operator to $\cE$ but it extends to $\cV$ through the right-hand side of \eqref{eq:conjugation}. This allows us to take $h$ an affine real-valued function given by $h(x)=x\cdot \zeta+ c$, with $\zeta\in \R^n$ and $c\in \R$. It will be important that the gradient of $h$ is constant (as in \cite{QX, EH}). Thus, we compute
%$ \Angle {\e^{h}(\partial_{t}+\Lt +\kappa )\e^{-h}u}{v}$
 $ \Angle {\e^{h}(\cH +\kappa )\e^{-h}u}{v}$ with $u,v\in \cE$ and $h$ affine. 

\medskip

\noindent \emph{Step 1: New error estimate.} Compared to \eqref{eq:conjugation}, we get an extra term coming from the presence of $D$. A calculation yields, with $g_{i,j}$ defined in \eqref{eq:gij},
$$
\partial_{x_{j}}(\e^{-h}u) \partial_{x_{i}}(\e^{h}\overline {v}) -\partial_{x_{i}}(\e^{-h}u) \partial_{x_{j}}(\e^{h}\overline {v})= g_{i,j}+ \zeta_{i}\partial_{x_{j}}(u\overline{v})-\zeta_{j}\partial_{x_{i}}(u\overline{v}).
$$
 Next, we claim that for $f,g \in H^1_{x}$ and each $i\in \{1, \ldots, n\}$ the function $\partial_{x_{i}}(fg)$ belongs to $\cH^1_{x}$  with the estimate
$$
\|\partial_{x_{i}}(fg)\|_{\cH^1_{x}} \le C(n)(\|f\|_{\L^2_{x}}\|\nabla g\|_{\L^2_{x}}+ \|g\|_{\L^2_{x}}\|\nabla f\|_{\L^2_{x}}).
$$  
{For $f=g$ this is Proposition~3.2 in \cite{QX} and the argument applies \emph{mutadis mutandis} in the general case.} Moreover, if  $f,g$ are smooth with bounded support, then $\partial_{x_{i}}(fg)$ is a multiple of an atom in $\cH^{1}_{x}$, so that for any $b\in \BMO_{x}$,   $$\angle{b}{\partial_{x_{i}}(fg)}=\int_{\R^n}b(x) \overline{\partial_{x_{i}}(fg)(x)}\, dx,$$ 
and  the $\BMO_{x}-\cH^1_{x}$ duality gives us a bound 
$$|\angle{b}{\partial_{x_{i}}(fg)}| \leq C(n)\|b\|_{BMO_{x}}(\|f\|_{\L^2_{x}}\|\nabla g\|_{\L^2_{x}}+ \|g\|_{\L^2_{x}}\|\nabla f\|_{\L^2_{x}}).$$  Hence, for each fixed $t$, this applies to $f=u(t), g=\overline{v}(t)$ and, using again the skew-symmetry of $D$, we arrive at
$$ \Angle {D\nabla (\e^{-h}u) }{\nabla (\e^{h} {v})}= \Angle {D\nabla u }{\nabla v}+ \Angle {\beta_{D,\zeta}u}{v}$$ with 
$$
  \Angle {\beta_{D,\zeta}u}{v} \coloneqq  \sum_{i,j} \int_{\R}  \angle {d_{i,j}(t)}{\partial_{x_{j}}(\overline{u}v)(t) } \zeta_{i}\, \d t.
$$
Using the above estimate
and  Young's inequality, we see that for any $\varepsilon>0$, 
$$
|\Angle {\beta_{D,\zeta}u}{u}| \le \frac{C'_{n}|\zeta|^2\|D\|_{\L^\infty_{t}\BMO_{x}}^2}{\varepsilon} \|u\|_{\LL}^2+  \varepsilon \|\nabla u\|_{\LL}^2.
$$
{This is the required estimate for the additional error term in the presence of $D$.}

\medskip

\noindent \emph{Step 2: Off-diagonal estimate with affine perturbation.}
Now, it follows in the case (A3) that  if $\mathbf{(D_{\varepsilon_{0}})}$ holds for $\varepsilon_{0}$ small enough, then 
$\e^{h}(\cH +\kappa )\e^{-h}:\cV \to \cV'$ is invertible for $\kappa \ge 1+c_{0}(| \zeta|^2+P_{\infty}^2)$. In the case of lower bounds assumptions for $\Lt$, this is for $\kappa \ge c_{0}(1+| \zeta|^2)$. {Of course, $c_0$ now also depends on $\|D\|_{\L^\infty_{t} \BMO_{x}}$.} Moreover, in both cases, the operator norm of the inverse is bounded independently of $|\zeta|$. In conclusion, we obtain an estimate of the form
$$
\|\e^h\Gamma(t,s)\psi\|_{\L^2_{x}}\le C \e^{(\omega+c_{0}|\zeta|^2)(t-s)} \|\e^h\psi\|_{\L^2_{x}}
$$
for all $t>s$ with positive constants $C, \omega, c_{0}$. 

\medskip

\noindent \emph{Step 3: Proof of \eqref{eq:ODE}.} Let us first treat the case that $E,F$ are convex and compact sets with $d(E,F)^2>4n(t-s)$.
In this case, take  $e\in E, f\in F$  such that 
$|e-f|=d(E,F)$ and set 
\begin{align*}
 h(x)\coloneqq\frac{(f-x)\cdot (f-e)}{2c_{0}(t-s)} \quad \text{and} \quad \zeta\coloneqq\frac{e-f}{2c_{0}(t-s)}.
\end{align*}
Note that  $e$ is the orthogonal projection of $f$ onto $E$ and vice-versa. Hence, 
$$h(x)= \frac{|f-e|^2}{2c_{0}(t-s)} +\frac{(e-x)\cdot (f-e)}{2c_{0}(t-s)} \ge \frac{|f-e|^2}{2c_{0}(t-s)}$$ for $x\in E$ and $h(y)\le 0$ for $y\in F$, from which we obtain  \eqref{eq:ODE}. For the general situation where $E, F$ are arbitrary closed sets, we can assume  $d(E,F)^2>8 n (t-s)$; otherwise, we are done {with the uniform $\L^2_{x}$ bound for $\Gamma(t,s)$}. Let $Q_{k} \coloneqq [0,\sqrt{t-s}]^n +\{\sqrt{t-s}\, k\}$, $k\in \IZ^n$. Cover  $E$  with the cubes $Q_{k}$ that intersect $E$, and  $F$ with the cubes $Q_{\ell}$ that intersect $F$. We have $d(Q_{k},Q_{\ell})^2> 4 n (t-s)$. We apply the estimate just obtained for each pair $Q_{k},Q_{\ell}$ and sum in order to conclude (of course the constants change), using that the cubes form a partition of $\R^n$ up to a null set and simple discrete convolution inequalities.
\end{proof}

\begin{rem}
When $A$ is also a real matrix,  pointwise upper and lower bounds were obtained for the fundamental solution of the parabolic operator with pure second order term and matrix coefficient $A+D$ in \cite{QX}. Here, we allow complex $A$ and unbounded lower order terms and limit ourselves to an $\L^2-\L^2$ upper bound.  Some similar estimates are obtained for time-independent matrix coefficients of the form $A+D$ without lower order terms in \cite{EH}. {In principle, we could re-discover pointwise upper bounds from (the extension of) Theorem~\ref{thm:GUB}, were we able to verify the local boundedness property without resorting to itself \cite{QX}. This is yet another example that illustrates how the order of classical arguments is reversed in our work.}
\end{rem}

\subsection{Systems} The theory and its previous extensions do not change for  systems of $N$ equations, $N\ge 2$. The results are the same with pointwise ellipticity in the $x$-variable replaced by ellipticity in the G\aa rding sense (uniformly in $t$): The matrix $A(t)$ has entries being $N\times N$ matrices of bounded measurable coefficients in $(t,x)$ and 
$$
\Re \angle {A(t)\nabla \otimes  \mathbf{u}(t) }{\nabla \otimes  \mathbf{u} (t)} \ge \lambda \|\nabla \otimes  \mathbf{u}(t)\|_{\L^2_{x}}^2
$$
holds for all $t$. {Indeed, we have never used pointwise bounds and ellipticity on $A$ for means other than bounding $\angle{A \nabla u}{\nabla v}$ from above and below.}

If one wants to add a matrix of $\BMO$-type, it should be block diagonal, that is $D=(\delta_{\alpha,\beta}D^\alpha)_{1\le \alpha,\beta\le N}$, where $\delta_{\alpha,\beta}$ is the Kronecker symbol, with each $D^\alpha$ as in the previous section.

If the G\aa rding inequality comes with a negative $\L^2$ norm on $ \mathbf{u}(t)$, then one should apply the inhomogeneous theory. We leave details to the reader.

\section{Higher order problems on full space}
\label{sec:HO}

It is mainly a matter to fix algebraic notation as the analysis done for second order parabolic operators goes through almost \textit{verbatim} for higher order problems on full space. We give details of the setup and  sketch the main points, following faithfully what was done for second order problems. Given our omission of proofs, this section should be considered as an announcement of results, the verification of which is left to the interest readers. Results in this section also provide the generalization of the theory for second order elliptic parts when the compatible pairs are allowed to vary with the coefficients as mentionned earlier.

\subsection{The elliptic operator}
 \label{sec:hooperator}
The elliptic part $\Lt $ is now $2m$-th order, $m\ge 2$, given formally by
\begin{equation}
\label{eq:higherorderL}
\Lt u= \sum (-1)^{|\alpha|}\partial^\alpha (a_{\alpha,\beta}(t,x)\partial^\beta u),
\end{equation}
where the sum is taken over pairs $(\alpha,\beta) $ of multi-indices with $0\le |\alpha|, |\beta| \le m$ and $\partial^\alpha$ are partial derivatives in the $x$-variable of order $\alpha$. We have set  $|\alpha|=\alpha_{1}+\cdots +\alpha_{n}$ for $\alpha=(\alpha_{1},\ldots, \alpha_{n})$.

\subsection{Variational space}
 \label{sec:hovarspace}
For the homogeneous theory, the space $\cVdot$ becomes the space of tempered distributions $u$ having Fourier transforms  $(|\xi|^{2m}+|\tau|)^{-1/2}g$ for some (unique) $g\in \LL$, equipped with the norm $\|u\|_{\cVdot} \coloneqq (2\pi)^{-(n+1)/2} \|g\|_{\LL}$. As in the case of order $2$, this space realizes $\L^2_{t}\Hdot^m_{x}\cap \Hdot^{1/2}_{t}\L^2_{x}$ defined within tempered distributions modulo polynomials with norm 
$$\big(\|(-\Delta)^{m/2}u\|^2_{\LL}+ \|\D_{t}^{1/2}\! u\|_{\LL}^2\big)^{1/2}\sim
\sum_{|\alpha|=m} \|\partial^{\alpha}u\|_{\LL}+ \|\D_{t}^{1/2}\! u\|_{\LL}.$$

\subsection{Embeddings}
 \label{sec:hoembeddings} 
 
For an arbitrary collection $({\bf r,q})$ of pairs of exponents $(r^{\alpha,\beta}, q^{\alpha,\beta})$ in $[1,\infty]^2$ indexed by multi-indices $(\alpha,\beta) $ with $0\le |\alpha|, |\beta| \le m$,  we set
$$
\Deldot^{{\bf r,q}} \coloneqq \bigcap_{0\le |\alpha|,|\beta|\le m} \{u\in \cD'(\R^{n+1})\, :\, \partial^\alpha u \in \L_{t}^{r^{\alpha,\beta}}\L_{\vphantom{t} x}^{q^{\alpha,\beta}}\}.
$$
For each $\alpha$, there could be several mixed spaces involved to which $\partial^\alpha u$ belongs, parametrized by the multi-indices $\beta$.
If all pairs  of exponents belong to  $[1,\infty)^2$, then the dual space of  $\Deldot^{{\bf r,q}}$ in the duality extending the $\LL$ inner product can be identified with
\begin{equation}
\label{eq:dual}
(\Deldot^{{\bf r,q}})' \coloneqq \sum_{0\le |\alpha|,|\beta|\le m}  \partial^\alpha \L_{t}^{(r^{\alpha,\beta})'}\L^{(q^{\alpha,\beta})'}_{\vphantom{t} x} = \sum_{0\le |\alpha|,|\beta|\le m}   \L_{t}^{(r^{\alpha,\beta})'}\big(\partial^\alpha\L_{\vphantom{t}  x}^{(q^{\alpha,\beta})'}\big) =: \dot\Sigma^{{\bf r',q'}},
\end{equation}
with the same interpretation as in the case $m=1$ in Section~\ref{sec:preliminaries} and $({\bf r',q'})$ is the collection of pairs of H\"older conjugates obtained from $({\bf r,q})$. When all pairs of exponents in $({\bf r,q})$ belong to   $(1,\infty]^2$, then the dual space of  $\Sigma^{{\bf r',q'}}$  can be identified with $\Deldot^{{\bf r,q}}$ for the same duality.  In particular, $\Deldot^{{\bf r,q}}$ is reflexive when all pairs belong to $(1,\infty)^2$.

Sobolev embeddings for partial derivatives $\partial^\alpha$ in the spirit of Lemma~\ref{ref:embedding} are as follows: If $u\in \cVdot$ and $0\le |\alpha|\le m$, then $\partial^\alpha u\in \L^{r}_{t}\L^{q}_{\vphantom{t} x}$ with $\|\partial^\alpha u \|_{\L^{r}_{t}\L^{q}_{\vphantom{t} x}} \lesssim \|u\|_{\cVdot}$ provided  
\begin{equation}
\label{eq:highorderadmissiblepartialalpha}
(r,q) \in [2,\infty)^2, \quad \frac 1{r}+ \frac n{2mq}=  \frac {n+2|\alpha|}{4m}. 
\end{equation}
We say that  pairs $(r,q)$ with the condition \eqref{eq:highorderadmissiblepartialalpha} are {\bf admissible for 
$\partial^\alpha$}.  When $|\alpha|=m$,  the only admissible pair  for  $\partial^\alpha$ is $(2,2)$. If $|\alpha|<m$, then there is more flexibility. A collection    $({\bf r,q})$    of pairs  
$(r^{\alpha,\beta}, q^{\alpha,\beta})$ indexed by multi-indices $(\alpha,\beta)$ with  $0\le |\alpha|,|\beta| \le m$ is {\bf admissible} (resp.\ {\bf super admissible})  if  each pair $(r^{\alpha,\beta}, q^{\alpha,\beta})$ is admissible for $\partial^\alpha$ (resp.\ admissible for $\partial^\alpha$ when $\alpha\ne 0$ and admissible for $\partial^\alpha$  or equal to $(\infty, 2)$ when $\alpha =0$). In particular, the continuous inclusion $\cVdot \hookrightarrow \Deldot^{{\bf r,q}}$ holds for all admissible collections.

\subsection{Variational approach}
 \label{sec:hoapproach}
 
When $0\le |\alpha|,|\beta| \le m$,  critical mixed Lebesgue  spaces  $\L_{t}^{r({\alpha,\beta})}\L^{q({\alpha,\beta})}_{\vphantom{t}  x}$ for  the coefficients $a_{\alpha,\beta}$ are given by the relations
\begin{equation}
\label{eq:highordercompatible}
(r({\alpha,\beta}), q({\alpha,\beta}))\in (1,\infty]^2, \quad  \frac 1{r({\alpha,\beta})}+ \frac n{2mq({\alpha,\beta})}= 1- \frac {|\alpha|+|\beta|}{2m}. 
\end{equation}
We say that $(r({\alpha,\beta}), q({\alpha,\beta}))$ is a compatible pair for $(\alpha,\beta)$. If such a pair  is given, any choice  of  admissible pairs $
(r^{\alpha,\beta}, q^{\alpha,\beta}) $  and $
(r_{\beta, \alpha}, q_{\beta, \alpha}) $ for  $\partial^\alpha$ and
$\partial^\beta$,  respectively, yields
\begin{equation}
\label{eq:higherorderHolder}
\|a_{\alpha,\beta}\,\partial^\beta u\, \partial^\alpha v\|_{ \L^1_{t}\L^1_{x}} \le  \|a_{\alpha,\beta}\|_{\L_{t}^{r({\alpha,\beta})}\L_{\vphantom{t}  x}^{q({\alpha,\beta})}} \|u\|_{\L_{t}^{r_{\beta, \alpha}}\L_{\vphantom{t}  x}^{q_{\beta, \alpha}}} \|v\|_{\L_{t}^{r^{\alpha,\beta}}\L_{\vphantom{t}  x}^{q^{\alpha,\beta}}}
\end{equation}
provided that
\begin{equation}
\label{eq:highorderadmissiblecompatible}
\frac 1{q({\alpha,\beta})}+ \frac 1{q_{\beta, \alpha}} + \frac 1{q^{\alpha,\beta}}=1 \quad \& \quad \ \frac 1{r({\alpha,\beta})}+ \frac 1{r_{\beta, \alpha}}+ \frac 1{r^{\alpha,\beta}}=1. 
\end{equation}
Note that this covers the higher order derivatives when $|\alpha|=|\beta|=m$, where $a_{\alpha,\beta}$ are bounded and the admissible pairs  for  $\partial^\alpha$ and $\partial^\beta$ are $(2,2)$. If $|\alpha|+|\beta|<2m$, then we have several choices.

We come to the definition of $\cH$ on $\cVdot$. First, we fix {once and for all} a collection
$({\bf \tilde r_{1},\tilde q_{1}})$  of compatible pairs $(r({\alpha,\beta}), q({\alpha,\beta}))$ for $(\alpha,\beta)$ with   $0\le |\alpha|,|\beta| \le m$.  We assume $a_{\alpha,\beta}\in \L_{t}^{r({\alpha,\beta})}\L^{q({\alpha,\beta})}_{\vphantom{t}  x}$ and set\footnote{With this parametrization, when $m=1$, we have considered the compatible collection consisting of $(\tilde r_{1},\tilde q_{1})$ when $|\alpha|=|\beta|=0$,  $(2\tilde r_{1}, 2 \tilde q_{1})$ when $|\alpha| +|\beta|=1$ and $(\infty,\infty)$ when $ |\alpha|=|\beta|=1$.} 
$$
\Lambda\coloneqq \sup_{|\alpha|=|\beta|=m}\|a_{\alpha,\beta}\|_{\infty}\quad \& \quad P_{\bf \tilde r_{1}, \tilde q_{1}} \coloneqq \sum_{ |\alpha|+|\beta|<2 m} \|a_{\alpha,\beta}\|_{\L_{t}^{r({\alpha,\beta})}\L_{\vphantom{t}  x}^{q({\alpha,\beta})}}.
$$
Secondly, we need to work with two collections of admissible pairs, one for $\partial^\alpha$ denoted by  $({\bf r_{1},q_{1}})=
(r^{\alpha,\beta}, q^{\alpha,\beta})_{\alpha,\beta} $, the other one for $\partial^\beta$ denoted by $({\bf \bar r_{1},\bar q_{1}})=(r_{\beta, \alpha}, q_{\beta, \alpha})$, {both} satisfying in addition 
\eqref{eq:highorderadmissiblecompatible}.\footnote{With this parametrization, there is no notion of {unambiguously} defined conjugate collection  associated to the compatible collection for the coefficients. Besides, there are many  possible choices of collections $({\bf r_{1} , q_{1}})$ and $({\bf \bar r_{1},\bar q_{1}})$ and we fix one once and for all.} 
 We define accordingly  the space $\Deldot^{{\bf r_{1},q_{1}}}$ as above, and, taking into account the symmetric roles of multi-indices $\alpha,\beta$, we set
$$
\Deldot^{{\bf \bar r_{1},\bar q_{1}}} \coloneqq \bigcap_{0\le |\alpha|,|\beta|\le m} \{u\in \cD'(\R^{n+1})\, :\, \partial^\beta u \in \L_{t}^{r_{\beta,\alpha}}\L_{\vphantom{t}  x}^{q_{\beta,\alpha}}\}.
$$
{With this notation and using \eqref{eq:higherorderHolder} and \eqref{eq:highorderadmissiblecompatible}, we see that $\Lt$ in \eqref{eq:higherorderL} satisfies}
$$|\Angle {\Lt u} {v}|
  \le (\Lambda  + P_{\bf \tilde r_{1}, \tilde q_{1}}) \|u\|_{\Deldot^{{\bf \bar r_{1},\bar q_{1}}}}\|v\|_{\Deldot^{{\bf r_{1},q_{1}}}},
 $$ 
so that $\Lt$ acts boundedly from $\Deldot^{{\bf \bar r_{1},\bar q_{1}}}$ into $(\Deldot^{{\bf r_{1},q_{1}}})'$. From the continuous inclusions $\cVdot \hookrightarrow \Deldot^{{\bf \bar r_{1},\bar q_{1}}}$ and $(\Deldot^{{\bf r_{1},q_{1}}})' \hookrightarrow \cVdot'$ for admissible collections, we obtain that $\cH=\partial_{t}+\Lt: \cVdot \to \cVdot'$ is well-defined and bounded. 

\subsection{Main regularity estimates}
 \label{sec:homainreg}
   
We can now state the main regularity lemma. We set $\nabla^mu=(\partial^\alpha u)_{ |\alpha|= m}$ for simplicity. 

\begin{lem}
\label{lem:mainreghigh} Let $u\in \cD'(\R^{n+1})$. Assume $\nabla^m u\in \LL$ and $\partial_{t}u\in\dot\Sigma^{{\bf r',q'}}$, where $({\bf r,q})$ is  a super admissible collection. Then, there is a polynomial $P$ in the $x$-variable with degree not exceeding $m-1$, such that 
$u-P\in \C_{0}(\L^2_{x})$ and   
$$
 \sup_{t\in \R}\|u(t)-P\|_{\L^2_{x}} \le C(\|\nabla^m u\|_{\LL}+ \|\partial_{t}u\|_{\dot\Sigma^{\bf r', q'} })
$$
with some constant $C$ independent of $u$ and $P$. Moreover, if the collection $({\bf r,q})$ is admissible, then $u-P\in \cVdot$ with the same estimate on $\|u-P\|_{\cVdot}$.
\end{lem}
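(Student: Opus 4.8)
The strategy mirrors exactly the chain of arguments in Section~\ref{sec:mainregularityestimates} for the second order case, replacing $-\Delta$ by the constant-coefficient polyharmonic operator $(-\Delta)^m$ and keeping track of the polynomial ambiguity of order $m-1$ instead of constants. First I would establish the uniqueness statement that replaces Lemma~\ref{lem:uniqueness}: if $w\in\cD'(\R^{n+1})$ solves $\partial_t w+(-\Delta)^m w=0$ with $\nabla^m w\in\LL$, then $w$ is a polynomial in $x$ of degree at most $m-1$. The proof is the same Fourier-analytic argument: for each multi-index $\alpha$ with $|\alpha|=m$, the derivative $w_\alpha=\partial^\alpha w$ lies in $\LL$, solves the same equation, hence is annihilated by the automorphism $\partial_t+(-1)^m(-\Delta)^m$ (whose symbol $i\tau-(-1)^m|\xi|^{2m}$ vanishes only at the origin) on $\cS_0(\R^{n+1})$, so $w_\alpha$ is a polynomial, and being in $\LL$ it must vanish. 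Thus all top-order derivatives of $w$ vanish, so $w$ is a polynomial of degree $\le m-1$ in $x$; the equation then forces $\partial_t w=0$.

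Second, I would prove the analogue of Lemma~\ref{lem:maxreg}: for $w$ in the purely $\L^2$-based homogeneous Sobolev space $\Hdot_t^{-\theta/2}\Hdot_x^{m(\theta-1)}$ (the natural $2m$-th order substitute for $\Htheta$, using the macro \verb|\Hmtheta|), there is a solution $v\in\C_0(\L^2_x)\cap\cVdot$ of $\partial_t v+(-\Delta)^m v=w$ with $\sup_t\|v(t)\|_{\L^2_x}+\|v\|_{\cVdot}\lesssim\|w\|$. The Duhamel formula now uses the semigroup $\e^{-t(-\Delta)^m}$, which is a contraction on $\L^2_x$ with the same decay properties, and the explicit computation of $\|M_\theta h\|_{\LL}$ for $h(t,x)=1_{(-\infty,0)}(t)(\e^{t(-\Delta)^m}\varphi)(x)$ goes through verbatim after the change of variable $\tau=\sigma|\xi|^{2m}$, producing the same finite constant $c(\theta)$ because $\int|\sigma|^\theta/(1+\sigma^2)\,\d\sigma<\infty$ for $\theta\in[0,1)$. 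Steps~2--4 (continuity in time, the distributional identity, membership in $\cVdot$ via the bounded Fourier multiplier $m=(|\tau|+|\xi|^{2m})^{1/2}(i\tau+|\xi|^{2m})^{-1}|\tau|^{\theta/2}|\xi|^{m(1-\theta)}$) are unchanged. One also needs the higher-order analogue of Proposition~\ref{prop:maxregL1L2} to cover the $(\infty,2)$ endpoint, obtained the same way, together with the higher-order embedding $\dot\Sigma^{{\bf r',q'}}\hookrightarrow\Hmtheta+\L^1_t\L^2_x$ which follows from the Sobolev embeddings \eqref{eq:highorderadmissiblepartialalpha} dualized as in Lemma~\ref{lem:H-theta}; here the super admissible collection guarantees that each summand $\partial^\alpha\L_t^{(r^{\alpha,\beta})'}\L_x^{(q^{\alpha,\beta})'}$ embeds into the right space.

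Third, with these tools in hand the corollary follows as in Corollaries~\ref{cor:maxreg}--\ref{cor:mainreg}: given $u$ with $\nabla^m u\in\LL$ and $\partial_t u\in\dot\Sigma^{{\bf r',q'}}$, one has $\partial_t u+(-\Delta)^m u\in\Hmtheta+\L^1_t\L^2_x$ (summing over $\theta$'s for the various pairs, using Remark~\ref{rem:mainreg2}(ii)), solves this with the $v$ just constructed, and by the polyharmonic uniqueness statement $u-v=:P$ is a polynomial of degree $\le m-1$ in $x$; then $u-P=v\in\C_0(\L^2_x)$ with the stated bound, and $u-P\in\cVdot$ when the collection is admissible (not merely super admissible), because then $v\in\cVdot$ by construction. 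The main obstacle is bookkeeping rather than conceptual: one must verify that the super admissible collection is exactly the hypothesis needed for the embedding $\dot\Sigma^{{\bf r',q'}}$ into a finite sum of spaces $\Hmtheta$ plus $\L^1_t\L^2_x$, i.e.\ that for each admissible pair $(r,q)$ for $\partial^\alpha$ one can solve $\theta=1-\frac2r\in[0,1)$ and match the spatial exponent, and that the constant in the polyharmonic heat estimate is uniform; these are all routine once the algebra of exponents \eqref{eq:highorderadmissiblepartialalpha}--\eqref{eq:highorderadmissiblecompatible} is written out. The degree bound $m-1$ on $P$ is forced because only derivatives of order exactly $m$ are assumed to lie in $\LL$.
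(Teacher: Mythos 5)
Your proposal follows essentially the same route the paper indicates for this lemma: replace $-\Delta$ by $(-\Delta)^m$ throughout Section~\ref{sec:mainregularityestimates}, prove the polyharmonic uniqueness statement (solutions with $\nabla^m u\in\LL$ are polynomials in $x$ of degree at most $m-1$), construct the solution via Duhamel with the semigroup of $(-\Delta)^m$ in the spaces $\Hmtheta$, and use that each summand $\L_{t}^{(r^{\alpha,\beta})'}\big(\partial^\alpha\L_{\vphantom{t}x}^{(q^{\alpha,\beta})'}\big)$ embeds into some $\Hmtheta$ with $\theta=1-\tfrac{2}{r^{\alpha,\beta}}\in[0,1)$ (plus the $\L^1_t\L^2_x$ analogue of Proposition~\ref{prop:maxregL1L2} for the $(\infty,2)$ component allowed by super admissibility), and your exponent bookkeeping is correct. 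The only blemish is the sign in the constant-coefficient operator used in the uniqueness step (the relevant adjoint is $-\partial_t+(-\Delta)^m$ with symbol $-i\tau+|\xi|^{2m}$), which is immaterial since any of these symbols vanishes only at the origin, so the argument stands as written.
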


The proofs are similar to that of Section~\ref{sec:mainregularityestimates}, replacing $-\Delta$ by $(-\Delta)^m$: for example, one uses
$$\Hmtheta \coloneqq \{\D_{t}^{\theta/2}(-\Delta)^{m(1-\theta) /2} g \, : \,  g\in \LL\}$$ as each $\L_{t}^{(r^{\alpha,\beta})'}\big(\partial^\alpha\L_{\vphantom{t}  x}^{(q^{\alpha,\beta})'}\big)$ embeds into one $\Hmtheta$ for some $\theta\in [0,1)$ when $(r^{\alpha,\beta},q^{\alpha,\beta})$ is an admissible pair for $\partial^\alpha$. 

The integral equalities of Section~\ref{sec:integralequalities} are also proved similarly.

\subsection{The resulting theory}
 \label{sec:hotheory}

The invertibility of $\cH$ is again enough to develop the uniqueness and existence  of $\Deldot^{{\bf  \bar r_{1}, \bar q_{1}}}$-solutions and  to produce Green operators in order to obtain representations. For example, the uniqueness statement    corresponding to Theorem~\ref{thm:uniqueness} becomes that whenever $\cH$ is invertible, then any  $u\in\Deldot^{{\bf  \bar r_{1}, \bar q_{1}}}$  such that  $\partial_{t}u+\Lt u=0$ vanishes. 

The invertibility for $\cH$ can be checked provided there is a G\aa rding inequality in the spirit of \eqref{eq:Aelliptic} for the leading coefficients, that is,
 \begin{equation}
 \label{eq:hogarding}
 \Re \sum_{ |\alpha|=|\beta|= m} \int_{\R} \angle{a_{\alpha,\beta}(t)\partial^\beta u(t)}{\partial^\alpha u(t)}\, \d t \ge \lambda  \|\nabla^m u\|_{\LL}^2,
\end{equation}
and for the lower order coefficients, smallness  of $P_{\bf \tilde r_{1}, \tilde q_{1}}$ is needed. Alternatively, invertibility can also follow from lower bounds on $\Lt$ as in Theorem~\ref{th:lb}.

If \eqref{eq:hogarding} holds and the leading part of $\cH$ is a pure $2m$-order operator, then one can work with the uniqueness class of  $\L^2_{t}\Hdot^{m}_{\vphantom{t} x}$-solutions, which is defined analogously to Section~\ref{sec:puresecondorder}. 

If the G\aa rding inequality comes with a negative $\LL$ norm on $u$, or $P_{\bf \tilde r_{1}, \tilde q_{1}}$ is not small enough, or  bounded coefficients are added to the lower order coefficients while $P_{\bf \tilde r_{1}, \tilde q_{1}}$ remains small, or again that a lower bound is assumed on $\Lt$, then one uses inhomogeneous spaces to prove invertibility of $\cH+\kappa:\cV \to \cV'$ for large enough $\kappa$. 

Using the improvement of \eqref{eq:higherorderHolder} with the mixed Lorentz spaces $\L_{t}^{r({\alpha,\beta}),\infty}\L_{\vphantom{t}  x}^{q({\alpha,\beta}),\infty}$ replacing the mixed Lebesgue spaces $\L_{t}^{r({\alpha,\beta})}\L_{\vphantom{t}  x}^{q({\alpha,\beta})}$ for the lower order coefficients is possible and  $P_{\bf \tilde r_{1},\tilde q_{1}}$ is modified accordingly. This covers, for example, power weights $c(t,x)|x|^{-n/q(\alpha,\beta)}$ with $c$ bounded above and below, when $r(\alpha,\beta)=\infty$. For forcing terms and solutions, the mixed Lorentz spaces $\L_{t}^{r,2}\L_{\vphantom{t}  x}^{q,2}$ may replace the mixed Lebesgue spaces $\L_{t}^{r}\L_{\vphantom{t}  x}^{q}$ with the same collections of pairs.

The proof of causality uses  a variant of Gagliardo--Nirenberg inequalities and requires mixed Lebesgue-Lorentz norms. A quick proof of this variant can be found in  Proposition~\ref{prop:GN}. 

The Cauchy problem can be stated and proved in a similar fashion. The fundamental solution operator can be identified with exponentially weighted Green operators as before. Under the same assumptions guaranteeing invertibility and causality of $\cH+\kappa$, the fundamental solution operator enjoys $\L^2$ off-diagonal estimates.   Lipschitz bounded functions of the $x$-variable are replaced by the regular functions  considered by Davies in \cite{Da1} for the case of time-independent parabolic operators with bounded lower terms.  This is more complicated here, because we take unbounded coefficients. But we can obtain lower bounds of perturbed operators 
$\e^h (\cH+\kappa)\e^{-h}$
using successive and tedious decompositions of the perturbed coefficients as in the condition $\mathbf{(D_{\varepsilon})}$, where $\kappa$ is chosen on the order of $c+c\|\nabla h\|_{\infty}^{2m}$ and optimization in $h$ gives  exponential decay in  $(d(E,F)^{2m}/|t-s|)^{1/(2m-1)}$. 

Extensions to systems work without difficulty.

\section{Second order problems with lateral boundary conditions}
\label{sec:lateral}

In this short section we describe an extension of our theory to second order parabolic problems on cylinders with lateral boundary conditions. As the previous section, {this should be considered an announcement of results}. Working out the details along our sketch and extending the results to  systems is again left to interested readers. Adaptation to higher order problems is likely to hold but would require further work. 

\subsection{The geometric setup}
 \label{sec:bcsetup}

We work on $\R\times \Omega$, where $\Omega$ is an open set in $\R^n$, and encode lateral boundary conditions through the choice of a variational space $V$ with 
$$
\W^{1,2}_{0}(\Omega) \subset V \subset \W^{1,2}(\Omega),
$$
equipped with the Hilbertian norm 
$$
\|\psi\|_{V}\coloneqq \big(\|\psi\|_{\L^2(\Omega)}^2+ \|\nabla \psi\|_{\L^2(\Omega)}^2\big)^{1/2}. 
$$
The cases $V = \W^{1,2}_{0}(\Omega)$ and $V = \W^{1,2}(\Omega)$ correspond to (pure) lateral Dirichlet and Neumann boundary conditions. Spaces in between can be used to model for instance a mix of the two.

The only geometric assumption that we make on $\Omega$ are (fractional) Sobolev embeddings for $V$. We write $[\cdot\,,\cdot]_\theta$ for the complex interpolation bracket, see for example Section~1.9 in~\cite{Tr}.

\medskip

{
\paragraph{\bf Assumption $\mathbf{(V)}$}
We assume that there exists an \emph{embedding dimension} $d \in [1,\infty)$ with the following property: For all $\theta \in [0,1]$ and $2 \leq q < \infty$ such that $\frac{1}{2} - \frac{1-\theta}{d} = \frac{1}{q}$, we have 
\begin{equation}
\label{eq:Sobolev_frac}
	[\L^2(\Omega), V]_{1-\theta} \hookrightarrow \L^q(\Omega)
\end{equation}
with continuous inclusion.

%\medskip

\begin{rem}
 If $\mathbf{(V)}$ holds for one choice of $d$, then it holds for all larger choices. Hence, it will be advantageous to take $d$ as small as possible. The primary example we have in mind  is { when} $\theta = 0$ is allowed above (hence $d >2$) and therefore $V$ itself satisfies the Sobolev embedding
\begin{align}
	\label{eq:Sobolev}
	V \hookrightarrow \L^{2d/(d-2)}(\Omega).
\end{align}
In this case, the other embeddings required in $\mathbf{(V)}$ follow by complex interpolation. However, already for $\Omega = \R^2$ the optimal choice is $d=2$ and by fractional Sobolev embeddings we have indeed $\mathbf{(V)}$ with $d=2$ and that \eqref{eq:Sobolev_frac} is satisfied when $\theta\in (0,1]$, even though we do not have \eqref{eq:Sobolev}. In ambient dimension $n=1$ {and when $\Omega$ is an interval}, $\mathbf{(V)}$ holds with embedding dimension  $d=1$ no matter what the boundary conditions are and \eqref{eq:Sobolev_frac} is satisfied in the limited range $\theta \in (\frac 12, 1]$ due to the constraint $2\le q<\infty$.
\end{rem}

\begin{rem}
Testing \eqref{eq:Sobolev_frac} with cut-off functions $\psi$ for arbitrarily small balls contained in $\Omega$, reveals that $d$ cannot be smaller than the ambient dimension $n$. In principle, $d$ can be  larger than $n$. When $V=\W^{1,2}_{0}(\Omega)$ or when $\Omega$ is sufficiently regular, the value $d=n$ is obtained. For a discussion of irregular sets that satisfy $\mathbf{(V)}$, we refer to the introduction of \cite{Be} or \cite[Ch.~4]{AF} for the case $d>n$ and to \cite[Sec.~3]{Eg} for mixed Dirichlet-Neumann boundary conditions.
\end{rem}
}

\subsection{Variational space}
 \label{sec:bcvarspace}
The variational space is now $\cV \coloneqq  \L^2_{t}V \cap \H^{1/2}_{t}\L^2_{x}$, equipped with the Hilbertian norm $\|u\|_{\cV}$ given by
$$
\|u\|_{\cV}\coloneqq \big(\|u\|_{\LL}^2+ \|\nabla u\|_{\LL}^2+ \|\D_{t}^{1/2}\!u\|^2_{\LL}\, \big)^{1/2},
$$
where in this section we use the notation $\L^p_{x} \coloneqq \L^p(\Omega)$. {Let $-\Delta_{V}$ be the positive self-adjoint operator built from the sesquilinear form $(\psi, \tpsi) \mapsto \angle{\nabla \psi}{\nabla \tpsi}$ on $V\times V$. We let $S=(1-\Delta_{V})^{1/2}$, so that by Kato's second representation theorem~\cite{Kato} the domain of $S$ is equal to $V$ with $\|S\psi\|_{\L^2_{x}}= \|\psi\|_{V}$ for all $\psi\in V$. It is also known that the domains of the  powers $S^\alpha$, $\alpha\in \R$,  interpolate by the complex method~\cite{AMcN}.

\subsection{Embeddings}
 \label{sec:bcembeddings}	
We begin by developing the theory along the lines of Section~\ref{sec: full space}. As the reader may have already observed, we have used the full strength of distribution theory only in the $t$-variable, whereas in the $x$-variable distributions and test functions have mostly appeared for the sake of simple arguments but they could have been replaced by spectral theory for the Laplacian and functions in less regular spaces such as $\Deldot^{r,q}$. This is our general guideline.

Our first task is to identify the pairs $(r,q)$ for which we have the embedding
\begin{equation}
\label{eq:embedding}
\cV \hookrightarrow \Del^{r, q}: = \L^2_{t}V \cap \L^{r}_{t}\L^{q}_{\vphantom{t} x},
\end{equation}
where $\Del^{r, q}$ is equipped with the  norm 
$
\|u\|_{\Del^{r, q}}: =\|u\|_{\L^2_{t}V}
+ \|u\|_{\L^{r}_{t}\L^{q}_{\vphantom{t} x}}.
$
We set $\Sigma^{r',q'}= \L^2_{t}V'+ \L^{r'}_{t}\L^{q'}_{\vphantom{t} x}$ with the usual infimum norm.

\begin{lem}
\label{ref:embeddingV}
Under Assumption~$\mathbf{(V)}$, the embedding \eqref{eq:embedding}, and by duality $\Sigma^{r',q'} \hookrightarrow \cV'$, hold if $\frac 1 {r}+\frac {d}{2q}= \frac d4$ with $2\le r,q < \infty$. 
\end{lem}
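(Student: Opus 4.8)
The strategy is to mimic the proof of Lemma~\ref{ref:embedding} in the full-space setting, using the abstract operator $S = (1-\Delta_V)^{1/2}$ in place of $(1-\Delta)^{1/2}$ and replacing the exact Fourier-multiplier convexity inequality by a spectral-calculus version. By density it suffices to establish the inclusion $\cV \hookrightarrow \L^r_t\L^q_x$ on a suitable dense subspace (e.g.\ finite linear combinations $\sum_j f_j \otimes \psi_j$ with $f_j \in \cS(\R)$ and $\psi_j$ in the domain of every power of $S$); the remaining part $\|u\|_{\L^2_t V}$ of the $\Del^{r,q}$-norm is already controlled by $\|u\|_{\cV}$ by definition. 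Since $-\Delta_V$ is nonnegative self-adjoint, $1-\Delta_V \ge 1$ and we have the operational inequality, valid on the domain of $S$,
$$
\|\D_t^{\theta/2} S^{2(1-\theta)} u\|_{\LL} \le \theta \|\D_t^{1/2} u\|_{\LL} + (1-\theta)\|S^2 u\|_{\LL} \le \|u\|_{\cV} \quad (\theta \in [0,1]),
$$
obtained by taking the Fourier transform in $t$, using the spectral resolution of $-\Delta_V$ in $x$, and applying the pointwise convexity bound $\lambda^\theta \sigma^{2(1-\theta)} \le \theta \lambda + (1-\theta)\sigma^2$ for $\lambda = |\tau|$, $\sigma \ge 1$ (so that $\sigma^2 \ge 1 \ge 1$, and $\|S^2 u\|_{\LL}^2 = \|(1-\Delta_V)u\|_{\LL}^2 \le \|u\|_{\cV}^2$ after noting $\|(1-\Delta_V)u\|_{\LL} \le \|u\|_{\LL} + \|\nabla u\|_{?}$\,—\,here one must be slightly careful, so see below).

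Next I would chain two embeddings as in \eqref{eq:embed}. Spatially, Assumption~$\mathbf{(V)}$ gives $[\L^2(\Omega),V]_{1-\theta} \hookrightarrow \L^q(\Omega)$ when $\frac12 - \frac{1-\theta}{d} = \frac1q$ and $2 \le q < \infty$; combined with the complex-interpolation identification of the domains of the powers $S^\alpha$ (via \cite{AMcN}, as recalled in the text), this yields $\|\psi\|_{\L^q_x} \le c\, \|S^{2(1-\theta)}\psi\|_{\L^2_x}$ for $\psi$ in the domain of $S^{2(1-\theta)}$, because $\operatorname{dom}(S^{2(1-\theta)}) = [\L^2_x, \operatorname{dom}(S^2)]_{1-\theta}$ and $\operatorname{dom}(S^2) \subset V = \operatorname{dom}(S)$ with continuous inclusion, while $[\L^2_x, V]_{1-\theta} \hookrightarrow \L^q_x$. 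Temporally, the scalar (homogeneous) Sobolev embedding on $\R$ in its vector-valued form gives $\|\varphi\|_{\L^r_t\L^2_x} \le c(r)\|\D_t^{\theta/2}\varphi\|_{\L^2_t\L^2_x}$ exactly when $\frac{\theta}{2} - \frac12 = -\frac1r$, i.e.\ $\theta = 1 - \frac2r$, and $2 \le r < \infty$. Applying the spatial embedding inside the $\L^r_t$ norm and then the temporal one, with $\varphi = S^{2(1-\theta)}u$, produces
$$
\|u\|_{\L^r_t\L^q_x} \le c\,\|S^{2(1-\theta)}u\|_{\L^r_t\L^2_x} \le c\,\|\D_t^{\theta/2} S^{2(1-\theta)}u\|_{\LL} \le c\,\|u\|_{\cV}.
$$
Solving the two constraints for $\theta \in [0,1)$: eliminating $\theta$ from $\frac12 - \frac{1-\theta}{d} = \frac1q$ and $\theta = 1 - \frac2r$ gives $\frac1r + \frac{d}{2q} = \frac{d}{4}$, which is exactly the stated condition, and the ranges $2 \le r < \infty$, $2 \le q < \infty$ are respected. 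The duality statement $\Sigma^{r',q'} \hookrightarrow \cV'$ is then immediate by taking adjoints of the bounded inclusion $\cV \hookrightarrow \Del^{r,q} = \L^2_t V \cap \L^r_t\L^q_x$, using $(\Del^{r,q})' = \L^2_t V' + \L^{r'}_t\L^{q'}_x = \Sigma^{r',q'}$ (realized through the map $u \mapsto (u,\nabla u)$ into $\L^r_t\L^q_x \times \L^2_t\L^2_x$, exactly as in Section~\ref{sec:preliminaries}).

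\textbf{Main obstacle.} The delicate point is not the $t$-variable — that part is verbatim the same scalar Sobolev embedding on $\R$ used before — but rather making the spectral-calculus manipulations with $S = (1-\Delta_V)^{1/2}$ rigorous, in particular: (i) identifying $\operatorname{dom}(S^{2(1-\theta)})$ with the complex interpolation space $[\L^2_x, \operatorname{dom}(S^2)]_{1-\theta}$ and then relating this to $[\L^2_x, V]_{1-\theta}$ (one has $V = \operatorname{dom}(S)$ with equal norms by Kato's second representation theorem, and $\operatorname{dom}(S^2) \hookrightarrow V = \operatorname{dom}(S) \hookrightarrow \L^2_x$, so $[\L^2_x,\operatorname{dom}(S^2)]_{1-\theta} \hookrightarrow [\L^2_x, V]_{2(1-\theta)}$ or similar — the precise reindexing of interpolation parameters between $S$ and $S^2$ needs care, using $[\L^2_x,\operatorname{dom}(S^2)]_{\sigma} = \operatorname{dom}(S^{2\sigma})$ from \cite{AMcN}); and (ii) justifying the mixed-norm convexity inequality for $\D_t^{\theta/2} S^{2(1-\theta)}$ on the dense subspace, which is routine via the joint spectral decomposition since $\D_t$ (a Fourier multiplier in $t$) and $S$ (acting in $x$) commute. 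Once these are in place, everything else is bookkeeping parallel to Lemma~\ref{ref:embedding}.
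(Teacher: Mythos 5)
Your strategy is the paper's own: replace the Fourier transform in $x$ by the spectral theorem for $1-\Delta_V$, let Assumption~$\mathbf{(V)}$ play the role of the spatial Sobolev embedding with $d$ in place of $n$, keep the one-dimensional temporal embedding, and dualize at the end. But the key estimate, as you have written it, fails: the spatial operator must be $S^{1-\theta}$, not $S^{2(1-\theta)}$. Your displayed inequality
\[
\|\D_t^{\theta/2} S^{2(1-\theta)} u\|_{\LL} \le \theta \|\D_t^{1/2} u\|_{\LL} + (1-\theta)\|S^2 u\|_{\LL} \le \|u\|_{\cV}
\]
cannot hold, because $\|S^2u\|_{\LL}=\|(1-\Delta_V)u\|_{\LL}$ is a second-order spatial quantity that $\|u\|_{\cV}$ does not control; in parabolic order, $\D_t^{\theta/2}S^{2(1-\theta)}$ has order $2-\theta>1$ while $\cV$ only controls order $1$. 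Concretely, for $u=f\otimes\psi$ with $\hat f$ concentrated near $|\tau|=M$ and $\psi$ a spectral packet of $S$ near $\sigma=M^{1/2}$, the left-hand side grows like $M^{1-\theta/2}\|u\|_{\LL}$ whereas $\|u\|_{\cV}\sim M^{1/2}\|u\|_{\LL}$, so the last inequality fails for every $\theta\in[0,1)$. The ``one must be slightly careful, see below'' flag in your text, and the reindexing discussion in your obstacle paragraph, do not repair this: the doubling of the exponent is exactly the error.

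The fix is the paper's route. The pointwise convexity bound you quote, $|\tau|^{\theta}\sigma^{2(1-\theta)}\le \theta|\tau|+(1-\theta)\sigma^{2}$, is the one for the \emph{squared} norm of $\D_t^{\theta/2}S^{1-\theta}u$ (note $\sigma^{2(1-\theta)}$ is the square of the symbol of $S^{1-\theta}$), and via the joint spectral decomposition it gives
\[
\|\D_t^{\theta/2} S^{1-\theta} u\|_{\LL}^2 \le \theta \|\D_t^{1/2} u\|_{\LL}^2 + (1-\theta)\|S u\|_{\LL}^2 \le \|u\|_{\cV}^2 ,
\]
since $\|Su\|_{\LL}=\|u\|_{\L^2_t V}$. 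On the spatial side there is then no reindexing issue at all: by Kato's representation theorem $\dom(S)=V$ with equal norms, and by interpolation of domains of fractional powers (the cited result of \cite{AMcN}) $\dom(S^{1-\theta})=[\L^2_x,V]_{1-\theta}$, which is precisely the space in Assumption~$\mathbf{(V)}$, so $\|\psi\|_{\L^q_x}\lesssim\|S^{1-\theta}\psi\|_{\L^2_x}$ when $\tfrac12-\tfrac{1-\theta}{d}=\tfrac1q$. With this correction your chain (spatial embedding inside the $\L^r_t$ norm, vector-valued temporal Sobolev embedding with $\theta=1-\tfrac2r$, then the convexity bound) closes and coincides with the paper's proof, and the arithmetic giving $\tfrac1r+\tfrac{d}{2q}=\tfrac d4$ is correct. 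One minor point: the duality identification of $(\Del^{r,q})'$ should be realized through $u\mapsto(u,u)\in\L^r_t\L^q_x\times\L^2_t V$ (not $u\mapsto(u,\nabla u)$), since the second component must capture $\L^2_t V'$.
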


\begin{proof}
We modify the proof of Lemma~\ref{ref:embedding}. In order to prove \eqref{eq:convexity} we have previously used the Fourier transform on $\L^2(\R^n)$ to obtain unitary equivalence of $-\Delta$ to a multiplication operator $m(\xi) = |\xi|^2$.  Here, we use the spectral theorem for $(1-\Delta_{V})$ and the same argument applies. As for \eqref{eq:embed}, the required Sobolev inequality in the spatial variable is precisely our Assumption~$\mathbf{(V)}$ and now $d$ instead of $n$ plays the role of the dimension. Hence, \eqref{eq:embedding} holds under the given conditions on $(r,q)$.
\end{proof}

\subsection{Variational approach}
 \label{sec:bcapproach}
Pairs that satisfy the relation in Lemma~\ref{ref:embeddingV} will be called {\bf admissible pairs} (for the boundary value problems under assumption $\mathbf{(V)}$). Once again, admissible pairs $(r,q)$  are conjugates of pairs $(\tilde r, \tilde q)$, called {\bf compatible pairs for lower order coefficients}, which are defined by
$$\frac 1 {\tilde r}+\frac {d}{2\tilde q}= 1\quad  \&  \quad  1<\tilde r, \tilde q \le \infty.$$
The conjugation rule is  $(r, q)=(2(\tilde r)',2(\tilde q)')
$ as in \eqref{eq:associatedpair}.
Fixing once and for all a compatible pair $(\tilde r_{1}, \tilde q_{1})$ for lower order coefficients,
we define the parabolic operator $\cH$ on $\cV$ by the sesquilinear form
$$
\Angle {\cH u}{v} = \Angle {\partial_{t}u} {v} +  \int_{-\infty}^\infty   \angle{A(t)\nabla u(t)}{\nabla v(t)} +\angle{ \beta u(t)}{v(t)}\, \d t,
$$
where as before, $\beta$  includes the lower order terms,  $\angle{\cdot \,}{\cdot}$ is now the inner product on $\L^2_{x}=\L^2(\Omega)$ and $\Angle{\cdot \,}{\cdot}$ the sesquilinear duality extending the $\L^2_{t}\L^2_{x}$ inner product.  
As $\cD(\R; V)$ is a dense subspace of $\cV$, we have
$$
\Angle {\cH u}{v}= -\Angle {u}{\partial_{t} v} + \Angle {\Lt  u}{v}
$$
for all $u\in \cV$ and $v\in \cD(\R; V)$, where $\Lt $ is defined by the integral above. 
H\"older's inequality, which is dimensionless in terms of exponents, yields
\begin{align*}
 |\Angle {\Lt u} {v}| \le \|A\|_{\infty} \|\nabla u\|_{\LL}\|\nabla v\|_{\LL} + P_{\tilde  r_{1},\tilde q_{1}} \|u\|_{\Del^{ r_{1}, q_{1}}}\|v\|_{\Del^{ r_{1}, q_{1}}}
\end{align*}
with $P_{\tilde  r_{1},\tilde q_{1}}$ as in   \eqref{eq:P}, so that 
$$
|\Angle {\cH u}{v}|\le C \|u\|_{\cV}\|v\|_{\cV}.
$$
Hence, using $\cH$ gives access to weak solutions in $\L^2_{t}V$ of $\partial_{t} u+\Lt u=w$ with lateral boundary conditions prescribed by $V$.

\subsection{Main regularity estimates}
 \label{sec:bcmainreg}
 
Modifying the proofs of Section~\ref{sec:mainregularityestimates} on replacing  $-\Delta$ systematically by $(1-\Delta_{V})$, the main regularity lemma becomes the following statement.

\begin{lem}
\label{lem:mainregV} Let $u\in \cD'(\R; V')$  with $u\in \L^2_{t}V$  and $\partial_{t}u\in\Sigma^{r', q'} $ for $(r,q)$ an admissible pair  or  $(r,q)=(\infty,2)$ under Assumption $\mathbf{(V)}$. Then
 $u\in  \C_{0}(\L^2_{x})$ and for some   constant $C<\infty$ independent of $u$, 
$$
 \sup_{t\in \R}\|u(t)\|_{\L^2_{x}} \le C(\|u\|_{\L^2_{t}V}+ \|\partial_{t}u\|_{\Sigma^{r', q'} }).
$$
Moreover, if $(r,q)$ is admissible, then $u\in \cV$ with the same estimate on {$\|u\|_{\cV}$}.
\end{lem}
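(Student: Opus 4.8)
The statement is the boundary-condition analogue of Corollary~\ref{cor:mainreg}, and the proof should run in exact parallel to Section~\ref{sec:mainregularityestimates}, systematically replacing the Laplacian $-\Delta$ on $\R^n$ by the shifted operator $1-\Delta_V$ on $\Omega$. The plan is to reconstruct the chain Lemma~\ref{lem:uniqueness} $\to$ Lemma~\ref{lem:maxreg} $\to$ Lemma~\ref{lem:H-theta} $\to$ Proposition~\ref{prop:maxregDeldot} $\to$ Proposition~\ref{prop:maxregL1L2} $\to$ Corollary~\ref{cor:maxreg} $\to$ Corollary~\ref{cor:mainreg} in this new setting. The single crucial structural ingredient one loses is the Fourier transform in $x$: it must be replaced throughout by the spectral theorem applied to the positive self-adjoint operator $1-\Delta_V$, equivalently by functional calculus for $S=(1-\Delta_V)^{1/2}$. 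Since $S\ge 1$, the operators $S^{-1}$, $\e^{-t\Delta_V}=\e^{t}\e^{-tS^2}$ etc.\ are all bounded on $\L^2_x$ with good decay, which is what makes the homogeneous-in-time estimates work on a semi-infinite or bi-infinite time interval.

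\textbf{Key steps.} First I would establish the uniqueness substitute: if $u\in\cD'(\R;V')$ solves $\partial_t u - \Delta_V u + u = 0$ (note the shift) with $u\in\L^2_tV$, then $u=0$. Taking the partial Fourier transform only in $t$ reduces this to $(\i\tau + 1 + |S|^2)\hat u(\tau) = 0$ in $V'$ for a.e.\ $\tau$, and since $\i\tau+1-\Delta_V$ is invertible $V\to V'$ for every real $\tau$ (its real part is $\ge 1$), one gets $\hat u=0$; no analogue of the polynomial/$\cS_{00}$ argument is needed because the shift removes the low-frequency degeneracy. Second, I would prove the analogue of Lemma~\ref{lem:maxreg}: for $\theta\in[0,1)$ and $w$ in the space $\Hdot_t^{-\theta/2}S^{-2(\theta-1)}\L^2_tV'$-type norm (i.e.\ $w = \D_t^{\theta/2}(1-\Delta_V)^{(1-\theta)/2}g$ with $g\in\LL$), the Duhamel formula $v(t)=\int_{-\infty}^t \e^{(t-s)(\Delta_V-1)}w(s)\,\d s$ defines $v\in\C_0(\L^2_x)\cap\cV$ with the stated estimate. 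The four-step structure of the proof of Lemma~\ref{lem:maxreg} transfers verbatim: Step~1 uses the scalar computation $\|M_\theta h\|_{\LL} = c(\theta)\|\varphi\|_{\L^2_x}$, which is now proved by the spectral theorem for $1-\Delta_V$ writing $h(t)=1_{(-\infty,0)}(t)\e^{t(\Delta_V-1)}\varphi$, $\hat h(\tau)=(-\i\tau+1+|S|^2)^{-1}\varphi$ and the change of variables $\tau=\sigma(1+\lambda^2)$ in the spectral parameter $\lambda$; Steps~2--4 are unchanged (contractivity of $\e^{t(\Delta_V-1)}$ on $\L^2_x$ and strong convergence to $0$, which here is immediate since the semigroup decays exponentially). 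Third, the embedding Lemma~\ref{lem:H-theta} carries over using Lemma~\ref{ref:embeddingV} in place of Lemma~\ref{ref:embedding}, so $\L^{r'}_t\L^{q'}_x\hookrightarrow$ the relevant $\theta$-space with $\theta=1-2/r$. Fourth, for the endpoint $(\infty,2)$ one repeats Proposition~\ref{prop:maxregL1L2}: the $\L^1_t\L^2_x$ part is handled by Steps~1--2, the $\L^2_tV'$ part by $\theta=0$ of the previous lemma, and the gradient bound by testing against the backward solution $\tilde v$. Finally, assembling: given $u\in\L^2_tV$ with $\partial_t u\in\Sigma^{r',q'}$, one has $\partial_t u - \Delta_V u + u = \partial_t u + (1-\Delta_V)u \in \L^2_tV' + \L^{r'}_t\L^{q'}_x$ (the first summand because $(1-\Delta_V):V\to V'$ boundedly and $u\in\L^2_tV$), let $v$ be the solution to $\partial_t v-\Delta_V v + v$ equal to this right-hand side, invoke uniqueness to get $v=u$, and optimize over decompositions. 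As in Remark~\ref{rem:mainreg2}(ii), the constructive nature of the proof lets one accommodate a finite sum of mixed-space summands with different admissible pairs.

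\textbf{Main obstacle.} The genuinely delicate point is not any single estimate but making sure that the density/approximation arguments go through cleanly with $\L^2(\Omega)$ and $V$ in place of $\cS(\R^n)$ and $\H^1(\R^n)$: in Section~\ref{sec: full space} one freely used $\cS_{00}(\R^{n+1})$, Schwartz functions with frequency support away from $\{\tau=0\}\cup\{\xi=0\}$, as a dense subspace on which all integrals converge absolutely. Over $\Omega$ there is no spatial Fourier cutoff, so the correct replacement is $\cS_{00}(\R;\L^2_x)$-type functions of the form $g(t,x)=\int (\text{smooth compactly supported away from }\tau=0)\,\d(\text{spectral measure of }1-\Delta_V)$, or more concretely finite sums $\sum_k f_k(t)\otimes \psi_k$ with $f_k\in\cS_{00}(\R)$ and $\psi_k\in\dom(S^N)$ for all $N$ (e.g.\ $\psi_k$ in the range of a compactly-supported spectral projection). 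One must check these are dense in the relevant $\theta$-spaces and that on them $w,\Delta_V w\in\L^1_t\L^2_x$ with $t\mapsto w(t)$ continuous $\L^2_x$-valued, so that Step~3's pointwise identity $v'(t)=w(t)+(\Delta_V-1)v(t)$ in $\L^2_x$ makes sense; this is routine but needs to be stated with care because it is the only place where the "no Fourier transform in $x$" issue bites. Since the excerpt explicitly adopts the guideline of replacing spatial distribution theory by spectral theory for the Laplacian, I would simply remark that the shift by $1$ (present because $S=(1-\Delta_V)^{1/2}$ rather than $(-\Delta_V)^{1/2}$) is precisely what makes all of this work without the low-frequency gymnastics of the $\R^n$ case, and refer to Section~\ref{sec:inhomogneous} for the analogous bookkeeping.
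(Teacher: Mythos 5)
Your proposal is correct and follows essentially the same route as the paper: redo the chain of regularity lemmas from Section~\ref{sec:mainregularityestimates} with $-\Delta$ replaced by $1-\Delta_V$, the spatial Fourier transform replaced by the spectral theorem for $S=(1-\Delta_V)^{1/2}$, a dense class of the $\cS_{00}$-in-$t$ type with good spatial regularity (the paper takes $\cS_{00}(\R;\dom(\Delta_V^2))$, your spectral-projection tensor products serve the same purpose), and the assembly via uniqueness with no additive constant. The only real deviation is the uniqueness step, where you take the Fourier transform in $t$ and use coercivity of $\i\tau+1-\Delta_V$ on $V$, whereas the paper notes that $\partial_t u\in\L^2_t V'$ and concludes directly from Lions' embedding theorem and the energy identity; both arguments are valid and of comparable simplicity.
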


\begin{proof} We indicate the main changes. 

\medskip

\paragraph{\itshape Modification of the uniqueness Lemma~ \ref{lem:uniqueness}}

This is now stated for $u\in \cD'(\R; V')$ such that $\partial_{t}u + (1-\Delta_{V}) u=0$ in $\cD'(\R; V')$: if   $u\in \L^2_{t}V$, then $u=0$. Indeed, we see that $\partial_{t}u \in \L^2_{t}V'$. By Lions' embedding theorem we have $u\in \C_{0}(\L^2_{x})$, and testing the equation against $u$ yields that $\Angle {(1-\Delta_{V})u}{u}=0$, which implies $u=0$.

\medskip

\paragraph{\itshape Modification of the embedding in Lemma~\ref{lem:H-theta} }

Here, we have to show that with $\theta=1-\frac 2 r$ we have the continuous inclusion
\begin{align*}
	(\L^{r}_{t}\L^{q}_{\vphantom{t} x})' \hookrightarrow \HthetaV,
\end{align*}
 where 
\begin{align*}
 \HthetaV \coloneqq\{\D_{t}^{\theta/2}S^{1-\theta} g \, : \,  g\in \LL\} \quad \text{with} \quad \|w\|_{\HthetaV} \coloneqq \|g\|_{\LL}.
\end{align*}
In the definition of this space, $S^{1-\theta}$ is extended dy duality to a map from $\L^2$ into the dual of $V$ with respect to the $\L^2_x$ duality. This uses that $V$ is the domain of $S$ and that $0 \leq 1-\theta\leq1$. Hence, we are working with  a subspace of $ \cD'(\R; V')$. The embedding itself is a repetition of the proof of Lemma~\ref{lem:H-theta} except that now we take $G = \cS(\R; V)$ as dense subset. This is where we use assumption $\mathbf{(V)}$.

\medskip

\paragraph{\itshape Modification of the stronger regularity statement in Lemma~ \ref{lem:maxreg}}

We need a new dense subspace $G_0$, which we can take as $G_0 \coloneqq \cS_{00}(\R; \dom(\Delta_V^2))$ here. 

Step~1 then goes through \emph{mutadis mutandis} if we understand Fourier in the $x$-variable as a special case of the spectral theorem for the Laplacian, compare with the proof of Lemma~\ref{ref:embeddingV}.

Step~2 remains unchanged.

For Step~3, we obtain $v'(t)+ (1-\Delta_{V})v(t)=w(t)$ in $\L^2_x$ for all $t \in \R$, whenever $g\in G_{0}$. The equation can also be interpreted in $\cD'(\R; V')$ for the test functions $\phi\in \cD(\R; V)$: this interpretation passes to the limit for $g\in \LL$, thanks to Step~1. 

Lastly, Step~4, again for $g\in G_{0}$, has been a Fourier transform argument and now its use in the $x$-variable should be replaced by the spectral theorem. From this perspective, the proof is the same as before.

\medskip

\paragraph{\itshape End of proof} Modifications of Proposition~\ref{prop:maxregL1L2} and of the corollaries that follow are proved similarly with constant $c=0$.
\end{proof}

\subsection{The resulting theory}
 \label{sec:bctheory}
From this point on, the theory can be developed similar to the \emph{inhomogeneous setting} of Section~\ref{sec:inhomogneous}, assuming  $\mathbf{(V)}$. The two exceptional topics are pointwise Gaussian upper bounds (Section~\ref{sec:Gaussian_pointwise}) and $\BMO$-coefficients in the principal part (Section~\ref{sec:BMO}), the extension of which will require finer geometrical properties of the underlying domain $\Omega$ and should be considered open at this point.

The rest works out smoothly, as long as we assume uniform ellipticity in the sense of G\aa rding: There should exist $\lambda>0$ and $c_0 \in \R$ such that for almost every $t$ and every $w \in V$ we have
\begin{equation}
\label{eq:gardingV}
\Re \angle{A(t)\nabla w}{\nabla w } \ge \lambda  \|\nabla w\|_{\L^2_{x}}^2 - c_0 \|w\|_{\L^2_{x}}^2.
\end{equation}
Then, we can work with lower order coefficients in Lebesgue-Lebesgue mixed spaces with the assumption $\mathbf{(D_{\varepsilon})}$. This gives access to representation by Green operators  for the inverse of $\cH+\kappa$ for appropriate $\kappa\ge 0$, causality and fundamental solution operators for the Cauchy problem. The proof of $\L^2$ off-diagonal estimates can be adapted if $V$ is invariant under multiplication with bounded Lipschitz functions. For example, the variational spaces for mixed Dirichlet-Neumann boundary conditions have this property~\cite[Lem.~4]{Eg}. 

If \eqref{eq:gardingV} comes with $c_{0}=0$ and the leading part of $\cH$ is a pure second order operator, then one can also develop the theory in the class  $\L^2_{t}V$ similar to Section~\ref{sec:puresecondorder}.

Finally, for the extension of the definition of $\cH$ when coefficients belong to   mixed Lorentz spaces, we can use the following self-improvement property to treat all compatible pairs $(\tilde r_{1},\tilde q_{1})$ with $\tilde r_{1}<\infty$.

\begin{lem}
\label{lem:self-improvement V} If Assumption~{$\mathbf{(V)}$} holds, then 
$\L^q(\Omega)$ can be replaced with $\L^{q,2}(\Omega)$ in  \eqref{eq:Sobolev_frac} when $\theta>0$.
\end{lem}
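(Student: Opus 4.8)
The plan is to mirror the self-improvement argument for $\dot{\cV}$ on $\R^{n+1}$ (the lemma just before the Lorentz section in Section~\ref{sec:Lorentz}), reducing everything to a statement about the operator $S = (1-\Delta_V)^{1/2}$ and abstract Riesz-type maps on the Hilbert space $\L^2(\Omega)$. The point is that the only analytic input behind \eqref{eq:Sobolev_frac} with $\theta>0$ is the boundedness of $S^{-(1-\theta)}$ from $\L^2(\Omega)$ into $\L^q(\Omega)$ when $\frac12 - \frac{1-\theta}{d} = \frac 1q$ and $2 \le q < \infty$, which is exactly the statement $[\L^2,V]_{1-\theta} = \dom(S^{1-\theta}) \hookrightarrow \L^q$ via Kato's second representation theorem and complex interpolation of the domains of powers of $S$ (both already invoked in the excerpt). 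So it suffices to upgrade this one operator bound from $\L^2 \to \L^q$ to $\L^2 \to \L^{q,2}$.

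First I would record the equivalence: for $0<\theta\le 1$ and $q$ as above, \eqref{eq:Sobolev_frac} is equivalent to $\|S^{-(1-\theta)}f\|_{\L^q(\Omega)} \lesssim \|f\|_{\L^2(\Omega)}$ for all $f \in \L^2(\Omega)$; here we use $1-\theta \in [0,1)$, so that $q>2$ strictly (when $\theta<1$) and the operator genuinely gains integrability, while for $\theta=1$ there is nothing to prove. Next I would invoke the Stein--Weiss / O'Neil interpolation principle exactly as in the proof of the corresponding $\dot{\cV}$-lemma: $S^{-(1-\theta)}$ is, up to a harmless bounded factor, comparable to a positivity-preserving Riesz-potential-like operator on $\Omega$, and one only needs that it maps $\L^{2}$ boundedly into the Lorentz space $\L^{q,2}$ with the same pair $(2,q)$. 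More robustly, and to avoid pinning down the kernel of $S^{-(1-\theta)}$ on a rough $\Omega$, I would instead use real interpolation directly: pick $\theta_0 < \theta < \theta_1$ in $(0,1]$ with corresponding exponents $q_0 < q < q_1$ satisfying the same relation; \eqref{eq:Sobolev_frac} at $\theta_0$ and $\theta_1$ gives $S^{-(1-\theta_i)} \colon \L^2 \to \L^{q_i}$ bounded, and since $[\dom(S^{1-\theta_0}), \dom(S^{1-\theta_1})]$ real-interpolates to the Lorentz-type scale in the spatial exponent while the $\L^2$ side is untouched, the bilinear/complex-plus-reiteration argument yields $S^{-(1-\theta)}\colon \L^2 \to (\L^{q_0},\L^{q_1})_{\eta,2} = \L^{q,2}(\Omega)$ with $\frac 1q = \frac{1-\eta}{q_0} + \frac{\eta}{q_1}$, using the standard identification of the real interpolation space between two Lebesgue spaces as a Lorentz space with second index equal to the interpolation parameter. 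Re-expressing, this says precisely $[\L^2(\Omega),V]_{1-\theta} \hookrightarrow \L^{q,2}(\Omega)$, which is the claim.

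The main obstacle I anticipate is purely a matter of bookkeeping at the endpoints of the interpolation: one must be able to choose $\theta_0,\theta_1$ strictly inside $(0,1]$ so that the corresponding $q_0, q_1$ are finite and $\ge 2$, and check that the constraint $2 \le q < \infty$ in Assumption~$\mathbf{(V)}$ still leaves room on both sides of $\theta$. For $d>2$ this is immediate; for $d=2$ (e.g. $\Omega = \R^2$) one has $q = 2/\theta$, so $\theta \in (0,1]$ corresponds to $q \in [2,\infty)$ and the required pair $\theta_0<\theta<\theta_1$ exists whenever $\theta<1$, while $\theta=1$ needs no argument; for $d=1$ (an interval) one has $q = 1/\theta$ and the admissible range is $\theta \in (\tfrac12,1]$, again giving room to interpolate for $\theta<1$. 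In all these cases the reiteration/real-interpolation step is classical. An alternative, slightly cleaner, route that avoids choosing a second endpoint is to note that by Kato--McIntosh the complex interpolation scale $\dom(S^\sigma)$ coincides, for the relevant range, with the real interpolation scale, and then quote O'Neil's theorem on Lorentz improvement of Riesz potentials directly (as the paper does for the $\R^n$ case via \cite{ONeil}); the only thing to verify is that $S^{-(1-\theta)}$ inherits the pointwise domination by a fractional integral operator on $\Omega$, which follows from heat-kernel upper bounds for $\e^{t\Delta_V}$ — but since the excerpt prefers to keep geometric hypotheses on $\Omega$ minimal, I would present the pure-interpolation argument as the main line and mention the O'Neil route only as a remark.
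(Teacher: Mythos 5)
Your main line is essentially the paper's own proof: the paper likewise exploits open-endedness of the admissible range to pick an auxiliary compatible parameter, identifies the complex scale $[\L^2_x,V]_{1-\theta}$ with the real $(\cdot\,,\cdot)_{1-\theta,2}$ scale for the Hilbert couple, and concludes by reiteration together with $(\L^2_x,\L^{r}_{\vphantom{x}x})_{\sigma,2}=\L^{q,2}_x$ --- the only difference being that it needs just one auxiliary endpoint $\vartheta\in(0,\theta)$, using $\L^2_x$ itself (the case $\theta=1$) as the other endpoint, whereas you take parameters on both sides of $\theta$. One cosmetic slip: with $\theta_0<\theta<\theta_1$ the associated exponents satisfy $q_0>q>q_1$, not $q_0<q<q_1$, but this does not affect the interpolation identities you use.
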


\begin{proof}
Let us fix $\theta \in (0,1]$ with corresponding Lebesgue exponent $q$. By open-endedness, we pick $\vartheta \in (0,\theta)$ with corresponding larger exponent $r$. By \eqref{eq:Sobolev_frac} we have a continuous inclusion
\begin{align*}
 [\L^2_x, V]_{1-\vartheta} \hookrightarrow \L^r_x.
\end{align*}
For Hilbert spaces the complex method agrees with the $(\cdot\,, \cdot)_{\theta,2}$-real method~\cite[Sec.~6]{AMcN}. With $\sigma \coloneqq \frac{1-\theta}{1-\vartheta} \in (0,1)$ we obtain the required continuous inclusion
\begin{align*}
	[\L^2_x, V]_{1-\theta} 
	= (\L^2_x,V)_{1-\theta,2} 
	= (\L^2_x, (\L^2_x,V)_{1-\vartheta,2})_{\sigma,2}
	\hookrightarrow (\L^2_x, \L^r_{\vphantom{x} x})_{\sigma,2}
	= \L^{q,2}_{x}.
\end{align*}
The second equality is the reiteration theorem~\cite[Sec.~1.10.2]{Tr} and the final equality follows from the real interpolation property for Lebesgue spaces~\cite[Sec.~1.18.6]{Tr} and the relation $\frac{1-\sigma}{2} + \frac{\sigma}{r} = \frac{1}{q}$.
\end{proof}

With the previous lemma at hand, the extension of the definition of $\cH$ with coefficients in mixed Lorentz spaces can be carried out as before for compatible pairs for the coefficients with $\tilde r_{1}<\infty$. The case $r_{1}=2$  for the conjugate admissible pair $(r_{1},q_{1})$ is not covered by this statement. Invertibility can be shown under Lorentz-Lorentz mixed norms for the  lower order coefficients and causality follows under Lebesgue-Lorentz mixed spaces. 

In order to include Lorentz spaces for compatible pairs with $\tilde r_{1}=\infty$, which is probably the most interesting case in applications, the improvement in Lemma~\ref{lem:self-improvement V} for $\theta = 0$   is needed, that is, $d \geq 3$ and the embedding $V \hookrightarrow \L^{2d/(d-2),2}_{x}$ holds.  One simple way to guarantee this embedding  for $d=n \geq 3$ is to assume that there is a bounded  Sobolev extension operator $V \to \W^{1,2}(\R^n)$ since then one can use the O'Neil's Sobolev embedding on $\R^n$ and restrict back to $\Omega$. Hence, this always works for pure lateral Dirichlet conditions ($V = \W^{1,2}_0(\Omega)$), using the extension by zero. For the existence of an extension operator in the case of mixed lateral boundary conditions, the most general geometric assumptions as far we are aware can be found in \cite{BBHT}.

\section{Gagliardo--Nirenberg inequalities}
\label{sec:GN}

We prove here a version of Gagliardo--Nirenberg inequalities including Lorentz norms. We work on $\R^n$.

\begin{prop}
\label{prop:GN} Let $m\ge 1$ be an integer, $\I$ be an interval and $u\in \L^\infty(\I; \L^2_{x})$ with $\nabla^m u \in \L^2(\I; \L^2_{x})$. Let $\alpha$ be a multi-index such that $0\le |\alpha|\le m$ and 
\begin{equation*}
(r,q) \in [2,\infty)^2, \quad \frac 1{r}+ \frac n{2mq}=  \frac {n+2|\alpha|}{4m}. 
\end{equation*}
Then 
$$
\|\partial^\alpha u\|_{\L^{r}(\I; \L^{q,2}_{x})} \le C(n, m,|\alpha|, r) \|\nabla^m u\|_{\L^{2}(\I; \L^{2}_{x})}^{2/r} \| u\|_{\L^{\infty}(\I; \L^{2}_{x})}^{1-2/r}.
$$
\end{prop}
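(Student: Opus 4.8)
The plan is to reduce the mixed-norm estimate to a pointwise-in-time inequality and then integrate in time. First I would recall the classical Gagliardo--Nirenberg--Sobolev inequality on $\R^n$ in its Lorentz-refined form: for fixed $t$ and a function $w \in \L^2_{x}$ with $\nabla^m w \in \L^2_{x}$, one has
$$
\|\partial^\alpha w\|_{\L^{q,2}_{x}} \le C(n,m,|\alpha|)\, \|\nabla^m w\|_{\L^2_{x}}^{\vartheta} \,\|w\|_{\L^2_{x}}^{1-\vartheta}, \qquad \vartheta = \frac{|\alpha|}{m} + \frac{n}{2m}\Big(\frac12 - \frac1q\Big).
$$
The Lorentz improvement in the target space is exactly O'Neil's refinement of the Sobolev embedding (the same ingredient used in Section~\ref{sec:Lorentz}): the homogeneous Riesz potential $I_{m-|\alpha|}$ maps $\L^2_{x} \to \L^{q_0,2}_{x}$ whenever $\frac12 - \frac{m-|\alpha|}{n} = \frac1{q_0}$, and one interpolates (or uses the standard scaling/dyadic decomposition of $\partial^\alpha w$ into high and low frequencies, estimating the low part by $\|w\|_{\L^2_{x}}$ and the high part by $\|\nabla^m w\|_{\L^2_{x}}$) to obtain the displayed inequality with the convex-combination exponent $\vartheta$. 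A clean way is: write $\partial^\alpha w = \partial^\alpha (-\Delta)^{-m/2}(-\Delta)^{m/2} w$, split at frequency $\lambda$, bound the low frequencies in $\L^{q,2}_{x}$ by $C\lambda^{|\alpha| + n/2 - n/q}\|w\|_{\L^2_x}$ and the high frequencies by $C\lambda^{|\alpha| - m + n/2 - n/q}\|\nabla^m w\|_{\L^2_x}$ (both via Bernstein together with O'Neil for the Lorentz target), and optimize in $\lambda$.

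Next I would check that the scaling relation $\frac1r + \frac{n}{2mq} = \frac{n+2|\alpha|}{4m}$ is precisely equivalent to $\vartheta = 2/r$. Indeed, multiplying the hypothesis by $2$ gives $\frac2r = \frac{n}{2m}\big(\frac12 - \frac1q\big)^{-1}\!\cdot$\,---\,more directly, from $\frac1r = \frac{n+2|\alpha|}{4m} - \frac{n}{2mq}$ one computes $\frac2r = \frac{|\alpha|}{m} + \frac{n}{2m}\big(\frac12 - \frac1q\big) = \vartheta$, which is the exponent appearing in the pointwise inequality. This identification is the only "computation" and it is short.

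With the pointwise inequality in hand, I would raise it to the power $r$ and integrate over $\I$:
$$
\int_{\I} \|\partial^\alpha u(t)\|_{\L^{q,2}_{x}}^{r}\, \d t \le C^r \int_{\I} \|\nabla^m u(t)\|_{\L^2_{x}}^{\vartheta r}\, \|u(t)\|_{\L^2_{x}}^{(1-\vartheta)r}\, \d t \le C^r \|u\|_{\L^\infty(\I;\L^2_x)}^{(1-\vartheta)r} \int_{\I} \|\nabla^m u(t)\|_{\L^2_{x}}^{\vartheta r}\, \d t.
$$
Since $\vartheta r = 2$ by the scaling identity, the last integral is exactly $\|\nabla^m u\|_{\L^2(\I;\L^2_x)}^2$, and taking $r$-th roots yields the claimed bound with $2/r = \vartheta$ and $1 - 2/r = 1-\vartheta$. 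Measurability of $t \mapsto \|\partial^\alpha u(t)\|_{\L^{q,2}_x}$ and of the right-hand side follows from the hypotheses $u \in \L^\infty(\I;\L^2_x)$, $\nabla^m u \in \L^2(\I;\L^2_x)$ once one knows $\partial^\alpha u(t) \in \L^{q,2}_x$ for a.e.\ $t$, which the pointwise inequality provides.

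The main obstacle is the Lorentz-valued pointwise Gagliardo--Nirenberg inequality itself, specifically getting the $\L^{q,2}_x$ target (rather than $\L^q_x$) with the sharp interpolation exponent; but this is handled exactly by O'Neil's mapping properties of Riesz potentials on Lorentz spaces combined with the frequency-splitting/optimization argument above (as in \cite{ONeil}, also used in Section~\ref{sec:Lorentz}), and for $\I$ a genuine interval rather than all of $\R$ one simply works with the restriction of $u$, since the pointwise estimate is applied slicewise in $t$ and needs no global structure. An alternative, if one prefers to avoid the explicit Bernstein argument, is to first establish the $\L^q_x$-valued GN inequality by the classical method and then recover the $\L^{q,2}_x$ refinement by real interpolation between two admissible exponent pairs (as in Lemma~\ref{lem:self-improvement V}); either route closes the proof.
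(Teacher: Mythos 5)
Your proposal follows essentially the paper's own route: a slicewise (fixed-$t$) Lorentz-refined Gagliardo--Nirenberg inequality obtained from O'Neil's Sobolev embedding in Lorentz spaces together with an $\L^2$-based interpolation in frequency, and then integration in time using $\vartheta r=2$ and the $\L^\infty_t\L^2_x$ bound; the paper merely organizes the pointwise step by factoring through $\|(-\Delta)^{b/2}u(t)\|_{\L^2_x}$ with $b=|\alpha|+n(\tfrac12-\tfrac1q)$ and a Plancherel--H\"older interpolation instead of your high/low frequency splitting, which is an equivalent argument. One slip to fix: the exponent you wrote, $\vartheta=\tfrac{|\alpha|}{m}+\tfrac{n}{2m}\bigl(\tfrac12-\tfrac1q\bigr)$, has the wrong coefficient on the second term; scaling (and your own Bernstein bounds $\lambda^{|\alpha|+n/2-n/q}$ and $\lambda^{|\alpha|-m+n/2-n/q}$, optimized in $\lambda$) gives $\vartheta=\tfrac{|\alpha|}{m}+\tfrac{n}{m}\bigl(\tfrac12-\tfrac1q\bigr)=\tfrac{b}{m}$, and it is this value, not the one you wrote, that equals $2/r$ under the relation $\tfrac1r+\tfrac{n}{2mq}=\tfrac{n+2|\alpha|}{4m}$. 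With that correction the identification $\vartheta r=2$ and the time integration close the proof exactly as you describe.
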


\begin{proof} Let $a=|\alpha|$.  Define $b\geq 0$ by   $b-\frac n 2= a- \frac n q$. Then, for (almost) every $t\in \I$, we invoke the boundedness of $\partial^\alpha(-\Delta)^{-a /2}$ on Lorentz spaces and  the O'Neil's Sobolev embedding theorem \cite{ONeil},  
\begin{equation*}
\|\partial^\alpha u(t) \|_{ \L^{q,2}_{x}}  \lesssim  \|(-\Delta)^{a /2} u(t)\|_{\L^{q,2}_{x}}\lesssim   \|(-\Delta)^{b /2} u(t)\|_{\L^{2}_{x}}.
\end{equation*}
We next use the classical interpolation inequality 
$$
\|(-\Delta)^{b /2} u(t)\|_{\L^{2}_{x}} \lesssim \|\nabla^m u(t)\|_{\L^2_{x}}^{b/m} \|u(t)\|_{\L^2_{x}}^{(m-b)/m}.
$$
%which holds for almost every $t$. 
Using $u\in \L^\infty(\I; \L^2_{x})$, we can conclude by integrating the $r$-th power and working out the exponents. The interpolation inequality itself is easily seen by using the Fourier transform in $\L^2_{x}$. Indeed, writing $|\xi|^{2b} |\hat{u}(\xi)|^2 = (|\xi|^{2m} |\hat{u}(\xi)|^2)^{b/m}(|\hat{u}(\xi)|^2)^{(m-b)/m}$, it boils down to H\"older's inequality with exponent $\frac{m}{b} \in [1,\infty)$.
 This finishes the proof.  
\end{proof}

{\small
\bibliographystyle{amsplain}
%\addcontentsline{toc}{section}{References}

 \end{document}